\DeclareSymbolFont{cyrletters}{OT2}{wncyr}{m}{n}
\DeclareMathSymbol{\Sha}{\mathalpha}{cyrletters}{"58}
\newtheorem{theorem}{Theorem}[section]
\newtheorem{lemma}[theorem]{Lemma}
\newtheorem{corollary}[theorem]{Corollary}
\newtheorem{definition}[theorem]{Definition}
\newtheorem{example}[theorem]{Example}
\newtheorem{remark}[theorem]{Remark}
\theoremstyle{definition}\newtheorem{prop}[subsection]{}
\title{Cyclotomy, cyclotomic cosets and arithmetic properties of some families in $\frac{\mathbb{F}_l[x]}{\langle x^{p^sq^t}-1\rangle}$}
\author{Juncheng Zhou$^{1}$ and Hongfeng Wu$^{1}$\footnote{Corresponding author.}
\setcounter{footnote}{-1}
\footnote{E-Mail addresses:
2024312050101@mail.ncut.edu.cn(J. Zhou), whfmath@gmail.com(H. Wu)}
\\
{1.~College of Science, North China University of technology, Beijing, China}}
\date{}
\begin{document}
\maketitle

\bibliographystyle{abbrv}
\thispagestyle{plain}
\setcounter{page}{1}

\begin{abstract}
Arithmetic properties of some families in $\frac{\mathbb{F}_l[x]}{\langle x^{p^sq^t}-1\rangle}$ are obtained by using the 
cyclotomic classes of order 2 with respect to $n=p^sq^t$, where $p\equiv3 \mathrm{mod} 4$, $\gcd(\phi(p^s),\phi(q^t))=2$, $l$ is 
a primitive root modulo $q^t$ and $\mathrm{ord}_{p^s}(l)=\phi(p^s)/2$. The form of these cyclotomic classes enables us to 
further generalize the results obtained in \cite{ref1}. The explicit expressions of primitive idempotents of minimal 
ideals in $\frac{\mathbb{F}_l[x]}{\langle x^{p^sq^t}-1\rangle}$ are also obtained.
\end{abstract}

\section{Introduction}

Theory of cyclotomy is a significant  tool in research of codes. Classical cyclotomy means cyclotomy with respect to an odd prime. C. Ding and T. Helleseth \cite{ref2} introduced a generalized cyclotomy of order 2 with respect to a composite number $n$ which includes classical cyclotomy as a special case. The last two decades have witness a lot of research on cyclic codes by using the theory of cyclotomy, for example, see \cite{ref1},\cite{ref2},\cite{ref4},\cite{ref6},\cite{ref8},\cite{ref11}$\sim$\cite{ref15}.

Let $\mathbb{F}_l$ be a finite field of order $l$, where $l$ is a prime. A cyclic code of length $n$ over $\mathbb{F}_l$ is viewed as an ideal in the ring $\frac{\mathbb{F}_l[x]}{\langle x^n-1\rangle}$, where $\gcd(n,l)=1$. It is well known that an ideal in $\frac{\mathbb{F}_l[x]}{\langle x^n-1\rangle}$ can be written as a direct sum of minimal ideals. Further, each minimal ideal (minimal cyclic code) is generated by a primitive idempotent. Thus it is useful to obtain the explicit expressions for primitive idempotents in $\frac{\mathbb{F}_l[x]}{\langle x^n-1\rangle}$. 

A lot of papers investigate cyclic codes and their primitive idempotents by using the theory of cyclotomy.
In \cite{ref2}, generalized cyclotomy of order 2 has been applied for constructing codes of length $n=p_1^{e_1}\cdots p_t^{e^t}$ with each $p_i$ being odd prime. 
In \cite{ref6}, C.Ding study the codes of length $pq$ over $\mathbb{F}_l$ by using generalized cyclotomy, where $l$ is a quadratic residue modulo both $p$ and $q$. 

In \cite{ref7}, R. Singh and M. Prunthi obtain the explicit expressions of primitive of irreducible quadratic residue cyclic codes of length $p^sq^t$ and $\mathrm{ord}_{p^s}(l)=\phi(p^s)/2$, $\mathrm{ord}_{q^t}(l)=\phi(q^t)/2$, which in fact generalizes C. Ding's results.
In\cite{ref13}, F. Li and Q. Yue investigate irreducible cyclic code of length $n=p^sq^t$ over $\mathbb{F}_l$ with $pq\mid l-1$.
In \cite{ref14}, Z. Shi and F. Fu found the primitive idempotents of irreducible constacyclic codes and LCD cyclic codes of length $n=p^sq^t$ over $\mathbb{F}_l$ with $\gcd(pq,l(l-1))=1$.
In \cite{ref4}, G.K. Backshi and M. Raka obtained minimal cyclic codes of length $p^sq$, where $l$ is a primitive root of both  $p^s$ and $q$, $\gcd(\phi(p)/2,\phi(q)/2)=1$.
In \cite{ref15}, A.Sahni and P.T. Sehgal also investigated minimal cyclic codes of length $p^sq$, where $l$ is a primitive root of both  $p^s$ and $q$, $\gcd(\phi(p),\phi(q))=d$.

S. Jain, S. Betra and K. Kmar consider a case different from all research above. In \cite{ref1}\cite{ref8}\cite{ref11}, $p,q$ are distinct odd primes, $p\equiv3 \mathrm{mod}  4$ and $\gcd(\phi(p^s),\phi(q^t))=2$. $l$, the order of a finite field, is a primitive root modulo $q$ and $\mathrm{ord}_{p^s}(l)=\phi(p^s)/2$. However, S. Jain et al. only investigate primitive idempotents when $n=pq$. Arithmetic properties of some families  and primitive idempotents in  $\frac{\mathbb{F}_l[x]}{\langle x^{p^sq^t}-1\rangle}$, where $l$ is a primitive root modulo $q^t$ and $\mathrm{ord}_{p^s}(l)=\phi(p^s)/2$, are unstudied.

In this paper, we first generalize the results of S. Jain et al. and obtained the arithmetic properties of some families in $\frac{\mathbb{F}_l[x]}{\langle x^{p^sq^t}-1\rangle}$, where $p,q$ are distinct odd primes, $p\equiv3 \mathrm{mod}  4$, $\gcd(\phi(p^s),\phi(q^t))=2$, $\mathrm{ord}_{p^s}(l)=\phi(p^s)/2$ and $\mathrm{ord}_{q^t}(l)=\phi(q^t)$. In Section \ref{sec2} we give the cyclotomic classes of Order 2 and write $l$-cyclotomic cosets modulo $p^sq^t$ in an additive form in order to facilitate the follow study. In Section \ref{sec3}, we calculate the cyclotomic number of order 2 and investigate relationships between cyclotomic cosets.

In Section \ref{sec4}, we investigate arithmetic properties of the polynomials in the ring $\frac{\mathbb{F}_l[x]}{\langle x^{p^sq^t}-1\rangle}$ that has the form\[\chi_\gamma=\sum_{i\in C_\gamma}x^i,\]where $C_\gamma$ is the cyclotomic coset containing $\gamma$. The results in Section \ref{sec3} will be used to prove the theorems in Section \ref{sec4}. For convenience, lemmas in Section \ref{sec3} and theorems in Section \ref{sec4} are one-to-one, that is, Lemma 3.n $\mapsto$ Theorem 4.n-1, for $2\leqslant n\leqslant 17$.

In Section \ref{sec5} we devoted to calculate the explicit expressions of primitive idempotents. Theorem 6, Chapter 8 of \cite{ref5} gives the expressions over $\mathbb{F}_2$, and G.K. Bassi and M. Raka \cite{ref4} generalize it into $\mathbb{F}_l$ for any primes. By classifying $p^sq^t$th roots of unit, we can see that primitive idempotents can be written in the form of linear combination of $\chi_\gamma$, which is defined in Section \ref{sec4}. We then obtain the explicit expressions of primitive idempotents of minimal ideals in $\frac{\mathbb{F}_l[x]}{\langle x^{p^sq^t}-1\rangle}$.

The terminology and assumptions throughout this paper are as follows:

\begin{enumerate}[(1)]
\item $p,q,l$ are three distinct primes with $p$ and $q$ are odd and $p\equiv3 \mathrm{mod}  4$.
\item $\mathbb{Z}_m^*$ denotes the set of units modulo m, for any positive integer m.
\item $\mathrm{ord}_m(k)$ denotes the multiplicative order of $k$ modulo $m$ and $\phi(\cdot)$ denotes Euler's phi function.
\item $\mathrm{ord}_{p^s}(l)=\phi(p^s)/2$, $\mathrm{ord}_{q^t}(l)=\phi(q^t)$ and $\gcd(\phi(p^s),\phi(q^t))=2$.
\item $R_k$ denotes the set of quadratic residues in $\mathbb{Z}_{p^k}^*$ and $N_k$ denotes the set of quadratic nonresidual in $\mathbb{Z}_{p^k}^*$. It is easy to find that\[R_k=\{x+p\lambda:x\in R_1,0\leqslant \lambda\leqslant p^{k-1}-1\}\] and \[N_k=\{x+p\lambda:x\in N_1,0\leqslant \lambda\leqslant p^{k-1}-1\}\]
\end{enumerate}

\section{Cyclotomic classes of order 2 and $l$-cyclotomic cosets modulo $p^sq^t$}
\label{sec2}
\subsection{$l$-cyclotomic cosets}

In this section, we obtain all the $l$-cyclotomic cosets modulo $n=p^sq^t$, when $\mathrm{ord}_{p^s}(l)=\phi(p^s)$/2, $\mathrm{ord}_{q^t}(l)=\phi(q^t)$ and $\gcd(\phi(p^s),\phi(q^t))=2$. To obtain these cosets, we first prove some results.

\begin{definition}
The $l$-cyclotomic coset modulo $n$ containing $\gamma$ denoted by $C_\gamma$ is defined as\[C_\gamma=\{\gamma l^i \mathrm{mod}  n:i=0,1,\cdots, \tau-1\},\]where $\tau$ is the least positive integer such that \[\gamma l^\tau\equiv\gamma \mathrm{mod}  n.\]
\end{definition}
We know that these cyclotomic coset partition the set $\{0,1,\cdots,n-1\}$. A subset $\{\gamma_1,\gamma_2,\cdots,\gamma_m\}$ of $\mathbb{Z}_n$ is said to be complete set of representative of $l$-cyclotomic cosets modulo $n$ if $C_{\gamma_1},C_{\gamma_2},\cdots,C_{\gamma_m}$ are distinct and\[\bigcup_{i=1}^mC_{\gamma_i}=\mathbb{Z}_n\]

\begin{lemma}[\cite{ref1}, Lemma 2.1, p.3]
$\mathrm{ord}_n(l)=\phi(n)/2$.
\end{lemma}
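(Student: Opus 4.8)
The plan is to use the Chinese Remainder Theorem isomorphism $\mathbb{Z}_n^* \cong \mathbb{Z}_{p^s}^* \times \mathbb{Z}_{q^t}^*$, under which the image of $l$ is the pair $(l \bmod p^s, l \bmod q^t)$. The multiplicative order of an element in a direct product of groups is the least common multiple of the orders of its components, so
\[
\mathrm{ord}_n(l) = \mathrm{lcm}\bigl(\mathrm{ord}_{p^s}(l),\, \mathrm{ord}_{q^t}(l)\bigr) = \mathrm{lcm}\bigl(\phi(p^s)/2,\, \phi(q^t)\bigr),
\]
using the standing hypotheses $\mathrm{ord}_{p^s}(l) = \phi(p^s)/2$ and $\mathrm{ord}_{q^t}(l) = \phi(q^t)$. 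It remains to evaluate this least common multiple.

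The key computation is that $\mathrm{lcm}\bigl(\phi(p^s)/2,\, \phi(q^t)\bigr) = \phi(p^s)\phi(q^t)/2 = \phi(n)/2$. First I would note $\phi(n) = \phi(p^s)\phi(q^t)$ since $\gcd(p^s, q^t) = 1$. Then, writing $a = \phi(p^s)/2$ and $b = \phi(q^t)$, the identity $\mathrm{lcm}(a,b) = ab/\gcd(a,b)$ reduces the problem to showing $\gcd\bigl(\phi(p^s)/2,\, \phi(q^t)\bigr) = 1$. From the hypothesis $\gcd(\phi(p^s), \phi(q^t)) = 2$ we know $2 \mid \phi(q^t)$ and the only common factor of $\phi(p^s)$ and $\phi(q^t)$ is $2$; since $p \equiv 3 \bmod 4$ we have $\phi(p^s) = p^{s-1}(p-1)$ with $p - 1 \equiv 2 \bmod 4$, so $\phi(p^s)/2 = p^{s-1}(p-1)/2$ is odd. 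An odd number sharing a common factor with $\phi(q^t)$ would force that factor to be an odd common divisor of $\phi(p^s)$ and $\phi(q^t)$, contradicting $\gcd(\phi(p^s),\phi(q^t)) = 2$. Hence $\gcd(\phi(p^s)/2,\, \phi(q^t)) = 1$, and therefore $\mathrm{lcm}(\phi(p^s)/2,\, \phi(q^t)) = \phi(p^s)\phi(q^t)/2 = \phi(n)/2$.

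The only subtle point — the "main obstacle," though it is mild — is the coprimality argument: one must use both $p \equiv 3 \bmod 4$ (to guarantee $\phi(p^s)/2$ is odd, i.e. that the factor of $2$ is removed cleanly and no further power of $2$ remains) and $\gcd(\phi(p^s), \phi(q^t)) = 2$ (to rule out odd common factors). Either hypothesis alone is insufficient. With that in hand the lemma follows immediately; note the stated result also matches the cited reference \cite{ref1}, Lemma 2.1, so the argument is essentially a transcription adapted to exponents $s, t$ rather than $s = t = 1$.
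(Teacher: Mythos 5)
Your argument is correct: the CRT reduction to $\mathrm{lcm}(\phi(p^s)/2,\phi(q^t))$ and the coprimality argument (using $p\equiv 3 \bmod 4$ to make $\phi(p^s)/2$ odd and $\gcd(\phi(p^s),\phi(q^t))=2$ to exclude odd common factors) is exactly the standard proof. The paper itself gives no proof, importing the statement from \cite{ref1}, so your write-up simply supplies the omitted argument in the expected way.
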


\begin{lemma}[\cite{ref1}, Lemma 2.2, p.3]
If $g$ is primitive root modulo $q^t$, then $g$ is also a primitive root modulo $q^{t-j}$ with $0\leqslant j<t$.
\end{lemma}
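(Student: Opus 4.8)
The plan is to deduce the claim from the canonical reduction map between unit groups. For a fixed $j$ with $0\leqslant j<t$ (the case $j=0$ being trivial, so assume $t-j\geqslant 1$), consider the map $\pi\colon \mathbb{Z}_{q^t}^*\to \mathbb{Z}_{q^{t-j}}^*$ induced by reduction modulo $q^{t-j}$. First I would check that $\pi$ is a well-defined group homomorphism, which is immediate since $q^{t-j}\mid q^t$, and that it is surjective: every residue class in $\mathbb{Z}_{q^{t-j}}^*$ admits a representative $a$ with $\gcd(a,q)=1$, and then $a\bmod q^t$ lies in $\mathbb{Z}_{q^t}^*$ and is sent by $\pi$ to that class. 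Since $g$ is a primitive root modulo $q^t$, the group $\mathbb{Z}_{q^t}^*$ is cyclic of order $\phi(q^t)$ with generator $g$; the image of a cyclic group under a surjective homomorphism is cyclic and is generated by the image of the generator, so $\mathbb{Z}_{q^{t-j}}^*=\pi(\langle g\rangle)=\langle g\bmod q^{t-j}\rangle$. This says precisely that $g$ is a primitive root modulo $q^{t-j}$.

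If one prefers a self-contained computation, the same fact can be obtained directly. Put $d=\mathrm{ord}_{q^{t-j}}(g)$; then $d\mid \phi(q^{t-j})$, and it suffices to exclude $d<\phi(q^{t-j})$. If $d$ were proper, there would be a prime $r\mid \phi(q^{t-j})$ with $g^{\phi(q^{t-j})/r}\equiv 1\bmod q^{t-j}$. Writing $a=g^{\phi(q^{t-j})/r}$, so that the $q$-adic valuation satisfies $v_q(a-1)\geqslant t-j$, the lifting-the-exponent identity for the odd prime $q$ gives $v_q(a^{q^j}-1)=v_q(a-1)+j\geqslant t$. Since $q^j\phi(q^{t-j})=q^{t-1}(q-1)=\phi(q^t)$ and $\phi(q^{t-j})\mid \phi(q^t)$, the prime $r$ also divides $\phi(q^t)$ and $g^{\phi(q^t)/r}=a^{q^j}\equiv 1\bmod q^t$, contradicting that $g$ is a primitive root modulo $q^t$. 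Hence $d=\phi(q^{t-j})$.

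This is routine elementary number theory, and I do not expect a genuine obstacle; the only points needing a moment of care are the surjectivity of $\pi$ (one must choose a lift coprime to $q$) in the first approach, or the valuation bookkeeping in the lifting step of the second. Because $j<t$ forces $t-j\geqslant 1$, the quantities $\phi(q^{t-j})=q^{t-j-1}(q-1)$ are always well defined and no separate edge cases arise.
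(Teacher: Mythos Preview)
Your argument is correct. Both routes you give are standard and sound: the surjective-homomorphism argument is the cleanest, and the lifting-the-exponent computation also works (note that $v_q(a-1)\geqslant t-j\geqslant 1$ is exactly what is needed to apply LTE for the odd prime $q$).

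As for comparison with the paper: the paper does not supply its own proof of this lemma. It is quoted from \cite{ref1} and stated without argument, so there is no approach in the paper to compare against. Your first proof via the reduction map $\mathbb{Z}_{q^t}^*\twoheadrightarrow\mathbb{Z}_{q^{t-j}}^*$ is the natural one-line justification and would serve perfectly well here.
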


\begin{lemma}[\cite{ref1}, Lemma 2.3, p.3]
If $\mathrm{ord}_{p^s}(l)=\phi(p^s)/2$, then $\mathrm{ord}_{p^{s-i}}(l)=\phi(p^{s-i})/2$ with $0\leqslant i<s$.
\end{lemma}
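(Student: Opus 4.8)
The plan is to work inside the multiplicative group $G=\mathbb{Z}_{p^s}^{*}$, which is cyclic of order $\phi(p^s)=p^{s-1}(p-1)$, an even number since $p$ is odd. Let $\langle l\rangle$ denote the cyclic subgroup generated by $l$; by hypothesis $|\langle l\rangle|=\mathrm{ord}_{p^s}(l)=\phi(p^s)/2$, so $\langle l\rangle$ is a subgroup of index $2$. Because $G$ is cyclic it possesses a unique subgroup of each order dividing $|G|$, and the squaring endomorphism $g\mapsto g^{2}$ has kernel of order $2$, so its image $G^{2}$ is also a subgroup of index $2$. Hence $\langle l\rangle=G^{2}$: the cyclic group generated by $l$ is precisely the subgroup of squares of $G$. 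This identification is the conceptual core of the argument.

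Next I would bring in the natural reduction homomorphism $\pi\colon \mathbb{Z}_{p^s}^{*}\to\mathbb{Z}_{p^{s-i}}^{*}$, which is surjective with kernel $H=\{\,1+p^{s-i}m \bmod p^s : 0\le m<p^{i}\,\}$ of order $p^{i}$ (here $0\le i<s$ is used so that $p^{s-i}\ge p$). Since $p$ is odd, $H$ sits inside the Sylow $p$-subgroup of $G$, a group of odd order $p^{s-1}$; on any finite abelian group of odd order the squaring map is a bijection, so $H=H^{2}\subseteq G^{2}=\langle l\rangle$. Consequently $\langle l\rangle\cap H=H$, and the standard formula for the order of a coset in a quotient of a cyclic group yields
\[
\mathrm{ord}_{p^{s-i}}(l)=\mathrm{ord}_{G/H}\big(\pi(l)\big)=\frac{|\langle l\rangle|}{|\langle l\rangle\cap H|}=\frac{\phi(p^{s})/2}{p^{i}}=\frac{p^{s-i-1}(p-1)}{2}=\frac{\phi(p^{s-i})}{2},
\]
which is exactly the assertion. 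The case $i=0$ is trivial, and when $s=1$ there is nothing to prove beyond $i=0$.

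An alternative, purely elementary route avoids the group language: a binomial expansion shows that for an odd prime $p$ and any $k\ge 1$ one has $\mathrm{ord}_{p^{k+1}}(l)\in\{\mathrm{ord}_{p^{k}}(l),\,p\cdot\mathrm{ord}_{p^{k}}(l)\}$, together with $\mathrm{ord}_{p^{k}}(l)\mid\mathrm{ord}_{p^{k+1}}(l)$. Applying the first inclusion with $k=s-1$ gives $\mathrm{ord}_{p^{s-1}}(l)\ge\mathrm{ord}_{p^{s}}(l)/p=\phi(p^{s-1})/2$, while on the other hand $\mathrm{ord}_{p^{s-1}}(l)$ divides $\gcd\big(\phi(p^{s-1}),\phi(p^{s})/2\big)=\phi(p^{s-1})/2$ (using $p\equiv 3\bmod 4$, which forces $\gcd(p-1,(p-1)/2)=(p-1)/2$), so equality holds; a descending induction on $i$ then completes the proof. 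In both approaches the only delicate point is pinning down the $p$-part of the order, i.e. showing it is divisible by the full power $p^{s-i-1}$ and no less; the observation that $\langle l\rangle$ is the group of squares, which automatically swallows the whole $p$-Sylow of $G$, is what makes this transparent.
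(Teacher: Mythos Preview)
Your argument is correct. The group-theoretic route is clean: identifying $\langle l\rangle$ with the unique index-$2$ subgroup $G^{2}$ of the cyclic group $G=(\mathbb{Z}/p^{s}\mathbb{Z})^{*}$, and then observing that the kernel $H$ of the reduction map has odd $p$-power order and hence lies inside $G^{2}$, immediately gives $|\pi(\langle l\rangle)|=|\langle l\rangle|/|H|=\phi(p^{s-i})/2$. The alternative elementary descent via the standard fact $\mathrm{ord}_{p^{k+1}}(l)\in\{\mathrm{ord}_{p^{k}}(l),\,p\cdot\mathrm{ord}_{p^{k}}(l)\}$ also works; one small remark is that the identity $\gcd(p-1,(p-1)/2)=(p-1)/2$ holds for every odd prime $p$, so the appeal to $p\equiv 3\pmod 4$ there is unnecessary (though harmless, as it is a standing hypothesis in the paper).

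As for comparison with the paper: the paper does not supply its own proof of this lemma at all---it is simply quoted from \cite{ref1}. Your write-up therefore fills in an argument where the paper only cites one, and either of your two approaches would serve.
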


Now we will find all the $l$-cyclotomic cosets modulo $p^sq^t$ using the results prove above.
\begin{theorem}
There are $(2s+1)(t+1)$ $l$-cyclotomic cosets modulo $p^sq^t$ which are given as follows:
\[
\begin{aligned}
&C_0=\{0\};\\
&C_{p^sq^j}=\{p^sq^jl^m:0\leqslant m\leqslant\phi(q^{t-j})-1\},\quad0\leqslant j\leqslant t-1;\\
&C_{p^iq^j}=\{p^iq^jl^m:0\leqslant m\leqslant\phi(p^{s-i}q^{t-j})/2-1\},\quad0\leqslant i\leqslant s-1,0\leqslant j\leqslant t;\\
&C_{p^iq^jg}=\{p^iq^jgl^m:0\leqslant m\leqslant\phi(p^{s-i}q^{t-j})/2-1\},\quad0\leqslant i\leqslant s-1,0\leqslant j\leqslant t.
\end{aligned}
\]\label{thm2.1}
\end{theorem}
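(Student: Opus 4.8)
The plan is to enumerate the $l$-cyclotomic cosets modulo $n=p^sq^t$ by a double induction on the exponents, organized around the divisors of $n$. First I would observe that every element $\gamma \in \mathbb{Z}_n$ can be written uniquely (up to the CRT) in a form that records $v_p(\gamma)$ and $v_q(\gamma)$, i.e. $\gamma = p^i q^j u$ with $u$ a unit modulo $p^{s-i}q^{t-j}$, together with the degenerate cases where $\gamma$ is divisible by $p^s$ or by $q^t$ (or both, giving $\gamma=0$). Since multiplication by $l$ preserves the $p$-adic and $q$-adic valuations, each coset $C_\gamma$ stays inside one such ``block'' $\{p^iq^j u : u \in \mathbb{Z}_{p^{s-i}q^{t-j}}^*\}$ (or the analogous block when a prime power fully divides $\gamma$). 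So it suffices to analyze, for each pair $(i,j)$, how the cyclic group generated by $l$ acts on the units modulo $p^{s-i}q^{t-j}$, and count orbits.

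The key computation is the orbit length, i.e. the size of $\langle l\rangle$ acting on $\mathbb{Z}_{p^{s-i}q^{t-j}}^*$. For the generic block $0\le i\le s-1$, $0\le j\le t$, Lemma~2.3 gives $\mathrm{ord}_{p^{s-i}}(l)=\phi(p^{s-i})/2$, and Lemma~2.2 (with the hypothesis that $l$ is a primitive root mod $q^t$) gives $\mathrm{ord}_{q^{t-j}}(l)=\phi(q^{t-j})$; by CRT, $\mathrm{ord}_{p^{s-i}q^{t-j}}(l)=\mathrm{lcm}(\phi(p^{s-i})/2,\phi(q^{t-j}))$, and since $\gcd(\phi(p^s),\phi(q^t))=2$ forces the relevant gcd down to $1$ after removing the factor $2$ from the $p$-side, this lcm equals $\phi(p^{s-i}q^{t-j})/2$. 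Hence each orbit in that block has size $\phi(p^{s-i}q^{t-j})/2$, the block has $\phi(p^{s-i}q^{t-j})$ elements, so there are exactly two orbits; I would exhibit $p^iq^j$ and $p^iq^jg$ as representatives and show they are inequivalent (this is where $\mathrm{ord}_{p^s}(l)=\phi(p^s)/2<\phi(p^s)$ matters: $\langle l\rangle \bmod p^{s-i}$ is the unique index-$2$ subgroup, which equals $R_{s-i}$ because $p\equiv 3\bmod 4$ makes $-1$ a nonresidue, and $g$, being a primitive root, lies outside it). For the degenerate blocks $\gamma=p^sq^j$ with $j\le t-1$, the $p$-part is trivial and the orbit length is just $\mathrm{ord}_{q^{t-j}}(l)=\phi(q^{t-j})$, giving a single coset each; and $C_0=\{0\}$ is its own coset.

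Finally I would tally: one coset from $C_0$; $t$ cosets from the $C_{p^sq^j}$, $0\le j\le t-1$; and for the generic blocks, $2$ cosets for each of the $s\cdot(t+1)$ pairs $(i,j)$ with $0\le i\le s-1$, $0\le j\le t$, i.e. $2s(t+1)$ cosets. The total is $1 + t + 2s(t+1) = (2s+1)(t+1)$, matching the claim. As a consistency check I would verify that the union of all listed cosets has size $\sum$ (orbit sizes) $= 1 + \sum_{j=0}^{t-1}\phi(q^{t-j}) + \sum_{i=0}^{s-1}\sum_{j=0}^{t} 2\cdot\tfrac{\phi(p^{s-i}q^{t-j})}{2}$, which telescopes to $p^sq^t=n$, confirming that we have a complete set of representatives and that the cosets are disjoint.

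\textbf{Main obstacle.} The crux is the orbit-count in the generic block: proving that $\mathrm{ord}_{p^{s-i}q^{t-j}}(l)=\phi(p^{s-i}q^{t-j})/2$ exactly (not a proper divisor) and then that $p^iq^j$ and $p^iq^jg$ generate \emph{distinct} orbits rather than coinciding. Both hinge on pinning down $\langle l\rangle \bmod p^{s-i}$ as precisely the subgroup $R_{s-i}$ of quadratic residues — using $p\equiv 3\bmod 4$ so that the index-$2$ subgroup is unambiguous — and on the coprimality $\gcd(\phi(p^{s-i})/2,\phi(q^{t-j}))=1$, which I would extract from $\gcd(\phi(p^s),\phi(q^t))=2$ together with the fact that $\phi(p^{s-i})/2$ and $\phi(q^{t-j})$ divide $\phi(p^s)/2$ and $\phi(q^t)$ respectively. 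Everything else is bookkeeping via CRT and the two induction lemmas already supplied.
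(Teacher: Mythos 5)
Your proposal is correct and takes essentially the same route as the paper, whose entire proof consists of asserting that the listed sets are distinct and observing that their cardinalities sum to $n$. You go further by explicitly verifying the two points the paper calls obvious — that $\mathrm{ord}_{p^{s-i}q^{t-j}}(l)=\phi(p^{s-i}q^{t-j})/2$ (so each listed set is a full coset of the stated size; your reduction to $\gcd(\phi(p^{s-i})/2,\phi(q^{t-j}))=1$ is right and does quietly use $p\equiv 3 \bmod 4$ to make $\phi(p^{s-i})/2$ odd) and that $C_{p^iq^j}\neq C_{p^iq^jg}$ because $\langle l\rangle$ reduces modulo $p^{s-i}$ to the quadratic residues while $g$ does not — both of which are sound.
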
 

\begin{proof}
Obviously, the sets given above are distinct. Also,\[\# C_0+\sum_{j=0}^{t-1}\#C_{p^sq^j}+\sum_{i=0}^{s-1}\sum_{j=0}^t\#C_{p^iq^j}+\sum_{i=0}^{s-1}\sum_{j=0}^t\#C_{p^iq^jg}=n,\]which implies that these are the all $l$-cyclotomic cosets modulo $n$, and hence this proves the results.
\end{proof}

\subsection{Cyclotomic classes of order 2}

\begin{lemma}[\cite{ref9}, Theorem 8.10, p.162]
An integer $n>1$ has a primitive root if and only if $n=2,4,p^s$ or $2p^s$ for some odd prime $p$.
\end{lemma}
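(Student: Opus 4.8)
The plan is to prove the two implications separately. For the ``if'' direction, I would treat the four families in turn. The cases $n=2$ and $n=4$ are immediate, since $1$ generates $(\mathbb{Z}/2\mathbb{Z})^*$ and $3$ generates $(\mathbb{Z}/4\mathbb{Z})^*$. For $n=2p^s$, I would invoke the Chinese Remainder Theorem to get $(\mathbb{Z}/2p^s\mathbb{Z})^*\cong(\mathbb{Z}/2\mathbb{Z})^*\times(\mathbb{Z}/p^s\mathbb{Z})^*\cong(\mathbb{Z}/p^s\mathbb{Z})^*$, which reduces this case to the case $n=p^s$. Thus the real content of the ``if'' direction is the existence of a primitive root modulo $p^s$.

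For $n=p^s$ I would proceed in two steps. First, establish that $(\mathbb{Z}/p\mathbb{Z})^*$ is cyclic: for each $d\mid p-1$ the congruence $x^d\equiv1\pmod p$ has at most $d$ solutions, because a polynomial of degree $d$ over the field $\mathbb{Z}/p\mathbb{Z}$ has at most $d$ roots; a counting argument (summing the number of elements of each order over divisors of $p-1$) then forces exactly $\phi(p-1)\geqslant1$ elements of order $p-1$. Second, I would lift a primitive root $g$ modulo $p$ to one modulo $p^s$: after replacing $g$ by $g+p$ if necessary, one may assume $g^{p-1}\not\equiv1\pmod{p^2}$, and then a binomial-expansion induction on $k$ shows $g^{p^{k-2}(p-1)}\not\equiv1\pmod{p^k}$ for all $k\geqslant2$, which together with the fact that $\mathrm{ord}_{p^k}(g)$ divides $\phi(p^k)$ pins down $\mathrm{ord}_{p^k}(g)=\phi(p^k)$.

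For the ``only if'' direction, suppose $n>1$ is not of the form $2,4,p^s$ or $2p^s$. Then either $n=2^k$ with $k\geqslant3$, or $n$ admits a factorization $n=n_1n_2$ with $\gcd(n_1,n_2)=1$ and $n_1,n_2>2$. In the first case I would show by induction on $k$ that $x^{2^{k-2}}\equiv1\pmod{2^k}$ for every odd $x$, so the group exponent is at most $2^{k-2}<2^{k-1}=\phi(2^k)$, and no primitive root exists. In the second case, $\phi(n_1)$ and $\phi(n_2)$ are both even, so by CRT every unit $x$ satisfies $x^{\mathrm{lcm}(\phi(n_1),\phi(n_2))}\equiv1\pmod n$, and $\mathrm{lcm}(\phi(n_1),\phi(n_2))\leqslant\tfrac12\phi(n_1)\phi(n_2)=\tfrac12\phi(n)<\phi(n)$, again ruling out a primitive root.

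The main obstacle is the $n=p^s$ case: proving $(\mathbb{Z}/p\mathbb{Z})^*$ is cyclic requires the field-theoretic input on the number of roots of a polynomial, and the lifting step must be executed carefully. It is precisely here that the hypothesis $p$ odd is essential, since for $p=2$ the analogous lifting collapses (e.g.\ $(1+2)^2\equiv1\pmod 8$), which is exactly the phenomenon exploited in the $n=2^k$, $k\geqslant3$, part of the converse.
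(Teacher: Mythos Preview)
Your proof sketch is correct and follows the standard textbook argument. Note, however, that the paper does not prove this lemma at all: it is simply cited from Burton's \emph{Elementary Number Theory} (Theorem 8.10) as a background result, so there is no proof in the paper to compare against.
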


Let $g_1$ and $g_2$ be the primitive roots modulo $p^s$ and $q^t$, respectively, and $g$ be the solution of simultaneous congruences\[
\begin{aligned}
g&\equiv g_1 \mathrm{mod}  p^s,\\
g&\equiv g_2 \mathrm{mod}  q^t.
\end{aligned}
\]
Then the existence of $g$ is guaranteed by Chinese Remainder Theorem. $g$ is a common primitive root of both $p^s$ and $q^t$. 

Further, let $v$ be the solution of simultaneous congruences\[\begin{aligned}
v&\equiv 1 \mathrm{mod}  p^s,\\
v&\equiv g \mathrm{mod}  q^t.\end{aligned}
\]
Then the cyclotomic classes of order 2 with respect to $p^sq^t$ are given by\[
D_0^{p^sq^t}=\left\{g^{2i}:i=0,1,\cdots,\frac{\phi(p^sq^t)}{4}-1\right\}\cup\left\{g^{2i}v:i=0,1,\cdots,\frac{\phi(p^sq^t)}{4}-1\right\},
\]
and
\[D_1^{p^sq^t}=gD_0^{p^sq^t}\]

\begin{lemma}[\cite{ref10}, Theorem 83]
An integer a is quadratic residue modulo an odd prime $p$ if and only if $a^{\phi(p)/2}\equiv1 \mathrm{mod}  p$.\label{lem2.7}
\end{lemma}

\begin{lemma}
Let $p$ be an odd prime and $a$ be an integer coprime with $p$, then $a$ is an quadratic residue modula $p^s$ if and only if $a$ is a quadratic residue modulo $p$.
\end{lemma}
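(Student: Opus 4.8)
The plan is to treat the two implications separately, the forward one being essentially a definitional reduction and the backward one exploiting the cyclic structure of $\mathbb{Z}_{p^s}^*$ guaranteed by Lemma 2.6.

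For the easy direction, suppose $a$ is a quadratic residue modulo $p^s$, say $a\equiv b^2\pmod{p^s}$. Since $\gcd(a,p)=1$ we have $\gcd(b,p)=1$, so $b$ is a unit; reducing the congruence modulo $p$ gives $a\equiv b^2\pmod p$, so $a$ is a quadratic residue modulo $p$. This uses nothing beyond the definition. For the converse I would first fix a primitive root $g$ modulo $p^s$, which exists by Lemma 2.6, so that $\mathbb{Z}_{p^s}^*$ is cyclic of order $\phi(p^s)=p^{s-1}(p-1)$. The one substantive point is that $g$ is also a primitive root modulo $p$ (the analogue for $p$ of Lemma 2.2 with the full exponent drop): if $d=\mathrm{ord}_p(g)$ then $g^d=1+pt$ for some $t$, and the standard congruence $(1+pt)^{p^{s-1}}\equiv 1\pmod{p^s}$ (valid since $p$ is odd) forces $\phi(p^s)\mid d\,p^{s-1}$, hence $(p-1)\mid d$; with $d\mid(p-1)$ this gives $d=p-1$. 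Now write $a\equiv g^k\pmod{p^s}$ with $k$ unique modulo $\phi(p^s)$. Because $\mathbb{Z}_{p^s}^*$ is cyclic of even order, $a$ is a quadratic residue modulo $p^s$ iff $k$ is even; reducing modulo $p$ gives $a\equiv g^k\pmod p$ with $g$ a primitive root modulo $p$, so $a$ is a quadratic residue modulo $p$ iff $k\bmod(p-1)$ is even. Since $p-1$ is even (and $\phi(p^s)=p^{s-1}(p-1)$ is likewise even), the parity of $k$ is well defined and equals the parity of $k\bmod(p-1)$, so the two conditions coincide. An equivalent packaging: cyclicity of $\mathbb{Z}_{p^s}^*$ gives the Euler-type criterion ``$a$ is a quadratic residue mod $p^s$ $\iff a^{\phi(p^s)/2}\equiv1\pmod{p^s}$'', and $\phi(p^s)/2=p^{s-1}\,\phi(p)/2$ is an odd multiple of $\phi(p)/2$, which reduces everything to Lemma 2.7.

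The only genuinely non-formal step is the descent of a primitive root from $p^s$ to $p$ (equivalently, tracking the odd factor $p^{s-1}$ when comparing the two Euler criteria); the rest is parity bookkeeping. A self-contained alternative for the converse, if one prefers not to invoke primitive roots, is Hensel lifting applied to $f(x)=x^2-a$: from a root $b$ modulo $p$ with $f'(b)=2b\not\equiv0\pmod p$ one lifts successively to roots modulo $p^2,p^3,\dots,p^s$. I would present the primitive-root version since Lemma 2.6 is already in hand. I also note that this lemma is exactly what legitimizes the descriptions of $R_k$ and $N_k$ recorded in assumption~(5) of the introduction: a unit modulo $p^k$ is a quadratic residue precisely when its reduction modulo $p$ lies in $R_1$, and counting ($|R_k|=\phi(p^k)/2=p^{k-1}|R_1|$) shows the indicated parametrization by $x\in R_1$ and $0\leqslant\lambda\leqslant p^{k-1}-1$ is a bijection.
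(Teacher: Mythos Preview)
Your argument is correct. The forward direction is trivial in both treatments. For the converse, however, the paper does \emph{not} use the cyclic structure of $\mathbb{Z}_{p^s}^*$; it proceeds exactly by the Hensel-lifting alternative you mention at the end: given $b$ with $b^2\equiv a\pmod{p^i}$, it finds $k\in\{0,1,\dots,p-1\}$ with $(b+kp^i)^2\equiv a\pmod{p^{i+1}}$ (using only that $2b$ is invertible modulo $p$), and inducts on $i$. Your primitive-root route is a clean global argument that leverages Lemma~2.6 directly and yields the Euler-type criterion modulo $p^s$ along the way; its cost is the auxiliary verification that a primitive root modulo $p^s$ descends to one modulo $p$. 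The paper's inductive lift is more elementary and self-contained (no appeal to the group structure), at the price of a short computation at each step. Since you already sketched the Hensel alternative, you have in effect covered the paper's proof as well.
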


\begin{proof}
Obviously, $a$ is an quadratic residue modula $p$ if $a$ is a quadratic residue modulo $p^s$. If $a$ is an quadratic residue modula $p^i$, there exists an integer $b$ such that $b^2\equiv a \mathrm{mod}  p^i$. We claim that there exists an integer $k\in\{0,1,\cdots,p-1\}$ such that $(b+kp^i)^2\equiv a \mathrm{mod}  p^{i+1}$ for $(2,p)=1$, and hence this proves the result.
\end{proof}

\begin{lemma}
Let $p$ and $q$ are two distinct odd primes. If $x$ runs through reduced residue system modulo $q^t$ and $y$ runs through reduced system modulo $p^s$, then the set $\{p^sx+q^ty\}$ containing $\phi(p^s)\phi(q^t)$ number of integers, forms the reduced residue system modulo $p^sq^t$.\label{lem2.9}
\end{lemma}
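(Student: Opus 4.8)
The plan is to prove Lemma \ref{lem2.9} by first showing the cardinality claim, then showing the set $\{p^sx + q^ty\}$ (as $x$ ranges over a reduced residue system mod $q^t$ and $y$ over a reduced residue system mod $p^s$) consists of $\phi(p^s)\phi(q^t)$ distinct residues modulo $p^sq^t$, and finally that each such residue is a unit modulo $p^sq^t$. Since $\phi(p^sq^t) = \phi(p^s)\phi(q^t)$ by multiplicativity of $\phi$ (using $\gcd(p^s,q^t)=1$), these three facts together force $\{p^sx+q^ty\}$ to be exactly the reduced residue system modulo $p^sq^t$, by a counting argument: a set of $\phi(p^sq^t)$ distinct units modulo $p^sq^t$ must be all of $\mathbb{Z}_{p^sq^t}^*$.

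First I would establish distinctness modulo $p^sq^t$. Suppose $p^sx_1 + q^ty_1 \equiv p^sx_2 + q^ty_2 \pmod{p^sq^t}$. Reducing modulo $p^s$ gives $q^ty_1 \equiv q^ty_2 \pmod{p^s}$; since $\gcd(q^t,p^s)=1$, $q^t$ is invertible mod $p^s$, so $y_1 \equiv y_2 \pmod{p^s}$, hence $y_1 = y_2$ as representatives of the reduced residue system mod $p^s$. Symmetrically, reducing modulo $q^t$ yields $x_1 = x_2$. This gives injectivity of the map $(x,y)\mapsto p^sx+q^ty \bmod p^sq^t$, so the image has exactly $\phi(p^s)\phi(q^t)$ elements.

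Next I would check that each $p^sx+q^ty$ is coprime to $p^sq^t$, i.e. coprime to both $p$ and $q$. Modulo $p$: $p^sx + q^ty \equiv q^ty \pmod p$, and since $\gcd(y,p^s)=1$ forces $\gcd(y,p)=1$ and also $\gcd(q^t,p)=1$, we get $\gcd(p^sx+q^ty, p)=1$. Modulo $q$: $p^sx+q^ty \equiv p^sx \pmod q$, which is coprime to $q$ for the analogous reason. Hence $\gcd(p^sx+q^ty,\,p^sq^t)=1$, so every element of the image lies in $\mathbb{Z}_{p^sq^t}^*$.

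Combining: the image is a subset of $\mathbb{Z}_{p^sq^t}^*$ of size $\phi(p^s)\phi(q^t) = \phi(p^sq^t) = |\mathbb{Z}_{p^sq^t}^*|$, so it equals $\mathbb{Z}_{p^sq^t}^*$, which is the reduced residue system modulo $p^sq^t$. There is no real obstacle here — the argument is essentially the Chinese Remainder Theorem in the guise of the ``$p^sx+q^ty$'' parametrization — so the only care needed is to be explicit that reduction modulo $p^s$ kills the $p^sx$ term and reduction modulo $q^t$ kills the $q^ty$ term, and that invertibility of $q^t$ mod $p^s$ (and of $p^s$ mod $q^t$) is what lets one recover $y$ and $x$ respectively. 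One could alternatively phrase it via the ring isomorphism $\mathbb{Z}_{p^sq^t}\cong\mathbb{Z}_{p^s}\times\mathbb{Z}_{q^t}$, noting that $q^t \equiv (q^t \bmod p^s, 0)$ and $p^s \equiv (0, p^s \bmod q^t)$ generate the two factors, but the direct divisibility computation is cleaner to present.
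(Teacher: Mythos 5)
Your proof is correct and follows essentially the same route as the paper: verify that the $\phi(p^s)\phi(q^t)$ values are pairwise distinct modulo $p^sq^t$, that each is coprime to $p^sq^t$, and conclude by comparing cardinalities with $\phi(p^sq^t)$. The only cosmetic difference is that you recover $x$ and $y$ by reducing modulo $q^t$ and $p^s$ respectively, while the paper reads the same conclusion off the difference equation $p^s(x-x')=-q^t(y-y')$; these are the same argument.
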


\begin{proof}
Obviously, elements in the set $\{p^sx+q^ty\}$ are coprime with both $p$ and $q$. If $p^sx+q^ty=p^sx'+q^ty'$, then $p^s(x-x')=-q^t(y-y')$. For $p$, $q$ are distinct, $x\equiv x' \mathrm{mod}  q^t$ and $y\equiv y' \mathrm{mod}  p^s$. Also, Since there are exactly $\phi(p^s)\phi(q^t)$ elements in the reduced system modulo $p^sq^t$, the result is proved.
\end{proof}

\begin{theorem}
Let $v\equiv1 \mathrm{mod}  p^s$ and $v\equiv g \mathrm{mod}  q^t$, then
\begin{enumerate}[(i)]
\item If $q$ is a quadratic residue modulo $p$, then \[D_0^{p^sq^t}=\left\{q^tx+p^sy:x\in R_s, y\in\mathbb{Z}_{q^t}^*\right\}\]
and
\[D_1^{p^sq^t}=\left\{q^tx+p^sy:x\in N_s, y\in\mathbb{Z}_{q^t}^*\right\}.\]

\item If $q$ is a quadratic nonresidue modulo $p$, then \[
\begin{aligned}
&D_0^{p^sq^t}=\left\{q^tx+p^sy:x\in N_s, y\in\mathbb{Z}_{q^t}^*\right\},\\
&D_1^{p^sq^t}=\left\{q^tx+p^sy:x\in R_s, y\in\mathbb{Z}_{q^t}^*\right\},
\end{aligned}\]
if $2\nmid t$ and
\[
\begin{aligned}
&D_0^{p^sq^t}=\left\{q^tx+p^sy:x\in R_s, y\in\mathbb{Z}_{q^t}^*\right\},\\
&D_1^{p^sq^t}=\left\{q^tx+p^sy:x\in N_s, y\in\mathbb{Z}_{q^t}^*\right\},
\end{aligned}.\]
if $2\mid t$.
\end{enumerate}
\end{theorem}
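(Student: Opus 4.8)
The plan is to translate the multiplicative description of $D_0^{p^sq^t}$ into the additive coordinates supplied by Lemma \ref{lem2.9}, and then identify which coordinate lands in $R_s$ versus $N_s$ by tracking a single quadratic-character computation. First I would fix the CRT isomorphism $\mathbb{Z}_{p^sq^t}^*\cong\mathbb{Z}_{p^s}^*\times\mathbb{Z}_{q^t}^*$ and record the images of the two generators of $D_0^{p^sq^t}$: under this isomorphism $g\mapsto(g_1,g_2)$ and $v\mapsto(1,g)\mapsto(1,g_2)$. Hence $g^{2i}\mapsto(g_1^{2i},g_2^{2i})$ and $g^{2i}v\mapsto(g_1^{2i},g_2^{2i+1})$, so the first coordinate of every element of $D_0^{p^sq^t}$ is an even power of $g_1$, i.e. a quadratic residue modulo $p^s$ (using Lemma \ref{lem2.7} together with the preceding lemma that quadratic residues mod $p^s$ are exactly those mod $p$), while the second coordinate ranges over \emph{all} of $\mathbb{Z}_{q^t}^*$ — the even powers give the squares and the "$v$-shifted" family gives the odd powers. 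Via Lemma \ref{lem2.9}, writing an element as $q^tx+p^sy$ with $x$ reduced mod $p^s$ and $y$ reduced mod $q^t$, the residue mod $p^s$ is $q^tx$ and the residue mod $q^t$ is $p^sy$. So $D_0^{p^sq^t}=\{q^tx+p^sy: q^tx\in R_s\text{ (mod }p^s),\ y\in\mathbb{Z}_{q^t}^*\}$, and the whole problem reduces to deciding whether multiplication by $q^t$ is a bijection $R_s\to R_s$ or $R_s\to N_s$ on $\mathbb{Z}_{p^s}^*$.

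That last point is exactly where the case split in the statement comes from, and it is the only genuinely delicate step. Multiplication by $q^t$ preserves $R_s$ iff $q^t$ is a quadratic residue mod $p^s$, equivalently (by the lemma reducing QR mod $p^s$ to QR mod $p$) iff $q^t$ is a QR mod $p$, i.e. iff $\left(\tfrac{q}{p}\right)^t=1$. If $q\in R_1$ this always holds, giving case (i). If $q\in N_1$, then $\left(\tfrac{q}{p}\right)^t=(-1)^t$, so $q^t\in R_1$ precisely when $t$ is even and $q^t\in N_1$ when $t$ is odd; correspondingly $x\mapsto q^tx$ sends $R_s$ to $R_s$ when $2\mid t$ and swaps $R_s$ with $N_s$ when $2\nmid t$. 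Substituting $x':=q^tx$ (a reindexing of the reduced residue system mod $p^s$, which does not change the set $\{q^tx+p^sy\}$ once we also allow $y$ to range over all of $\mathbb{Z}_{q^t}^*$) converts $\{q^tx + p^sy : q^tx\in R_s\}$ into $\{q^tx'+p^sy : x'\in R_s\}$ in case (i) and $2\mid t$ subcase, and into $\{q^tx'+p^sy:x'\in N_s\}$ in the $2\nmid t$ subcase — which is the asserted description of $D_0^{p^sq^t}$ in each case.

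Finally, $D_1^{p^sq^t}=gD_0^{p^sq^t}$: in CRT coordinates multiplication by $g=(g_1,g_2)$ sends $(\text{even power of }g_1,\ \mathbb{Z}_{q^t}^*)$ to $(\text{odd power of }g_1,\ \mathbb{Z}_{q^t}^*)$, i.e. it flips the first coordinate from $R_s$ to $N_s$ while leaving the second coordinate covering all units mod $q^t$. So $D_1^{p^sq^t}$ has the same additive description as $D_0^{p^sq^t}$ but with $R_s$ and $N_s$ interchanged, which is precisely what is claimed in every case. I expect the main obstacle to be purely bookkeeping: keeping straight that the $p^s$-part of $q^tx+p^sy$ is $q^tx$ (not $x$) and hence that the honest invariant is $q^tx\in R_s$, and then correctly tracking the sign $(-1)^t$ through the Legendre symbol when $q\in N_1$; the structure $D_1=gD_0$ handles the second half for free.
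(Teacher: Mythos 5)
Your proof is correct and follows essentially the same route as the paper: both detect membership in $D_0^{p^sq^t}$ versus $D_1^{p^sq^t}$ by the quadratic character of the reduction modulo $p^s$, use Lemma \ref{lem2.9} to read off that the reduction of $q^tx+p^sy$ modulo $p^s$ is $q^tx$, and let the value of $\left(\tfrac{q}{p}\right)^{t}$ drive the case split. The one step worth making explicit (a gap the paper's own proof shares) is that $D_0^{p^sq^t}$ in CRT coordinates is \emph{all} of $R_s\times\mathbb{Z}_{q^t}^*$ rather than merely contained in it; this follows either from $\gcd(\phi(p^s)/2,\phi(q^t)/2)=1$, so that the pairs $(g_1^{2i},g_2^{2i})$ and $(g_1^{2i},g_2^{2i+1})$ exhaust all admissible combinations, or from the cardinality count $\#D_0^{p^sq^t}=\tfrac{1}{2}\phi(p^s)\phi(q^t)=\#\bigl(R_s\times\mathbb{Z}_{q^t}^*\bigr)$.
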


\begin{proof}
We only prove the first result, the second result can be proved similarly. By the definition of $D_0^{p^sq^t}$, elements in $D_0^{p^sq^t}$ are all quadratic residue modulo $p^s$ and $D_0^{p^sq^t}( \mathrm{mod}  p^s)$ contains $\phi(q^t)$ copies of $R_s$. Similarly, $D_1^{p^sq^t}( \mathrm{mod}  p^s)$ contains $\phi(q^t)$ copies of $N_s$.

Let $\omega_r=\{q^tx+p^sy:x\in R_s, y\in\mathbb{Z}_{q^t}^*\}$ and $\omega_n=\{q^tx+p^sy:x\in N_s, y\in\mathbb{Z}_{q^t}^*\}$, then by Lemma \ref{lem2.9}, $\omega_r\sqcup \omega_g=\mathbb{Z}_{p^sq^t}^*$. If $q$ is a quadratic residue modulo $p$, then $\omega_r( \mathrm{mod}  p^s)$ contains $\phi(q^t)$ copies of $R_s$ and $\omega_n( \mathrm{mod}  p^s)$ contains $\phi(q^t)$ copies of $N_s$. So $\omega_r=D_0^{p^sq^t}$ and $\omega_n=D_1^{p^sq^t}$.
\end{proof}

\begin{lemma}
$C_1=D_0^{p^sq^t}$ and $C_g=D_1^{p^sq^t}$.\label{lem2.8}
\end{lemma}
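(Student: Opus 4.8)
The plan is to identify the cyclotomic coset $C_1$ (and $C_g$) with the cyclotomic class $D_0^{p^sq^t}$ (resp. $D_1^{p^sq^t}$) by a cardinality-plus-containment argument, using the structural descriptions already established. First I would recall from Theorem~\ref{thm2.1} (the case $i=0$, $j=0$) that $C_1 = \{l^m \bmod p^sq^t : 0 \le m \le \phi(p^sq^t)/2 - 1\}$, so that $\#C_1 = \phi(p^sq^t)/2$; and by construction $\#D_0^{p^sq^t} = 2\cdot \phi(p^sq^t)/4 = \phi(p^sq^t)/2$ as well. Hence it suffices to show $C_1 \subseteq D_0^{p^sq^t}$, and the equality $C_g = D_1^{p^sq^t}$ will then follow by multiplying through by $g$ (since $C_g = g C_1$ by the definition of cyclotomic cosets, and $D_1^{p^sq^t} = g D_0^{p^sq^t}$ by definition).

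For the containment $C_1 \subseteq D_0^{p^sq^t}$, the key point is that $D_0^{p^sq^t}$ is exactly the set of $a \in \mathbb{Z}_{p^sq^t}^*$ that are quadratic residues modulo $p^s$ (this is precisely what the previous theorem's description says, via the identification $D_0^{p^sq^t} = \{q^t x + p^s y : x \in R_s,\ y \in \mathbb{Z}_{q^t}^*\}$ when $q \in R_s$, together with Lemma on QRs mod $p^s$ versus mod $p$; in the nonresidue-$q$ case one tracks the parity of $t$ the same way). So I would show every power $l^m$ is a quadratic residue modulo $p^s$. This is immediate from Lemma~\ref{lem2.7} (and the lemma lifting QRs from $p$ to $p^s$): since $\mathrm{ord}_{p^s}(l) = \phi(p^s)/2$, we have $l^{\phi(p^s)/2} \equiv 1 \bmod p^s$, so $l$ itself is a quadratic residue modulo $p^s$, hence so is every power $l^m$. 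Therefore every element of $C_1$ lies in $D_0^{p^sq^t}$, and by the cardinality count $C_1 = D_0^{p^sq^t}$.

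The one subtlety — and the step I expect to require the most care — is that the description of $D_0^{p^sq^t}$ in the preceding theorem splits into cases according to whether $q$ is a quadratic residue modulo $p$ and, when it is not, according to the parity of $t$; in two of these three subcases $D_0^{p^sq^t}$ is the residue set and in one it is the nonresidue set. I would therefore phrase the argument uniformly: let $\epsilon \in \{R_s, N_s\}$ be whichever set of residues/nonresidues occurs in the description of $D_0^{p^sq^t}$ for the case at hand; observe that in every case $D_0^{p^sq^t}$ consists of the $a \in \mathbb{Z}_{p^sq^t}^*$ whose reduction mod $p^s$ lies in $\epsilon$; note that $1 \in R_s$ always, so $1 \in D_0^{p^sq^t}$ forces $\epsilon = R_s$ in all cases — which resolves the apparent case split and shows $D_0^{p^sq^t}$ is always the "QR mod $p^s$" set. (Alternatively, one simply checks $1 \in D_0^{p^sq^t}$ directly from $D_0^{p^sq^t} = \{g^{2i}\} \cup \{g^{2i}v\}$ by taking $i=0$.) Once this is pinned down, the containment argument above goes through without further obstacle, and the proof concludes.
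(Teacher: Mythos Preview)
Your proposal is correct and follows essentially the same route as the paper: both arguments identify $C_1$ and $D_0^{p^sq^t}$ with the set of units in $\mathbb{Z}_{p^sq^t}$ that reduce to quadratic residues modulo $p^s$, using that $l$ is a quadratic residue mod $p$ (from $\mathrm{ord}_{p^s}(l)=\phi(p^s)/2$ and Lemma~\ref{lem2.7}) and then matching cardinalities. One small imprecision to fix: when you write ``$D_0^{p^sq^t}$ consists of the $a$ whose reduction mod $p^s$ lies in $\epsilon$'', the reduction of $q^tx+p^sy$ is $q^tx$, not $x$, so the set of reductions is $q^t\epsilon$ rather than $\epsilon$; your alternative check $1=g^0\in D_0^{p^sq^t}$ is the clean way to see that this set is $R_s$ in every case, and with that adjustment the containment argument goes through as you describe.
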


\begin{proof}
By Lemma \ref{lem2.7} $l$ is a quadratic residue modulo $p$. Since $\mathrm{ord}_{p^sq^t}=\phi(p^sq^t)/2$, we can see that $C_1( \mathrm{mod}  p^s)$ contains $\phi(q^t)$ copies of $R_s$ and $C_g( \mathrm{mod}  p^s)$ contains $\phi(q^t)$ copies of $N_s$. Also, $C_1\sqcup C_g=\mathbb{Z}_{p^sq^t}^*$, so we have $C_1=D_0^{p^sq^t}$ and $C_g=D_1^{p^sq^t}$.
\end{proof}

\begin{theorem}
The $l$-cyclotomic coset obtained in Theorem \ref{thm2.1} can also be represented as follows:

\begin{enumerate}[(i)]
\item If $q$ is a quadratic residue modulo $p$, then\[
\begin{aligned}
&C_0=\{0\};\\
&C_{p^iq^j}=\left\{p^iq^tx+p^sq^jy:x\in R_{s-i},q\in \mathbb{Z}_{q^{t-j}}^*\right\},\quad 0\leqslant i\leqslant s-1, 0\leqslant j\leqslant t-1;\\
&C_{p^iq^t}=\left\{p^iq^tx:x\in R_{s-i}\right\},\quad0\leqslant i\leqslant s-1;\\
&C_{p^iq^jg}=\left\{p^iq^tx+p^sq^jy:x\in N_{s-i},q\in \mathbb{Z}_{q^{t-j}}^*\right\},\quad 0\leqslant i\leqslant s-1, 0\leqslant j\leqslant t-1;\\
&C_{p^iq^tg}=\left\{p^iq^tx:x\in N_{s-i}\right\},\quad0\leqslant i\leqslant s-1;\\
&C_{p^sq^j}=\left\{p^sq^jy:y\in\mathbb{Z}_{q^{t-j}}^*\right\},\quad 0\leqslant j\leqslant t-1.
\end{aligned}\]
\item If $q$ is a quadratic nonresidue modulo $p$, then\[
\begin{aligned}
&C_0=\{0\};\\
&C_{p^iq^j}=\left\{p^iq^tx+p^sq^jy:x\in N_{s-i},q\in \mathbb{Z}_{q^{t-j}}^*\right\},\quad 0\leqslant i\leqslant s-1, 0\leqslant j\leqslant t-1,2\nmid t-j;\\
&C_{p^iq^j}=\left\{p^iq^tx+p^sq^jy:x\in R_{s-i},q\in \mathbb{Z}_{q^{t-j}}^*\right\},\quad 0\leqslant i\leqslant s-1, 0\leqslant j\leqslant t-1,2\mid t-j;\\
&C_{p^iq^t}=\left\{p^iq^tx:x\in R_{s-i}\right\},\quad0\leqslant i\leqslant s-1;\\
&C_{p^iq^jg}=\left\{p^iq^tx+p^sq^jy:x\in R_{s-i},q\in \mathbb{Z}_{q^{t-j}}^*\right\}, \quad0\leqslant i\leqslant s-1, 0\leqslant j\leqslant t-1,2\nmid t-j;\\
&C_{p^iq^jg}=\left\{p^iq^tx+p^sq^jy:x\in N_{s-i},q\in \mathbb{Z}_{q^{t-j}}^*\right\},\quad0\leqslant i\leqslant s-1, 0\leqslant j\leqslant t-1, 2\mid t-j;\\
&C_{p^iq^tg}=\left\{p^iq^tx:x\in N_{s-i}\right\},\quad0\leqslant i\leqslant s-1;\\
&C_{p^sq^j}=\left\{p^sq^jy:y\in\mathbb{Z}_{q^{t-j}}^*\right\},\quad 0\leqslant j\leqslant t-1..
\end{aligned}\]
\end{enumerate}\label{2.12}
\end{theorem}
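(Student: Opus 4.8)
The plan is to translate the multiplicative descriptions of the cosets from Theorem \ref{thm2.1} into additive descriptions, exactly as was done for the base cosets $C_1$ and $C_g$ in Lemma \ref{lem2.8} and the theorem preceding it. The key observation is that a coset $C_{p^iq^j}$ (with $0\leqslant i\leqslant s-1$, $0\leqslant j\leqslant t-1$) is obtained from the cyclotomic setup modulo $p^{s-i}q^{t-j}$ after scaling by $p^iq^j$: indeed $C_{p^iq^j}=p^iq^j\cdot\{l^m:0\leqslant m\leqslant \phi(p^{s-i}q^{t-j})/2-1\}$, and by Lemma 2.3 and Lemma 2.2 the hypotheses $\mathrm{ord}_{p^{s-i}}(l)=\phi(p^{s-i})/2$, $\mathrm{ord}_{q^{t-j}}(l)=\phi(q^{t-j})$ and $\gcd(\phi(p^{s-i}),\phi(q^{t-j}))=2$ still hold. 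Hence the whole apparatus of Section \ref{sec2} applies verbatim at the reduced level $p^{s-i}q^{t-j}$: the analogue of Lemma \ref{lem2.8} gives that $\{l^m\bmod p^{s-i}q^{t-j}\}$ equals $D_0^{p^{s-i}q^{t-j}}$, which by the theorem before Lemma \ref{lem2.8} equals $\{q^{t-j}x+p^{s-i}y:x\in R_{s-i},\,y\in\mathbb{Z}_{q^{t-j}}^*\}$ when $q$ is a QR mod $p$ (and the other cases accordingly). Multiplying through by $p^iq^j$ and reducing modulo $p^sq^t$ turns $q^{t-j}x+p^{s-i}y$ into $p^iq^tx+p^sq^jy$, which is precisely the claimed form.

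The steps in order: (1) fix $i,j$ and verify that the reduced modulus $p^{s-i}q^{t-j}$ inherits all standing hypotheses, so that the cyclotomic classes $D_0^{p^{s-i}q^{t-j}},D_1^{p^{s-i}q^{t-j}}$ are defined and the analogue of Lemma \ref{lem2.8} holds there; (2) record that, at the reduced level, $q$ being a QR or NQR mod $p$ is unchanged, but the parity condition that appears in part (ii) of the theorem before Lemma \ref{lem2.8} now reads ``$2\mid t-j$'' rather than ``$2\mid t$'' — this is the source of the split on $2\mid t-j$ versus $2\nmid t-j$ in case (ii); (3) multiply the additive description of $D_0^{p^{s-i}q^{t-j}}$ (resp.\ $D_1$) by $p^iq^j$, checking that $p^iq^j(q^{t-j}x+p^{s-i}y)=p^iq^tx+p^sq^jy$ and that this map is a bijection onto $C_{p^iq^j}$ (resp.\ $C_{p^iq^jg}$) — injectivity follows from Lemma \ref{lem2.9} applied at level $p^{s-i}q^{t-j}$ together with cancellation of $p^iq^j$; (4) handle the boundary cases $j=t$, where $q^{t-j}=1$ and the ``$y$'' factor disappears, giving $C_{p^iq^t}=\{p^iq^tx:x\in R_{s-i}\}$ and $C_{p^iq^tg}=\{p^iq^tx:x\in N_{s-i}\}$ directly from Lemma 2.3 and Lemma \ref{lem2.7}; (5) handle $i=s$, where there is no quadratic-residue structure left and $C_{p^sq^j}=p^sq^j\{l^m\}=\{p^sq^jy:y\in\mathbb{Z}_{q^{t-j}}^*\}$ since $l$ is a primitive root mod $q^{t-j}$ by Lemma 2.2; (6) assemble the cases and note that the cardinalities match those in Theorem \ref{thm2.1}, so nothing is lost or double-counted.

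I expect the main obstacle to be bookkeeping rather than depth: one must be careful that the residue/nonresidue assignment in case (ii) flips correctly with the parity of $t-j$, since the ``$v$'' used to build $D_0^{p^{s-i}q^{t-j}}$ satisfies $v\equiv g\bmod q^{t-j}$ and $g^{t-j}$ is a QR mod $p$ iff $t-j$ is even (using that $g$, being a primitive root mod $p$, is a nonresidue). Making sure the labels $C_{p^iq^j}$ versus $C_{p^iq^jg}$ get attached to $R_{s-i}$ versus $N_{s-i}$ consistently across all four sub-cases of (ii) — two parities times residue/nonresidue of $q$ — is the only place an error could creep in; everything else is the scaling argument of Lemma \ref{lem2.8} applied uniformly. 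Since the problem statement only asserts the two listed cases ($q$ a QR mod $p$ and $q$ an NQR mod $p$), and within case (ii) further splits on $2\mid t-j$, the proof is complete once these sub-cases are checked, and by symmetry it suffices to prove one representative sub-case in detail and remark that the others follow ``similarly,'' as the paper does elsewhere.
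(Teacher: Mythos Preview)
Your proposal is correct and follows exactly the route the paper takes: the paper's entire proof is the one-line observation that $C_{p^iq^j}=p^iq^jD_0^{p^{s-i}q^{t-j}}$ and $C_{p^iq^jg}=p^iq^jD_1^{p^{s-i}q^{t-j}}$, after which one invokes Lemma~\ref{lem2.8} (and implicitly the preceding theorem giving the additive form of $D_0,D_1$) at the reduced modulus. Your steps (1)--(6) simply unpack this in detail, and your remark that the parity split in case~(ii) arises because the exponent on $q$ at the reduced level is $t-j$ rather than $t$ is precisely the bookkeeping the paper leaves to the reader.
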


\begin{proof}
Note that $C_{p^iq^j}=p^iq^jD_0^{p^{s-i}q^{t-j}}$ and $C_{p^iq^jg}=p^iq^jD_1^{p^{s-i}q^{t-j}}$, then it can be proved easily using Lemma \ref{lem2.8}
\end{proof}

\section{Cyclotomic number of order 2}
\label{sec3}
\begin{lemma}
Let $p$ be a prime of the form $4k-1$, then

\begin{enumerate}[(i)]
\item For any $r\in R_s$, we have\[\#(r+R_s)\cap R_s=\#(r+N_s)\cap R_s=\#(r+N_s)\cap N_s=\frac{p-3}{4}p^{s-1},\]and\[\#(r+R_s)\cap N_s=\frac{p+1}{4}p^{s-1}.\]
\item For any $n\in N_s$, we have\[\#(n+N_s)\cap N_s=\#(n+R_s)\cap R_s=\#(n+R_s)\cap N_s=\frac{p-3}{4}p^{s-1},\]and\[\#(n+N_s)\cap R_s=\frac{p+1}{4}p^{s-1}.\]
\end{enumerate}\label{3.1}
\end{lemma}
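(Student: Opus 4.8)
The plan is to reduce everything to the prime $p$ and then lift to $p^s$ using the explicit descriptions $R_s=\{x+p\lambda: x\in R_1,\ 0\le\lambda\le p^{s-1}-1\}$ and $N_s=\{x+p\lambda: x\in N_1,\ 0\le\lambda\le p^{s-1}-1\}$ given in the terminology. First I would handle the case $s=1$: for $p\equiv 3\pmod 4$, $-1$ is a quadratic nonresidue, so $R_1$ and $N_1$ are interchanged by negation, and the four cyclotomic numbers of order $2$ are the classical ones $(0,0)=(1,1)=(1,0)=\frac{p-3}{4}$ and $(0,1)=\frac{p+1}{4}$ (see e.g.\ the standard tables for $p\equiv 3\pmod 4$). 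Concretely, for fixed $r\in R_1$, the quantities $\#(r+R_1)\cap R_1$, $\#(r+N_1)\cap R_1$, $\#(r+N_1)\cap N_1$ all equal $\frac{p-3}{4}$ and $\#(r+R_1)\cap N_1=\frac{p+1}{4}$; the statements for $n\in N_1$ follow by multiplying through by a fixed nonresidue (which swaps $R_1\leftrightarrow N_1$), and one checks the $\frac{p+1}{4}$ lands on the claimed intersection. Since $p\equiv 3\pmod 4$ these residue-class counts are all integers, so no divisibility issue arises.

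Next I would lift to general $s$. Fix $r\in R_s$ and write $r=a+p\mu$ with $a\in R_1$. For $b\in R_1$ and $0\le\nu\le p^{s-1}-1$, the element $b+p\nu$ lies in $r+R_s$ modulo $p^s$ iff there is some $\lambda$ with $b+p\nu\equiv a+p\mu+ x+p\lambda\pmod{p^s}$ for some $x\in R_1$; reducing mod $p$ this forces $b\equiv a+x\pmod p$, i.e.\ $x$ is determined mod $p$ (it must be the residue $b-a$, which is in $R_1$ exactly when $b\in a+R_1$), and then for that fixed residue class $x$ runs over a full set of $p^{s-1}$ lifts while $\lambda$ is then free, so the congruence in the high-order part is always solvable. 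The upshot is that $\#(r+R_s)\cap R_s = \big(\#(a+R_1)\cap R_1\big)\cdot p^{s-1}$, and similarly for the other three intersections with $R_1$ replaced by $N_1$ in the appropriate slots. Combining with the $s=1$ computation gives exactly the claimed values $\frac{p-3}{4}p^{s-1}$ and $\frac{p+1}{4}p^{s-1}$, and part (ii) follows from part (i) by the negation/nonresidue-scaling symmetry, since $p\equiv 3\pmod 4$.

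The main obstacle is the bookkeeping in the lifting step: one must be careful that the map $(b+p\nu)\mapsto (b\bmod p)$ sends $r+R_s$ onto $(a+R_1)$ with each fiber of size exactly $p^{s-1}$, and that the fiber-size count does not secretly depend on whether $b-a\equiv 0\pmod p$ (it cannot, since $0\notin R_1\cup N_1$ and $r\not\equiv 0$, but this edge case should be noted). Equivalently, one is using that the reduction $\mathbb{Z}_{p^s}^*\to\mathbb{Z}_p^*$ is a group homomorphism with kernel of order $p^{s-1}$, so that preimages of the classical cyclotomic sets have the classical sizes scaled by $p^{s-1}$; phrasing it this way makes the argument clean. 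Everything else is the classical order-$2$ cyclotomic number table for $p\equiv 3\pmod 4$, which I would simply cite.
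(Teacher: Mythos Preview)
Your proposal is correct and follows a genuinely different route from the paper. The paper works directly in $\mathbb{Z}_{p^s}$: using that $-1$ is a nonresidue mod $p$, it determines how much of $r+R_s$ and $r+N_s$ meets $p\mathbb{Z}_{p^s}$ and how much of $R_s$ and $N_s$ meets $r+p\mathbb{Z}_{p^s}$, obtaining four linear relations among the four intersection sizes and then asserting the stated values. You instead settle the case $s=1$ by quoting the classical order-$2$ cyclotomic numbers for $p\equiv 3\pmod 4$, and then lift, using that $R_s$ and $N_s$ are precisely the preimages of $R_1$ and $N_1$ under the surjection $\mathbb{Z}_{p^s}^*\to\mathbb{Z}_p^*$, so that each count at level $p^s$ equals $p^{s-1}$ times the corresponding count at level $p$.

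What each approach buys: the paper's argument is elementary and self-contained, but as written its four displayed equations are linearly dependent (the sum of the first two equals the sum of the last two), so they determine the four unknowns only up to a one-parameter family; an extra symmetry---for instance $\#(r+N_s)\cap R_s=\#(r+N_s)\cap N_s$, obtainable from the involution $x\mapsto r^2x^{-1}$ on $N_s$---is tacitly needed to close the computation. Your approach sidesteps this by importing the complete $s=1$ answer, and the lifting step, phrased as you do via a surjective homomorphism with kernel of size $p^{s-1}$, is transparent; the edge case you flag ($0\notin R_1\cup N_1$) is exactly the right thing to check.
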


\begin{proof}
We only prove the first result, the second one can be proved similarly. By Lemma \ref{lem2.7}, -1 is a quadratic nonresidue modulo $p$. so there is no element in $(r+R_s)\cap p\mathbb{Z}_{p^s}$, then\[\#(r+R_s)\cap R_s+(r+R_s)\cap N_s=\#(r+R_s)\cap\mathbb{Z}_{p^s}-\#(r+R_s)\cap p\mathbb{Z}_{p^s}=\frac{p-1}{2}p^{s-1}.\]
Since $-r$ is nonresidue modulo $p$, there are $p^{s-1}$ elements in $(r+N_s)\cap p\mathbb{Z}_{p^s}$, then
\[\#(r+N_s)\cap R_s+(r+N_s)\cap N_s=\#(r+N_s)\cap\mathbb{Z}_{p^s}-\#(r+N_s)\cap p\mathbb{Z}_{p^s}=\frac{p-3}{2}p^{s-1}.\]
Also, since $r+p\mathbb{Z}_{p^s}\subset R_s$,\[\#(r+R_s)\cap R_s+\#(r+N_s)\cap R_s=\# R_s-\#(r+p\mathbb{Z}_{p^s})\cap R_{s}=\frac{p-3}{2}p^{s-1}\]and\[\#(r+R_s)\cap N_s+\#(r+N_s)\cap N_s=\# N_s-\#(r+p\mathbb{Z}_{p^s})\cap N_{s}=\frac{p-1}{2}p^{s-1}.\]
By the four equations above, we have
\[\#(r+R_s)\cap R_s=\#(r+N_s)\cap R_s=\#(r+N_s)\cap N_s=\frac{p-3}{4}p^{s-1},\]and\[\#(r+R_s)\cap N_s=\frac{p+1}{4}p^{s-1}.\]
\end{proof}

For simplicity, we denote $C_{ij}=C_{p^iq^j}$ and $C_{ij}^*=C_{p^iq^jg}$. In particular, $C_{ij}=C_{ij}^*$ if $i=s$ and $C_{st}=C_0$. Then by Theorem \ref{2.12} and Lemma \ref{3.1} , we can easily obtain the results in Lemma \ref{3.2} $\sim$ Lemma \ref{3.17}.

\begin{lemma}
Let $a\in C_{ij}$ with $0\leqslant i\leqslant s-1$ and $0\leqslant j\leqslant t-1$, then
\begin{enumerate}[(i)]
\item $\#(a+C_{ij})\cap C_{ij}=\frac{p-3}{4}p^{s-i-1}(q-2)q^{t-j-1}$;
\item $\#(a+C_{ij})\cap C_{ij}^*=\frac{p+1}{4}p^{s-i-1}(q-2)q^{t-j-1}$;
\item If $q$ is a quadratic residue modulo $p$, then for $j<m\leqslant t$,\[\#(a+C_{ij})\cap C_{im}=\frac{p-3}{4}p^{s-i-1}\phi(q^{t-m})\]
and\[\#(a+C_{ij})\cap C_{im}^*=\frac{p+1}{4}p^{s-i-1}\phi(q^{t-m}).\]
\item If $q$ is a quadratic nonresidue modulo $p$, then for $j<m\leqslant t$,\[\#(a+C_{ij})\cap C_{im}=\biggl\{\begin{array}{l}\frac{p+1}{4}p^{s-i-1}\phi(q^{t-m}),\quad 2\nmid m-j,\\\frac{p-3}{4}p^{s-i-1}\phi(q^{t-m}),\quad 2\mid m-j,\end{array}\]
and
\[\#(a+C_{ij})\cap C_{im}^*=\biggl\{\begin{array}{l}\frac{p-3}{4}p^{s-i-1}\phi(q^{t-k}),\quad 2\nmid m-j,\\\frac{p+1}{4}p^{s-i-1}\phi(q^{t-m}),\quad 2\mid m-j.\end{array}\]
\end{enumerate}\label{3.2}
\end{lemma}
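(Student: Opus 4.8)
The plan is to reduce the statement about $l$-cyclotomic cosets to the counting statement of Lemma~\ref{3.1} via the explicit additive description of the cosets given in Theorem~\ref{2.12}. First I would fix $a\in C_{ij}$ with $0\leqslant i\leqslant s-1$, $0\leqslant j\leqslant t-1$. Using $C_{ij}=p^iq^jD_0^{p^{s-i}q^{t-j}}$ and $C_{ij}^*=p^iq^jD_1^{p^{s-i}q^{t-j}}$ (as noted in the proof of Theorem~\ref{2.12}), I would first divide everything through by the common factor $p^iq^j$, so that without loss of generality we may assume $i=j=0$, i.e.\ $a\in C_1=D_0^{p^sq^t}$ and we are counting inside $\mathbb{Z}_{p^sq^t}$ with $p\to p^{s}$, $q\to q^{t}$ rescaled (the parameters $s-i$, $t-j$ carry through). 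Then I would invoke Theorem~\ref{2.12} to write each relevant coset concretely as a set of the form $\{q^tx+p^sy: x\in X, y\in\mathbb{Z}_{q^{t}}^*\}$ (or $\{q^tx : x\in X\}$ for the $m=t$ case), where $X$ is one of $R_s, N_s$ depending on the quadratic character of $q$ mod $p$.

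Next, I would write $a=q^t\alpha+p^s\beta$ with $\alpha\in R_s$ (after the rescaling; by Lemma~\ref{lem2.8} and Theorem~2.11 this is the shape of an element of $C_1$) and $\beta\in\mathbb{Z}_{q^t}^*$. The key observation, coming from Lemma~\ref{lem2.9}, is that the map $(x,y)\mapsto q^tx+p^sy$ is a bijection between $R_s\times\mathbb{Z}_{q^t}^*\ \sqcup\ N_s\times\mathbb{Z}_{q^t}^*$ and $\mathbb{Z}_{p^sq^t}^*$, and that this bijection is \emph{compatible with translation by $a$}: adding $a=q^t\alpha+p^s\beta$ corresponds to adding $\alpha$ in the first coordinate (mod $p^s$) and $\beta$ in the second (mod $q^t$). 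Hence an intersection count like $\#(a+C_{ij})\cap C_{ij}$ factors as a product of two independent counts: a count of the form $\#(\alpha+R_s)\cap R_s$ (handled by Lemma~\ref{3.1}), times a count in $\mathbb{Z}_{q^t}^*$ of the form $\#(\beta+\mathbb{Z}_{q^t}^*)\cap\mathbb{Z}_{q^t}^*$. The latter is elementary: $\#(\beta+\mathbb{Z}_{q^t}^*)\cap\mathbb{Z}_{q^t}^*=\phi(q^t)-\#\{\text{units }u: u-\beta\notin\mathbb{Z}_{q^t}^*\}$, and $u-\beta$ fails to be a unit exactly when $u\equiv\beta\pmod q$, giving $q^{t-1}$ bad values, so the count is $\phi(q^t)-q^{t-1}=(q-2)q^{t-1}$. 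For the cross terms $\#(a+C_{ij})\cap C_{im}$ with $m>j$, the second coordinate ranges over $\mathbb{Z}_{q^{t-m}}^*$ multiplied by $q^m$, so after dividing by $q^j$ the relevant second-coordinate count degenerates: one simply needs that $\beta+\mathbb{Z}_{q^t}^* \pmod{q^{t-m}}$, suitably interpreted, always contributes a full $\phi(q^{t-m})$ (since $m>j$ forces the $q$-adic valuations to differ, the reduction mod $q^{t-m}$ becomes surjective with uniform fibers), leaving only the $R_s$/$N_s$ factor of Lemma~\ref{3.1} to determine the answer; the parity conditions $2\mid m-j$ vs.\ $2\nmid m-j$ enter precisely through the sign bookkeeping in Theorem~\ref{2.12}(ii) tracking whether $C_{im}$ is realized by $R_{s-i}$ or $N_{s-i}$.

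Assembling: for (i) and (ii) the product is $\left(\tfrac{p-3}{4}p^{s-i-1}\right)\cdot(q-2)q^{t-j-1}$ resp.\ $\left(\tfrac{p+1}{4}p^{s-i-1}\right)\cdot(q-2)q^{t-j-1}$, matching the claim; for (iii) and (iv) the product is $\left(\tfrac{p\mp3 \text{ or } p+1}{4}p^{s-i-1}\right)\cdot\phi(q^{t-m})$ with the case split dictated by whether $x$ ranges over $R_{s-i}$ or $N_{s-i}$ in the description of $C_{im}$, $C_{im}^*$, i.e.\ by the quadratic character of $q$ mod $p$ and the parity of $m-j$. The main obstacle I anticipate is \emph{not} Lemma~\ref{3.1} (which is already in hand) but getting the second-coordinate ($q$-part) bookkeeping exactly right in the cross-term cases $m>j$: one must be careful that after factoring out $q^j$ the element $a/q^j$ is a unit mod $q^{t-j}$ but has $q$-adic valuation $0$, while elements of $C_{im}/q^j$ have valuation $m-j>0$, so the translate $a+C_{im}$ meets $C_{im}$ in a set whose $q$-part count is a clean $\phi(q^{t-m})$ with no defect — establishing this uniformity rigorously (rather than by analogy) is the delicate point, and it is where the hypothesis $\gcd(\phi(p^s),\phi(q^t))=2$ together with $l$ being a primitive root mod $q^t$ is implicitly used to guarantee the cosets have the stated clean product form.
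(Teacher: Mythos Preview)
Your approach is correct and is exactly the paper's: the paper simply states that Lemmas~\ref{3.2}--\ref{3.17} follow from Theorem~\ref{2.12} and Lemma~\ref{3.1}, which is precisely your strategy of using the additive CRT description of the cosets to factor each intersection count into a $p$-part handled by Lemma~\ref{3.1} times an elementary $q$-part. Your closing worry is unfounded---the hypotheses $\gcd(\phi(p^s),\phi(q^t))=2$ and $\mathrm{ord}_{q^t}(l)=\phi(q^t)$ are already absorbed into Theorem~\ref{2.12}, and once that is in hand the $q$-part for $m>j$ is immediate: one needs $y\in\mathbb{Z}_{q^{t-j}}^*$ with $\beta+y\in q^{m-j}\mathbb{Z}_{q^{t-m}}^*$, i.e.\ $y=-\beta+q^{m-j}z$ for $z\in\mathbb{Z}_{q^{t-m}}^*$, giving exactly $\phi(q^{t-m})$ solutions.
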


\begin{lemma}
Let $a^*\in C_{ij}^*$ with $0\leqslant i\leqslant s-1$ and $0\leqslant j\leqslant t-1$, then
\begin{enumerate}[(i)]
\item $\#(a^*+C_{ij}^*)\cap C_{ij}=\frac{p+1}{4}p^{s-i-1}(q-2)q^{t-j-1}$;
\item $\#(a^*+C_{ij}^*)\cap C_{ij}^*=\frac{p-3}{4}p^{s-i-1}(q-2)q^{t-j-1}$;
\item If $q$ is a quadratic residue modulo $p$, then for $j<m\leqslant t$,\[\#(a^*+C_{ij}^*)\cap C_{im}=\frac{p+1}{4}p^{s-i-1}\phi(q^{t-m})\]
and\[\#(a^*+C_{ij}^*)\cap C_{im}^*=\frac{p-3}{4}p^{s-i-1}\phi(q^{t-m}).\]

\item If $q$ is a quadratic nonresidue modulo $p$, then for $j<m\leqslant t$,\[\#(a^*+C_{ij}^*)\cap C_{im}=\biggl\{\begin{array}{l}\frac{p-3}{4}p^{s-i-1}\phi(q^{t-m}),\quad 2\nmid m-j,\\\frac{p+1}{4}p^{s-i-1}\phi(q^{t-m}),\quad 2\mid m-j,\end{array}\]
and
\[\#(a^*+C_{ij}^*)\cap C_{im}^*=\biggl\{\begin{array}{l}\frac{p+1}{4}p^{s-i-1}\phi(q^{t-k}),\quad 2\nmid m-j,\\\frac{p-3}{4}p^{s-i-1}\phi(q^{t-m}),\quad 2\mid m-j.\end{array}\]
\end{enumerate}
\end{lemma}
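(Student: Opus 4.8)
The plan is to follow the same route as the argument for Lemma~\ref{3.2}, deriving every displayed identity from the additive description of the cosets in Theorem~\ref{2.12} together with the counting identities of Lemma~\ref{3.1}. Concretely, I would first strip off the common factor $p^iq^j$: since $C_{ij}=p^iq^jD_0^{n'}$ and $C_{ij}^*=p^iq^jD_1^{n'}$ with $n'=p^{s-i}q^{t-j}$ (as in the proof of Theorem~\ref{2.12}), and since each $C_{im},C_{im}^*$ with $m>j$ likewise consists of multiples of $p^iq^j$, the reduction modulo $p^sq^t$ of $p^iq^ju$ depends only on $u\bmod n'$. Writing $a^*=p^iq^ju_0$ with $u_0$ representing a class of $D_1^{n'}\subseteq\mathbb{Z}_{n'}^*$, the set $a^*+C_{ij}^*$ becomes $p^iq^j(u_0+D_1^{n'})$ with addition in $\mathbb{Z}_{n'}$, and each of $C_{ij},C_{ij}^*,C_{im},C_{im}^*$ becomes a fixed subset of $\mathbb{Z}_{n'}$; so every quantity to be computed has the shape $\#\big((u_0+A)\cap B\big)$ inside $\mathbb{Z}_{n'}$.

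The next step is to split by the Chinese Remainder isomorphism $\mathbb{Z}_{n'}^*\cong\mathbb{Z}_{p^{s-i}}^*\times\mathbb{Z}_{q^{t-j}}^*$ (Lemma~\ref{lem2.9}). From the definition of $D_0^{n'}$ (using $v\equiv 1\bmod p^{s-i}$ and that $g$ restricts to primitive roots) one checks that in these coordinates $D_0^{n'}=R_{s-i}\times\mathbb{Z}_{q^{t-j}}^*$ and $D_1^{n'}=N_{s-i}\times\mathbb{Z}_{q^{t-j}}^*$, so $u_0\leftrightarrow(n_0,w_0)$ with $n_0\in N_{s-i}$ and $w_0\in\mathbb{Z}_{q^{t-j}}^*$. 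Reading Theorem~\ref{2.12} through the same isomorphism, for $j<m\leqslant t$ the $p$-component of $C_{im}$ (resp.\ $C_{im}^*$) is $R_{s-i}$ (resp.\ $N_{s-i}$) when $q$ is a quadratic residue modulo $p$, and agrees with this when $2\mid m-j$ but is swapped when $2\nmid m-j$ when $q$ is a nonresidue; in all cases the $q$-component of $C_{im}$ and of $C_{im}^*$ is the set of residues mod $q^{t-j}$ of exact $q$-adic valuation $m-j$. Consequently $\#\big((u_0+A)\cap B\big)$ factors as (a $p$-count)$\,\times\,$(a $q$-count).

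For the $p$-count, $\#\big((n_0+N_{s-i})\cap R_{s-i}\big)=\tfrac{p+1}{4}p^{s-i-1}$ and $\#\big((n_0+N_{s-i})\cap N_{s-i}\big)=\tfrac{p-3}{4}p^{s-i-1}$ by Lemma~\ref{3.1}(ii) (this is where $p\equiv3\bmod4$ enters). For the $q$-count: in parts (i)--(ii) one has $m=j$ and needs $\#\{w\in\mathbb{Z}_{q^{t-j}}^*:w_0+w\in\mathbb{Z}_{q^{t-j}}^*\}=\phi(q^{t-j})-q^{t-j-1}=(q-2)q^{t-j-1}$, discarding only $w\equiv-w_0\bmod q$; in parts (iii)--(iv) one needs $\#\{w\in\mathbb{Z}_{q^{t-j}}^*:v_q(w_0+w)=m-j\}=q^{t-m}-q^{t-m-1}=\phi(q^{t-m})$, valid since $w_0$ is a $q$-adic unit and $m<t$ (with $m=t$ giving the single residue $0$ and $\phi(q^0)=1$). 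Multiplying the two contributions, case by case, produces exactly the formulas in (i)--(iv).

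The one place where care is needed is the case bookkeeping in the quadratic-nonresidue case: the raw statement of Theorem~\ref{2.12}(ii) involves the parities of $t-j$ and of $t-m$ separately, and one must check that after removing the factor $p^iq^j$ the dependence on $t-j$ cancels --- because $t-m=(t-j)-(m-j)$ --- leaving only the parity of $m-j$, which is exactly why (iv) is phrased in terms of $2\mid m-j$ versus $2\nmid m-j$. Verifying this cancellation, and checking that the boundary value $m=t$ and the assignment of $R_{s-i}$ versus $N_{s-i}$ to $C_{im}$ and $C_{im}^*$ are all consistent, is the bulk of the work; everything else is a mechanical translation of Lemma~\ref{3.1} and the elementary $q$-adic count above.
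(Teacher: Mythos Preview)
Your proposal is correct and follows essentially the same route the paper indicates: the paper gives no separate proof for this lemma, stating only that the results in Lemma~\ref{3.2} through Lemma~\ref{3.17} follow ``easily'' from Theorem~\ref{2.12} and Lemma~\ref{3.1}, and that is exactly what you do. Your use of the intrinsic CRT description $D_0^{n'}\cong R_{s-i}\times\mathbb{Z}_{q^{t-j}}^*$, $D_1^{n'}\cong N_{s-i}\times\mathbb{Z}_{q^{t-j}}^*$ (which is implicit in the proof of the paper's Theorem on $D_0^{p^sq^t}$) is a clean way to carry this out, since it makes the factorization into a $p$-count times a $q$-count transparent and, as you note, automatically reduces the parity bookkeeping in the nonresidue case to the single parameter $m-j$.
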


\begin{lemma}
Let $a\in C_{ij}$, $0\leqslant i\leqslant s-1$ and $0\leqslant j\leqslant t-1$, then
\begin{enumerate}[(i)]
\item $\#(a+C_{ij}^*)\cap C_{ij}=\#(a+C_{ij}^*)\cap C_{ij}^*=\frac{p-3}{4}p^{s-i-1}(q-2)q^{t-j-1}$.

\item For $j<m\leqslant t$, we have\[\#(a+C_{ij}^*)\cap C_{im}=\#(a+C_{ij}^*)\cap C_{im}^*=\frac{p-3}{4}p^{s-i-1}\phi(q^{t-m}).\]

\item For $i<k\leqslant s-1$ and $j<m\leqslant t$, we have\[\#(a+C_{ij}^*)\cap C_{km}=\#(a+C_{ij}^*)\cap C_{km}^*=\phi(p^{s-k})\phi(q^{t-m})/2.\]
\item $\#(a+C_{ij}^*)\cap C_{sj}=(q-2)q^{t-j-1}$.
\item For $j<m\leqslant t$, we have\[\#(a+C_{ij}^*)\cap C_{sm}=\phi(q^{t-m}).\]
\item For $i<k\leqslant s-1$, we have
\[\#(a+C_{ij}^*)\cap C_{kj}=\#(a+C_{ij}^*)\cap C_{kj}^*=\phi(p^{s-k})(q-2)q^{t-j-1}/2.\]
\end{enumerate}
\end{lemma}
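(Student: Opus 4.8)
The plan is to reduce all six counting statements to the pattern established in Lemma \ref{3.1}, exactly as was done for Lemmas \ref{3.2} and its $C_{ij}^*$-analogue. First I would recall from Theorem \ref{2.12} the additive description of each coset: writing $C_{ij}=\{p^iq^tx+p^sq^jy:x\in X_{s-i},\,y\in\mathbb{Z}_{q^{t-j}}^*\}$ and $C_{ij}^*=\{p^iq^tx+p^sq^jy:x\in Y_{s-i},\,y\in\mathbb{Z}_{q^{t-j}}^*\}$, where $\{X_{s-i},Y_{s-i}\}=\{R_{s-i},N_{s-i}\}$ with the assignment depending on the quadratic character of $q$ mod $p$ and the parity of $t-j$. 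The key observation, which makes this lemma genuinely different from the previous two, is that here we are translating a \emph{mixed} sum $a+C_{ij}^*$ with $a\in C_{ij}$: the element $a=p^iq^tx_0+p^sq^jy_0$ has its $x_0$-part in $X_{s-i}$ while the variable part ranges over $Y_{s-i}$, so after subtracting, the relevant first-coordinate count is of the form $\#(x_0+Y_{s-i})\cap Z$ where $x_0$ and the two target classes sit in specific residue classes. Because the first coordinate of $a$ and the first coordinates being translated lie in \emph{opposite} classes ($R$ versus $N$), Lemma \ref{3.1} forces the two intersection counts against $C_{ij}$ and against $C_{ij}^*$ to be \emph{equal}, which is precisely why parts (i) and (ii) collapse to a single value $\frac{p-3}{4}p^{s-i-1}(q-2)q^{t-j-1}$; similarly part (iii) of the present lemma is symmetric in $C_{km}$ versus $C_{km}^*$.

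Concretely, I would organize the proof by the structure of the $q$-adic valuation of the difference, mirroring the four-case split in Lemma \ref{3.2}. For (i), both $a$ and the summands of $a+C_{ij}^*$ lie in cosets whose $q$-part is exactly $q^j$; the difference of two elements of $q^j\mathbb{Z}_{q^{t-j}}^*$ can be a unit times $q^j$ or can absorb extra powers of $q$, and one counts the "unit difference" case via $(q-2)q^{t-j-1}$ in the $y$-coordinate (this is the $\#(y_0+\mathbb{Z}_{q^{t-j}}^*)\cap\mathbb{Z}_{q^{t-j}}^*$ count, which equals $q^{t-j}-2q^{t-j-1}=(q-2)q^{t-j-1}$ since $-1\in\mathbb{Z}_{q^{t-j}}^*$ and $q$ is odd) combined with the $R$-versus-$N$ first-coordinate count $\frac{p-3}{4}p^{s-i-1}$ from Lemma \ref{3.1}. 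For (ii) and (v) the target coset $C_{im}$ (resp.\ $C_{sm}$) has strictly larger $q$-valuation $q^m$ with $m>j$, so an element of $a+C_{ij}^*$ lands there only when the $q^j$-parts of $a$ and the summand cancel to order $q^m$; this pins the $y$-coordinate contribution to $\phi(q^{t-m})$ and leaves the first-coordinate count again governed by Lemma \ref{3.1}. For (iii), (iv), (vi) the target coset has strictly larger $p$-valuation, so the $p^i$-parts must cancel; once they do, the first coordinate is no longer constrained by a quadratic-character condition (it ranges over all of $\mathbb{Z}_{p^{s-k}}^*$, or the $q^t$-part vanishes entirely for $C_{sj},C_{sm}$), which is why these counts are the "generic" values $\phi(p^{s-k})\phi(q^{t-m})/2$, etc., with no $\frac{p\pm3}{4}$ factor and with the $C_{km}$/$C_{km}^*$ symmetry automatic.

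I would then assemble the pieces: for each target coset $C$, the count $\#(a+C_{ij}^*)\cap C$ factors as (number of valid $q$-coordinate pairs) $\times$ (number of valid first-coordinate pairs), by the Chinese-Remainder bijection $\{p^iq^tx+p^sq^jy\}$ of Lemma \ref{lem2.9} applied at the appropriate levels $p^{s-i}$ and $q^{t-j}$, and substitute the two factors computed above. The main obstacle I anticipate is purely bookkeeping rather than conceptual: keeping straight, across the valuation cases, exactly which of $R_{s-i}$ or $N_{s-i}$ plays the role of the "$a$-class" versus the "summand-class" under the two cases of Theorem \ref{2.12} (quadratic residue vs.\ nonresidue, and the parity toggle in the nonresidue case) — and verifying that in every case the two classes are \emph{opposite}, so that Lemma \ref{3.1}(i) and (i i) deliver the \emph{same} number $\frac{p-3}{4}p^{s-i-1}$ and the stated equalities in (i), (ii), (iii) hold. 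Once that sign-chasing is done, parts (iv), (v), (vi) are immediate since they involve no first-coordinate character constraint at all. The whole argument parallels the proof sketch indicated after Lemma \ref{3.1} ("by Theorem \ref{2.12} and Lemma \ref{3.1}, we can easily obtain..."), so I would present it in the same compressed style.
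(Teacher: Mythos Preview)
Your proposal is correct and follows exactly the approach the paper intends: the paper states only that Lemmas \ref{3.2}--\ref{3.17} follow ``easily'' from Theorem \ref{2.12} and Lemma \ref{3.1}, and your plan is precisely the CRT factorisation (separating the $p^{s-i}$-coordinate count governed by Lemma \ref{3.1} from the $q^{t-j}$-coordinate count) that this remark is pointing to. Your identification of the key point --- that $a\in C_{ij}$ and the summand in $C_{ij}^*$ always have first coordinates in \emph{opposite} quadratic classes, so Lemma \ref{3.1} yields the single value $\tfrac{p-3}{4}p^{s-i-1}$ for both targets --- is exactly what drives the equalities in (i)--(ii), and your treatment of (iii)--(vi) via cancellation in the $p$-coordinate is likewise the intended argument. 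One small wording correction: in (iii) and (vi) the target $C_{km}$ (or $C_{km}^*$) \emph{does} impose a quadratic-character constraint on $(x_0+x')/p^{k-i}$, but since that quotient ranges over all of $\mathbb{Z}_{p^{s-k}}$ the two constraints $R_{s-k}$ and $N_{s-k}$ each pick out $\phi(p^{s-k})/2$ values, giving the stated symmetry; it is not that the constraint is absent but that it is symmetric.
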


\begin{lemma}
Let $a\in C_{sj}$ and $0\leqslant j\leqslant t-1$, then
\begin{enumerate}[(i)]
\item $\#(a+C_{sj})\cap C_{sj}=(q-2)q^{t-j-1}$.
\item For $j<m\leqslant t$, we have\[\#(a+C_{sj})\cap C_{sm}=\phi(q^{t-m}).\]
\end{enumerate}
\end{lemma}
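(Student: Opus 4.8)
The plan is to reduce both statements to an elementary count inside $\mathbb{Z}_{q^{t-j}}$, using the explicit description $C_{sj}=C_{p^sq^j}=\{p^sq^jy \bmod n:y\in\mathbb{Z}_{q^{t-j}}^*\}$ from Theorem \ref{2.12}, together with the convention $C_{st}=C_0=\{0\}$ (so that $\phi(q^{t-m})=\phi(1)=1$ when $m=t$). Since $p^sq^j\cdot q^{t-j}=n$, the map $y\mapsto p^sq^jy$ is a bijection from $\mathbb{Z}_{q^{t-j}}$ onto the multiples of $p^sq^j$ in $\{0,1,\cdots,n-1\}$, and it restricts to a bijection $\mathbb{Z}_{q^{t-j}}^*\to C_{sj}$; write $a=p^sq^ja_0$ with $a_0\in\mathbb{Z}_{q^{t-j}}^*$. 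Translation by $a$ is injective on these $q^{t-j}$ multiples, so $\#(a+C_{sj})\cap C_{sm}$ equals the number of $y\in\mathbb{Z}_{q^{t-j}}^*$ with $a+p^sq^jy=p^sq^j(a_0+y)\in C_{sm}$; and cancelling $p^sq^j$ from a congruence modulo $n=p^sq^j\cdot q^{t-j}$ (legitimate, since $p^sq^j$ divides both sides and the modulus) turns this membership condition into a condition on $a_0+y$ modulo $q^{t-j}$.

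For (i), $p^sq^j(a_0+y)\in C_{sj}$ holds iff $a_0+y$ is a unit modulo $q^{t-j}$, so the count is $\#\{y\in\mathbb{Z}_{q^{t-j}}^*:a_0+y\in\mathbb{Z}_{q^{t-j}}^*\}$. Among the $q^{t-j}$ residues $y$, exactly $q^{t-j-1}$ satisfy $q\mid y$ and exactly $q^{t-j-1}$ satisfy $q\mid a_0+y$; these two sets are disjoint because $a_0$ is a unit, so the count is $q^{t-j}-2q^{t-j-1}=(q-2)q^{t-j-1}$. For (ii) with $j<m\le t$, the same cancellation gives $p^sq^j(a_0+y)\in C_{sm}$ iff $a_0+y\equiv q^{m-j}y'\pmod{q^{t-j}}$ for some $y'\in\mathbb{Z}_{q^{t-m}}^*$, equivalently iff the $q$-adic valuation of $a_0+y$ in $\mathbb{Z}_{q^{t-j}}$ equals $m-j$ (for $m=t$ this reads $a_0+y\equiv 0$). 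As $y$ runs over all of $\mathbb{Z}_{q^{t-j}}$ the value $a_0+y$ is attained once each, and the values of valuation exactly $m-j$ are the $q^{m-j}u$ with $u$ a unit mod $q^{t-m}$, so there are $\phi(q^{t-m})$ of them; moreover each corresponding $y$ is automatically a unit, since $q\mid a_0+y$ forces $y\equiv-a_0\pmod q$, which is a unit. Hence the count is $\phi(q^{t-m})$, and for $m=t$ it is $1$, consistent with $-a\in C_{sj}$ giving $0\in a+C_{sj}$.

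The argument is essentially bookkeeping: this lemma is the $q$-part analogue of the $p$-part count carried out in Lemma \ref{3.1}, but it is simpler because there is no quadratic residue/nonresidue splitting. The only points needing care are the cancellation of $p^sq^j$ in the congruence modulo $n$, the degenerate case $m=t$ where $C_{st}=C_0=\{0\}$, and the observation in (ii) that the valuation condition already forces $y\in\mathbb{Z}_{q^{t-j}}^*$, so no further subtraction of non-unit $y$ is required. I would present (i) before (ii), in each case stating the reduction to $\mathbb{Z}_{q^{t-j}}$ first and then performing the short count.
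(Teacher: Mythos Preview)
Your proof is correct and follows exactly the route the paper indicates: the paper simply states that Lemmas \ref{3.2}--\ref{3.17} follow ``easily'' from Theorem \ref{2.12} and Lemma \ref{3.1}, and your argument is precisely the direct count in $\mathbb{Z}_{q^{t-j}}$ that Theorem \ref{2.12} reduces this to (as you note, Lemma \ref{3.1} is not needed here because $C_{sj}$ involves no quadratic-residue splitting). Your handling of the cancellation of $p^sq^j$, the degenerate case $m=t$, and the automatic unit condition on $y$ in part (ii) are all sound.
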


\begin{lemma}
Let $a\in C_{it}$, $a^*\in C_{it}^*$ and $0\leqslant i\leqslant s-1$, then
\begin{enumerate}[(i)]
\item $\#(a+C_{it})\cap C_{it}=\#(a^*+C_{it}^*)\cap C_{it}^*=\frac{p-3}{4}p^{s-i-1}$.
\item $\#(a+C_{it})\cap C_{it}^*=\#(a^*+C_{it}^*)\cap C_{it}=\frac{p+1}{4}p^{s-i-1}$.
\item $\#(a+C_{it}^*)\cap C_{it}=\#(a+C_{it}^*)\cap C_{it}^*=\frac{p-3}{4}p^{s-i-1}$.
\item For $i<k\leqslant s-1$, we have\[\#(a+C_{it}^*)\cap C_{kt}=(a+C_{it}^*)\cap C_{kt}^*=\frac{\phi(p^{s-k})}{2}.\]
\item $\#(a+C_{it}^*)\cap C_0=1$.
\end{enumerate}
\end{lemma}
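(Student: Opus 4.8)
The plan is to feed the explicit descriptions of $C_{it}$, $C_{it}^*$, $C_{kt}$, $C_{kt}^*$, $C_0$ supplied by Theorem \ref{2.12} into the translate--intersection counts of Lemma \ref{3.1}. Note first that in both cases of Theorem \ref{2.12} one has $C_{it}=\{p^iq^tx:x\in R_{s-i}\}$ and $C_{it}^*=\{p^iq^tx:x\in N_{s-i}\}$ (and likewise $C_{kt}=\{p^kq^tx:x\in R_{s-k}\}$, $C_{kt}^*=\{p^kq^tx:x\in N_{s-k}\}$). The main arithmetic input we shall lean on is that $p\equiv3\pmod4$ makes $-1$ a quadratic nonresidue modulo $p$ (Lemma \ref{lem2.7}), hence $-r\in N_{s-i}$ for $r\in R_{s-i}$ and $-n\in R_{s-i}$ for $n\in N_{s-i}$, and also $p^{k-i}u-r\equiv-r\pmod p$ is a nonresidue whenever $k>i$.

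For parts (i)--(iii) I would write $a=p^iq^tr_0$ with $r_0\in R_{s-i}$ and observe that, reducing modulo $n=p^sq^t$, one has $a+p^iq^tr=p^iq^t\rho$ where $\rho=(r_0+r)\bmod p^{s-i}$; since $-r_0\notin R_{s-i}$ the sum $r_0+r$ never vanishes mod $p^{s-i}$, and $r\mapsto\rho$ is a bijection from $R_{s-i}$ onto the translate $r_0+R_{s-i}\subseteq\mathbb Z_{p^{s-i}}$. As $p^iq^t\rho$ lies in $C_{it}$ (resp. $C_{it}^*$) precisely when $\rho\in R_{s-i}$ (resp. $\rho\in N_{s-i}$), the counts in (i) and (ii) become $\#(r_0+R_{s-i})\cap R_{s-i}$ and $\#(r_0+R_{s-i})\cap N_{s-i}$, which Lemma \ref{3.1}(i) (with $s$ replaced by $s-i$) evaluates to $\frac{p-3}{4}p^{s-i-1}$ and $\frac{p+1}{4}p^{s-i-1}$. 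The $a^*\in C_{it}^*$ statements are the mirror images, via Lemma \ref{3.1}(ii); for (iii) one replaces $R_{s-i}$ by $N_{s-i}$ in the translate and reads off $\#(r_0+N_{s-i})\cap R_{s-i}=\#(r_0+N_{s-i})\cap N_{s-i}=\frac{p-3}{4}p^{s-i-1}$ from Lemma \ref{3.1}(i).

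Parts (iv) and (v) are where the nested--coset structure enters, so I expect those to require the most care. Keeping $a=p^iq^tr_0$ and taking $n\in N_{s-i}$, the element $a+p^iq^tn=p^iq^t\rho$ with $\rho=(r_0+n)\bmod p^{s-i}$ lies in $C_{kt}$ (for $i<k\leqslant s-1$) exactly when $\rho=p^{k-i}u$ for some $u$ with $u\in R_{s-k}$, equivalently when $n\equiv p^{k-i}u-r_0\pmod{p^{s-i}}$ for some $u\in R_{s-k}$. The key point is that $p^{k-i}u-r_0\equiv-r_0\pmod p$ is automatically a nonresidue, so $p^{k-i}u-r_0\in N_{s-i}$ for every admissible $u$; hence $n\mapsto u$ is a bijection onto $R_{s-k}$ and $\#(a+C_{it}^*)\cap C_{kt}=\#R_{s-k}=\phi(p^{s-k})/2$, and the same argument with $N_{s-k}$ in place of $R_{s-k}$ gives $\#(a+C_{it}^*)\cap C_{kt}^*=\phi(p^{s-k})/2$. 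Finally $a+p^iq^tn\equiv0\pmod n$ iff $\rho=0$ iff $n\equiv-r_0\pmod{p^{s-i}}$, and since $-r_0\bmod p^{s-i}$ does lie in $N_{s-i}$ this is satisfied by exactly one $n$, whence $\#(a+C_{it}^*)\cap C_0=1$.

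The only genuine obstacle is bookkeeping: one must be scrupulous about when a product $p^iq^t(\cdot)$ is reduced modulo $n$ versus when the inner factor is reduced modulo $p^{s-i}$, and must track the exceptional index where a sum $r_0+r$ or $r_0+n$ becomes $0$ or a nonunit. All of this collapses once $p\equiv3\pmod4$ is used to pin down the residue class of $-r_0$, $-n_0$ and $p^{k-i}u-r_0$, after which every claim is a direct quotation of Lemma \ref{3.1} (or, for (iv)--(v), an immediate bijection) applied with $s-i$ in place of $s$.
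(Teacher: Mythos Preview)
Your proposal is correct and follows essentially the same approach as the paper: the paper states (just before Lemma \ref{3.2}) that all of Lemmas \ref{3.2}--\ref{3.17} follow directly from Theorem \ref{2.12} and Lemma \ref{3.1}, and your argument is precisely the unpacking of that claim for the case at hand, applying Lemma \ref{3.1} with $s-i$ in place of $s$ after identifying $C_{it}=p^iq^tR_{s-i}$ and $C_{it}^*=p^iq^tN_{s-i}$. The only addition is that for (iv)--(v) you supply the explicit bijection argument (using that $p^{k-i}u-r_0\equiv-r_0\pmod p$ is a nonresidue), which the paper leaves implicit.
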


\begin{lemma}
Let $a\in C_{ij}$, $a^*\in C_{ij}$, $0\leqslant i<i'\leqslant s-1$ and $0\leqslant j<j'\leqslant t$, then
\begin{enumerate}[(i)]
\item $\#(a+C_{i'j'})\cap C_{ij}=\#(a+C_{i'j'}^*)\cap C_{ij}=\frac{\phi(p^{s-i'})}{2}\phi(q^{t-j'})$.
\item $\#(a^*+C_{i'j'})\cap C_{ij}^*=\#(a^*+C_{i'j'}^*)\cap C_{ij}^*=\frac{\phi(p^{s-i'})}{2}\phi(q^{t-j'})$.
\end{enumerate} 
\end{lemma}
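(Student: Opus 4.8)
The plan is to reduce everything to the additive description of the cyclotomic cosets given in Theorem \ref{2.12} and then count. First I would fix a representative $a\in C_{ij}$ and write $a=p^iq^tx_0+p^sq^jy_0$ with $x_0$ in the appropriate quadratic-residue class $R_{s-i}$ (or $N_{s-i}$, depending on the parity case) and $y_0\in\mathbb{Z}_{q^{t-j}}^*$; similarly write a generic element of $C_{i'j'}$ (resp.\ $C_{i'j'}^*$) as $p^{i'}q^tx+p^sq^jy'$ with $x\in R_{s-i'}$ (resp.\ $N_{s-i'}$) and $y'\in\mathbb{Z}_{q^{t-j'}}^*$. Adding $a$ to such an element, the $p^sq^j$-part is $p^sq^j(y_0+q^{j'-j}y')$ and the $p^i$-part is $p^i(q^tx_0+p^{i'-i}q^tx)=p^iq^t(x_0+p^{i'-i}x)$; the key point is that since $i<i'$ we have $p^{i'-i}x\equiv0\bmod p$, so $x_0+p^{i'-i}x$ lies in the \emph{same} residue class mod $p$ as $x_0$, hence in $R_{s-i}$ (resp.\ $N_{s-i}$) exactly when $a$ was, and moreover as $x$ ranges over $R_{s-i'}$ the element $x_0+p^{i'-i}x$ ranges over a full coset of $p^{i'-i}\mathbb{Z}_{p^{s-i}}^{?}$ inside $x_0+p\mathbb{Z}$, which is entirely contained in whichever of $R_{s-i},N_{s-i}$ contains $x_0$. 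This is the whole content of part (5) of Lemma \ref{3.1} stated there as ``$r+p\mathbb{Z}_{p^s}\subset R_s$''.

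Next I would make the counting precise. The total number of elements of the form $p^{i'}q^tx$ with $x\in R_{s-i'}$ that reduce mod $p^{s-i}$ into a prescribed coset $x_0+p\mathbb{Z}$ is $\#R_{s-i'}$ divided by the number of distinct such cosets hit, but it is cleaner to observe directly: $x_0+p^{i'-i}x$ for $x\in R_{s-i'}\subset\mathbb{Z}_{p^{s-i'}}^*$ takes each value in $x_0+p^{i'-i}\mathbb{Z}_{p^{s-i}}$ that is a unit mod $p$ — and all of these are units since $x_0$ is — with multiplicity $1$, and $\#R_{s-i'}=\phi(p^{s-i'})/2$. On the $q$-side, $y_0+q^{j'-j}y'$ for $y'\in\mathbb{Z}_{q^{t-j'}}^*$: since $j<j'$, the added term is $\equiv0\bmod q$, so $y_0+q^{j'-j}y'$ stays a unit mod $q$ automatically, and ranges over $\phi(q^{t-j'})$ values. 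Multiplying, the number of pairs is $\frac{\phi(p^{s-i'})}{2}\phi(q^{t-j'})$, and every resulting sum $a+(\cdots)$ lands in $C_{ij}$ (because its $p$-part is a QR/NQR of the right type and its $q$-part is the right power of $q$ times a unit), which is exactly the claimed intersection size. The computation for $C_{i'j'}^*$ is identical except that the $p$-side class of the summand is the opposite one, but since $p^{i'-i}x\equiv0\bmod p$ regardless, the reduction mod $p$ of $x_0+p^{i'-i}x$ is still governed solely by $x_0$, so the count is unchanged — this is why the two quantities in (i) are equal, and likewise in (ii) after replacing $a$ by $a^*\in C_{ij}^*$, which only flips $x_0$'s class (and hence the target to $C_{ij}^*$) but not the arithmetic.

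The main obstacle, as in all the lemmas of this section, is purely bookkeeping: one must be careful that ``$x_0+p^{i'-i}x$ ranges over its image without collision'' genuinely holds, i.e.\ that the map $x\mapsto x_0+p^{i'-i}x$ from $R_{s-i'}$ into $\mathbb{Z}_{p^{s-i}}$ is injective — this follows because $\gcd(p^{i'-i},p^{s-i})=p^{i'-i}$ and reduction mod $p^{s-i'}$ composed with this map is a bijection onto a coset; and symmetrically that $y_0+q^{j'-j}y'$ is injective in $y'$ modulo $q^{t-j}$, which needs $j'-j\le t-j$, i.e.\ $j'\le t$, guaranteed by hypothesis. I would also double-check the boundary behaviour when $i'$ or $j'$ attains its extreme value, though the stated ranges $i<i'\le s-1$ and $j<j'\le t$ keep us away from the degenerate coset $C_{st}=C_0$. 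Once these injectivity checks are in place, the equalities in (i) and (ii) drop out immediately from Theorem \ref{2.12}, exactly as the remark preceding Lemma \ref{3.2} promises (``can be easily obtained'').
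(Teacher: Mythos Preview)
Your approach is correct and is exactly what the paper has in mind: the paper does not write out a separate proof for this lemma but simply remarks that Lemmas \ref{3.2}--\ref{3.17} follow directly from the additive description in Theorem \ref{2.12} (together with Lemma \ref{3.1}, which is not actually needed here since $p^{i'-i}x\equiv 0\bmod p$ bypasses the cyclotomic-number counts entirely). Two small remarks: the generic element of $C_{i'j'}$ should be written $p^{i'}q^tx+p^sq^{j'}y'$ (you wrote $q^j$, though your subsequent computation $y_0+q^{j'-j}y'$ makes clear you meant $q^{j'}$); and your injectivity discussion, while not wrong, is more than is needed---since translation by $a$ is a bijection of $\mathbb{Z}_{p^sq^t}$, it suffices to observe that $a+C_{i'j'}\subseteq C_{ij}$, whence the intersection has size $\#C_{i'j'}=\tfrac{\phi(p^{s-i'})}{2}\phi(q^{t-j'})$.
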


\begin{lemma}
Let $a\in C_{ij}$, $a^*\in C_{ij}$, $0\leqslant i<i'\leqslant s-1$ and $0\leqslant j'<j\leqslant t$, then
\begin{enumerate}[(i)]
\item $\#(a+C_{i'j'})\cap C_{ij'}=\#(a+C_{i'j'}^*)\cap C_{ij'}=\frac{\phi(p^{s-i'})}{2}\phi(q^{t-j})$.
\item $\#(a^*+C_{i'j'})\cap C_{ij'}^*)=\#(a^*+C_{i'j'}^*)\cap C_{ij'}^*)=\frac{\phi(p^{s-i'})}{2}\phi(q^{t-j})$.
\end{enumerate}
\end{lemma}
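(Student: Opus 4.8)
The plan is to reduce everything to the additive description of the cyclotomic cosets given in Theorem \ref{2.12} and then to counting problems over the reduced residue system modulo $p^{s-i}$ and modulo $q^{t-j}$. Recall that for $0\leqslant i\leqslant s-1$ and $0\leqslant j\leqslant t-1$ we have, up to the two cases on the quadratic character of $q$,
\[
C_{ij}=\left\{p^iq^tx+p^sq^jy:x\in R_{s-i}\text{ or }N_{s-i},\ y\in\mathbb{Z}_{q^{t-j}}^*\right\},
\]
and similarly $C_{i'j'}$, $C_{i'j'}^*$ with $i'>i$, $j'>j$ run over the analogous set with $p^{i'}$, $q^{t}$, $p^s$, $q^{j'}$ and the appropriate residue class set. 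Since $i<i'$, the relevant reduction will be modulo $p^{s-i'}$ on the ``$p$-side'' rather than modulo $p^{s-i}$, which is precisely why the count collapses to $\tfrac{\phi(p^{s-i'})}{2}\phi(q^{t-j'})$ and no longer distinguishes starred from unstarred: the shift $a$ lives in $C_{ij}$ with $i<i'$, so $a\equiv 0 \bmod p^{i'}$ is false but $a$ reduced modulo $p^{s-i'}$ ranges over a full residue class, washing out the quadratic character.

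First I would fix $a\in C_{ij}$ and write a generic element of $C_{i'j'}$ as $c=p^{i'}q^tx'+p^sq^{j'}y'$. The condition $a+c\in C_{ij}$ forces, modulo $p^{i}$ and modulo $q^{j}$, that $p^i\mid a+c$ and $q^j\mid a+c$ with the quotient a unit; since $i<i'$ and $j<j'$ these divisibility conditions are automatically compatible, and the real content is counting, for how many pairs $(x',y')$ the element $a+c$, after dividing by $p^iq^j$, lands in the prescribed residue-class-times-unit-group set defining $C_{ij}$. The key step is then to separate variables via the Chinese Remainder Theorem: the ``$p$-part'' asks how many $x'$ modulo $p^{s-i'}$ (times a free range over $p^{s-i'}\mid x'$'s higher digits, contributing the usual $p^{\,i'-i}$ type factor that gets absorbed into $\phi(p^{s-i'})$ versus $p^{s-i-1}$ bookkeeping) make a fixed affine expression a quadratic residue or nonresidue modulo $p^{s-i}$, and the ``$q$-part'' asks the analogous question modulo $q^{t-j}$ where, because $l$ is a primitive root, every nonzero residue class is hit and the count is simply $\phi(q^{t-j'})$ copies of the full unit group $\mathbb{Z}_{q^{t-j}}^*$ intersected appropriately, giving the factor $\phi(q^{t-j'})$. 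The $p$-part count is exactly $\tfrac{\phi(p^{s-i'})}{2}$ once one checks that the relevant affine map is a bijection between residue classes modulo $p^{s-i'}$ and residues modulo $p^{s-i'}$ that is balanced between $R$ and $N$; this balance is where Lemma \ref{3.1} (with $p\equiv 3\bmod 4$) is invoked, or rather a degenerate version of it, since here one of the two sets being intersected is a single residue class modulo a smaller power, so the intersection with $R_{s-i}$ and with $N_{s-i}$ has equal size by the pigeonhole/translation symmetry.

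For part (ii) of the statement (the case $j'<j$, so the ``$q$-shrinking'' goes the other way), the argument is structurally identical with the roles of the exact power of $q$ dividing $a$ versus dividing $c$ interchanged: now $c\in C_{i'j'}$ is divisible by $q^{j'}$ with $j'<j$, so $a+c$ is divisible by exactly $q^{j'}$, landing in a coset indexed by $j'$ rather than $j$; the $q$-part of the count becomes $\phi(q^{t-j})$ (note the index) because the free range is over the higher $q$-digits of $y'$, and the $p$-part is again $\tfrac{\phi(p^{s-i'})}{2}$ by the same balancing argument. I would state the $j'<j$ case as a corollary of the computation done for $j'>j$, pointing out that the only change is which of $q^j,q^{j'}$ survives in $a+c$.

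The main obstacle I anticipate is the careful CRT bookkeeping of the two ``digit ranges'': when $i<i'$, an element of $C_{i'j'}$ is divisible by $p^{i'}$, but the defining residue set $R_{s-i'}$ (or $N_{s-i'}$) only constrains it modulo $p^{s-i'+i'}=p^s$ in $x'$-terms of the lowest digits, and one must correctly account for the $p^{\,i'-i-1}$ extra free digits when re-expressing the count relative to $C_{ij}$'s parametrization, which is modulo $p^{s-i}$ in the analogous sense. Getting this exponent right — so that $\tfrac{p-3}{4}p^{s-i-1}$-style local counts aggregate cleanly into the stated $\tfrac{\phi(p^{s-i'})}{2}$ — is the delicate point, and it is exactly the kind of computation the paper defers with ``it can be proved easily using Lemma \ref{3.1}''. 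I would organize it by first proving a clean auxiliary counting lemma: for $i<i'$ and any fixed $a_0\in\mathbb{Z}_{p^{s-i}}^*$, the number of $x'\in\mathbb{Z}_{p^{s-i'}}$ (with an appropriate free extension) such that $a_0+p^{i'-i}q^{\,?}x'$ is a quadratic residue modulo $p^{s-i}$ equals $\tfrac12\phi(p^{s-i'})\cdot p^{\,i'-i-1}$ and likewise for nonresidues — and then feed this, together with the trivial $q$-side count coming from $l$ being a primitive root modulo $q^{t-j'}$, into the product formula.
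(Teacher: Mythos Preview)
You have misread the lemma: parts (i) and (ii) are not distinguished by the ordering of $j$ and $j'$. The hypothesis $0\leqslant j'<j\leqslant t$ governs both parts; they differ only in whether the shift lies in $C_{ij}$ or in $C_{ij}^{*}$. Your second paragraph, written under the assumption $j<j'$ with target $C_{ij}$, is an argument for the preceding lemma, not this one, and your third paragraph is then not ``part~(ii)'' but the whole of the present statement.

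More substantively, your mechanism on the $p$-side is wrong. You invoke Lemma~\ref{3.1} and a ``balance'' between $R_{s-i}$ and $N_{s-i}$, but no balancing occurs here. In CRT coordinates, if $a$ has $p$-component $p^{i}r$ with $r\in R_{s-i}$ and $c\in C_{i'j'}$ has $p$-component $p^{i'}r'$, then because $i'>i$ one has $(a+c)/p^{i}\equiv r\pmod{p}$: the quadratic character is \emph{unchanged} by adding $c$. Every $c$ keeps the $p^{i}$-quotient in $R_{s-i}$ (and, for (ii), in $N_{s-i}$); the $p$-count is $\lvert R_{s-i'}\rvert=\phi(p^{s-i'})/2$ because the condition is vacuous, not because residues and nonresidues are equidistributed. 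The same triviality holds on the $q$-side since $j'<j$: the unit condition on $(a+c)/q^{j'}$ reduces to that on $c$, which is automatic. Hence in fact $a+C_{i'j'}\subseteq C_{ij'}$ in its entirety, so the literal fixed-$a$ count is $\lvert C_{i'j'}\rvert=\tfrac{\phi(p^{s-i'})}{2}\phi(q^{t-j'})$. The quantity $\tfrac{\phi(p^{s-i'})}{2}\phi(q^{t-j})$ printed in the lemma is rather the multiplicity of each element of $C_{ij'}$ in the multiset $C_{ij}+C_{i'j'}$, which is what the paper actually computes in the proof of Theorem~\ref{4.7} by counting pairs $(a,c)$ hitting a fixed target; your ``free range over the higher $q$-digits of $y'$'' does not recover that multiplicity from a fixed-$a$ count.
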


\begin{lemma}
Let $a\in C_{ij}$, $0\leqslant i\leqslant s-1$ and $0\leqslant j<j'\leqslant t$, then
\begin{enumerate}[(i)]
\item $\#(a+C_{ij'})\cap C_{ij}=\frac{p-3}{4}p^{s-i-1}\phi(q^{t-j'})$.
\item Suppose $q$ is a quadratic residue modulo $p$, then\[\#(a+C_{ij'})\cap C_{ij}^*=\frac{p+1}{4}p^{s-i-1}\phi(q^{t-j'}).\]
\item If $q$ is a quadratic nonresidue modulo $p$ and $2\mid j'-j$, then\[\#(a+C_{ij'})\cap C_{ij}^*=\frac{p+1}{4}p^{s-i-1}\phi(q^{t-j'}).\]
\item If $q$ is a quadratic nonresidue modulo $p$ and $2\nmid j'-j$, then
\[\#(a+C_{ij'})\cap C_{ij}^*=\frac{p-3}{4}p^{s-i-1}\phi(q^{t-j'}).\]
Also, for $i<k\leqslant s-1$,
\[\#(a+C_{ij'})\cap C_{kj}=\#(a+C_{ij'})\cap C_{kj}^*=\frac{\phi(p^{s-k})}{2}\phi(q^{t-j'}),\]
and
\[\#(a+C_{ij'})\cap C_{sj}=\phi(q^{t-j'}).\]
\end{enumerate}
\end{lemma}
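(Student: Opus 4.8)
The plan is to pass to $\mathbb{Z}_n\cong\mathbb{Z}_{p^s}\times\mathbb{Z}_{q^t}$ by the Chinese Remainder Theorem, write each residue as $z\leftrightarrow(z_p,z_q)$, and observe from Theorem \ref{2.12} that membership of $z$ in any of $C_{kj},C_{kj}^*,C_{sj}$ depends only on (a) $v_p(z_p)$, (b) the quadratic class ($R$ or $N$) of $z_p/p^{v_p(z_p)}$ modulo the relevant power of $p$, and (c) $v_q(z_q)$ — the unit $z_q/q^{v_q(z_q)}$ being unconstrained, since $p^s$ is a unit mod $q^t$ so every unit class appears. Now take $a\in C_{ij}$ and $c\in C_{ij'}$ with $j<j'$. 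Since $v_q(a_q)=j<j'=v_q(c_q)$ we get $v_q(a_q+c_q)=j$ for \emph{every} $c$, so each of the $\phi(q^{t-j'})$ admissible values of $c_q$ is compatible with the $q$-part of every target coset; hence each count in the statement equals $\phi(q^{t-j'})$ times a count over the $p$-part. Setting $r:=a_p/p^i$ and letting $u:=c_p/p^i$ vary, the whole lemma reduces to counting, for each target, the $u$ for which $p^i(r+u)$ has the prescribed $p$-valuation and quadratic class; here $r$ lies in $R_{s-i}$ or $N_{s-i}$ and $u$ ranges over $R_{s-i}$ or $N_{s-i}$, the choice being dictated by Theorem \ref{2.12} (for $q$ a nonresidue, by the parities of $t-j$ and $t-j'$).

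Let $X$ be the class of $r$. Since $p\equiv3\bmod4$, Lemma \ref{lem2.7} gives that $-1$ is a nonresidue mod $p$, so $-r$ lies in the opposite class $X^c$. If $u$ ranges over $X$ — which happens exactly when $q$ is a residue mod $p$, or $q$ is a nonresidue and $j'-j$ is even — then $r+u$ is always a unit, so $a+c$ always lands in $C_{ij}\cup C_{ij}^*$; applying Lemma \ref{3.1} with $s-i$ in place of $s$ (the cases $X=R_{s-i}$ and $X=N_{s-i}$ giving identical counts by the symmetry of that lemma) yields $\#\{u\in X:r+u\in X\}=\frac{p-3}{4}p^{s-i-1}$ and $\#\{u\in X:r+u\in X^c\}=\frac{p+1}{4}p^{s-i-1}$, which are parts (i), (ii) and (iii) after multiplying by $\phi(q^{t-j'})$.

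The remaining regime is $q$ a nonresidue with $j'-j$ odd, so $u$ ranges over $X^c$. Split by $v_p(r+u)$. The $u$ with $v_p(r+u)=0$ split, again by Lemma \ref{3.1}, as $\frac{p-3}{4}p^{s-i-1}$ into $r+u\in X$ and $\frac{p-3}{4}p^{s-i-1}$ into $r+u\in X^c$, giving the $\frac{p-3}{4}$ formulas in (i) and (iv). The $u$ with $v_p(r+u)\ge1$ are precisely those with $u\equiv-r\bmod p$; since $-r\in X^c$, all $p^{s-i-1}$ such residues lie in $X^c$, and as $u$ runs over them $w:=r+u$ runs bijectively over $p\mathbb{Z}_{p^{s-i}}$. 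The target $C_{sj}$ needs $w=0$, giving the unique solution $u=-r$ and the count $\phi(q^{t-j'})$; a target $C_{kj}$ or $C_{kj}^*$ with $i<k\le s-1$ needs $v_p(w)=k-i$, and $w\mapsto w/p^{k-i}$ is a bijection from $\{w\in p\mathbb{Z}_{p^{s-i}}:v_p(w)=k-i\}$ onto $\mathbb{Z}_{p^{s-k}}^*$, which has $\phi(p^{s-k})$ elements split evenly between residues and nonresidues, so since $C_{kj}$ and $C_{kj}^*$ select opposite classes each gets $\frac{1}{2}\phi(p^{s-k})$, i.e.\ $\frac{\phi(p^{s-k})}{2}\phi(q^{t-j'})$. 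As a consistency check these counts sum to $\#C_{ij'}=\frac{p-1}{2}p^{s-i-1}\phi(q^{t-j'})$, using $\sum_{m=1}^{M}\phi(p^m)=p^M-1$. The only real difficulty is the bookkeeping: keeping straight which quadratic class the $p$-part of each coset occupies as a function of the residuacity of $q$ and the parities $t-j$, $t-j'$, and verifying the equidistribution of $w/p^{k-i}$ over $\mathbb{Z}_{p^{s-k}}^*$ that forces the equality of the starred and unstarred counts. The degenerate case $j'=t$, where $c_q=0$ and $\phi(q^{t-j'})=1$, is subsumed throughout.
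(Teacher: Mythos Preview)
Your proof is correct and follows exactly the route the paper indicates (it gives no detailed argument for this lemma, only the remark that Lemmas~\ref{3.2}--\ref{3.17} follow from Theorem~\ref{2.12} and Lemma~\ref{3.1}). Your CRT decomposition is simply a clean repackaging of the additive description in Theorem~\ref{2.12}, and the reduction to the $p$-component counts of Lemma~\ref{3.1} is precisely the intended mechanism; the case split on whether $u$ lies in $X$ or $X^c$ correctly tracks the parity bookkeeping of Theorem~\ref{2.12}(ii), and your handling of the non-unit values of $r+u$ in the $X^c$ regime cleanly yields the extra $C_{kj},C_{kj}^*,C_{sj}$ counts in part~(iv).
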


\begin{lemma}
Let $a^*\in C_{ij}^*$, $0\leqslant i\leqslant s-1$ and $0\leqslant j<j'\leqslant t$, then
\begin{enumerate}[(i)]
\item $\#(a^*+C_{ij'}^*)\cap C_{ij}^*=\frac{p-3}{4}p^{s-i-1}\phi(q^{t-j'})$.
\item Suppose $q$ is a quadratic residue modulo $p$, then\[\#(a^*+C_{ij'}^*)\cap C_{ij}=\frac{p+1}{4}p^{s-i-1}\phi(q^{t-j'}).\]
\item If $q$ is a quadratic nonresidue modulo $p$ and $2\mid j'-j$, then\[\#(a^*+C_{ij'}^*)\cap C_{ij}=\frac{p+1}{4}p^{s-i-1}\phi(q^{t-j'}).\]
\item If $q$ is a quadratic nonresidue modulo $p$ and $2\nmid j'-j$, then
\[\#(a^*+C_{ij'}^*)\cap C_{ij}=\frac{p-3}{4}p^{s-i-1}\phi(q^{t-j'}).\]
Also, for $i<k\leqslant s-1$,
\[\#(a^*+C_{ij'}^*)\cap C_{kj}=\#(a+C_{ij'})\cap C_{kj}^*=\frac{\phi(p^{s-k})}{2}\phi(q^{t-j'}),\]
and
\[\#(a^*+C_{ij'}^*)\cap C_{sj}=\phi(q^{t-j'}).\]
\end{enumerate}
\end{lemma}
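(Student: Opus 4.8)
The plan is to mirror the proof of the preceding lemma (the version with $a\in C_{ij}$ and $C_{ij'}$), using the additive descriptions of the cyclotomic cosets in Theorem \ref{2.12} together with the cyclotomic numbers of order $2$ modulo $p^s$ from Lemma \ref{3.1}. First, by Theorem \ref{2.12}, write
\[
a^*=p^iq^t\alpha+p^sq^j\beta,
\]
where $\beta\in\mathbb{Z}_{q^{t-j}}^*$ and $\alpha$ lies in $N_{s-i}$ or $R_{s-i}$ according to whether $q$ is a quadratic residue modulo $p$ and, in the nonresidue case, to the parity of $t-j$; similarly a general element of $C_{ij'}^*$ is $p^iq^tx'+p^sq^{j'}y'$ (with the $y'$-term absent when $j'=t$), where $x'$ ranges over a fixed half of $\mathbb{Z}_{p^{s-i}}^*$ and $y'\in\mathbb{Z}_{q^{t-j'}}^*$. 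Adding,
\[
a^*+\bigl(p^iq^tx'+p^sq^{j'}y'\bigr)\equiv p^iq^t(\alpha+x')+p^sq^j\bigl(\beta+q^{j'-j}y'\bigr)\pmod{p^sq^t}.
\]
Because $p^iq^t$ has additive order $p^{s-i}$ and $p^sq^j$ has additive order $q^{t-j}$ modulo $n$, membership of this sum in a prescribed cyclotomic coset depends only on the class of $\alpha+x'$ modulo $p^{s-i}$ and, independently, on the class of $\beta+q^{j'-j}y'$ modulo $q^{t-j}$. Hence every count below factors as a $p$-part times a $q$-part.

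The $q$-part is immediate: since $j'>j$ we have $\beta+q^{j'-j}y'\equiv\beta\not\equiv0\pmod q$, so this is a unit modulo $q^{t-j}$ for every $y'$, contributing the universal factor $\phi(q^{t-j'})$ (read as $1$ when $j'=t$) and never affecting which $p$-side coset the sum lands in. For (i)--(iii) and the first assertion of (iv) the $p$-part is a translate-intersection $\#(\alpha+S')\cap S$ with $S',S\in\{R_{s-i},N_{s-i}\}$, which Lemma \ref{3.1} evaluates to $\frac{p-3}{4}p^{s-i-1}$ or $\frac{p+1}{4}p^{s-i-1}$ (using $p\equiv3\bmod4$, so $-1$ is a nonresidue). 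The only real work is the bookkeeping that pins down, in each case --- $q$ a residue mod $p$; $q$ a nonresidue with $2\mid j'-j$; $q$ a nonresidue with $2\nmid j'-j$, the last split again by the parity of $t-j$ --- which of $R_{s-i}$, $N_{s-i}$ is the home set of $\alpha$, the summand set of $x'$, and the target set; then one reads off the answer from Lemma \ref{3.1} and multiplies by $\phi(q^{t-j'})$.

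For the remaining assertions of (iv) the target $C_{kj}$, $C_{kj}^*$ ($i<k\leqslant s-1$) or $C_{sj}$ forces $\alpha+x'$ to have $p$-adic valuation exactly $k-i$ (respectively $\equiv0\pmod{p^{s-i}}$), so in particular $x'\equiv-\alpha\pmod p$; since $-1$ is a nonresidue, this congruence is compatible with $x'$ lying in the half of $\mathbb{Z}_{p^{s-i}}^*$ used by $C_{ij'}^*$ precisely when $q$ is a nonresidue and $2\nmid j'-j$, which is exactly why these clauses appear only in (iv). In that case, writing $x'\equiv-\alpha+p^{k-i}m\pmod{p^{s-i}}$, the quotient $m$ runs over all of $\mathbb{Z}_{p^{s-k}}^*$ ($\phi(p^{s-k})$ values, or a single value when $k=s$), splitting evenly between $R_{s-k}$ and $N_{s-k}$, i.e. between $C_{kj}$ and $C_{kj}^*$; multiplying by the $q$-part $\phi(q^{t-j'})$ gives $\frac{\phi(p^{s-k})}{2}\phi(q^{t-j'})$ and $\phi(q^{t-j'})$ respectively.

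I expect the main obstacle to be purely organizational: keeping the residue-class dictionary supplied by Theorem \ref{2.12} straight, so that the various parity subcases collapse to a single uniform appeal to Lemma \ref{3.1} rather than a dozen separate computations; no new idea beyond those two lemmas is needed.
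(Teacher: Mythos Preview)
Your proposal is correct and follows exactly the approach the paper indicates: the paper gives no separate proof of this lemma, stating only that Lemmas \ref{3.2}--\ref{3.17} all follow ``easily'' from Theorem \ref{2.12} and Lemma \ref{3.1}, which is precisely the additive-description-plus-cyclotomic-numbers argument you outline. Your factorization into independent $p$- and $q$-parts and your explanation of why the $C_{kj}$, $C_{kj}^*$, $C_{sj}$ clauses are nonzero only under the hypothesis of (iv) are exactly the intended mechanism.
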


\begin{lemma}
Let $a\in C_{ij}$, $0\leqslant i\leqslant s-1$ and $0\leqslant j<j'\leqslant t$, then
\begin{enumerate}[(i)]
\item If $q$ is a quadratic residue modulo $p$, then
\[
\#(a+C_{ij'}^*)\cap C_{ij}=(a+C_{ij'}^*)\cap C_{ij}^*=\frac{p-3}{4}p^{s-i-1}\phi(q^{t-j'}).
\]
Also, for $i<k\leqslant s-1$,
\[
\#(a+C_{ij'}^*)\cap C_{kj}=\#(a+C_{ij'}^*)\cap C_{kj}^*=\frac{\phi(p^{s-k})}{2}\phi(q^{t-j'}),
\]
and\[\#(a+C_{ij'}^*)\cap C_{sj}=\phi(q^{t-j'}).\]

\item If $q$ is a quadratic nonresidue modulo $p$ and $2\nmid j'-j$, then
\[
\#(a+C_{ij'}^*)\cap C_{ij}=\frac{p-3}{4}p^{s-i-1}\phi(q^{t-j'})
\]
and
\[\#(a+C_{ij'}^*)\cap C_{ij}^*=\frac{p+1}{4}p^{s-i-1}\phi(q^{t-j'}).\]

\item If $q$ is a quadratic nonresidue modulo $p$ and $2\mid j'-j$, then
\[
\#(a+C_{ij'}^*)\cap C_{ij}=(a+C_{ij'}^*)\cap C_{ij}^*=\frac{p-3}{4}p^{s-i-1}\phi(q^{t-j'}).
\]
Also, for $i<k\leqslant s-1$,
\[
\#(a+C_{ij'}^*)\cap C_{kj}=\#(a+C_{ij'}^*)\cap C_{kj}^*=\frac{\phi(p^{s-k})}{2}\phi(q^{t-j'}),
\]
and\[\#(a+C_{ij'}^*)\cap C_{sj}=\phi(q^{t-j'}).\]
\end{enumerate}
\end{lemma}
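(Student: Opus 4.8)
The plan is to pass to the Chinese-remainder decomposition $\mathbb{Z}_{p^sq^t}\cong\mathbb{Z}_{p^s}\times\mathbb{Z}_{q^t}$. The additive description in Theorem~\ref{2.12} says precisely that, for $0\leqslant i\leqslant s-1$ and $0\leqslant j\leqslant t-1$, the coset $C_{ij}$ is a full product set $A_{ij}\times B_j$, where $B_j=\{w\in\mathbb{Z}_{q^t}:v_q(w)=j\}$ has $\phi(q^{t-j})$ elements and does not depend on $i$, and $A_{ij}$ is the set of $z\in\mathbb{Z}_{p^s}$ with $v_p(z)=i$ whose unit part $z/p^i\bmod p^{s-i}$ lies in $R_{s-i}$ when $q^j$ is a quadratic residue modulo $p$ (equivalently $q\in R_1$, or $q\in N_1$ and $j$ even) and in $N_{s-i}$ otherwise; likewise $C_{ij}^*=A_{ij}^*\times B_j$ with $A_{ij}^*$ the other of the two; with the degenerations $B_t=\{0\}$, $A_{sj}=\{0\}$ and $C_0=\{0\}\times\{0\}$. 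Consequently $a+C_{ij'}^*$ is a translate of a product set, so each of the quantities $\#(a+C_{ij'}^*)\cap C_{kj}$ (and similarly with $C_{kj}^*$ or $C_{sj}$) splits as the product of a count in $\mathbb{Z}_{q^t}$ and a count in $\mathbb{Z}_{p^s}$, which I compute separately.

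The $q$-count is immediate: writing $a=(a_p,a_q)$ with $v_q(a_q)=j$ and taking $c=(c_p,c_q)\in C_{ij'}^*$, so that $v_q(c_q)=j'$ (or $c_q=0$ when $j'=t$), the strict inequality $j<j'$ gives $v_q(a_q+c_q)=j$ for every admissible $c_q$; hence all $\phi(q^{t-j'})$ of them carry $a_q$ into $B_j$, injectively, which explains why only the $q$-index-$j$ targets $C_{kj},C_{kj}^*,C_{sj}$ occur and why the factor $\phi(q^{t-j'})$ appears uniformly. Everything now rests on the $p$-count: with $\widetilde a=a_p/p^i\in\mathbb{Z}_{p^{s-i}}^*$ fixed (of the quadratic type occurring in $A_{ij}$) and $\widetilde c$ ranging over whichever of $R_{s-i},N_{s-i}$ occurs in $A_{ij'}^*$, I must distribute $p^i(\widetilde a+\widetilde c)$ among $A_{ij}$, $A_{ij}^*$, the valuation-$k$ classes for $i<k\leqslant s-1$, and $0$. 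The decisive observation is that $p\equiv3\bmod4$ forces $-1\in N_1$, so $-\widetilde a$ has quadratic type opposite to that of $\widetilde a$; therefore $\widetilde c\equiv-\widetilde a\pmod p$ has a solution with $\widetilde c$ in its prescribed class exactly when that class is the one opposite to $\widetilde a$'s — that is, exactly when $q^j$ and $q^{j'}$ have the same type modulo $p$. This holds in cases (i) and (iii) and fails in case (ii), and it is the source of the case split.

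In case (ii) the class of $\widetilde c$ coincides with that of $\widetilde a$, so $\widetilde a+\widetilde c$ is always a unit, and Lemma~\ref{3.1} with $s-i$ in place of $s$ and $\widetilde a$ in place of $r$ (or $n$) gives $\tfrac{p-3}{4}p^{s-i-1}$ sums in $\widetilde a$'s type (landing in $A_{ij}$, the $p$-part of $C_{ij}$) and $\tfrac{p+1}{4}p^{s-i-1}$ in the opposite type (landing in $A_{ij}^*$, the $p$-part of $C_{ij}^*$); multiplying by $\phi(q^{t-j'})$ gives the two formulas of (ii). In cases (i) and (iii) the class of $\widetilde c$ is opposite to that of $\widetilde a$, and Lemma~\ref{3.1} now yields $\tfrac{p-3}{4}p^{s-i-1}$ sums of each unit type (one feeding $C_{ij}$, the other $C_{ij}^*$) together with exactly $p^{s-i-1}$ sums divisible by $p$. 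For this last batch I parametrise the solutions of $\widetilde c\equiv-\widetilde a\pmod p$ as $\widetilde c=x_0+p\lambda$ with $x_0=(-\widetilde a\bmod p)$ fixed and $\lambda$ ranging over $\mathbb{Z}_{p^{s-i-1}}$ (using the shape of $R_k,N_k$ recorded in convention~(5)); then $p^i(\widetilde a+\widetilde c)=p^{\,i+1}m$ with $m$ running bijectively over $\mathbb{Z}_{p^{s-i-1}}$, and the $m$ with $v_p(m)=k-i-1$ and a prescribed type of unit part number $|R_{s-k}|=|N_{s-k}|=\tfrac12\phi(p^{s-k})$, landing in $C_{kj}$ and in $C_{kj}^*$ respectively for each $i<k\leqslant s-1$, while $m=0$ gives the single sum $0$, landing in $C_{sj}$; multiplying each by $\phi(q^{t-j'})$ produces the remaining displayed equalities. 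I expect the only real difficulty to be keeping this bookkeeping straight — tracking, across the quadratic character of $q$ and the parity of $j'-j$, which of $R_{s-i},N_{s-i}$ occurs in each coset and which type of $m$ feeds which target, and verifying that the degenerate cases $j'=t$, $i=s-1$ and $k=s$ are absorbed correctly.
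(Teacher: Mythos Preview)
Your proof is correct and follows essentially the approach the paper indicates: the paper gives no detailed argument for this lemma, only the remark preceding Lemma~\ref{3.2} that all of Lemmas~\ref{3.2}--\ref{3.17} ``can easily be obtained'' from Theorem~\ref{2.12} together with Lemma~\ref{3.1}, and that is exactly what you have carried out --- the CRT splitting from Theorem~\ref{2.12} to separate the $p$- and $q$-counts, and Lemma~\ref{3.1} (with $s$ replaced by $s-i$) for the $p$-count. Your reformulation of the $p$-part type as ``unit part lies in $R_{s-i}$ iff $q^{j}$ is a quadratic residue mod $p$'' is a tidy repackaging of the parity conditions in Theorem~\ref{2.12}(ii), and it makes the case split (same type in cases (i),(iii), opposite type in case~(ii)) transparent.
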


\begin{lemma}
Let $a^*\in C_{ij}^*$, $0\leqslant i\leqslant s-1$ and $0\leqslant j<j'\leqslant t$, then
\begin{enumerate}[(i)]
\item If $q$ is a quadratic residue modulo $p$, then
\[
\#(a^*+C_{ij'})\cap C_{ij}=(a^*+C_{ij'})\cap C_{ij}^*=\frac{p-3}{4}p^{s-i-1}\phi(q^{t-j'}).
\]
Also, for $i<k\leqslant s-1$,
\[
\#(a^*+C_{ij'})\cap C_{kj}=\#(a^*+C_{ij'})\cap C_{kj}^*=\frac{\phi(p^{s-k})}{2}\phi(q^{t-j'}),
\]
and\[\#(a^*+C_{ij'})\cap C_{sj}=\phi(q^{t-j'}).\]

\item If $q$ is a quadratic nonresidue modulo $p$ and $2\nmid j'-j$, then
\[
\#(a^*+C_{ij'})\cap C_{ij}=\frac{p+1}{4}p^{s-i-1}\phi(q^{t-j'})
\]
and
\[\#(a^*+C_{ij'})\cap C_{ij}^*=\frac{p-3}{4}p^{s-i-1}\phi(q^{t-j'}).\]

\item If $q$ is a quadratic nonresidue modulo $p$ and $2\mid j'-j$, then
\[
\#(a^*+C_{ij'})\cap C_{ij}=(a^*+C_{ij'})\cap C_{ij}^*=\frac{p-3}{4}p^{s-i-1}\phi(q^{t-j'}).
\]
Also, for $i<k\leqslant s-1$,
\[
\#(a^*+C_{ij'})\cap C_{kj}=\#(a^*+C_{ij'})\cap C_{kj}^*=\frac{\phi(p^{s-k})}{2}\phi(q^{t-j'}),
\]
and\[\#(a^*+C_{ij'})\cap C_{sj}=\phi(q^{t-j'}).\]
\end{enumerate}
\end{lemma}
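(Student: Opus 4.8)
The plan is to mimic the preceding lemma (which treats $a+C_{ij'}^{*}$ for $a\in C_{ij}$): pass to the additive coordinates of Theorem~\ref{2.12}, separate the roles of $p$ and $q$, and reduce the $p$-part to a count governed by Lemma~\ref{3.1}. Write $a^{*}=p^{i}q^{t}\alpha+p^{s}q^{j}\beta$ with $\beta\in\mathbb Z_{q^{t-j}}^{*}$ and $\alpha$ in the quadratic class modulo $p^{s-i}$ attached to $C_{ij}^{*}$ (namely $\alpha\in N_{s-i}$ when $q$ is a quadratic residue mod $p$, and $\alpha\in R_{s-i}$ or $N_{s-i}$ according to the parity of $t-j$ otherwise); write a generic element of $C_{ij'}$ as $p^{i}q^{t}x+p^{s}q^{j'}y$ with $x$ in the class attached to $C_{ij'}$ and $y\in\mathbb Z_{q^{t-j'}}^{*}$. (For $j'=t$ the $y$-term disappears and $\phi(q^{t-j'})=1$, a degenerate instance of the same argument.) The generic element of $a^{*}+C_{ij'}$ is then
\[
e=p^{i}q^{t}(\alpha+x)+p^{s}q^{j}\bigl(\beta+q^{j'-j}y\bigr).
\]

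Since $j<j'$ and $\beta$ is a unit mod $q$, the quantity $z:=\beta+q^{j'-j}y$ is again a unit mod $q$, so $e$ has $q$-adic valuation exactly $j$; this is why only the cosets $C_{ij},C_{ij}^{*},C_{kj},C_{kj}^{*}$ ($i<k\le s-1$) and $C_{sj}$ can arise. Moreover, as $y$ runs over $\mathbb Z_{q^{t-j'}}^{*}$ with $\beta$ fixed, $z\bmod q^{t-j}$ takes $\phi(q^{t-j'})$ distinct unit values, which produces the common factor $\phi(q^{t-j'})$; it then suffices to count, for one fixed $z$, the admissible $x$ sending $e$ into each target coset.

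For the $p$-part the coset of $e$ depends only on $w:=\alpha+x\bmod p^{s-i}$: it is determined by $v_{p}(w)$ together with the quadratic character modulo $p$ of $w/p^{v_{p}(w)}$, and is $C_{sj}$ when $p^{s-i}\mid w$. If $w$ is a unit mod $p$ then $e$ lies in $C_{ij}$ or $C_{ij}^{*}$ according as $w$ is in the $x$-class of $C_{ij}$ or of $C_{ij}^{*}$; applying Lemma~\ref{3.1} with $s$ replaced by $s-i$ (using $p\equiv3\bmod4$) to $\#(\alpha+(\text{class of }x))\cap R_{s-i}$ and $\cap N_{s-i}$ yields the stated $\tfrac{p-3}{4}p^{s-i-1}$ and $\tfrac{p+1}{4}p^{s-i-1}$ values. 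One checks that $\alpha$ and $x$ range over opposite quadratic classes exactly in cases (i) and (iii) --- where Lemma~\ref{3.1} gives $\tfrac{p-3}{4}p^{s-i-1}$ for each of $C_{ij},C_{ij}^{*}$ and leaves $p^{s-i-1}$ admissible $x$ with $p\mid w$ --- and over the same class in case (ii), where Lemma~\ref{3.1} forces $\#(\alpha+(\text{class of }x))\cap p\mathbb Z_{p^{s-i}}=0$; this explains both the absence of $C_{kj}$- and $C_{sj}$-terms in case (ii) and the asymmetric pair there, $\tfrac{p+1}{4}p^{s-i-1}$ for $C_{ij}$ and $\tfrac{p-3}{4}p^{s-i-1}$ for $C_{ij}^{*}$ (since $\alpha$ lies in the $x$-class of $C_{ij}^{*}$). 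In cases (i) and (iii), as $x$ runs over the $p^{s-i-1}$ admissible values with $p\mid w$, the quotient $(\alpha+x)/p$ runs bijectively over $\mathbb Z_{p^{s-i-1}}$, which follows from the description $R_{k}=\{x+p\lambda:x\in R_{1},\,0\le\lambda\le p^{k-1}-1\}$ and its analogue for $N_{k}$. An element of $p$-adic valuation $c'$ in $\mathbb Z_{p^{s-i-1}}$ corresponds to $e$ of $p$-adic valuation $k:=i+1+c'$; since the $x$-class of $C_{kj}$ is a full quadratic class, the $\phi(p^{s-k})$ such $x$ split evenly, $\tfrac{\phi(p^{s-k})}{2}$ into $C_{kj}$ and $\tfrac{\phi(p^{s-k})}{2}$ into $C_{kj}^{*}$, while the unique value $(\alpha+x)/p=0$ sends $e=p^{s}q^{j}z$ into $C_{sj}$. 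Multiplying by the $\phi(q^{t-j'})$ choices of $z$ gives all the displayed identities.

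Cases (i)--(iii) thus correspond to $q$ a residue, and $q$ a nonresidue with $j'-j$ odd, resp.\ even; inside the nonresidue case a further split on the parity of $t-j$ changes whether $\alpha$ and the various $x$-classes are of type $R$ or of type $N$, but one verifies that each branch gives the counts listed, so no hypothesis on the parity of $t$ is needed. I expect the main obstacle to be organizing this case bookkeeping cleanly, together with precise statements of the two structural facts used: that an element written $p^{i}q^{t}w+p^{s}q^{j}z$ with $w$ a $p$-unit and $z$ a $q$-unit lies in $C_{ij}$ iff $w$ lies in the $x$-class of $C_{ij}$ (and its analogue at higher $p$-valuation), and the bijection $x\mapsto(\alpha+x)/p$ onto $\mathbb Z_{p^{s-i-1}}$. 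Once these are settled, the numbers are forced by Lemma~\ref{3.1}.
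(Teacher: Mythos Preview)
Your argument is correct and is precisely the approach the paper intends: the paper does not give a separate proof for this lemma but states that Lemmas~\ref{3.2}--\ref{3.17} all follow from Theorem~\ref{2.12} together with Lemma~\ref{3.1}, and your proof is a careful unpacking of exactly that reduction (additive coordinates, separation of the $p$- and $q$-parts, and the cyclotomic-number count from Lemma~\ref{3.1}). The case bookkeeping you worry about is handled correctly, including the key dichotomy that $\alpha$ and $x$ lie in opposite quadratic classes in cases (i) and (iii) but in the same class in case (ii).
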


\begin{lemma}
\begin{enumerate}[(i)]
\item Let $a\in C_{sj}$ and $0\leqslant j<j'\leqslant t$, then$\#(a+C_{sj'})\cap C_{sj}=\phi(q^{t-j'})$.
\item Let $a\in C_{ij}$, $0\leqslant i\leqslant s-1$ and $0\leqslant j< j'\leqslant t$, then
$\#(a+C_{sj'})\cap C_{ij}=\phi(q^{t-j'})$.
\item Let $a^*\in C_{ij}^*$, $0\leqslant i\leqslant s-1$ and $0\leqslant j< j'\leqslant t$, then
$\#(a^*+C_{sj'})\cap C_{ij}^*=\phi(q^{t-j'})$.
\item Let $a\in C_{ij}$, $0\leqslant i\leqslant s-1$ and $0\leqslant j'< j\leqslant t$, then
$\#(a+C_{sj'})\cap C_{ij'}=\phi(q^{t-j})$.
\item Let $a^*\in C_{ij}^*$, $0\leqslant i\leqslant s-1$ and $0\leqslant j'< j\leqslant t$, then
$\#(a^*+C_{sj'})\cap C_{ij'}^*=\phi(q^{t-j})$.
\end{enumerate}
\end{lemma}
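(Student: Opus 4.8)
The plan is to convert each of the five assertions into an elementary divisibility count in $\mathbb{Z}_{q^{t-j'}}^*$, exploiting the explicit additive forms of the cyclotomic cosets from Theorem~\ref{2.12}. Everything is driven by one structural fact: every element of $C_{sj'}$ is $p^sq^{j'}y'$ with $y'\in\mathbb{Z}_{q^{t-j'}}^*$, so it is divisible by $p^s$ --- whence adding it to $a$ changes neither $v_p(a)$ nor the quadratic character modulo $p$ of the ``$p$-part'' of $a$ --- and it has $q$-adic valuation exactly $j'$. Thus the coset containing $a+b$ is determined by the $q$-adic behaviour of $a+b$ alone, and the proof falls into two regimes according to whether $v_q(a)$ lies below or at the level $j'$.

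For parts (i)--(iii) I would observe that the $q$-index of $a$ is the smaller one, so $v_q(a)<j'\leqslant v_q(b)$ for every $b\in C_{sj'}$ (the sole degenerate instance, $b=0$, arises when $j'=t$ and gives the trivial $1=\phi(q^{0})$). Then $v_q(a+b)=v_q(a)$, and in the Theorem~\ref{2.12} form of $a$ the summand $p^sq^{j'}y'$ merely shifts the ``$q$-coordinate'' of $a$ by a multiple of $q$, leaving its $p$-part and its unit-modulo-$q$ status intact; so $a+b$ lies in the very coset that $a$ does. Hence $a+C_{sj'}$ is contained in that coset, and the intersection has cardinality $|C_{sj'}|=\phi(q^{t-j'})$. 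This disposes of (i), (ii) and (iii) at once, and shows why no split on the quadratic character of $q$ is needed there.

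For parts (iv)--(v) I would use the regime $j'<j\leqslant t$ in which $a$ has the smaller $q$-index $j'$ and one counts how many translates land in the coset of $q$-index $j$ --- so that $a$ and the translating elements share the same $q$-valuation and $a+b$ may ascend. Write $a\equiv p^iq^{j'}w\pmod n$ with $\gcd(w,pq)=1$ (Theorem~\ref{2.12}) and $b=p^sq^{j'}y'=p^iq^{j'}\cdot p^{s-i}y'$, so $a+b\equiv p^iq^{j'}(w+p^{s-i}y')\pmod n$; then $a+b$ belongs to the index-$j$ coset exactly when $v_q(w+p^{s-i}y')=j-j'$. Since $p^{s-i}$ is a unit modulo $q^{t-j'}$, among the $y'\in\mathbb{Z}_{q^{t-j'}}^*$ there are $q^{t-j}$ with $v_q(w+p^{s-i}y')\geqslant j-j'$ and $q^{t-j-1}$ with $v_q\geqslant j-j'+1$, leaving exactly $\phi(q^{t-j})$ admissible values (for $j=t$ the single solution $w+p^{s-i}y'\equiv0\pmod{q^{t-j'}}$ gives $1=\phi(q^{0})$). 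Finally, writing $w+p^{s-i}y'=q^{j-j'}z$ one has $z\equiv w\,(q^{j-j'})^{-1}\pmod{p^{s-i}}$, independent of the admissible $y'$, so all admissible sums land in the \emph{same} one of $C_{ij},C_{ij}^{*}$; comparing the quadratic character of $w(q^{j-j'})^{-1}$ with the description of $C_{ij}$ in Theorem~\ref{2.12} --- a parity check on $t-j'$, $j-j'$ and $t-j$, carried out separately according to whether $q$ is a quadratic residue modulo $p$ --- shows the characters always match, so the admissible sums all lie in $C_{ij}$ (resp.\ $C_{ij}^{*}$) and the count is $\phi(q^{t-j})$, with no case distinction.

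The step I expect to require the most care is exactly this last character matching: one must verify that the quadratic character of $q$ modulo $p$, the parity of $j-j'$, and the parity of $t-j$ always conspire so that $w(q^{j-j'})^{-1}$ carries precisely the character that Theorem~\ref{2.12} prescribes for $C_{ij}$, so that the answer is the clean $\phi(q^{t-j})$ independently of the sub-case. This is the bookkeeping for which Lemma~\ref{lem2.7}, the explicit description of $R_k,N_k$ in assumption~(5), and the hypothesis $p\equiv3\bmod4$ are tailor-made; everything else is the routine valuation count above, with $v_p(a+b)=i$ throughout because $p^s\mid b$ while $p^i\,\|\,a$.
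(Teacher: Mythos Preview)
Your argument for (i)--(iii) is correct and is exactly the route the paper intends via Theorem~\ref{2.12}.

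For (iv)--(v) you have the $q$-valuations reversed. Since $a\in C_{ij}=C_{p^iq^j}$ and $j'<j$, one has $v_q(a)=j$, the \emph{larger} of the two indices, while every $b\in C_{sj'}$ satisfies $v_q(b)=j'$; hence $v_q(a+b)=j'$ for \emph{every} $b$ and no ``ascent'' ever occurs. Concretely, with $a=p^iq^tx+p^sq^jy$ from Theorem~\ref{2.12} (or $a=p^iq^tx$ when $j=t$) one gets $a+b=p^iq^tx+p^sq^{j'}(q^{j-j'}y+y')$ with the bracket a unit modulo $q^{t-j'}$, so all of $a+C_{sj'}$ sits inside $C_{ij'}\cup C_{ij'}^*$ with the $x$-coordinate unchanged --- structurally identical to (i)--(iii), not the cancellation count you set up. Your parametrisation $a\equiv p^iq^{j'}w$ with $\gcd(w,q)=1$ already presupposes $v_q(a)=j'$, which is false here.

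Moreover, the ``characters always match'' step you postpone is not mere bookkeeping: it fails. When $q$ is a nonresidue modulo $p$ and $j-j'$ is odd, Theorem~\ref{2.12}(ii) assigns opposite $R/N$ sets to $C_{ij}$ and $C_{ij'}$, so the unchanged $x$-coordinate forces $a+b\in C_{ij'}^*$ and the intersection in (iv) is empty. In the remaining sub-cases the intersection is all of $a+C_{sj'}$, of size $\phi(q^{t-j'})$, not $\phi(q^{t-j})$; the quantity $\phi(q^{t-j})$ is the \emph{multiplicity} appearing in Theorem~4.12(iii), recovered from $\phi(q^{t-j'})$ by the factor $|C_{ij}|/|C_{ij'}|$ as in the proof of Theorem~\ref{4.1}. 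Thus the printed value in (iv)--(v) is itself a slip, and your cancellation argument was engineered to hit that incorrect target.
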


\begin{lemma}
Let $a\in C_{ij}$, $0\leqslant i<i'\leqslant s-1$ and $0\leqslant j\leqslant t-1$, then
\begin{enumerate}[(i)]
\item $\#(a+C_{i'j})\cap C_{ij}=\frac{\phi(p^{s-i'})}{2}(q-2)q^{t-j-1}$.
\item If $q$ is a quadratic residue modulo $p$, then for $j<m\leqslant t$, we have
\[\#(a+C_{i'j})\cap C_{im}=\frac{\phi(p^{s-i'})}{2}\phi(q^{t-m}).\]
\item If $q$ is a quadratic nonresidue modulo $p$, then for $j<m\leqslant t$ and $2\nmid m-j$, we have\[\#(a+C_{i'j})\cap C_{im}^*=\frac{\phi(p^{s-i'})}{2}\phi(q^{t-m}).\]For $j<m\leqslant t$ and $2\mid m-j$, we have\[\#(a+C_{i'j})\cap C_{im}=\frac{\phi(p^{s-i'})}{2}\phi(q^{t-m}).\]
\end{enumerate}
\end{lemma}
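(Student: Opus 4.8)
The plan is to split the count through the Chinese Remainder Theorem and reduce it to an elementary $q$-adic counting problem, the quadratic-residue structure entering only through the $p$-component. Identify $\mathbb{Z}_n$ with $\mathbb{Z}_{p^s}\times\mathbb{Z}_{q^t}$ and write $z\leftrightarrow(z_p,z_q)$. From Theorem~\ref{2.12} one extracts, for all $0\leqslant i\leqslant s-1$ and $0\leqslant m\leqslant t$, the characterisation: $z\in C_{im}$ (resp. $z\in C_{im}^*$) if and only if $v_p(z_p)=i$, $v_q(z_q)=m$ (with $v_q(z_q)=t$ meaning $z_q=0$), and $z_p/p^i$ is a quadratic residue (resp. nonresidue) modulo $p$ up to the parity correction by $t-m$ that appears exactly when $q$ is a nonresidue modulo $p$. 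The $q$-component carries no quadratic-residue condition, since $l$ is a primitive root modulo $q^t$; this is what permits independent counting of the two parts.

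First I would handle the $p$-component. Writing $a_p=p^i a'$ and, for $c\in C_{i'j}$, $c_p=p^{i'}c'$ with $a',c'$ coprime to $p$, the sum $a_p+c_p=p^i(a'+p^{i'-i}c')$ has $v_p=i$ and cofactor $\equiv a'\pmod p$, because $i'>i$ (and reduction modulo $p^s$ affects neither, as $s>i$). By the lemma following Lemma~\ref{lem2.7} — a unit is a quadratic residue modulo a prime power iff it is one modulo $p$ — the $p$-part of every element of $a+C_{i'j}$ therefore inherits the quadratic character of $a$. In the residue case this puts all elements of the right $q$-valuation into $C_{im}$ (and none into $C_{im}^*$); in the nonresidue case, since the $p$-parts of $C_{im}$ and $C_{ij}$ differ precisely by the parity of $t-m$ versus $t-j$, those elements land in $C_{im}$ when $2\mid m-j$ and in $C_{im}^*$ when $2\nmid m-j$, which is the dichotomy in~(iii).

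Next I would decouple and count the $q$-part. For each $c_q$ with $v_q(c_q)=j$, the admissible $c_p$ number $\#R_{s-i'}=\phi(p^{s-i'})/2$ and all go to the same coset, so
\[
\#\bigl(a+C_{i'j}\bigr)\cap C_{im}=\frac{\phi(p^{s-i'})}{2}\cdot\#\{\,c_q\in\mathbb{Z}_{q^t}:v_q(c_q)=j,\ v_q(a_q+c_q)=m\,\},
\]
and similarly for $C_{im}^*$ in the twisted case. Writing $a_q=q^j u$, $c_q=q^j w$ with $u,w$ units modulo $q^{t-j}$, one has $(a_q+c_q)\bmod q^t=q^j\bigl((u+w)\bmod q^{t-j}\bigr)$ (reduction modulo $q^t$ subtracts $q^t$ at most once), so the inner count is the number of units $w$ modulo $q^{t-j}$ with $v_q(u+w)=m-j$: for $m=j$ this is $w\not\equiv 0,-u\pmod q$, giving $(q-2)q^{t-j-1}$; for $j<m<t$ it is $w\equiv -u\pmod{q^{m-j}}$ but $w\not\equiv -u\pmod{q^{m-j+1}}$, giving $q^{t-m}-q^{t-m-1}=\phi(q^{t-m})$; and for $m=t$ it is the unique solution $w\equiv -u\pmod{q^{t-j}}$, i.e. $1=\phi(q^{t-m})$. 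Multiplying by $\phi(p^{s-i'})/2$ gives (i), (ii), and (iii).

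I expect the only genuine difficulty to be bookkeeping: keeping the moduli $n$, $p^s$, $q^t$ and their $p$- and $q$-adic valuations consistent under reduction, and, in the case $q$ a nonresidue modulo $p$, correctly matching the parities of $t-j$ and $t-m$ so that $C_{im}$ versus $C_{im}^*$ comes out right. Given the CRT split and the explicit form of the cosets from Theorem~\ref{2.12}, the rest is routine; in particular, unlike the preceding lemmas, this one needs no input from the cyclotomic-number computation of Lemma~\ref{3.1}, since the strict inequality $i<i'$ rules out any cancellation in the $p$-part.
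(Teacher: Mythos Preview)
Your argument is correct and follows essentially the same route as the paper, which simply refers all of Lemmas~\ref{3.2}--\ref{3.17} back to the explicit additive description of the cosets in Theorem~\ref{2.12} (together with Lemma~\ref{3.1}); your CRT split is exactly what that description encodes, and you have merely supplied the details the paper omits. Your observation that Lemma~\ref{3.1} is not actually needed here---because $i<i'$ forces the $p$-cofactor of $a+c$ to be congruent to that of $a$ modulo $p$, so no genuine cyclotomic-number count arises---is correct and sharpens the paper's blanket citation.
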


\begin{lemma}
Let $a^*\in C_{ij}^*$, $0\leqslant i<i'\leqslant s-1$ and $0\leqslant j\leqslant t-1$, then
\begin{enumerate}[(i)]
\item $\#(a^*+C_{i'j}^*)\cap C_{ij}^*=\frac{\phi(p^{s-i'})}{2}(q-2)q^{t-j-1}$.
\item If $q$ is a quadratic residue modulo $p$, then for $j<m\leqslant t$, we have
\[\#(a^*+C_{i'j}^*)\cap C_{im}^*=\frac{\phi(p^{s-i'})}{2}\phi(q^{t-m}).\]
\item If $q$ is a quadratic nonresidue modulo $p$, then for $j<m\leqslant t$ and $2\nmid m-j$, we have\[\#(a^*+C_{i'j}^*)\cap C_{im}=\frac{\phi(p^{s-i'})}{2}\phi(q^{t-m}).\]For $j<m\leqslant t$ and $2\mid m-j$, we have\[\#(a^*+C_{i'j}^*)\cap C_{im}^*=\frac{\phi(p^{s-i'})}{2}\phi(q^{t-m}).\]
\end{enumerate}
\end{lemma}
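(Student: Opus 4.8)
The plan is to mimic, in the ``star'' case, the computation carried out for the preceding (non-star) lemma: substitute the additive descriptions of the $l$-cyclotomic cosets supplied by Theorem~\ref{2.12} and reduce each cardinality to an elementary count of pairs of residue classes. Concretely, I would write $a^{*}=p^{i}q^{t}x_{0}+p^{s}q^{j}y_{0}$ in the standard form of Theorem~\ref{2.12}, so that $y_{0}\in\mathbb{Z}_{q^{t-j}}^{*}$ while $x_{0}$ lies in $R_{s-i}$ or $N_{s-i}$ according to the quadratic character of $q$ modulo $p$ and the parity of $t-j$, and write a generic element of $C_{i'j}^{*}$ as $b'=p^{i'}q^{t}x'+p^{s}q^{j}y'$, with $(x',y')$ ranging over the matching admissible pairs. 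Because $z\mapsto a^{*}+z$ is a bijection of $\mathbb{Z}_{n}$ and $C_{i'j}^{*}$ is parametrized bijectively by its admissible pairs (uniqueness of such representations, exactly as in Lemma~\ref{lem2.9}), each of the three cardinalities in question equals the number of admissible $(x',y')$ for which
\[
a^{*}+b'=p^{i}q^{t}\bigl(x_{0}+p^{i'-i}x'\bigr)+p^{s}q^{j}(y_{0}+y')
\]
lands in the prescribed coset.

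The simplification I would exploit --- it is where the hypothesis $i<i'$ is used, and why Lemma~\ref{3.1} is \emph{not} needed here --- is that $X:=x_{0}+p^{i'-i}x'\equiv x_{0}\pmod p$. Hence $X$ is prime to $p$, and by the fact proved in Section~\ref{sec2} that an integer prime to $p$ is a quadratic residue modulo $p^{k}$ if and only if it is one modulo $p$, $X$ lies in $R_{s-i}$, resp.\ $N_{s-i}$, exactly when $x_{0}$ does --- for \emph{every} admissible $x'$. So the ``$p$-part'' of the membership condition is automatically satisfied, the exact power of $p$ dividing $a^{*}+b'$ is $p^{i}$, and only the parity of $t-j$ (via $x_{0}\bmod p$) together with the exact power of $q$ dividing $y_{0}+y'$ determine which of $C_{im}$, $C_{im}^{*}$ the sum meets. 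Since $x'\mapsto X\bmod p^{s-i}$ is injective on residues modulo $p^{s-i'}$, all $\tfrac{\phi(p^{s-i'})}{2}$ admissible values of $x'$ occur; this is the source of the factor $\tfrac{\phi(p^{s-i'})}{2}$ in all three formulas.

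What remains is a one-variable count: how many admissible $y'\in\mathbb{Z}_{q^{t-j}}^{*}$ make $y_{0}+y'$ have a prescribed exact $q$-divisibility modulo $q^{t-j}$. For (i) I need $y_{0}+y'$ to be a unit, i.e.\ the $q$-part of $a^{*}+b'$ to be $q^{j}$; removing the $q^{t-j-1}$ units $y'$ with $y'\equiv-y_{0}\pmod q$ from the $(q-1)q^{t-j-1}$ units leaves $(q-2)q^{t-j-1}$, and the correct $D_{1}$-type of the sum is then forced by the same parity reasoning, giving (i). For $j<m\leqslant t$ I need $q^{m-j}$ to divide $y_{0}+y'$ but $q^{m-j+1}$ not to, so that $a^{*}+b'$ has $q$-part exactly $q^{m}$; since $m>j$ this forces $y_{0}+y'\equiv0\pmod q$, whence $y'\equiv-y_{0}\pmod q$ is automatically a unit modulo $q^{t-j}$, and the number of residues modulo $q^{t-j}$ with exact $q$-divisibility $q^{m-j}$ is $\phi(q^{t-m})$ (the case $m=t$ being degenerate: the unique $y'\equiv-y_{0}\pmod{q^{t-j}}$ yields $a^{*}+b'=p^{i}q^{t}X$ and $\phi(q^{0})=1$). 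Multiplying, the cardinality is $\tfrac{\phi(p^{s-i'})}{2}\phi(q^{t-m})$.

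The last point, which I expect to be the only genuinely fiddly part, is deciding whether these sums lie in $C_{im}$ or in $C_{im}^{*}$. Writing $a^{*}+b'=p^{i}q^{m}u$ with $u\equiv q^{t-m}X\pmod p$, and using Theorem~\ref{2.12} (equivalently $C_{im}=p^{i}q^{m}D_{0}^{p^{s-i}q^{t-m}}$), membership in $C_{im}$ amounts to $u$ being a quadratic residue modulo $p$. When $q$ is a quadratic residue, $X\equiv x_{0}$ is a nonresidue and the sum falls in $C_{im}^{*}$, which is (ii); when $q$ is a nonresidue, the character of $q^{t-m}X$ flips with the parity of $t-m$, and combining this with the parity of $t-j$ (which fixes whether $x_{0}$ is a residue) shows the sum lies in $C_{im}$ precisely when $m-j$ is odd and in $C_{im}^{*}$ precisely when $m-j$ is even, which is (iii). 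This parity chase is the same one already executed for the non-star lemma immediately preceding, so it introduces no new obstacle.
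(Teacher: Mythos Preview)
Your proposal is correct and is exactly the fleshed-out version of what the paper intends: the paper proves none of Lemmas~3.2--3.17 individually but simply asserts that they all follow ``by Theorem~\ref{2.12} and Lemma~\ref{3.1}'', and your argument is precisely the explicit computation via the additive representations of Theorem~\ref{2.12}. Your observation that Lemma~\ref{3.1} is \emph{not} needed here---because $i<i'$ forces $X\equiv x_{0}\pmod p$, so the quadratic character on the $p$-side is inherited from $a^{*}$ rather than distributed according to cyclotomic numbers---is a correct and useful sharpening of the paper's blanket reference; the factor $\tfrac{\phi(p^{s-i'})}{2}$ then comes entirely from the free choice of $x'$, and all the content lies in the $q$-adic valuation count and the parity chase, both of which you carry out accurately.
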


\begin{lemma}
Let $a\in C_{ij}$, $0\leqslant i<i'\leqslant s-1$ and $0\leqslant j\leqslant t-1$, then
\begin{enumerate}
\item $\#(a+C_{i'j}^*)\cap C_{ij}=\frac{\phi(p^{s-i'})}{2}(q-2)q^{t-j-1}$.
\item If $q$ is a quadratic residue modulo $p$, then for $j<m\leqslant t$, we have
\[\#(a+C_{i'j}^*)\cap C_{im}=\frac{\phi(p^{s-i'})}{2}\phi(q^{t-m}).\]
\item If $q$ is a quadratic nonresidue modulo $p$, then for $j<m\leqslant t$ and $2\nmid m-j$, we have\[\#(a+C_{i'j}^*)\cap C_{im}^*=\frac{\phi(p^{s-i'})}{2}\phi(q^{t-m}).\]For $j<m\leqslant t$ and $2\mid m-j$, we have\[\#(a+C_{i'j}^*)\cap C_{im}=\frac{\phi(p^{s-i'})}{2}\phi(q^{t-m}).\]
\end{enumerate}
\end{lemma}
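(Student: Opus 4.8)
The plan is to compute directly from the additive description of the cyclotomic cosets given in Theorem \ref{2.12}, just as in the preceding lemmas of this section. I would write $a=p^iq^tx_0+p^sq^jy_0$, where $x_0$ is a unit modulo $p^{s-i}$ lying in $R_{s-i}$ or $N_{s-i}$ according to Theorem \ref{2.12} and $y_0\in\mathbb Z_{q^{t-j}}^*$, and write an arbitrary element of $C_{i'j}^*$ as $b=p^{i'}q^tx'+p^sq^jy'$ with $x'$ a unit modulo $p^{s-i'}$ and $y'\in\mathbb Z_{q^{t-j}}^*$. By Theorem \ref{2.12} (the injectivity of the parametrisation coming from the same argument as in Lemma \ref{lem2.9}) the pair $(x',y')$ runs bijectively over $C_{i'j}^*$, so the desired cardinalities just count admissible pairs, starting from
\[
a+b\equiv p^iq^t\bigl(x_0+p^{i'-i}x'\bigr)+p^sq^j\bigl(y_0+y'\bigr)\pmod n .
\]

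The observation that makes everything work is that $i<i'$, so $p\mid p^{i'-i}x'$ and hence $x_0+p^{i'-i}x'\equiv x_0\pmod p$; this number is therefore still a unit modulo $p^{s-i}$ and, because a unit is a quadratic residue modulo $p^{s-i}$ precisely when it is one modulo $p$, it carries the same quadratic character as $x_0$ regardless of the choice of $x'$. In other words the ``$p$-part'' of $a+b$ never leaves the slot dictated by $a$, so all $\phi(p^{s-i'})/2$ values of $x'$ are admissible, and the whole problem reduces to the ``$q$-part''. Writing $Y=(y_0+y')\bmod q^{t-j}$ and letting $v_q$ denote the $q$-adic valuation, $a+b$ lies in $C_{ij}$ (or $C_{ij}^*$) when $Y$ is a unit, in a coset with $q$-exponent $m$ for $j<m<t$ when $v_q(Y)=m-j$, and in $C_{it}$ (or $C_{it}^*$) when $Y=0$, i.e. $y'\equiv-y_0\pmod{q^{t-j}}$. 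For part (i) one counts the $y'\in\mathbb Z_{q^{t-j}}^*$ with $y_0+y'$ still a unit, i.e. $y'\not\equiv-y_0\pmod q$; there are $\phi(q^{t-j})-q^{t-j-1}=(q-2)q^{t-j-1}$ of them, and multiplication by $\phi(p^{s-i'})/2$ gives the stated value. For parts (ii) and (iii), for a fixed $m$ with $j<m\le t$ the number of $y'$ producing the required $q$-valuation is $\phi(q^{t-m})$ — when $m<t$ every admissible value of $Y$ forces $y'\equiv-y_0\pmod q$, so $y'$ is automatically a unit, and when $m=t$ the unique choice is $y'\equiv-y_0$ — so again the count is $\phi(p^{s-i'})\phi(q^{t-m})/2$.

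The only step that is not a routine separation into $p$- and $q$-components is deciding whether $a+b$ falls in $C_{im}$ or in $C_{im}^*$. If $q$ is a quadratic residue modulo $p$, Theorem \ref{2.12}(i) places $x_0$ and every $C_{im}$ in an ``$R$-slot'', so $a+b\in C_{im}$, which is part (ii). If $q$ is a nonresidue, Theorem \ref{2.12}(ii) shows that the slot containing $x_0$ is governed by the parity of $t-j$ whereas the $R$-slot of $C_{im}$ is governed by the parity of $t-m$; since $(t-j)-(t-m)=m-j$, these parities agree exactly when $2\mid m-j$, so $a+b\in C_{im}$ when $2\mid m-j$ and $a+b\in C_{im}^*$ when $2\nmid m-j$ (the boundary case $m=t$ being consistent, since $C_{it}$ sits in an $R$-slot and $C_{it}^*$ in an $N$-slot). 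This gives part (iii); getting this parity dictionary straight is the only delicate point, everything else following the template of the earlier lemmas of this section.
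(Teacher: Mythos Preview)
Your argument is correct and is exactly the approach the paper has in mind: the paper states that all of Lemmas \ref{3.2}--\ref{3.17} follow from Theorem \ref{2.12} together with Lemma \ref{3.1}, and your computation is precisely this, with the observation that for this particular lemma the hypothesis $i<i'$ makes the $p$-component trivial (so Lemma \ref{3.1} is not actually invoked). Your handling of the parity bookkeeping in part (iii) via Theorem \ref{2.12}(ii) is also correct.
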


\begin{lemma}
Let $a^*\in C_{ij}^*$, $0\leqslant i<i'\leqslant s-1$ and $0\leqslant j\leqslant t-1$, then
\begin{enumerate}[(i)]
\item $\#(a^*+C_{i'j})\cap C_{ij}^*=\frac{\phi(p^{s-i'})}{2}(q-2)q^{t-j-1}$.
\item If $q$ is a quadratic residue modulo $p$, then for $j<m\leqslant t$, we have
\[\#(a^*+C_{i'j}^*)\cap C_{im}^*=\frac{\phi(p^{s-i'})}{2}\phi(q^{t-m}).\]
\item If $q$ is a quadratic nonresidue modulo $p$, then for $j<m\leqslant t$ and $2\nmid m-j$, we have\[\#(a^*+C_{i'j})\cap C_{im}=\frac{\phi(p^{s-i'})}{2}\phi(q^{t-m}).\]For $j<m\leqslant t$ and $2\mid m-j$, we have\[\#(a^*+C_{i'j})\cap C_{im}^*=\frac{\phi(p^{s-i'})}{2}\phi(q^{t-m}).\]
\end{enumerate}\label{3.17}
\end{lemma}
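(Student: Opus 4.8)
The plan is to push everything through the explicit additive form of the cyclotomic cosets in Theorem~\ref{2.12}, which turns the three assertions into an elementary count that factors as an independent contribution ``modulo $p$'' times one ``modulo $q$''. Fix $a^{*}\in C_{ij}^{*}$ and write, via Theorem~\ref{2.12}, $a^{*}=p^{i}q^{t}x_{0}+p^{s}q^{j}y_{0}$, where $y_{0}\in\mathbb{Z}_{q^{t-j}}^{*}$ and $x_{0}$ lies in $R_{s-i}$ or in $N_{s-i}$ according to the relevant case of that theorem (whether $q$ is a quadratic residue modulo $p$, and, in the nonresidue case, the parity of $t-j$). Likewise write a general element $b$ of $C_{i'j}$ (respectively $C_{i'j}^{*}$) as $b=p^{i'}q^{t}x+p^{s}q^{j}y$ with $y\in\mathbb{Z}_{q^{t-j}}^{*}$ and $x$ ranging over a set $R_{s-i'}$ or $N_{s-i'}$ of cardinality $\phi(p^{s-i'})/2$. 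Then
\[
a^{*}+b \;=\; p^{i}q^{t}\bigl(x_{0}+p^{\,i'-i}x\bigr)\;+\;p^{s}q^{j}(y_{0}+y).
\]

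The decisive point is that $i<i'$, so $x_{0}+p^{\,i'-i}x\equiv x_{0}\pmod p$; consequently, once $p^{i}$ is extracted, the cofactor $q^{t}(x_{0}+p^{\,i'-i}x)+p^{s-i}q^{j}(y_{0}+y)$ is congruent to $q^{t}x_{0}\pmod p$, hence a unit modulo $p$, so $p^{i}$ is the exact power of $p$ dividing $a^{*}+b$ and its quadratic character modulo $p$ is that of $q^{t}x_{0}$ --- a quantity depending only on $a^{*}$, not on $x$ or $y$. Thus the ``$p$-side'' of $a^{*}+b$ is frozen. Writing $q^{m}$ for the exact power of $q$ dividing $a^{*}+b$ (with $m=t$ when $q^{t}\mid a^{*}+b$), dividing out $p^{i}q^{m}$ leaves a cofactor congruent to $q^{\,t-m}x_{0}\pmod p$; comparing its quadratic character with the descriptions of $C_{im}$ and $C_{im}^{*}$ in Theorem~\ref{2.12} and running through the parities of $t-j$, $t-m$ and $m-j$ shows that $a^{*}+b$ necessarily lies in $C_{im}^{*}$ in the situation of part (i) (where $m=j$) and of parts (ii), (iii) with $2\mid m-j$, and in $C_{im}$ in part (iii) with $2\nmid m-j$ --- exactly as asserted. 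Because $i\neq i'$, no counting of the type $\#(r+R_{s})\cap R_{s}$ of Lemma~\ref{3.1} is needed here; only $\#R_{k}=\#N_{k}=\phi(p^{k})/2$ and the description $N_{k}=\{x+p\lambda\}$ are used.

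It remains, for the fixed unit $y_{0}$, to count the admissible $y\in\mathbb{Z}_{q^{t-j}}^{*}$, i.e.\ those producing the prescribed power of $q$; since $a^{*}+b=p^{i}q^{j}\bigl(q^{\,t-j}(x_{0}+p^{\,i'-i}x)+p^{s-i}(y_{0}+y)\bigr)$ and $t-j\geqslant1$, that power is governed solely by the $q$-adic valuation of $y_{0}+y$. For part (i) we need $q\nmid y_{0}+y$; the units $y$ with $y\equiv-y_{0}\pmod q$ number $q^{t-j-1}$, leaving $(q-1)q^{t-j-1}-q^{t-j-1}=(q-2)q^{t-j-1}$. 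For parts (ii) and (iii) with $j<m<t$ we need $y\equiv-y_{0}\pmod{q^{m-j}}$ but $y\not\equiv-y_{0}\pmod{q^{m-j+1}}$, which gives $q^{t-m}-q^{t-m-1}=\phi(q^{t-m})$ values, all automatically coprime to $q$; and for $m=t$ the condition $y\equiv-y_{0}\pmod{q^{t-j}}$ admits exactly one value, matching $\phi(q^{t-t})=1$. Since translation by $a^{*}$ is injective, distinct admissible $b$ yield distinct elements of the target coset, so the required cardinality is the number of admissible $y$ times the $\phi(p^{s-i'})/2$ unconstrained choices of $x$, which is precisely the claimed value. The only step that genuinely requires care is the parity bookkeeping of the second paragraph for part (iii): verifying case by case that the frozen quadratic character of $q^{\,t-m}x_{0}$ places $a^{*}+b$ in the starred or the unstarred coset exactly as stated is where parity slips are easiest to make; the same template also yields Lemmas~\ref{3.2}--\ref{3.17} uniformly.
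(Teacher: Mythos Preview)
Your argument is correct and follows exactly the route the paper indicates: the paper's ``proof'' of Lemmas~\ref{3.2}--\ref{3.17} is the single sentence that they follow from Theorem~\ref{2.12} and Lemma~\ref{3.1}, and you have carried out precisely that reduction via the additive CRT description of the cosets. Your observation that, because $i<i'$, the term $p^{\,i'-i}x$ vanishes modulo $p$ and hence the quadratic character of the $p$-cofactor is frozen at that of $x_{0}$ (so that Lemma~\ref{3.1} is not actually needed for this particular lemma) is a genuine simplification over the blanket reference in the paper; one small caveat is that your closing remark that ``the same template also yields Lemmas~\ref{3.2}--\ref{3.17} uniformly'' overstates things, since for the lemmas with equal first index (e.g.\ Lemma~\ref{3.2}) the cyclotomic numbers of Lemma~\ref{3.1} really are required.
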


\section{Arithmetic properties}\label{sec4}

In this section, we will study arithmetic properties of some families in the ring $\mathbb{F}_l[x]/\langle x^{p^sq^t-1}\rangle$. For that, let us define \[\chi_{ij}=\sum_{\alpha\in C_{ij}}x^\alpha\]and\[\chi_{ij}^*=\sum_{\alpha\in C_{ij}^*}x^\alpha.\]In particular, $\chi_{sj}=\chi_{sj}^*$ for $0\leqslant j\leqslant t$ and $\chi_{st}=1$.

\begin{theorem}
Let $0\leqslant i\leqslant s-1$ and $0\leqslant j\leqslant t-1$, then
\begin{enumerate}[(i)]
\item If $q$ is a quadratic residue modulo $p$, then
\[
\begin{aligned}\chi_{ij}^2=&\frac{p-3}{4}p^{s-i-1}\left(\left(q-2\right)q^{t-j-1}\chi_{ij}+\phi(q^{t-j})\sum_{j<m\leqslant t}\chi_{im}\right)\\&+\frac{p+1}{4}p^{s-i-1}\left(\left(q-2\right)q^{t-j-1}\chi_{ij}^*+\phi(q^{t-j})\sum_{j<m\leqslant t}\chi_{im}^*\right).
\end{aligned}\]\label{1}
\item If $q$ is a quadratic nonresidue modulo $p$, then
\[
\begin{aligned}\chi_{ij}^2=&\frac{p-3}{4}p^{s-i-1}\left(\left(q-2\right)q^{t-j-1}\chi_{ij}+\phi(q^{t-j})\left(\sum_{\substack{j<m\leqslant t\\2\nmid m-j}}\chi_{im}^*+\sum_{\substack{j<m\leqslant t\\2\mid m-j}}\chi_{im}\right)\right)\\
&+\frac{p+1}{4}p^{s-i-1}\left(\left(q-2\right)q^{t-j-1}\chi_{ij}^*+\phi(q^{t-j})\left(\sum_{\substack{j<m\leqslant t\\2\nmid m-j}}\chi_{im}+\sum_{\substack{j<m\leqslant t\\2\mid m-j}}\chi_{im}^*\right)\right).
\end{aligned}\]
\end{enumerate}\label{4.1}
\end{theorem}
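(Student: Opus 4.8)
The plan is to compute $\chi_{ij}^2$ directly from the definition and then collect terms using the cyclotomic-intersection counts already established in Lemma \ref{3.2}. Writing $\chi_{ij}^2 = \sum_{\alpha,\beta\in C_{ij}} x^{\alpha+\beta}$, I would reindex by the sum $\gamma = \alpha+\beta \bmod p^sq^t$, so that the coefficient of $x^\gamma$ equals $\#\{(\alpha,\beta)\in C_{ij}^2 : \alpha+\beta\equiv\gamma\}$. For a fixed $\gamma$, this count is $\#\big((\gamma - C_{ij})\cap C_{ij}\big)$, and since $C_{ij} = -C_{ij}$ fails in general (indeed $-1$ is a nonresidue mod $p$, which is exactly the point of Lemma \ref{3.1}), I need to be careful: one has $-C_{ij} = C_{ij}^*$ when $s-i\geq 1$ because negation sends quadratic residues to nonresidues in $\mathbb{Z}_{p^{s-i}}^*$ under the hypothesis $p\equiv 3\bmod 4$. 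Hence the coefficient of $x^\gamma$ is $\#\big((\gamma + C_{ij}^*)\cap C_{ij}\big)$ — but it is cleaner to keep the form $\#\big((\gamma-C_{ij})\cap C_{ij}\big)$ and observe directly that the multiset of values $\alpha+\beta$ is governed by the cyclotomic numbers computed in Lemma \ref{3.2}.

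The key structural fact is that $\chi_{ij}^2$ is fixed by multiplication by $l$ (since $C_{ij}$ is an $l$-cyclotomic coset, $l\cdot(C_{ij}+C_{ij}) = C_{ij}+C_{ij}$ as a multiset), so $\chi_{ij}^2$ is a $\mathbb{Z}$-linear combination of the $\chi$-polynomials $\chi_{km}, \chi_{km}^*$ and the constant $\chi_{st}=1$; moreover only those $C_{km}, C_{km}^*$ that can actually arise as $C_{ij}+C_{ij}$ appear. Since every element of $C_{ij}+C_{ij}$ is divisible by $p^iq^j$, and adding two elements of $C_{ij}$ (which are $\equiv 0 \bmod p^i$, $\not\equiv 0\bmod p^{i+1}$ in the $p$-part, and similarly $\equiv 0\bmod q^j$, $\not\equiv 0 \bmod q^{j+1}$ in the $q$-part) yields elements whose $p$-adic valuation is still exactly $i$ (because $-1$ is a nonresidue, so no cancellation to a higher power of $p$ occurs) but whose $q$-adic valuation is $\geq j$, the only cosets that occur are $C_{im}, C_{im}^*$ for $j\leq m\leq t$. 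Thus I would argue: the coefficient of $\chi_{ij}$ (resp. $\chi_{ij}^*$) is the single intersection number $\#(a+C_{ij})\cap C_{ij}$ (resp. $\#(a+C_{ij})\cap C_{ij}^*$) for any fixed $a\in C_{ij}$, which by Lemma \ref{3.2}(i),(ii) equals $\frac{p-3}{4}p^{s-i-1}(q-2)q^{t-j-1}$ (resp. $\frac{p+1}{4}p^{s-i-1}(q-2)q^{t-j-1}$); and for $j<m\leq t$ the coefficient of $\chi_{im}$ (resp. $\chi_{im}^*$) is $\#(a+C_{ij})\cap C_{im}$ (resp. $\#(a+C_{ij})\cap C_{im}^*$) for any fixed $a\in C_{im}$, which is given by Lemma \ref{3.2}(iii) in the quadratic-residue case and by Lemma \ref{3.2}(iv) in the nonresidue case — and these are precisely the coefficients asserted in the two parts of the theorem.

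For part (i), substituting Lemma \ref{3.2}(iii) for every $m$ with $j<m\leq t$ groups all the "plain" cosets with coefficient $\frac{p-3}{4}p^{s-i-1}\phi(q^{t-m})$ and all the starred cosets with coefficient $\frac{p+1}{4}p^{s-i-1}\phi(q^{t-m})$, giving the stated closed form after pulling out the common factor $\phi(q^{t-j})$ — here I should note the minor bookkeeping point that $\phi(q^{t-j})$ and $\phi(q^{t-m})$ appear; re-reading the theorem statement, the factor displayed is $\phi(q^{t-j})$, so I would double-check the indexing of the $\phi$-factor against Lemma \ref{3.2}(iii) and present whichever is correct, flagging that the intersection count in Lemma \ref{3.2}(iii) carries $\phi(q^{t-m})$. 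For part (ii), the same substitution using Lemma \ref{3.2}(iv) splits the range $j<m\leq t$ according to the parity of $m-j$: when $2\nmid m-j$ the plain-coset contribution lands with coefficient $\frac{p+1}{4}p^{s-i-1}$ and the starred with $\frac{p-3}{4}p^{s-i-1}$ (and vice versa when $2\mid m-j$), which reorganizes exactly into the two parenthesized sums in the statement. The main obstacle — really the only delicate point — is verifying rigorously that \emph{no other cosets appear}, i.e. that $C_{ij}+C_{ij}$ meets only $\{0\}\cup\bigcup_{m\geq j}(C_{im}\cup C_{im}^*)$ and in particular never hits a $C_{km}$ with $k>i$ or a $C_{sm}$; this is where the hypothesis $p\equiv 3\bmod 4$ (equivalently, $-1$ a nonresidue mod $p$, so the sum of two things with $p$-valuation exactly $i$ again has $p$-valuation exactly $i$, cf. the computation in the proof of Lemma \ref{3.1}) does the work, and I would spell that valuation argument out carefully before quoting the intersection numbers. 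Everything after that is routine collection of like terms.
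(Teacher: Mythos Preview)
Your approach is the same as the paper's --- invoke $l$-invariance to write $\chi_{ij}^2$ as a linear combination of the $\chi$'s, then read off the coefficients from Lemma~\ref{3.2} --- and your valuation argument ruling out cosets $C_{km}$ with $k\neq i$ is correct. But there is a genuine gap in how you read off the coefficients for $m>j$, and it is exactly the ``bookkeeping'' discrepancy you flag but do not resolve.

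Your claim that the coefficient of $\chi_{im}$ equals $\#\big((a+C_{ij})\cap C_{im}\big)$ is only correct when $\#C_{im}=\#C_{ij}$, i.e.\ when $m=j$. In general, a double count gives
\[
(\text{coeff.\ of }\chi_{im})\cdot \#C_{im}
=\#\{(a,b)\in C_{ij}^2:a+b\in C_{im}\}
=\#C_{ij}\cdot \#\big((a+C_{ij})\cap C_{im}\big),
\]
so the coefficient is $\dfrac{\#C_{ij}}{\#C_{im}}\cdot\#\big((a+C_{ij})\cap C_{im}\big)=\dfrac{\phi(q^{t-j})}{\phi(q^{t-m})}\cdot\dfrac{p-3}{4}p^{s-i-1}\phi(q^{t-m})=\dfrac{p-3}{4}p^{s-i-1}\phi(q^{t-j})$. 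This is precisely the mechanism that replaces the $\phi(q^{t-m})$ from Lemma~\ref{3.2}(iii) by the $\phi(q^{t-j})$ in the theorem; it is not an indexing typo but the substance of the step. The paper's proof makes this explicit by observing that as $l^k a$ runs once through $C_{ij}$, the sum $l^k(a+b)$ runs through $C_{im}$ exactly $\#C_{ij}/\#C_{im}$ times, so the multiset $C_{ij}+C_{ij}$ contains $\frac{p-3}{4}p^{s-i-1}\phi(q^{t-j})$ copies of $C_{im}$. Once you insert this factor, the rest of your argument goes through verbatim for both parts (i) and (ii).
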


\begin{proof}
We will only prove result (\ref{1}). Let $a\in C_{ij}$, then by Lemma \ref{3.2}, we obtain \begin{equation}
\#(a+C_{ij})\cap C_{im}=\frac{p-3}{4}p^{s-i-1}\phi(q^{t-m}),\quad j<m\leqslant t.\label{eq1}
\end{equation}
We can choose $b\in C_{ij}$ such that\[a+b=c,\]where $c\in C_{im}$. So we get $l^ka+l^kb=l^kc$ for $0\leqslant k\leqslant \frac{\phi(p^{s-i})}{2}\phi(q^{t-j})-1$. When $l^ka$ runs through $C_{ij}$, for $\#C_{im}=\frac{p^{s-i}}{2}\phi(q^{t-m})$, $l^kc$ runs through $C_{im}$ exactly $\# C_{ij}/\# C_{im}=\phi(q^{t-j})/\phi(q^{t-m})$ times. So by \eqref{eq1}, we obtain that the multiset $C_{ij}+C_{ij}$ contains $\frac{p-3}{4}p^{s-i-1}\phi(q^{t-j})$ copies of $C_{im}$. By similar argument and by Lemma \ref{3.2}, we obtain that the multiset $C_{ij}+C_{ij}$ contains $\frac{p-3}{4}p^{s-i-1}(q-2)q^{t-j-1}$ copies of $C_{ij}$, $\frac{p+1}{4}p^{s-i-1}(q-2)q^{t-j-1}$ copies of $C_{ij}^*$ and $\frac{p+1}{4}p^{s-i-1}\phi(q^{t-j})$ copies of $C_{im}^*$ for $j<m\leqslant t$. 

Further it is easy to verify that the coefficients of the terms on the both sides in result (\ref{1}) are equal. This together with argument above proves the result (\ref{1}).
\end{proof}

On the similar lines in proof of Theorem \ref{4.1}, we can prove the results in Theorem \ref{4.2} $\sim$  Theorem \ref{4.6}.

\begin{theorem}
Let $0\leqslant i\leqslant s-1$ and $0\leqslant j\leqslant t-1$, then
\begin{enumerate}[(i)]
\item If $q$ is a quadratic residue modulo $p$, then
\[
\begin{aligned}\left(\chi_{ij}^*\right)^2=&\frac{p-3}{4}p^{s-i-1}\left(\left(q-2\right)q^{t-j-1}\chi_{ij}^*+\phi(q^{t-j})\sum_{j<m\leqslant t}\chi_{im}^*\right)\\&+\frac{p+1}{4}p^{s-i-1}\left(\left(q-2\right)q^{t-j-1}\chi_{ij}+\phi(q^{t-j})\sum_{j<m\leqslant t}\chi_{im}\right).
\end{aligned}\]
\item If $q$ is a quadratic nonresidue modulo $p$, then
\[
\begin{aligned}\left(\chi_{ij}^*\right)^2=&\frac{p-3}{4}p^{s-i-1}\left(\left(q-2\right)q^{t-j-1}\chi_{ij}^*+\phi(q^{t-j})\left(\sum_{\substack{j<m\leqslant t\\2\nmid m-j}}\chi_{im}+\sum_{\substack{j<m\leqslant t\\2\mid m-j}}\chi_{im}^*\right)\right)\\
&+\frac{p+1}{4}p^{s-i-1}\left(\left(q-2\right)q^{t-j-1}\chi_{ij}+\phi(q^{t-j})\left(\sum_{\substack{j<m\leqslant t\\2\nmid m-j}}\chi_{im}^*+\sum_{\substack{j<m\leqslant t\\2\mid m-j}}\chi_{im}\right)\right).
\end{aligned}\]
\end{enumerate}\label{4.2}
\end{theorem}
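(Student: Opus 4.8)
The plan is to mirror the proof of Theorem \ref{4.1} verbatim, swapping the roles of the two cyclotomic classes $C_{ij}$ and $C_{ij}^*$ and invoking Lemma 3.3 (the $a^*\in C_{ij}^*$ analogue of Lemma \ref{3.2}) in place of Lemma \ref{3.2}. I would begin by fixing $a^*\in C_{ij}^*$ and counting, for each admissible $m$ with $j<m\leqslant t$, the cardinalities $\#(a^*+C_{ij}^*)\cap C_{ij}$, $\#(a^*+C_{ij}^*)\cap C_{ij}^*$, $\#(a^*+C_{ij}^*)\cap C_{im}$ and $\#(a^*+C_{ij}^*)\cap C_{im}^*$, all of which are supplied by Lemma 3.3; in the quadratic-residue case these equal $\tfrac{p+1}{4}p^{s-i-1}(q-2)q^{t-j-1}$, $\tfrac{p-3}{4}p^{s-i-1}(q-2)q^{t-j-1}$, $\tfrac{p+1}{4}p^{s-i-1}\phi(q^{t-m})$ and $\tfrac{p-3}{4}p^{s-i-1}\phi(q^{t-m})$ respectively, and the nonresidue case splits on the parity of $m-j$ exactly as stated there.

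Next I would pass from these ``fibre counts'' to multiset multiplicities, using the same orbit argument as in Theorem \ref{4.1}: if $a^*+b=c$ with $a^*,b\in C_{ij}^*$ and $c\in C_{im}$ (resp. $C_{im}^*$), then applying $l^k$ for $0\leqslant k\leqslant \tfrac{\phi(p^{s-i})}{2}\phi(q^{t-j})-1$ yields $l^ka^*+l^kb=l^kc$, so as $l^ka^*$ runs over $C_{ij}^*$ the element $l^kc$ runs over $C_{im}$ a total of $\#C_{ij}^*/\#C_{im}=\phi(q^{t-j})/\phi(q^{t-m})$ times. Combining this with the fibre count shows that the multiset $C_{ij}^*+C_{ij}^*$ contains $\tfrac{p-3}{4}p^{s-i-1}\phi(q^{t-j})$ copies of $C_{ij}^*$, $\tfrac{p+1}{4}p^{s-i-1}\phi(q^{t-j})$ copies of $C_{ij}$ (from the $m=j$ terms), and for $j<m\leqslant t$ the appropriate number of copies of $C_{im}$ and $C_{im}^*$ — in the residue case $\tfrac{p+1}{4}p^{s-i-1}\phi(q^{t-j})$ copies of $C_{im}$ and $\tfrac{p-3}{4}p^{s-i-1}\phi(q^{t-j})$ copies of $C_{im}^*$, and in the nonresidue case the two parity-dependent alternatives. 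Translating ``multiset $C_{ij}^*+C_{ij}^*$ contains $N$ copies of $C_{\gamma}$'' into ``the coefficient of $\chi_\gamma$ in $(\chi_{ij}^*)^2$ is $N$ in $\mathbb{F}_l$'' then gives the two displayed identities directly, after checking that the coefficients on the right-hand side of the claimed formula indeed sum correctly (a routine bookkeeping step).

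The one genuine subtlety, and the step I would be most careful about, is the same one that is quietly assumed in the proof of Theorem \ref{4.1}: one must verify that $a^*+C_{ij}^*$ lands entirely inside $\bigcup_{j\leqslant m\leqslant t}(C_{im}\cup C_{im}^*)$, i.e.\ that no sum $a^*+b$ with $a^*,b\in C_{ij}^*$ can equal $0$ or fall into a coset $C_{i'm}^{(*)}$ with $i'<i$ or $m<j$. Membership in the $p$-adic layer is clear since every element of $C_{ij}^*$ is divisible by $p^iq^j$ but not by $p^{i+1}$ (using the structure from Theorem \ref{2.12}, where $x\in N_{s-i}$ is a $p$-unit), and by $q^j$ but not $q^{j+1}$, so the sum of two such elements is again divisible by exactly $p^iq^{m}$ for some $m\geqslant j$; and $a^*+b=0$ is impossible because $-1$ is a nonresidue modulo $p$ by Lemma \ref{lem2.7}, so $C_{ij}^*$ is not closed under negation into $C_{ij}^*$ — more precisely $-C_{ij}^*=C_{ij}$, so $a^*+b=0$ would force $b\in C_{ij}$, contradicting $b\in C_{ij}^*$. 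Once this containment is in hand, the decomposition of the multiset $C_{ij}^*+C_{ij}^*$ is forced to use only the cosets appearing on the right-hand side, and the counts from Lemma 3.3 determine the multiplicities uniquely, completing the proof.
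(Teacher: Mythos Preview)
Your approach matches the paper's exactly: the paper simply states that Theorem~\ref{4.2} is proved ``on the similar lines in proof of Theorem~\ref{4.1}'', i.e.\ by swapping $C_{ij}\leftrightarrow C_{ij}^*$ and invoking Lemma~3.3 in place of Lemma~\ref{3.2}, which is precisely your plan. Two small slips to clean up: the $m=j$ multiplicities should carry the factor $(q-2)q^{t-j-1}$ rather than $\phi(q^{t-j})$ (the orbit ratio $\#C_{ij}^*/\#C_{ij}^*$ is $1$ there, so the multiplicity equals the fibre count itself), and your claim that $a^*+b$ stays exactly at the $p^i$-level does not follow from ``each summand is exactly divisible by $p^i$'' alone --- you need the $-1$-is-a-nonresidue fact applied to $x_1+x_2$ (two elements of $N_{s-i}$ can never sum to a multiple of $p$), not merely to rule out $a^*+b=0$.
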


\begin{theorem}
Let $0\leqslant i\leqslant s-1$ and $0\leqslant j\leqslant t-1$, then
\[\begin{aligned}\chi_{ij}\chi_{ij}^*=&\frac{p-3}{4}p^{s-i-1}(q-2)q^{t-j-1}(\chi_{ij}+\chi_{ij}^*)+\frac{p-3}{4}p^{s-i-1}\phi(q^{t-j})\sum_{j<m\leqslant t}(\chi_{im}+\chi_{im}^*)\\&
+\frac{\phi(p^{s-i})}{2}\phi(q^{t-j})\sum_{\substack{i<k\leqslant s-1\\j<m\leqslant t}}\left(\chi_{km}+\chi_{km}^*\right)+\frac{\phi(p^{s-i})}{2}(q-2)q^{t-j-1}\chi_{sj}\\&
+\frac{\phi(q^{s-i})}{2}\phi(q^{t-j})\sum_{j<m\leqslant t}\chi_{sm}+\frac{\phi(p^{s-i})}{2}(q-2)q^{t-j-1}\sum_{i<k\leqslant s-1}(\chi_{kj}+\chi_{kj}^*)
\end{aligned}\]
\end{theorem}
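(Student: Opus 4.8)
The plan is to compute the product $\chi_{ij}\chi_{ij}^*$ by counting, for each cyclotomic coset $C_\gamma$, how many times a generic element of $C_\gamma$ arises as a sum $a+a^*$ with $a\in C_{ij}$ and $a^*\in C_{ij}^*$; equivalently, to determine the multiplicity of each $C_\gamma$ in the multiset $C_{ij}+C_{ij}^*$. As in the proof of Theorem~\ref{4.1}, the key reduction is that $\#(a+C_{ij}^*)\cap C_\gamma$ is independent of the choice of $a\in C_{ij}$ (this is exactly what the Lemmas of Section~\ref{sec3} provide), and that if a fixed element $c\in C_\gamma$ is hit $N$ times, then as $l^k a$ runs over $C_{ij}$ the element $l^k c$ runs over $C_\gamma$ exactly $\#C_{ij}/\#C_\gamma$ times, so $C_\gamma$ appears in $C_{ij}+C_{ij}^*$ with multiplicity $N\cdot(\#C_{ij}/\#C_\gamma)\cdot(\#C_\gamma/\#C_{ij}) \cdot(\#C_{ij}/\#C_\gamma)$—more precisely, the multiplicity of $C_\gamma$ equals $\#(a+C_{ij}^*)\cap C_\gamma$ times $\#C_{ij}/\#C_\gamma$, cf.\ the multiset bookkeeping in Theorem~\ref{4.1}.

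First I would assemble the relevant intersection counts. The cosets $C_\gamma$ that can occur as a sum of an element of $C_{ij}$ and an element of $C_{ij}^*$ are, by the additive description in Theorem~\ref{2.12}: $C_{ij}$ and $C_{ij}^*$ themselves; $C_{im},C_{im}^*$ for $j<m\leqslant t$; $C_{km},C_{km}^*$ for $i<k\leqslant s-1$, $j<m\leqslant t$; $C_{sj}$; $C_{sm}$ for $j<m\leqslant t$; and $C_{kj},C_{kj}^*$ for $i<k\leqslant s-1$. The needed values of $\#(a+C_{ij}^*)\cap C_\gamma$ for each of these are supplied verbatim by the Lemma that handles $a\in C_{ij}$, $C_{ij}^*$ translates: namely the six items giving $\frac{p-3}{4}p^{s-i-1}(q-2)q^{t-j-1}$ for the $C_{ij},C_{ij}^*$ parts, $\frac{p-3}{4}p^{s-i-1}\phi(q^{t-m})$ for the $C_{im},C_{im}^*$ parts, $\phi(p^{s-k})\phi(q^{t-m})/2$ for the $C_{km},C_{km}^*$ parts, $(q-2)q^{t-j-1}$ for $C_{sj}$, $\phi(q^{t-m})$ for $C_{sm}$, and $\phi(p^{s-k})(q-2)q^{t-j-1}/2$ for the $C_{kj},C_{kj}^*$ parts. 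Multiplying each by the appropriate ratio $\#C_{ij}/\#C_\gamma$ converts these intersection counts into the coefficients displayed in the statement; in particular the ratio is $\phi(q^{t-j})/\phi(q^{t-m})$ for the $m$-indexed cosets at level $i$, it is $\phi(p^{s-i})\phi(q^{t-j})/(\phi(p^{s-k})\phi(q^{t-m}))$ for the $(k,m)$ cosets, and so on. One checks these arithmetic identities termwise: e.g.\ $\frac{p-3}{4}p^{s-i-1}\phi(q^{t-m})\cdot\frac{\phi(q^{t-j})}{\phi(q^{t-m})}=\frac{p-3}{4}p^{s-i-1}\phi(q^{t-j})$, matching the second line; and $\frac{\phi(p^{s-k})\phi(q^{t-m})}{2}\cdot\frac{\phi(p^{s-i})\phi(q^{t-j})}{\phi(p^{s-k})\phi(q^{t-m})}=\frac{\phi(p^{s-i})}{2}\phi(q^{t-j})$, matching the third line, and similarly for the $C_{sj}$, $C_{sm}$, and $C_{kj}$ contributions.

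Finally, to be rigorous I would do a global count to make sure no coset has been missed: the total multiplicity summed over all $C_\gamma$ (each weighted by $\#C_\gamma$) must equal $\#C_{ij}\cdot\#C_{ij}^* = \big(\tfrac{\phi(p^{s-i})}{2}\phi(q^{t-j})\big)^2$. This bookkeeping check both validates the list of occurring cosets and pins down the coefficients. The main obstacle is not any single deep idea but the careful case-tracking in item (iii) of the governing Lemma—the claim $\#(a+C_{ij}^*)\cap C_{km}=\#(a+C_{ij}^*)\cap C_{km}^*=\phi(p^{s-k})\phi(q^{t-m})/2$—since this is where the asymmetry between residue and nonresidue behaviour could in principle creep in; here it does not, because for $k>i$ the coset $C_{km}\cup C_{km}^*$ is a full $\bmod\,p^{s-k}$ residue structure and the translate by an element coprime to $p$ distributes evenly, but one must verify this evenness explicitly. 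Once that Lemma is in hand, assembling Theorem~4.3 is purely a matter of collecting terms, and I would present it exactly as in the proof of Theorem~\ref{4.1}: state the multiset decomposition of $C_{ij}+C_{ij}^*$, then remark that comparing coefficients on both sides finishes the proof.
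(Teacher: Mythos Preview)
Your proposal is correct and follows essentially the same approach as the paper: the paper simply remarks that Theorems~\ref{4.2}--\ref{4.6} are proved ``on the similar lines in proof of Theorem~\ref{4.1}'', i.e.\ by reading off the intersection counts from the corresponding Lemma (here Lemma~3.4) and converting them into multiset multiplicities via the ratio $\#C_{ij}/\#C_\gamma$, exactly as you describe. Your additional global cardinality check and your remark on why item~(iii) of Lemma~3.4 is independent of the quadratic character of $q$ are sound elaborations but not departures from the paper's method.
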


\begin{theorem}
Let $0\leqslant j \leqslant t-1$, then
$\chi_{sj}^2=(q-2)q^{t-j-1}\chi_{sj}+\phi(q^{t-j})\sum\limits_{j<m\leqslant t}\chi_{sm}$.
\end{theorem}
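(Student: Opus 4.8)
The plan is to compute $\chi_{sj}^2$ by counting, for each cyclotomic coset $C$, how many times $C$ appears in the multiset sum $C_{sj}+C_{sj}$, exactly as in the proof of Theorem \ref{4.1}. Recall that $C_{sj}=\{p^sq^jy:y\in\mathbb{Z}_{q^{t-j}}^*\}$ by Theorem \ref{2.12}, so every element of $C_{sj}+C_{sj}$ is of the form $p^sq^j(y_1+y_2)$ with $y_1,y_2\in\mathbb{Z}_{q^{t-j}}^*$; in particular it is divisible by $p^s$, hence lies in some $C_{sm}$ with $j\leqslant m\leqslant t$. Thus only the cosets $C_{sj},C_{s,j+1},\dots,C_{s,t-1},C_{st}=C_0$ can occur, and the right-hand side of the claimed identity already has the correct shape.

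First I would read off the relevant incidence numbers from the lemmas already established. Lemma 3.6(i) gives, for $a\in C_{sj}$, that $\#(a+C_{sj})\cap C_{sj}=(q-2)q^{t-j-1}$, and Lemma 3.6(ii) gives $\#(a+C_{sj})\cap C_{sm}=\phi(q^{t-m})$ for $j<m\leqslant t$. Next I would pass from these ``fixed $a$'' counts to multiset multiplicities in $C_{sj}+C_{sj}$: since $C_{sj}$ is a single $l$-cyclotomic coset, letting $l^ka$ run over $C_{sj}$ and using $l^ka+l^kb=l^k(a+b)$ shows that each target coset $C_{sm}$ is hit $\#C_{sj}/\#C_{sm}=\phi(q^{t-j})/\phi(q^{t-m})$ times for each representative, so the multiplicity of $C_{sm}$ in $C_{sj}+C_{sj}$ is
\[
\frac{\phi(q^{t-j})}{\phi(q^{t-m})}\cdot\#(a+C_{sj})\cap C_{sm}=\phi(q^{t-j}),\qquad j<m\leqslant t,
\]
while the multiplicity of $C_{sj}$ itself is $(q-2)q^{t-j-1}$ (here $\#C_{sj}/\#C_{sj}=1$, and the count $\#(a+C_{sj})\cap C_{sj}=(q-2)q^{t-j-1}$ is already the multiplicity). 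Translating the multiset identity $C_{sj}+C_{sj}$ into the polynomial $\chi_{sj}^2=\sum_\alpha\sum_\beta x^{\alpha+\beta}$ over $\mathbb{F}_l$ then yields exactly
\[
\chi_{sj}^2=(q-2)q^{t-j-1}\chi_{sj}+\phi(q^{t-j})\sum_{j<m\leqslant t}\chi_{sm}.
\]

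Finally I would verify that the total count is consistent, i.e. that the degree-counting bookkeeping closes: the number of (ordered) pairs is $(\#C_{sj})^2=\phi(q^{t-j})^2$, and this should equal $(q-2)q^{t-j-1}\cdot\#C_{sj}+\sum_{j<m\leqslant t}\phi(q^{t-j})\cdot\#C_{sm}$, which is a short check using $\#C_{sm}=\phi(q^{t-m})$ for $m<t$, $\#C_{st}=1$, and the telescoping identity $q^{t-j-1}+\sum_{j<m\leqslant t}\phi(q^{t-m})=q^{t-j-1}+(q^{t-j-1}-1)+1=\cdots$; this sanity check also pins down that no coset outside the list appears. The only mildly delicate point is the passage from the fixed-$a$ intersection numbers of Lemma 3.6 to genuine multiplicities in the multiset $C_{sj}+C_{sj}$ — one must be careful that, because $C_{sj}$ is closed under multiplication by $l$ while $C_{sm}$ is strictly smaller, each hit is counted with the right multiplicity $\phi(q^{t-j})/\phi(q^{t-m})$ — but this is the same mechanism already used in the proof of Theorem \ref{4.1}, so no genuinely new obstacle arises.
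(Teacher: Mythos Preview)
Your proposal is correct and follows essentially the same approach as the paper, which simply refers back to the proof of Theorem~\ref{4.1} (``on the similar lines''). The one slip is the lemma number: the intersection counts $\#(a+C_{sj})\cap C_{sj}=(q-2)q^{t-j-1}$ and $\#(a+C_{sj})\cap C_{sm}=\phi(q^{t-m})$ are the content of Lemma~3.5, not Lemma~3.6 (recall the paper's correspondence Lemma~$3.n\mapsto$ Theorem~$4.(n-1)$).
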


\begin{theorem}
Let $0\leqslant i\leqslant s-1$, then
\begin{enumerate}[(i)]
\item $\chi_{it}^2=\frac{p-3}{4}p^{s-i-1}\chi_{it}+\frac{p+1}{4}p^{s-i-1}\chi_{it}^*$.
\item $\left(\chi_{it}^*\right)^2=\frac{p-3}{4}p^{s-i-1}\chi_{it}^*+\frac{p+1}{4}p^{s-i-1}\chi_{it}$.
\item $\chi_{it}\chi_{it}^*=\frac{p-3}{4}p^{s-i-1}\left(\chi_{it}+\chi_{it}^*\right)+\frac{\phi(p^{s-i})}{2}\left(\sum\limits_{i<k\leqslant s-1}\left(\chi_{kt}+\chi_{kt}^*\right)+1\right).$
\end{enumerate}
\end{theorem}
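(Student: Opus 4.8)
The plan is to compute each product $\chi_{it}^2$, $(\chi_{it}^*)^2$ and $\chi_{it}\chi_{it}^*$ by counting, for each target coset $C$, the number of ordered pairs $(a,b)$ with $a$ in the first coset, $b$ in the second, and $a+b$ in $C$; by homogeneity under multiplication by $l$, this count is constant as $a$ ranges over its coset, so the coefficient of $\chi_C$ in the product is $\#(a+D)\cap C$ scaled by the appropriate ratio $\#(\text{first coset})/\#C$ exactly as in the proof of Theorem \ref{4.1}. The three relevant combinatorial inputs have already been recorded in Lemma 3.6 (the lemma for $a\in C_{it}$, $a^*\in C_{it}^*$): items (i)--(ii) give $\#(a+C_{it})\cap C_{it}=\frac{p-3}{4}p^{s-i-1}$ and $\#(a+C_{it})\cap C_{it}^*=\frac{p+1}{4}p^{s-i-1}$, item (iii) gives $\#(a+C_{it}^*)\cap C_{it}=\#(a+C_{it}^*)\cap C_{it}^*=\frac{p-3}{4}p^{s-i-1}$, item (iv) gives $\#(a+C_{it}^*)\cap C_{kt}=\#(a+C_{it}^*)\cap C_{kt}^*=\frac{\phi(p^{s-k})}{2}$ for $i<k\leqslant s-1$, and item (v) gives $\#(a+C_{it}^*)\cap C_0=1$.

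For part (i): since $C_{it}+C_{it}$ can only land in cosets of the form $C_{kt}$ or $C_{kt}^*$ or $C_0$ (because every element of $C_{it}$ is divisible by $p^iq^t$, hence so is any sum), and $C_{it}$ has size $\frac{\phi(p^{s-i})}{2}$, the self-intersection counts in Lemma 3.6(i)--(ii) show $C_{it}+C_{it}$ contains $\frac{p-3}{4}p^{s-i-1}$ copies of $C_{it}$ and $\frac{p+1}{4}p^{s-i-1}$ copies of $C_{it}^*$; I must also check that no copies of $C_{kt}$, $C_{kt}^*$ for $k>i$ or of $C_0$ occur, which follows since those counts are not listed in Lemma 3.6 and a dimension/cardinality check $\frac{p-3}{4}p^{s-i-1}+\frac{p+1}{4}p^{s-i-1}=\frac{p-1}{2}p^{s-i-1}$ times $\#C_{it}$ accounts for all $(\#C_{it})^2$ pairs. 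Part (ii) is identical with the roles of $C_{it}$ and $C_{it}^*$ swapped, using that $-1$ is a nonresidue mod $p$ so $(\chi_{it}^*)^2$ distributes symmetrically.

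For part (iii), the product $\chi_{it}\chi_{it}^*$ is the generating function of the multiset $C_{it}+C_{it}^*$. By Lemma 3.6(iii), this multiset contains $\frac{p-3}{4}p^{s-i-1}$ copies each of $C_{it}$ and $C_{it}^*$; by item (iv), for each $k$ with $i<k\leqslant s-1$ it contains $\frac{\phi(p^{s-k})}{2}\cdot\frac{\#C_{it}}{\#C_{kt}}=\frac{\phi(p^{s-k})}{2}\cdot\frac{\phi(p^{s-i})/2}{\phi(p^{s-k})/2}=\frac{\phi(p^{s-i})}{2}$ copies each of $C_{kt}$ and $C_{kt}^*$; and by item (v) it contains $1\cdot\#C_{it}=\frac{\phi(p^{s-i})}{2}$ copies of $C_0=\{0\}$, whose generating function is the constant $1$. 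Assembling these gives exactly the claimed formula, and a final cardinality check — the total $2\cdot\frac{p-3}{4}p^{s-i-1}+2\sum_{i<k\leqslant s-1}\frac{\phi(p^{s-i})}{2}+\frac{\phi(p^{s-i})}{2}$ multiplied against $\#C_{kt}$ or $1$ as appropriate should sum to $(\#C_{it})^2$ — confirms completeness.

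The main obstacle is not any single computation but bookkeeping: one must be certain the list of target cosets in Lemma 3.6 is exhaustive, i.e. that $C_{it}+C_{it}^*$ never meets a coset of type $C_{kj}$ with $j<t$ nor a coset $C_{sj}$ with $j<t$. This is immediate from the divisibility observation that every element of $C_{it}$ and $C_{it}^*$ is a multiple of $q^t$, so every pairwise sum is too, forcing the sum into one of $C_0,C_{kt},C_{kt}^*$ for $i\leqslant k\leqslant s-1$; once this is noted, the cardinality identity pins down that nothing has been missed, and the coefficient matching is routine.
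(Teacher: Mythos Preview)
Your proof is correct and follows exactly the approach the paper intends: the paper simply states that Theorems 4.2--4.6 are proved ``on the similar lines in proof of Theorem \ref{4.1}'', and you have carried this out explicitly, invoking the corresponding Lemma 3.6 (items (i)--(v)) and computing the multiplicities via the ratio $\#C_{it}/\#C$ just as in that template proof. Your additional divisibility remark (that every element of $C_{it}\cup C_{it}^*$ is a multiple of $q^t$, forcing sums into $\{C_0,C_{kt},C_{kt}^*\}$) and the cardinality cross-checks are not in the paper but are welcome sanity checks confirming that Lemma 3.6 exhausts the target cosets.
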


\begin{theorem}
Let $0\leqslant i<i'\leqslant s-1$ and $0\leqslant j<j'\leqslant t$, then
\begin{enumerate}[(i)]
\item $\chi_{ij}\chi_{i'j'}=\chi_{ij}\chi_{i'j'}^*=\frac{\phi(p^{s-i'})}{2}\phi(q^{t-j'})\chi_{ij}$.
\item $\chi_{ij}^*\chi_{i'j'}=\chi_{ij}^*\chi_{i'j'}^*=\frac{\phi(p^{s-i'})}{2}\phi(q^{t-j'})\chi_{ij}^*$.
\end{enumerate}\label{4.6}
\end{theorem}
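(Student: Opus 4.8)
The plan is to mimic the proof of Theorem~\ref{4.1}: expand each product into a double sum of monomials $\sum_{\alpha}\sum_{\beta}x^{\alpha+\beta}$ and read off the exponent distribution from the companion counting lemma of Section~\ref{sec3} (Lemma~3.7, the lemma paired with this theorem). Unlike Theorems~\ref{4.1}--\ref{4.2}, the present statement carries no case split on whether $q$ is a quadratic residue modulo $p$, mirroring the fact that Lemma~3.7 is itself uniform, so essentially one computation suffices. I will carry out $\chi_{ij}\chi_{i'j'}=\frac{\phi(p^{s-i'})}{2}\phi(q^{t-j'})\chi_{ij}$ in detail; the other three identities are obtained by replacing $C_{i'j'}$ with $C_{i'j'}^*$ and/or $(C_{ij},a)$ with $(C_{ij}^*,a^*)$ and quoting the matching clause of Lemma~3.7.

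First I record a slight strengthening of Lemma~3.7. By Theorem~\ref{thm2.1} and multiplicativity of $\phi$, $\#C_{i'j'}=\phi(p^{s-i'}q^{t-j'})/2=\frac{\phi(p^{s-i'})}{2}\phi(q^{t-j'})$, and for any $a$ the translate $a+C_{i'j'}$ has the same cardinality. Since Lemma~3.7 asserts that $(a+C_{i'j'})\cap C_{ij}$ already attains that cardinality, we must have $a+C_{i'j'}\subseteq C_{ij}$ for every $a\in C_{ij}$; equivalently $C_{ij}+C_{i'j'}\subseteq C_{ij}$ as subsets of $\mathbb{Z}_n$. Hence in $\chi_{ij}\chi_{i'j'}=\sum_{\alpha\in C_{ij}}\sum_{\beta\in C_{i'j'}}x^{\alpha+\beta}$ every exponent lies in $C_{ij}$, so the product is an $\mathbb{F}_l$-linear combination of the monomials $x^\gamma$ with $\gamma\in C_{ij}$.

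It then remains to evaluate the coefficients. For $\gamma\in C_{ij}$ let $M(\gamma)$ be the number of pairs $(\alpha,\beta)\in C_{ij}\times C_{i'j'}$ with $\alpha+\beta=\gamma$ in $\mathbb{Z}_n$. Because $\gcd(l,n)=1$ and both $C_{ij}$ and $C_{i'j'}$ are stable under multiplication by $l$, the map $(\alpha,\beta)\mapsto(l\alpha,l\beta)$ is a bijection from the set of representations of $\gamma$ onto that of $l\gamma$; as $C_{ij}$ is a single orbit under multiplication by $l$, $M(\gamma)$ is independent of $\gamma$, say $M(\gamma)=M$. Counting all pairs in two ways gives $M\cdot\#C_{ij}=\#C_{ij}\cdot\#C_{i'j'}$, so $M=\#C_{i'j'}=\frac{\phi(p^{s-i'})}{2}\phi(q^{t-j'})$, whence $\chi_{ij}\chi_{i'j'}=\frac{\phi(p^{s-i'})}{2}\phi(q^{t-j'})\chi_{ij}$. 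Feeding the remaining three intersection equalities of Lemma~3.7 into the same argument yields $\chi_{ij}\chi_{i'j'}^*=\frac{\phi(p^{s-i'})}{2}\phi(q^{t-j'})\chi_{ij}$ together with the two identities of part~(ii).

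I do not anticipate a genuine obstacle, since all the combinatorics already sits in Lemma~3.7; the two points deserving a line of justification are (a) upgrading the intersection-cardinality statement to the inclusion $a+C_{i'j'}\subseteq C_{ij}$, which relies on the numerical coincidence of that cardinality with $\#C_{i'j'}$, and (b) the transitivity of multiplication by $l$ on a cyclotomic coset, which forces all the coefficients $M(\gamma)$ to agree. One should also check the boundary value $j'=t$ (so that $\phi(q^{t-j'})=1$ and $\chi_{i't}$ is the relevant monomial sum), but the cardinality bookkeeping is unchanged there.
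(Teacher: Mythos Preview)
Your proposal is correct and matches the paper's intended approach: the paper simply states that Theorems~\ref{4.2}--\ref{4.6} are proved ``on the similar lines in proof of Theorem~\ref{4.1}'', invoking the paired counting lemma (here Lemma~3.7) and the $l$-orbit argument, exactly as you do. Your observation that the cardinality equality in Lemma~3.7 actually forces the inclusion $a+C_{i'j'}\subseteq C_{ij}$ is a tidy shortcut specific to this case, but the underlying mechanism is the same.
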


\begin{theorem}
Let $0\leqslant i<i'\leqslant s-1$ and $0\leqslant j'<j\leqslant t$, then
\begin{enumerate}[(i)]
\item $\chi_{ij}\chi_{i'j'}=\chi_{ij}\chi_{i'j'}^*=\frac{\phi(p^{s-i'})}{2}\phi(q^{t-j})\chi_{ij'}$.\label{2}
\item $\chi_{ij}^*\chi_{i'j'}=\chi_{ij}^*\chi_{i'j'}^*=\frac{\phi(p^{s-i'})}{2}\phi(q^{t-j})\chi_{ij'}^*$.
\end{enumerate}\label{4.7}
\end{theorem}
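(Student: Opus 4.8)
The plan is to mirror the proof of Theorem~\ref{4.1}: translate each product $\chi_{ij}\chi_{i'j'}$ into a statement about the multiset sum $C_{ij}+C_{i'j'}$ (and likewise $C_{ij}+C_{i'j'}^*$, $C_{ij}^*+C_{i'j'}$, $C_{ij}^*+C_{i'j'}^*$), and then read off the multiplicities from the corresponding cyclotomic-number lemma, which in this case is Lemma~3.8 (the lemma with $0\leqslant i<i'\leqslant s-1$ and $0\leqslant j'<j\leqslant t$). First I would fix $a\in C_{ij}$ and use Lemma~3.8(i) to record that $\#(a+C_{i'j'})\cap C_{ij'}=\#(a+C_{i'j'}^*)\cap C_{ij'}=\frac{\phi(p^{s-i'})}{2}\phi(q^{t-j})$, and that $a+C_{i'j'}$ meets no other cyclotomic coset (since the indices $i',j'$ are strictly ``smaller'' on the $q$-side and strictly ``larger'' on the $p$-side, the sum lands entirely inside $C_{ij'}$; this must be the content of Lemma~3.8, and I would cite it for the full list of intersections, not just the one displayed line).

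The second step is the orbit-counting bookkeeping exactly as in Theorem~\ref{4.1}: as $l^k a$ runs over $C_{ij}$ for $0\leqslant k\leqslant \#C_{ij}-1$, the element $l^k c$ with $c=a+b\in C_{ij'}$ runs over $C_{ij'}$ each value being hit $\#C_{ij}/\#C_{ij'}=\phi(q^{t-j'})/\phi(q^{t-j})$ times (note $\#C_{ij}=\frac{\phi(p^{s-i})}{2}\phi(q^{t-j})$ and $\#C_{ij'}=\frac{\phi(p^{s-i})}{2}\phi(q^{t-j'})$, and here $j'<j$ so $\phi(q^{t-j'})\geqslant\phi(q^{t-j})$, giving an integer ratio). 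Combining with the per-element count from Lemma~3.8 shows the multiset $C_{ij}+C_{i'j'}$ equals $\frac{\phi(p^{s-i'})}{2}\phi(q^{t-j'})$ copies of $C_{ij'}$, which is precisely the statement $\chi_{ij}\chi_{i'j'}=\frac{\phi(p^{s-i'})}{2}\phi(q^{t-j})\chi_{ij'}$ once one checks the normalization carefully (the factor $\phi(q^{t-j})$ versus $\phi(q^{t-j'})$ is exactly reconciled by the $\phi(q^{t-j'})/\phi(q^{t-j})$ overcounting). The identity $\chi_{ij}\chi_{i'j'}=\chi_{ij}\chi_{i'j'}^*$ follows because Lemma~3.8(i) gives the same count for $a+C_{i'j'}^*$, and part~(ii) is the verbatim ``starred'' analogue using $a^*\in C_{ij}^*$ together with Lemma~3.8(ii).

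The only genuine subtlety — and the step I expect to need the most care — is confirming that $C_{ij}+C_{i'j'}$ is \emph{supported entirely} on $C_{ij'}$, i.e.\ that no stray terms land in $C_0$, in $C_{sm}$, or in some other $C_{km}^{(*)}$. This is where the hypotheses $i<i'$ and $j'<j$ are essential: using the additive description of cosets from Theorem~\ref{2.12}, an element of $C_{i'j'}$ has $p$-adic valuation $i'>i$ and $q$-adic valuation $j'<j$, so adding it to $a\in C_{ij}$ cannot lower the $p$-valuation below $i$ nor raise the $q$-valuation above $j'$; a short valuation argument pins the sum's valuations to exactly $(i,j')$, forcing membership in $C_{ij'}\sqcup C_{ij'}^*$, and then the residue/nonresidue structure modulo $p$ (via Lemma~\ref{lem2.8} and Lemma~\ref{3.1}) distributes the mass so that the $\chi_{ij}$-translate picks out $C_{ij'}$ and the $\chi_{ij}^*$-translate picks out $C_{ij'}^*$ in the starred case. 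Granting Lemma~3.8 in full (as the excerpt permits), all of this is already packaged, and the proof reduces to the orbit-counting normalization check above; I would therefore write: ``This follows from Lemma~3.8 by the same multiset/orbit-counting argument as in the proof of Theorem~\ref{4.1}, where the factor $\phi(q^{t-j})$ arises because each coset $C_{ij'}$ appearing in $C_{ij}+C_{i'j'}$ is traversed $\phi(q^{t-j'})/\phi(q^{t-j})$ times.''
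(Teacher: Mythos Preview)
Your approach via Lemma~3.8 together with the orbit-counting template of Theorem~\ref{4.1} is not the route the paper takes here. The paper proves Theorem~\ref{4.7} by a direct preimage count using the additive description in Theorem~\ref{2.12}: writing (for $q$ a quadratic residue mod $p$) an element of $C_{ij}$ as $p^iq^{j'}(q^{t-j'}x+p^{s-i}q^{j-j'}y)$ and an element of $C_{i'j'}$ as $p^iq^{j'}(p^{i'-i}q^{t-j'}x'+p^{s-i}y')$, one fixes a target $(x_0,y_0)\in R_{s-i}\times\mathbb{Z}_{q^{t-j'}}^*$ parametrizing $C_{ij'}$ and observes that for each $x'\in R_{s-i'}$ there is a unique $x\in R_{s-i}$ with $x_0\equiv x+p^{i'-i}x'$, and for each $y\in\mathbb{Z}_{q^{t-j}}^*$ a unique $y'\in\mathbb{Z}_{q^{t-j'}}^*$ with $y_0\equiv q^{j-j'}y+y'$. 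The free parameters are therefore $x'$ and $y$, giving the multiplicity $\#R_{s-i'}\cdot\#\mathbb{Z}_{q^{t-j}}^*=\frac{\phi(p^{s-i'})}{2}\phi(q^{t-j})$ in one line; the containment $C_{ij}+C_{i'j'}\subseteq C_{ij'}$ and the uniform multiplicity are both visible without any appeal to Lemma~3.8 or an orbit argument.

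Your route can be made to work, but the bookkeeping as written is off. The ratio $\#C_{ij}/\#C_{ij'}$ equals $\phi(q^{t-j})/\phi(q^{t-j'})<1$ (since $j'<j$), not its reciprocal; in this regime the Theorem~\ref{4.1} picture ``$l^kc$ sweeps out $C_{ij'}$ an integer number of times as $l^ka$ sweeps $C_{ij}$'' does not apply literally, and what one should use instead is plain $l$-invariance of the multiset, so that the multiplicity of any $c\in C_{ij'}$ is $\bigl(\sum_{a\in C_{ij}}\#((a+C_{i'j'})\cap C_{ij'})\bigr)/\#C_{ij'}$. Moreover, since in fact $a+C_{i'j'}\subseteq C_{ij'}$ for every $a\in C_{ij}$, the intersection count must equal $\#C_{i'j'}=\frac{\phi(p^{s-i'})}{2}\phi(q^{t-j'})$ (the $\phi(q^{t-j})$ printed in Lemma~3.8 appears to be a misprint); with that value the double count gives $\#C_{ij}\cdot\#C_{i'j'}/\#C_{ij'}=\frac{\phi(p^{s-i'})}{2}\phi(q^{t-j})$, as required. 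Your remark ``once one checks the normalization carefully'' was sensing exactly this discrepancy, but the fix is these two corrections rather than an additional rescaling.
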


\begin{proof}
We will prove the first part of result (\ref{2}) for the case $q$ is a quadratic residue modulo $p$. That means, we need to show that, in the multiset\[p^iq^{j'}\left(\{q^{t-j'}x+p^{s-i}q^{j-j'}y:x\in R_{s-i},y\in \mathbb{Z}_{q^{t-j}}^*\}+\{p^{i'-i}q^{t-j'}x'+p^{s-i}y':x'\in R_{s-i'},y'\in \mathbb{Z}_{q^{t-j'}}^*\}\right),\]
every element exactly appears $\frac{\phi(p^{s-i'})}{2}\phi(q^{t-j})$ times.

Note that \[C_{ij'}=p^iq^{j'}\{q^{t-j'}x+p^{s-i}y':x\in R_{s-i},y'\in \mathbb{Z}_{q^{t-j'}}^*\},\]Let $x_0\in R_{s-i}$, since $i>i$, for any $x'\in R_{s-i'}$, there is exactly one $x\in R_{s-i}$ such that $x_0\equiv p^{i'-i}x'+x \mathrm{mod}  p^{s-i}$. Also, let $y_0\in \mathbb{Z}_{q^{t-j}}^*$, for any $y\in \mathbb{Z}_{q^{t-j}}^*$, there is exactly one $y'\in\mathbb{Z}_{q^{t-j'}}^*$ such that $y_0\equiv q^{j-j'}y+y' \mathrm{mod}  q^{t-j'}$. Hence every element appears \[\# R_{s-i'}\cdot\#\mathbb{Z}_{q^{t-j}}^*=\frac{\phi(p^{s-i'})}{2}\phi(q^{t-j})\] times. The result is proved.
\end{proof}

The results in Theorem \ref{4.8} $\sim$ Theorem \ref{4.16} can be proved by using the methods in the proof of Theorem \ref{4.1} or Theorem \ref{4.7}.

\begin{theorem}
Let $0\leqslant i\leqslant s-1$ and $0\leqslant j<j'\leqslant t$, then
\begin{enumerate}[(i)]
\item If $q$ is a quadratic residue modulo $p$, then
\[\chi_{ij}\chi_{ij'}=\frac{p-3}{4}p^{s-i-1}\phi(q^{t-j'})\chi_{ij}+\frac{p+1}{4}p^{s-i-1}\phi(q^{t-j'})\chi_{ij}^*.\]
\item If $q$ is a quadratic nonresidue modulo $p$ and $2\mid j'-j$, then
\[\chi_{ij}\chi_{ij'}=\frac{p-3}{4}p^{s-i-1}\phi(q^{t-j'})\chi_{ij}+\frac{p+1}{4}p^{s-i-1}\phi(q^{t-j'})\chi_{ij}^*.\]
\item If $q$ is a quadratic nonresidue modulo $p$ and $2\nmid j'-j$, then
\[\begin{aligned}
\chi_{ij}\chi_{ij'}=&\frac{p-3}{4}p^{s-i-1}\phi(q^{t-j'})\left(\chi_{ij}+\chi_{ij}^*\right)\\&+\frac{\phi(p^{s-i})}{2}\phi(q^{t-j'})\left(\sum_{i< k\leqslant s-1}\left(\chi_{kj}+\chi_{kj}^*\right)+\chi_{sj}\right).
\end{aligned}\]
\end{enumerate}\label{4.8}
\end{theorem}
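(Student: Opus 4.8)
The plan is to reduce the product $\chi_{ij}\chi_{ij'}$ to a counting problem about the multiset $C_{ij}+C_{ij'}$, exactly as in the proof of Theorem \ref{4.1}. Writing $\chi_{ij}\chi_{ij'}=\sum_{a\in C_{ij}}\sum_{b\in C_{ij'}}x^{a+b}$, I need to determine, for each cyclotomic coset $C_{km}$ or $C_{km}^*$, how many times it occurs in $C_{ij}+C_{ij'}$ as a multiset. Since $j'>j$, one has $\#C_{ij'}\mid \#C_{ij}$ when $q$ is a quadratic residue (or more precisely the $l$-orbit structure is compatible), so the same argument used in Theorem \ref{4.1} applies: fixing $a\in C_{ij}$, count $\#(a+C_{ij'})\cap C_{km}^{(*)}$ using the relevant item of Lemma \ref{3.12} (the lemma matching $a\in C_{ij}$, $0\le j<j'\le t$, product with $C_{ij'}^*$ — here adapted to $C_{ij'}$ itself via Lemma \ref{3.10}), then multiply by the orbit-ratio $\#C_{ij}/\#C_{km}$ to pass from the ``pointwise'' count to the multiset multiplicity, and finally divide back by $\#C_{km}$ worth of terms to read off the coefficient of $\chi_{km}$.

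Concretely, I would split into the three cases stated. In case (i), $q\in R_1$: by Lemma \ref{3.10}(i)--(ii) we have $\#(a+C_{ij'})\cap C_{ij}=\frac{p-3}{4}p^{s-i-1}\phi(q^{t-j'})$ and $\#(a+C_{ij'})\cap C_{ij}^*=\frac{p+1}{4}p^{s-i-1}\phi(q^{t-j'})$, and one checks that no other coset appears — this is where the structural description in Theorem \ref{2.12} is used, namely that adding an element of $C_{ij'}=p^iq^{j'}D_0^{p^{s-i}q^{t-j'}}$ to $C_{ij}$ keeps the $p$-part at level $i$ and the $q$-part at level $j$. Then, just as in Theorem \ref{4.1}, when $l^ka$ runs over $C_{ij}$ the sum $l^k(a+b)$ sweeps $C_{ij}$ (resp.\ $C_{ij}^*$) a number of times equal to $\#C_{ij}/\#C_{ij}=1$ (resp.\ $\#C_{ij}/\#C_{ij}^*=1$), so the coefficients transfer directly: $\chi_{ij}\chi_{ij'}=\frac{p-3}{4}p^{s-i-1}\phi(q^{t-j'})\chi_{ij}+\frac{p+1}{4}p^{s-i-1}\phi(q^{t-j'})\chi_{ij}^*$. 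Case (ii), $q\in N_1$ with $2\mid j'-j$, is identical because Lemma \ref{3.10}(iii) gives the same two counts. Case (iii), $q\in N_1$ with $2\nmid j'-j$, uses Lemma \ref{3.10}(iv): now the count splitting between $C_{ij}$ and $C_{ij}^*$ becomes symmetric ($\frac{p-3}{4}p^{s-i-1}\phi(q^{t-j'})$ each), and in addition the higher cosets $C_{kj},C_{kj}^*$ for $i<k\le s-1$ and $C_{sj}$ each appear with multiplicity $\frac{\phi(p^{s-k})}{2}\phi(q^{t-j'})$ resp.\ $\phi(q^{t-j'})$, which after the orbit-ratio bookkeeping yields exactly the displayed formula; a cardinality check (both sides evaluated at $x=1$ give $\#C_{ij}\cdot\#C_{ij'}$) confirms that all of $C_{ij}+C_{ij'}$ has been accounted for.

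The main obstacle is not any single computation but getting the orbit-ratio bookkeeping right in case (iii): when $C_{km}$ has a strictly larger $l$-orbit relative to... rather, when $\#C_{ij}$ is not a multiple of $\#C_{km}$ (which happens precisely when $k>i$, since then $\#C_{km}=\frac{\phi(p^{s-k})}{2}\phi(q^{t-m})$ can exceed $\#C_{ij}$ for small $t-m$), the ``runs through exactly $\#C_{ij}/\#C_{km}$ times'' step needs care. The clean way around this is to argue at the level of multisets directly: the multiset $C_{ij}+C_{ij'}$ is $l$-invariant (since both summand sets are), hence it is a disjoint union of full $l$-orbits, i.e.\ a nonnegative integer combination of the $C_{km}$ and $C_{km}^*$; the coefficient of $C_{km}$ in that combination equals $\#\big((a+C_{ij'})\cap C_{km}\big)\cdot\#C_{ij}\big/\#C_{km}$ only after one verifies $l$-invariance makes this independent of the choice of $a\in C_{km}$, which follows from Theorem \ref{2.12}. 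Once that lemma-level fact is isolated and stated once (it is implicitly what makes \eqref{eq1} work in Theorem \ref{4.1}), cases (i)--(iii) all follow by quoting the appropriate part of Lemma \ref{3.10} and matching coefficients, and the final verification that the coefficients on both sides agree is the same routine check performed in Theorem \ref{4.1}.
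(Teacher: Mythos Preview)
Your approach is correct and is exactly the one the paper intends: the paper states that Theorems~\ref{4.8}--\ref{4.16} are proved by the method of Theorem~\ref{4.1} (or Theorem~\ref{4.7}), i.e.\ by computing the multiplicities in the multiset $C_{ij}+C_{ij'}$ from the corresponding intersection lemma and then reading off the coefficients of the $\chi$'s. Two small corrections: the lemma you quote is Lemma~3.9 in the paper's numbering (not 3.10/3.12), and your worry about the orbit-ratio bookkeeping in case~(iii) is unnecessary, since the only cosets appearing there are $C_{ij},C_{ij}^*,C_{kj},C_{kj}^*$ ($i<k\le s-1$) and $C_{sj}$, all of which have cardinality dividing $\#C_{ij}$.
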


\begin{theorem}
Let $0\leqslant i\leqslant s-1$ and $0\leqslant j<j'\leqslant t$, then
\begin{enumerate}[(i)]
\item If $q$ is a quadratic residue modulo $p$, then
\[\chi_{ij}^*\chi_{ij'}^*=\frac{p-3}{4}p^{s-i-1}\phi(q^{t-j'})\chi_{ij}^*+\frac{p+1}{4}p^{s-i-1}\phi(q^{t-j'})\chi_{ij}.\]
\item If $q$ is a quadratic nonresidue modulo $p$ and $2\mid j'-j$, then
\[\chi_{ij}^*\chi_{ij'}^*=\frac{p-3}{4}p^{s-i-1}\phi(q^{t-j'})\chi_{ij}^*+\frac{p+1}{4}p^{s-i-1}\phi(q^{t-j'})\chi_{ij}.\]
\item If $q$ is a quadratic nonresidue modulo $p$ and $2\nmid j'-j$, then
\[\begin{aligned}
\chi_{ij}^*\chi_{ij'}^*=&\frac{p-3}{4}p^{s-i-1}\phi(q^{t-j'})\left(\chi_{ij}+\chi_{ij}^*\right)\\&+\frac{\phi(p^{s-i})}{2}\phi(q^{t-j'})\left(\sum_{i< k\leqslant s-1}\left(\chi_{kj}+\chi_{kj}^*\right)+\chi_{sj}\right).
\end{aligned}\]
\end{enumerate}
\end{theorem}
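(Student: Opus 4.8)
The statement is the ``starred'' companion of Theorem~\ref{4.8}, and the natural plan is to mimic the proof of Theorem~\ref{4.1}, with Lemma~3.10 (the lemma recording $\#(a^{*}+C_{ij'}^{*})\cap C$ for $a^{*}\in C_{ij}^{*}$) playing the role that Lemma~\ref{3.2} plays there. First I would write $\chi_{ij}^{*}\chi_{ij'}^{*}=\sum_{\alpha\in C_{ij}^{*}}\sum_{\beta\in C_{ij'}^{*}}x^{\alpha+\beta}$, with exponents reduced modulo $n=p^{s}q^{t}$, and note that the multiset $M=C_{ij}^{*}+C_{ij'}^{*}$ is invariant under multiplication by $l$, since $lC_{ij}^{*}=C_{ij}^{*}$ and $lC_{ij'}^{*}=C_{ij'}^{*}$. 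Hence all elements of any fixed $l$-cyclotomic coset occur in $M$ with one common multiplicity, so $M$ decomposes into copies of full cyclotomic cosets, and it suffices to determine, for each cyclotomic coset $C$ meeting $M$, the number $\mu_{C}$ of copies of $C$ in $M$; then $\chi_{ij}^{*}\chi_{ij'}^{*}$ is $\sum_{C}\mu_{C}\bigl(\sum_{\gamma\in C}x^{\gamma}\bigr)$, i.e. a combination of the various $\chi_{km}$ and $\chi_{km}^{*}$ with coefficients $\mu_{C}$.

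To compute $\mu_{C}$ I would run the double count from the proof of Theorem~\ref{4.1}. On one side, $\sum_{c\in C}(\text{multiplicity of }c\text{ in }M)=\mu_{C}\,\#C$ by the $l$-invariance just noted. On the other side this sum equals $\#\{(\alpha,\beta)\in C_{ij}^{*}\times C_{ij'}^{*}:\alpha+\beta\in C\}=\sum_{\alpha\in C_{ij}^{*}}\#(\alpha+C_{ij'}^{*})\cap C$, and by Lemma~3.10 the summand does not depend on $\alpha\in C_{ij}^{*}$, so it equals $\#C_{ij}^{*}\cdot\#(a^{*}+C_{ij'}^{*})\cap C$ for any fixed $a^{*}$. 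Hence $\mu_{C}=\dfrac{\#C_{ij}^{*}}{\#C}\,\#(a^{*}+C_{ij'}^{*})\cap C$. Substituting the values from Lemma~3.10 together with $\#C_{ij}^{*}=\tfrac12\phi(p^{s-i}q^{t-j})$, $\#C_{km}=\#C_{km}^{*}=\tfrac12\phi(p^{s-k}q^{t-m})$ and $\#C_{sj}=\phi(q^{t-j})$ then finishes the calculation: for $C\in\{C_{ij}^{*},C_{ij}\}$ the size ratio is $1$, giving the coefficients $\tfrac{p-3}{4}p^{s-i-1}\phi(q^{t-j'})$ and (according to the case) $\tfrac{p+1}{4}p^{s-i-1}\phi(q^{t-j'})$ or $\tfrac{p-3}{4}p^{s-i-1}\phi(q^{t-j'})$; for $C\in\{C_{kj},C_{kj}^{*}\}$ with $i<k\leqslant s-1$ the ratio $\phi(p^{s-i})/\phi(p^{s-k})$ converts $\tfrac{\phi(p^{s-k})}{2}\phi(q^{t-j'})$ into $\tfrac{\phi(p^{s-i})}{2}\phi(q^{t-j'})$; and for $C=C_{sj}$ the ratio $\tfrac12\phi(p^{s-i}q^{t-j})/\phi(q^{t-j})$ converts $\phi(q^{t-j'})$ into $\tfrac{\phi(p^{s-i})}{2}\phi(q^{t-j'})$. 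Splitting into the three cases of Lemma~3.10 ($q$ a quadratic residue mod $p$; $q$ a nonresidue with $2\mid j'-j$; $q$ a nonresidue with $2\nmid j'-j$) reproduces parts (i), (ii), (iii) verbatim.

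The only genuine care needed is to be sure the cosets enumerated in Lemma~3.10 exhaust the support of $M$, so that no coset is silently dropped. I would confirm this by the total count $\sum_{C}\mu_{C}\,\#C=\#C_{ij}^{*}\cdot\#C_{ij'}^{*}=\tfrac14\phi(p^{s-i}q^{t-j})\phi(p^{s-i}q^{t-j'})$: in the quadratic-residue case and the even-parity nonresidue case this is immediate from $\tfrac{p-3}{4}+\tfrac{p+1}{4}=\tfrac{p-1}{2}$, while in the odd-parity nonresidue case it additionally uses $1+\sum_{r=1}^{m}\phi(p^{r})=p^{m}$ with $m=s-i-1$ to absorb the $C_{kj},C_{kj}^{*},C_{sj}$ contributions. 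This is the step where a slip in the case division would surface, so I would run it first as a consistency check and only then assemble the explicit identities; the boundary value $j'=t$ (so $C_{ij'}^{*}=C_{it}^{*}$ and $\phi(q^{t-j'})=1$) already lies in the range of Lemma~3.10 and needs no separate treatment.
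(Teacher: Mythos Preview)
Your proposal is correct and follows exactly the approach the paper prescribes: the paper does not give a separate proof of this theorem but states that Theorems~\ref{4.8}--\ref{4.16} are proved by the method of Theorem~\ref{4.1} (or Theorem~\ref{4.7}), with the correspondence Lemma~3.$n\mapsto$ Theorem~4.$(n-1)$, so Lemma~3.10 is indeed the relevant input here. Your $l$-invariance/double-count reformulation is a clean packaging of the argument in the proof of Theorem~\ref{4.1}, and the total-count verification you add is a useful consistency check that the paper omits.
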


\begin{theorem}
Let $0\leqslant i\leqslant s-1$ and $0\leqslant j<j'\leqslant t$, then
\begin{enumerate}[(i)]
\item If $q$ is a quadratic residue modulo $p$, then
\[\begin{aligned}
\chi_{ij}\chi_{ij'}^*=&\frac{p-3}{4}p^{s-i-1}\phi(q^{t-j'})\left(\chi_{ij}+\chi_{ij}^*\right)\\&+\frac{\phi(p^{s-i})}{2}\phi(q^{t-j'})\left(\sum_{i< k\leqslant s-1}\left(\chi_{kj}+\chi_{kj}^*\right)+\chi_{sj}\right).
\end{aligned}\]
\item If $q$ is a quadratic nonresidue modulo $p$ and $2\nmid j'-j$, then
\[\chi_{ij}\chi_{ij'}^*=\frac{p-3}{4}p^{s-i-1}\phi(q^{t-j'})\chi_{ij}+\frac{p+1}{4}p^{s-i-1}\phi(q^{t-j'})\chi_{ij}^*.\]
\item If $q$ is a quadratic nonresidue modulo $p$ and $2\mid j'-j$, then
\[\begin{aligned}
\chi_{ij}\chi_{ij'}^*=&\frac{p-3}{4}p^{s-i-1}\phi(q^{t-j'})\left(\chi_{ij}+\chi_{ij}^*\right)\\&+\frac{\phi(p^{s-i})}{2}\phi(q^{t-j'})\left(\sum_{i< k\leqslant s-1}\left(\chi_{kj}+\chi_{kj}^*\right)+\chi_{sj}\right).
\end{aligned}\]
\end{enumerate}
\end{theorem}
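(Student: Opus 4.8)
The plan is to run the multiset argument from the proof of Theorem~\ref{4.1}, now for the product $\chi_{ij}\chi_{ij'}^{*}$, and to read the coefficients off from its companion, Lemma~3.11 in Section~\ref{sec3}. Expanding in $\mathbb{F}_l[x]/\langle x^{p^sq^t}-1\rangle$,
\[
\chi_{ij}\chi_{ij'}^{*}=\sum_{\alpha\in C_{ij}}\sum_{\beta\in C_{ij'}^{*}}x^{\alpha+\beta}=\sum_{\gamma\in\mathbb{Z}_n}N(\gamma)\,x^{\gamma},\qquad N(\gamma):=\#\{(\alpha,\beta)\in C_{ij}\times C_{ij'}^{*}:\alpha+\beta\equiv\gamma\},
\]
the first observation is that the multiset $C_{ij}+C_{ij'}^{*}$ of these sums is invariant under $\gamma\mapsto l\gamma$, since multiplication by $l$ permutes $C_{ij}$ and permutes $C_{ij'}^{*}$ (each being a single $l$-orbit). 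Hence $N$ is constant on every $l$-cyclotomic coset, and therefore $\chi_{ij}\chi_{ij'}^{*}$ is a $\mathbb{Z}$-linear combination, read in $\mathbb{F}_l$, of the polynomials $\chi_{km}$ and $\chi_{km}^{*}$.

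Next I would extract the coefficients. Fix a coset $C$ and a representative $\gamma_0\in C$; the coefficient of $\chi_{C}$ in $\chi_{ij}\chi_{ij'}^{*}$ is $N(\gamma_0)$, and summing over $C$ gives
\[
\#C\cdot N(\gamma_0)=\sum_{\gamma\in C}N(\gamma)=\sum_{\alpha\in C_{ij}}\#\bigl((\alpha+C_{ij'}^{*})\cap C\bigr)=\#C_{ij}\cdot\#\bigl((a+C_{ij'}^{*})\cap C\bigr),
\]
valid for any fixed $a\in C_{ij}$, the last equality because $(l^{k}\alpha+C_{ij'}^{*})\cap C=l^{k}\bigl((\alpha+C_{ij'}^{*})\cap C\bigr)$ has the same cardinality as $(\alpha+C_{ij'}^{*})\cap C$ while $\alpha$ runs through all of $C_{ij}=\{l^{k}a\}$. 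Thus the coefficient of $\chi_{C}$ equals $\#C_{ij}\cdot\#\bigl((a+C_{ij'}^{*})\cap C\bigr)/\#C$. Moreover, every element of $C_{ij}$ is divisible by $q^{j}$ but not by $q^{j+1}$, every element of $C_{ij'}^{*}$ is divisible by $q^{j'}$ with $j'>j$, and both are divisible by $p^{i}$; hence every sum $\alpha+\beta$ is divisible by $q^{j}$ but not by $q^{j+1}$ and by $p^{i}$, so the only cosets $C$ that occur are $C_{kj}$ and $C_{kj}^{*}$ with $i\leqslant k\leqslant s-1$, together with $C_{sj}$ (in particular $C_0$ never occurs). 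This is exactly the list appearing in Lemma~3.11.

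It then remains to substitute the values of Lemma~3.11 into the formula, in the three cases $q\in R_1$; $q\in N_1$ with $2\mid j'-j$; and $q\in N_1$ with $2\nmid j'-j$, using $\#C_{ij}=\tfrac{\phi(p^{s-i})}{2}\phi(q^{t-j})$, $\#C_{kj}=\tfrac{\phi(p^{s-k})}{2}\phi(q^{t-j})$ and $\#C_{sj}=\phi(q^{t-j})$, so that the ratios collapse. For instance, when $q\in R_1$, Lemma~3.11(i) gives $\#\bigl((a+C_{ij'}^{*})\cap C_{ij}\bigr)=\tfrac{p-3}{4}p^{s-i-1}\phi(q^{t-j'})$, hence coefficient $\tfrac{p-3}{4}p^{s-i-1}\phi(q^{t-j'})$ for $\chi_{ij}$ and for $\chi_{ij}^{*}$, while $\#\bigl((a+C_{ij'}^{*})\cap C_{kj}\bigr)=\tfrac{\phi(p^{s-k})}{2}\phi(q^{t-j'})$ yields coefficient $\tfrac{\phi(p^{s-i})}{2}\phi(q^{t-j'})$ for $\chi_{kj}$, $\chi_{kj}^{*}$ and $\chi_{sj}$; the case $q\in N_1$ with $2\mid j'-j$ is identical by Lemma~3.11(iii), and the case $2\nmid j'-j$ is read off from Lemma~3.11(ii). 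A final consistency check $\#C_{ij}\cdot\#C_{ij'}^{*}=\sum_{C}(\text{coefficient of }\chi_{C})\cdot\#C$ confirms that no further coset contributes. The argument is essentially bookkeeping; the only step needing care is the homogeneity assertion that $\#\bigl((\alpha+C_{ij'}^{*})\cap C\bigr)$ is independent of $\alpha\in C_{ij}$, which is precisely why Lemma~3.11 may be stated for an arbitrary representative $a\in C_{ij}$.
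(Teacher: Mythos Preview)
Your proposal is correct and follows precisely the approach the paper intends: the paper does not give a separate proof but states that Theorems~\ref{4.8}--\ref{4.16} are proved by the method of Theorem~\ref{4.1} (or Theorem~\ref{4.7}), with Lemma~3.11 as the designated companion for this theorem under the correspondence ``Lemma~3.$n$ $\mapsto$ Theorem~4.$(n-1)$''. Your write-up is in fact more explicit than the paper's own treatment---in particular, your valuation argument pinning down the possible cosets, and the consistency check showing in case~(ii) that the two intersections from Lemma~3.11(ii) already exhaust $\#C_{ij'}^{*}$ so that no further terms appear, make the bookkeeping entirely transparent.
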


\begin{theorem}
Let $0\leqslant i\leqslant s-1$ and $0\leqslant j<j'\leqslant t$, then
\begin{enumerate}[(i)]
\item If $q$ is a quadratic residue modulo $p$, then
\[\begin{aligned}
\chi_{ij}^*\chi_{ij'}=&\frac{p-3}{4}p^{s-i-1}\phi(q^{t-j'})\left(\chi_{ij}+\chi_{ij}^*\right)\\&+\frac{\phi(p^{s-i})}{2}\phi(q^{t-j'})\left(\sum_{i< k\leqslant s-1}\left(\chi_{kj}+\chi_{kj}^*\right)+\chi_{sj}\right).
\end{aligned}\]
\item If $q$ is a quadratic nonresidue modulo $p$ and $2\nmid j'-j$, then
\[\chi_{ij}^*\chi_{ij'}=\frac{p-3}{4}p^{s-i-1}\phi(q^{t-j'})\chi_{ij}^*+\frac{p+1}{4}p^{s-i-1}\phi(q^{t-j'})\chi_{ij}.\]
\item If $q$ is a quadratic nonresidue modulo $p$ and $2\mid j'-j$, then
\[\begin{aligned}
\chi_{ij}^*\chi_{ij'}=&\frac{p-3}{4}p^{s-i-1}\phi(q^{t-j'})\left(\chi_{ij}+\chi_{ij}^*\right)\\&+\frac{\phi(p^{s-i})}{2}\phi(q^{t-j'})\left(\sum_{i< k\leqslant s-1}\left(\chi_{kj}+\chi_{kj}^*\right)+\chi_{sj}\right).
\end{aligned}\]
\end{enumerate}
\end{theorem}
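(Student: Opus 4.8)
The plan is to imitate the proofs of Theorems~\ref{4.1} and~\ref{4.7}: expand the product into a multiset sum in $\mathbb{Z}_n$, observe that the resulting coefficient function is constant on each $l$-cyclotomic coset, and then read the coefficients off Lemma 3.12.

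Concretely, in $\mathbb{F}_l[x]/\langle x^n-1\rangle$ we have
\[
\chi_{ij}^{*}\chi_{ij'}=\sum_{a\in C_{ij}^{*}}\sum_{b\in C_{ij'}}x^{a+b},
\]
so the coefficient of $x^{c}$ equals $N_{c}:=\#\{(a,b)\in C_{ij}^{*}\times C_{ij'}:a+b\equiv c \bmod n\}$ reduced modulo $l$. First I would record that $N_{c}=N_{lc}$ for every $c$: the map $(a,b)\mapsto(la,lb)$ is a bijection from the pairs summing to $c$ onto those summing to $lc$, and it preserves $C_{ij}^{*}\times C_{ij'}$ since both $C_{ij}^{*}$ and $C_{ij'}$ are $l$-stable by definition. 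Hence $N_{c}$ depends only on the $l$-cyclotomic coset of $c$, which already shows that $\chi_{ij}^{*}\chi_{ij'}$ is an $\mathbb{F}_l$-linear combination of the $\chi_{km}$ and $\chi_{km}^{*}$; it remains to decide which cosets occur and with what coefficient. Writing $a=p^{i}q^{j}u$ and $b=p^{i}q^{j'}w$ with $u,w$ units and $j'>j$, we see $a+b=p^{i}q^{j}(u+q^{j'-j}w)$ has $q$-adic valuation exactly $j$, so by the explicit form of the cosets in Theorem~\ref{2.12} only $C_{ij},C_{ij}^{*}$ and possibly $C_{kj},C_{kj}^{*}$ (for $i<k\le s-1$) and $C_{sj}$ can occur, which is exactly the list appearing on the right-hand side.

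For the coefficients I would invoke Lemma 3.12, which for a fixed $a^{*}\in C_{ij}^{*}$ supplies $\#(a^{*}+C_{ij'})\cap C_{km}$ and $\#(a^{*}+C_{ij'})\cap C_{km}^{*}$ in each of the three cases; by the $l$-stability above this count does not depend on the chosen $a^{*}$. Summing over all $a^{*}\in C_{ij}^{*}$ then counts every pair $(a,b)$ whose sum lies in a fixed target coset $C_{km}$, and since $N_{c}$ is constant on $C_{km}$ the multiplicity of the block $C_{km}$ in the multiset $C_{ij}^{*}+C_{ij'}$ equals
\[
\frac{\#C_{ij}^{*}}{\#C_{km}}\cdot\#(a^{*}+C_{ij'})\cap C_{km},
\]
and likewise for $C_{km}^{*}$. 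Substituting $\#C_{ij}^{*}=\tfrac12\phi(p^{s-i})\phi(q^{t-j})$, $\#C_{kj}=\tfrac12\phi(p^{s-k})\phi(q^{t-j})$, $\#C_{sj}=\phi(q^{t-j})$ together with the values from Lemma 3.12 reproduces exactly the coefficients $\tfrac{p-3}{4}p^{s-i-1}\phi(q^{t-j'})$, $\tfrac{p+1}{4}p^{s-i-1}\phi(q^{t-j'})$ and $\tfrac{\phi(p^{s-i})}{2}\phi(q^{t-j'})$ occurring in (i), (ii), (iii). The three cases ($q\in R_{1}$; $q\in N_{1}$ with $2\nmid j'-j$; $q\in N_{1}$ with $2\mid j'-j$) run in parallel, differing only in whether $C_{ij}$ or $C_{ij}^{*}$ carries the weight $\tfrac{p-3}{4}p^{s-i-1}$ versus $\tfrac{p+1}{4}p^{s-i-1}$ and in whether the higher-level blocks appear at all.

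I expect the only genuinely non-routine point to be confirming that Lemma 3.12 has listed all non-empty intersections, i.e. that $C_{ij}^{*}+C_{ij'}$ contains no block besides $C_{ij},C_{ij}^{*},C_{kj},C_{kj}^{*}$ $(i<k\le s-1)$ and $C_{sj}$. This follows from the $q$-valuation remark together with the fact --- implicit in Theorem~\ref{2.12} and already built into Lemma 3.12 --- that the $p$-part of $u+q^{j'-j}w$ can be an arbitrary unit modulo $p^{s-k}$ for every $k\ge i$, so that the sum always lands in a coset of $p$-level $\ge i$ and $q$-level exactly $j$. Once this is granted, comparing coefficients on the two sides is the same mechanical substitution as in the proof of Theorem~\ref{4.1}.
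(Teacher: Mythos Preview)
Your proposal is correct and follows exactly the approach the paper intends: the paper explicitly states that this theorem is proved ``by using the methods in the proof of Theorem~\ref{4.1} or Theorem~\ref{4.7}'', and via the correspondence Lemma~3.$n\mapsto$ Theorem~4.$(n-1)$ the relevant intersection counts are precisely those of Lemma~3.12, which you invoke. Your write-up is in fact more detailed than what the paper offers (it gives no proof), and the auxiliary observations you add --- the $l$-stability argument for constancy of $N_c$ on cosets and the $q$-valuation check restricting the target cosets to $q$-level $j$ --- are correct and make the argument self-contained.
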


\begin{theorem}
\begin{enumerate}[(i)]
\item Let $0\leqslant j<j'\leqslant t$, then $\chi_{sj}\chi_{sj'}=\phi(q^{t-j'})\chi_{sj}$.
\item Let $0\leqslant i\leqslant s-1$ and $0\leqslant j< j'\leqslant t$, then\[\chi_{ij}\chi_{sj'}=\phi(q^{t-j'})\chi_{ij}\]and\[\chi_{ij}^*\chi_{sj'}=\phi(q^{t-j'})\chi_{ij}^*.\]
\item Let $0\leqslant i\leqslant s-1$ and $0\leqslant j'< j\leqslant t$, then\[\chi_{ij}\chi_{sj'}=\phi(q^{t-j})\chi_{ij'}\]and\[\chi_{ij}^*\chi_{sj'}=\phi(q^{t-j})\chi_{ij'}^*.\]
\end{enumerate}
\end{theorem}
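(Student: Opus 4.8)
The plan is to adapt the multiset argument from the proof of Theorem~\ref{4.7} (and, for the underlying principle, Theorem~\ref{4.1}). For subsets $A,B\subseteq\mathbb{Z}_n$ with $n=p^sq^t$ we have $\big(\sum_{a\in A}x^a\big)\big(\sum_{b\in B}x^b\big)=\sum_z\mu(z)x^z$, where $\mu(z)=\#\{(a,b)\in A\times B:a+b\equiv z\ \mathrm{mod}\ n\}$; when $A$ and $B$ are $l$-cyclotomic cosets, $lA=A$ and $lB=B$ make this multiset invariant under $z\mapsto lz$, so $\mu$ is constant on each cyclotomic coset and $\chi_A\chi_B=\sum_\gamma m_\gamma\chi_\gamma$, with $m_\gamma$ equal to the number of pairs $(a,b)$ with $a+b$ equal to a single chosen representative of $C_\gamma$. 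Thus in each of (i)--(iii) it suffices to fix one target element of the coset on the right-hand side, pull out of $A$, $B$ and that target the appropriate common factor ($p^iq^{\min(j,j')}$, or $p^sq^{\min(j,j')}$), and count solutions of the resulting single congruence, exactly as in Theorem~\ref{4.7}.

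For (i) and (ii), where $j<j'$, pulling out $p^sq^j$ (resp.\ $p^iq^j$) gives $C_{sj}=p^sq^j\{y:y\in\mathbb{Z}_{q^{t-j}}^*\}$ and $C_{sj'}=p^sq^j\{q^{j'-j}y':y'\in\mathbb{Z}_{q^{t-j'}}^*\}$ (resp.\ $C_{ij}=p^iq^j\{q^{t-j}x+p^{s-i}y:x\in R_{s-i}\text{ or }N_{s-i},\,y\in\mathbb{Z}_{q^{t-j}}^*\}$ and $C_{sj'}=p^iq^j\{p^{s-i}q^{j'-j}y'\}$); hitting a fixed target then reduces, modulo $q^{t-j}$ in case (i) and modulo $p^{s-i}q^{t-j}$ in case (ii), to $y+q^{j'-j}y'\equiv y_0$ — in case (ii) together with $x\equiv x_0\ (\mathrm{mod}\ p^{s-i})$, which is forced and leaves the $R_{s-i}/N_{s-i}$ label intact because every element of $C_{sj'}$ has $p$-adic valuation $\ge s$. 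Since $q^{j'-j}y'$ is a multiple of $q$, each of the $\phi(q^{t-j'})$ choices $y'\in\mathbb{Z}_{q^{t-j'}}^*$ determines a unique admissible $y$, itself a unit; hence every target occurs exactly $\phi(q^{t-j'})$ times, which is precisely $\chi_{sj}\chi_{sj'}=\phi(q^{t-j'})\chi_{sj}$, $\chi_{ij}\chi_{sj'}=\phi(q^{t-j'})\chi_{ij}$, and (with $R_{s-i}$ replaced by $N_{s-i}$) $\chi_{ij}^*\chi_{sj'}=\phi(q^{t-j'})\chi_{ij}^*$.

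For (iii), where $j'<j$, pull out $p^iq^{j'}$: then $C_{ij}=p^iq^{j'}\{q^{t-j'}x+p^{s-i}q^{j-j'}y\}$ and $C_{sj'}=p^iq^{j'}\{p^{s-i}y'\}$, so hitting a fixed $q^{t-j'}x_0+p^{s-i}y_0'$ forces $x=x_0$ (reduce mod $p^{s-i}$) and $q^{j-j'}y+y'\equiv y_0'$ (reduce mod $q^{t-j'}$); since $q^{j-j'}y$ is a multiple of $q$, each of the $\phi(q^{t-j})$ values $y\in\mathbb{Z}_{q^{t-j}}^*$ yields a unique unit $y'$, so the multiplicity is $\phi(q^{t-j})$, giving $\chi_{ij}\chi_{sj'}=\phi(q^{t-j})\chi_{ij'}$ and likewise $\chi_{ij}^*\chi_{sj'}=\phi(q^{t-j})\chi_{ij'}^*$. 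The one delicate point I would pin down against Theorem~\ref{2.12} is the ``$x$-part'': adding an element of $C_{sj'}$ leaves the residue modulo $p^{s-i}$ unchanged but drops the $q$-adic valuation from $j$ to $j'$, so the product lands in $C_{ij'}$ exactly when the quadratic character attached to valuation $j$ matches the one attached to valuation $j'$. When $q$ is a quadratic residue modulo $p$ these always agree and the identity is as stated; when $q$ is a nonresidue, which of $C_{ij'}$ and $C_{ij'}^*$ receives the mass depends on the parity of $j-j'$, the same dichotomy as in Theorem~\ref{4.8}, and handling this case split correctly is the main obstacle.
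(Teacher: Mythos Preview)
Your approach is exactly what the paper intends: it says the results in Theorems~\ref{4.8}--\ref{4.16} ``can be proved by using the methods in the proof of Theorem~\ref{4.1} or Theorem~\ref{4.7}'', and that is precisely the multiset-plus-$l$-invariance argument you carry out. Your proofs of (i) and (ii) are correct and complete; the key point, which you isolate cleanly, is that every element of $C_{sj'}$ is divisible by $p^s$, so adding it leaves the residue modulo $p^{s-i}$ (hence the $R_{s-i}/N_{s-i}$ label) untouched, and the congruence in the $q$-part has exactly $\phi(q^{t-j'})$ solutions.

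Your caveat about (iii) is not a gap in \emph{your} argument but an error in the paper's statement. In the nonresidue case the $R_{s-i}/N_{s-i}$ label attached to $C_{ij}$ by Theorem~\ref{2.12}(ii) depends on the parity of $t-j$, whereas the label attached to $C_{ij'}$ depends on the parity of $t-j'$; when $j-j'$ is odd these labels are opposite, and your computation then gives $\chi_{ij}\chi_{sj'}=\phi(q^{t-j})\chi_{ij'}^{*}$ (and $\chi_{ij}^{*}\chi_{sj'}=\phi(q^{t-j})\chi_{ij'}$), not the formula printed. The paper's own second example confirms this: with $p=7$, $q=5$ (a nonresidue mod $7$), $s=t=1$, $i=0$, $j=1$, $j'=0$, part (iii) would predict $\chi_5\chi_7=\chi_{00}=\chi_1$, but direct multiplication of $(x^5+x^{10}+x^{20})(x^7+x^{14}+x^{21}+x^{28})$ in $\mathbb{F}_2[x]/\langle x^{35}-1\rangle$ yields exactly $\chi_3=\chi_{00}^{*}$. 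So the ``main obstacle'' you flag is real; the correct version of (iii) must carry the same parity split as Theorems~\ref{4.8}--\ref{4.16}, and with that amendment your argument goes through verbatim.
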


\begin{theorem}
Let $0\leqslant i<i'\leqslant s-1$ and $0\leqslant j\leqslant t-1$, then
\begin{enumerate}[(i)]
\item If $q$ is a quadratic residue modulo $p$, then\[\chi_{ij}\chi_{i'j}=\frac{\phi(p^{s-i'})}{2}(q-2)q^{t-j-1}\chi_{ij}+\frac{\phi(p^{s-i'})}{2}\phi(q^{t-j})\sum_{j<m\leqslant t}\chi_{im}.\]
\item If $q$ is a quadratic nonresidue modulo $p$, then
\[
\chi_{ij}\chi_{i'j}=\frac{\phi(p^{s-i'})}{2}(q-2)q^{t-j-1}\chi_{ij}+\frac{\phi(p^{s-i'})}{2}\phi(q^{t-j})\left(\sum_{\substack{j<m\leqslant t\\2\nmid m-j}}\chi_{im}^*+\sum_{\substack{j<m\leqslant t\\2\mid m-j}}\chi_{im}\right).
\]
\end{enumerate}
\end{theorem}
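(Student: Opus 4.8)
The plan is to run the multiset (convolution) argument from the proof of Theorem \ref{4.1}, now fed by the cyclotomic-number lemma of Section \ref{sec3} paired with this theorem, namely the one computing $\#(a+C_{i'j})\cap C_{ij}$, $\#(a+C_{i'j})\cap C_{im}$ and $\#(a+C_{i'j})\cap C_{im}^*$ for $a\in C_{ij}$ and $0\leqslant i<i'\leqslant s-1$. Expanding $\chi_{ij}\chi_{i'j}=\sum_{a\in C_{ij}}\sum_{b\in C_{i'j}}x^{a+b}$, the coefficient of $x^{\gamma}$ equals the multiplicity of $\gamma$ in the multiset $C_{ij}+C_{i'j}$ reduced modulo $n=p^sq^t$. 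Since $lC_{ij}=C_{ij}$ and $lC_{i'j}=C_{i'j}$, multiplication by $l$ permutes this multiset, so $\gamma$ and $l\gamma$ occur with equal multiplicity; hence $C_{ij}+C_{i'j}$ is a union of full $l$-cyclotomic cosets with nonnegative integer multiplicities, and $\chi_{ij}\chi_{i'j}$ is an $\mathbb{N}$-combination of the $\chi_{im}$ and $\chi_{im}^*$.

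To read off which cosets occur and with what weight I would argue exactly as in Theorem \ref{4.1}. Fix $a\in C_{ij}$. For $j<m\leqslant t$ pick $b\in C_{i'j}$ with $a+b=c$, where $c$ lies in $C_{im}$ in the residue case and in $C_{im}$ or $C_{im}^*$ according to the parity of $m-j$ in the nonresidue case. Then $l^ka+l^kb=l^kc$ for $0\leqslant k\leqslant\frac{\phi(p^{s-i})}{2}\phi(q^{t-j})-1$, and as $l^ka$ sweeps $C_{ij}$ the point $l^kc$ sweeps the cyclotomic coset containing $c$ exactly $\#C_{ij}/\#C_{im}=\phi(q^{t-j})/\phi(q^{t-m})$ times. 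Combining this with the value $\#(a+C_{i'j})\cap C_{im}=\frac{\phi(p^{s-i'})}{2}\phi(q^{t-m})$ (resp. the same value for $C_{im}^*$) gives that $C_{ij}+C_{i'j}$ contains $\frac{\phi(p^{s-i'})}{2}\phi(q^{t-j})$ copies of $C_{im}$ (resp. of $C_{im}^*$). The same reasoning with $c\in C_{ij}$, where the cycling factor is $\#C_{ij}/\#C_{ij}=1$, shows $C_{ij}+C_{i'j}$ contains $\frac{\phi(p^{s-i'})}{2}(q-2)q^{t-j-1}$ copies of $C_{ij}$. Substituting these multiplicities back into $\chi_{ij}\chi_{i'j}$ produces precisely the right-hand sides of (i) and (ii); the only difference between the two cases is the asterisk bookkeeping, which is already built into the counting lemma.

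The point that deserves care is completeness: that no $C_{km}$ or $C_{km}^*$ with $k>i$, no $C_{sm}$, and not $C_0$ shows up. One route uses the additive model of Theorem \ref{2.12}: writing $a=p^iq^tx+p^sq^jy$ and $b=p^{i'}q^tx'+p^sq^jy'$ gives $a+b=p^iq^t(x+p^{i'-i}x')+p^sq^j(y+y')$, whose $p$-adic valuation is exactly $i$ because $x+p^{i'-i}x'\equiv x\not\equiv 0\pmod p$, while $x+p^{i'-i}x'$ has the same quadratic character modulo $p$ as $x$, and $q^j(y+y')$ realizes every $q$-adic valuation between $j$ and $t$; this forces $a+b$ into $C_{ij}$ or into one of the $C_{im},C_{im}^*$ with $j<m\leqslant t$, the asterisk being governed by the parity of $m-j$ as in the statement. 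A quicker check is the size count: since $(q-2)q^{t-j-1}+\sum_{j<m\leqslant t}\phi(q^{t-m})=\phi(q^{t-j})$, the multiplicities found above satisfy $\frac{\phi(p^{s-i'})}{2}(q-2)q^{t-j-1}\,\#C_{ij}+\sum_{j<m\leqslant t}\frac{\phi(p^{s-i'})}{2}\phi(q^{t-j})\,\#C_{im}=\#C_{ij}\cdot\#C_{i'j}$, so they already exhaust the product and no further coset can appear. Either route finishes the proof; I expect this completeness step to be the only real obstacle, the remainder being a routine transcription of the argument in Theorem \ref{4.1}.
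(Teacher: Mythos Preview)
Your proposal is correct and is exactly the approach the paper intends: the paper does not prove this theorem separately but states that Theorems \ref{4.8}--\ref{4.16} ``can be proved by using the methods in the proof of Theorem \ref{4.1} or Theorem \ref{4.7}'', and you have carried out precisely the Theorem \ref{4.1} convolution argument fed by the matching counting lemma (Lemma 3.14), with your additive-model completeness check being the Theorem \ref{4.7} route. Your size-count verification $(q-2)q^{t-j-1}+\sum_{j<m\leqslant t}\phi(q^{t-m})=\phi(q^{t-j})$ makes rigorous what the paper abbreviates as ``it is easy to verify that the coefficients of the terms on the both sides \ldots\ are equal''.
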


\begin{theorem}
Let $0\leqslant i<i'\leqslant s-1$ and $0\leqslant j\leqslant t-1$, then
\begin{enumerate}[(i)]
\item If $q$ is a quadratic residue modulo $p$, then\[\chi_{ij}^*\chi_{i'j}^*=\frac{\phi(p^{s-i'})}{2}(q-2)q^{t-j-1}\chi_{ij}^*+\frac{\phi(p^{s-i'})}{2}\phi(q^{t-j})\sum_{j<m\leqslant t}\chi_{im}^*.\]
\item If $q$ is a quadratic nonresidue modulo $p$, then
\[
\chi_{ij}^*\chi_{i'j}^*=\frac{\phi(p^{s-i'})}{2}(q-2)q^{t-j-1}\chi_{ij}^*+\frac{\phi(p^{s-i'})}{2}\phi(q^{t-j})\left(\sum_{\substack{j<m\leqslant t\\2\nmid m-j}}\chi_{im}+\sum_{\substack{j<m\leqslant t\\2\mid m-j}}\chi_{im}^*\right).
\]
\end{enumerate}
\end{theorem}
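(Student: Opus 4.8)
The plan is to follow the route of the proof of Theorem~\ref{4.1}, converting the intersection counts of Lemma 3.15 into multiplicities in the multiset $C_{ij}^*+C_{i'j}^*$. In $\frac{\mathbb{F}_l[x]}{\langle x^{p^sq^t}-1\rangle}$ we expand
\[
\chi_{ij}^*\chi_{i'j}^*=\sum_{\alpha\in C_{ij}^*}\sum_{\beta\in C_{i'j}^*}x^{\alpha+\beta}=\sum_{\gamma}M(\gamma)\,x^{\gamma},
\]
where $M(\gamma)=\#\{(\alpha,\beta)\in C_{ij}^*\times C_{i'j}^*:\alpha+\beta\equiv\gamma\bmod p^sq^t\}$. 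Since $C_{ij}^*$ and $C_{i'j}^*$ are $l$-cyclotomic cosets, the substitution $x\mapsto x^l$ fixes $\chi_{ij}^*$ and $\chi_{i'j}^*$, hence their product, so $M$ is constant on every $l$-cyclotomic coset. Moreover, since $i<i'$, each sum $\alpha+\beta$ with $\alpha\in C_{ij}^*$, $\beta\in C_{i'j}^*$ is divisible by $p^iq^j$ but not by $p^{i+1}$, so by Theorem~\ref{2.12} its residue modulo $p^sq^t$ lies in $C_{im}$ or $C_{im}^*$ for some $m$ with $j\leqslant m\leqslant t$. Hence only the coefficients of $\chi_{im},\chi_{im}^*$ ($j\leqslant m\leqslant t$) can be nonzero, and it remains to compute the value of $M$ on each such coset.

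Fix $a^*\in C_{ij}^*$ and a target coset $C_{\mu\nu}$. As in the proof of Theorem~\ref{4.1}: for each $\alpha\in C_{ij}^*$ there are $\#\bigl((a^*+C_{i'j}^*)\cap C_{\mu\nu}\bigr)$ elements $\beta\in C_{i'j}^*$ with $\alpha+\beta\in C_{\mu\nu}$ (independent of $\alpha$, since any two elements of $C_{ij}^*$ are $l$-conjugate while $C_{i'j}^*$ and $C_{\mu\nu}$ are $l$-invariant), and these pairs are distributed uniformly over $C_{\mu\nu}$; hence the multiset $C_{ij}^*+C_{i'j}^*$ contains $\frac{\#C_{ij}^*}{\#C_{\mu\nu}}\cdot\#\bigl((a^*+C_{i'j}^*)\cap C_{\mu\nu}\bigr)$ copies of $C_{\mu\nu}$. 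Substituting $\#C_{ij}^*=\frac{\phi(p^{s-i})}{2}\phi(q^{t-j})$ and $\#C_{im}=\#C_{im}^*=\frac{\phi(p^{s-i})}{2}\phi(q^{t-m})$ and invoking Lemma 3.15: for $C_{\mu\nu}=C_{ij}^*$ the ratio is $1$ and part~(i) gives $M=\frac{\phi(p^{s-i'})}{2}(q-2)q^{t-j-1}$; for $C_{\mu\nu}=C_{im}^*$ with $j<m\leqslant t$ in the quadratic residue case, part~(ii) gives $\#\bigl((a^*+C_{i'j}^*)\cap C_{im}^*\bigr)=\frac{\phi(p^{s-i'})}{2}\phi(q^{t-m})$, whence $M=\frac{\phi(q^{t-j})}{\phi(q^{t-m})}\cdot\frac{\phi(p^{s-i'})}{2}\phi(q^{t-m})=\frac{\phi(p^{s-i'})}{2}\phi(q^{t-j})$; and in the quadratic nonresidue case part~(iii) yields the same value $\frac{\phi(p^{s-i'})}{2}\phi(q^{t-j})$, placed on $C_{im}$ when $2\nmid m-j$ and on $C_{im}^*$ when $2\mid m-j$.

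It remains to verify completeness, i.e.\ that $M$ vanishes on the other cosets; for this I would check that the multiplicities found above already exhaust the multiset. Using $\sum_{j<m\leqslant t}\phi(q^{t-m})=q^{t-j-1}$,
\[
\#C_{ij}^*\cdot\frac{\phi(p^{s-i'})}{2}(q-2)q^{t-j-1}+\sum_{j<m\leqslant t}\#C_{im}^*\cdot\frac{\phi(p^{s-i'})}{2}\phi(q^{t-j})=\#C_{ij}^*\cdot\#C_{i'j}^*,
\]
which is the cardinality of the multiset $C_{ij}^*+C_{i'j}^*$, so no other coset contributes. Collecting the coefficients now reproduces exactly the right-hand sides of (i) and (ii). The only step needing genuine care is the passage from the fixed-base intersection numbers of Lemma 3.15 to the multiset multiplicities — getting the $l$-orbit ratio $\#C_{ij}^*/\#C_{\mu\nu}$ and the uniform-spreading claim right, and confirming the completeness identity; everything else is the bookkeeping already carried out in the proof of Theorem~\ref{4.1}.
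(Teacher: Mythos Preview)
Your proposal is correct and follows precisely the route the paper indicates: invoke Lemma~3.15 (the lemma paired with this theorem via the correspondence Lemma~$3.n\mapsto$~Theorem~$4.(n-1)$) and convert its intersection counts into multiset multiplicities exactly as in the proof of Theorem~\ref{4.1}. Your cardinality check confirming that the cosets listed in Lemma~3.15 exhaust the multiset $C_{ij}^*+C_{i'j}^*$ is the analogue of the paper's remark that ``the coefficients of the terms on both sides are equal,'' so nothing further is needed.
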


\begin{theorem}
Let $0\leqslant i<i'\leqslant s-1$ and $0\leqslant j\leqslant t-1$, then
\begin{enumerate}[(i)]
\item If $q$ is a quadratic residue modulo $p$, then\[\chi_{ij}\chi_{i'j}^*=\frac{\phi(p^{s-i'})}{2}(q-2)q^{t-j-1}\chi_{ij}+\frac{\phi(p^{s-i'})}{2}\phi(q^{t-j})\sum_{j<m\leqslant t}\chi_{im}.\]
\item If $q$ is a quadratic nonresidue modulo $p$, then
\[
\chi_{ij}\chi_{i'j}^*=\frac{\phi(p^{s-i'})}{2}(q-2)q^{t-j-1}\chi_{ij}+\frac{\phi(p^{s-i'})}{2}\phi(q^{t-j})\left(\sum_{\substack{j<m\leqslant t\\2\nmid m-j}}\chi_{im}^*+\sum_{\substack{j<m\leqslant t\\2\mid m-j}}\chi_{im}\right).
\]
\end{enumerate}
\end{theorem}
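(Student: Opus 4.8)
The plan is to run the argument of the proof of Theorem~\ref{4.1}, now fed by the Section~\ref{sec3} lemma that, for $a\in C_{ij}$ with $i<i'$ and $0\leqslant j\leqslant t-1$, evaluates $\#\big((a+C_{i'j}^{*})\cap C_{\gamma}\big)$ for each cyclotomic coset $C_{\gamma}$. First expand the product in $\mathbb{F}_{l}[x]/\langle x^{p^{s}q^{t}}-1\rangle$:
\[
\chi_{ij}\chi_{i'j}^{*}=\sum_{\alpha\in C_{ij}}\sum_{\beta\in C_{i'j}^{*}}x^{(\alpha+\beta)\bmod n}=\sum_{c=0}^{n-1}N(c)\,x^{c},\qquad N(c)=\#\{(\alpha,\beta)\in C_{ij}\times C_{i'j}^{*}:\alpha+\beta\equiv c\bmod n\},
\]
so $N(c)$ is the multiplicity of $c$ in the multiset $C_{ij}+C_{i'j}^{*}$. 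Since $C_{ij}$ and $C_{i'j}^{*}$ are $l$-cyclotomic cosets, multiplication by $l$ permutes each of them, so $(\alpha,\beta)\mapsto(l\alpha,l\beta)$ is a bijection of $C_{ij}\times C_{i'j}^{*}$ sending the fibre over $c$ onto the fibre over $lc$; hence $N(c)=N(lc)$, the function $N$ is constant on $l$-cyclotomic cosets modulo $n$, and $\chi_{ij}\chi_{i'j}^{*}=\sum_{\gamma}\mu_{\gamma}\chi_{\gamma}$ with $\mu_{\gamma}=N(\gamma)$ in $\mathbb{F}_{l}$.

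Next I would determine the support. For $\alpha\in C_{ij}$, $\beta\in C_{i'j}^{*}$ write $\alpha\equiv p^{i}q^{j}u$, $\beta\equiv p^{i'}q^{j}w$ with $\gcd(uw,pq)=1$, which is possible by Theorem~\ref{thm2.1} because $j<t$ and $i,i'<s$. Then $\alpha+\beta\equiv p^{i}q^{j}(u+p^{i'-i}w)$, and $u+p^{i'-i}w$ is coprime to $p$ since $i<i'$; so $p^{i}$ exactly divides $\alpha+\beta$ while $q^{j}\mid\alpha+\beta$. Hence every element of $C_{ij}+C_{i'j}^{*}$ lies in some $C_{im}$ or $C_{im}^{*}$ with $j\leqslant m\leqslant t$, and nothing else occurs. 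For such a coset $C_{\gamma}$ I would get $\mu_{\gamma}$ by double counting the pairs of $C_{ij}\times C_{i'j}^{*}$ whose sum lies in $C_{\gamma}$: this equals $\sum_{c\in C_{\gamma}}N(c)=\mu_{\gamma}\#C_{\gamma}$ on one side, and $\#C_{ij}\cdot\#\big((a+C_{i'j}^{*})\cap C_{\gamma}\big)$ for any fixed $a\in C_{ij}$ on the other (the inner count being independent of $a$ by the same $l$-invariance, and supplied by the Section~\ref{sec3} lemma). Thus
\[
\mu_{\gamma}=\frac{\#C_{ij}}{\#C_{\gamma}}\cdot\#\big((a+C_{i'j}^{*})\cap C_{\gamma}\big),
\]
into which I substitute $\#C_{ij}=\tfrac12\phi(p^{s-i})\phi(q^{t-j})$ and $\#C_{im}=\#C_{im}^{*}=\tfrac12\phi(p^{s-i})\phi(q^{t-m})$ from Theorem~\ref{thm2.1}.

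Inserting the Section~\ref{sec3} values then closes the proof. If $q$ is a quadratic residue modulo $p$ the lemma gives $\#\big((a+C_{i'j}^{*})\cap C_{ij}\big)=\tfrac12\phi(p^{s-i'})(q-2)q^{t-j-1}$, $\#\big((a+C_{i'j}^{*})\cap C_{im}\big)=\tfrac12\phi(p^{s-i'})\phi(q^{t-m})$ for $j<m\leqslant t$, and zero otherwise, so $\mu_{ij}=\tfrac12\phi(p^{s-i'})(q-2)q^{t-j-1}$ and $\mu_{im}=\tfrac12\phi(p^{s-i'})\phi(q^{t-j})$, which is part (i). If $q$ is a quadratic nonresidue modulo $p$ the coefficient of $C_{ij}$ is the same, whereas for $j<m\leqslant t$ the corresponding mass sits on $C_{im}^{*}$ when $m-j$ is odd and on $C_{im}$ when $m-j$ is even, and the same computation gives part (ii). I expect the two points needing attention to be (a) the support claim, so that the sum over $m$ is neither truncated nor enlarged, and (b) the cancellation of coset cardinalities that converts the $\phi(q^{t-m})$ emerging from the Section~\ref{sec3} lemma into the $\phi(q^{t-j})$ of the statement; a useful consistency check is that $\mu_{ij}\#C_{ij}+\sum_{j<m\leqslant t}\mu_{im}\#C_{im}$ must equal $\#C_{ij}\cdot\#C_{i'j}^{*}$, which reduces to the telescoping identity $(q-2)q^{t-j-1}+\sum_{j<m\leqslant t}\phi(q^{t-m})=\phi(q^{t-j})$.
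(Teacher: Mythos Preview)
Your proposal is correct and follows essentially the same approach as the paper. The paper does not write out a separate proof for this theorem; it simply notes that Theorems~\ref{4.8}--\ref{4.16} are proved by the methods of Theorem~\ref{4.1} (or Theorem~\ref{4.7}) fed by the corresponding Section~\ref{sec3} lemma, which here is the one giving $\#\big((a+C_{i'j}^{*})\cap C_{\gamma}\big)$ for $a\in C_{ij}$ with $i<i'$. Your expansion, $l$-invariance of $N(c)$, support analysis, and the ratio $\#C_{ij}/\#C_{\gamma}$ that converts $\phi(q^{t-m})$ into $\phi(q^{t-j})$ are exactly the ingredients of the proof of Theorem~\ref{4.1}, carried out with the appropriate lemma substituted; the consistency check you record is a nice addition but not something the paper spells out.
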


\begin{theorem}
Let $0\leqslant i<i'\leqslant s-1$ and $0\leqslant j\leqslant t-1$, then
\begin{enumerate}[(i)]
\item If $q$ is a quadratic residue modulo $p$, then\[\chi_{ij}^*\chi_{i'j}=\frac{\phi(p^{s-i'})}{2}(q-2)q^{t-j-1}\chi_{ij}^*+\frac{\phi(p^{s-i'})}{2}\phi(q^{t-j})\sum_{j<m\leqslant t}\chi_{im}^*.\]
\item If $q$ is a quadratic nonresidue modulo $p$, then
\[
\chi_{ij}^*\chi_{i'j}=\frac{\phi(p^{s-i'})}{2}(q-2)q^{t-j-1}\chi_{ij}^*+\frac{\phi(p^{s-i'})}{2}\phi(q^{t-j})\left(\sum_{\substack{j<m\leqslant t\\2\nmid m-j}}\chi_{im}+\sum_{\substack{j<m\leqslant t\\2\mid m-j}}\chi_{im}^*\right).
\]
\end{enumerate}\label{4.16}
\end{theorem}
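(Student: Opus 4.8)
The plan is to follow the multiset argument used in the proof of Theorem~\ref{4.1}, now feeding in Lemma~\ref{3.17} as the source of intersection counts. Since
\[
\chi_{ij}^*\chi_{i'j}=\sum_{\alpha\in C_{ij}^*}\sum_{\beta\in C_{i'j}}x^{\alpha+\beta},
\]
it is enough to describe the multiset $C_{ij}^*+C_{i'j}=\{\alpha+\beta:\alpha\in C_{ij}^*,\ \beta\in C_{i'j}\}$, counted with multiplicity. Fix a representative $a^*\in C_{ij}^*$, so that $C_{ij}^*=\{l^k a^*:0\leqslant k\leqslant\#C_{ij}^*-1\}$; using $l\,C_{i'j}=C_{i'j}$ one gets the multiset identity
\[
C_{ij}^*+C_{i'j}=\bigcup_{k=0}^{\#C_{ij}^*-1}l^k\bigl(a^*+C_{i'j}\bigr).
\]
In particular $C_{ij}^*+C_{i'j}$ is invariant under multiplication by $l$, hence it is a union (with multiplicities) of whole $l$-cyclotomic cosets, and inside a fixed coset each element occurs equally often.

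First I would apply Lemma~\ref{3.17} to the single translate $a^*+C_{i'j}$. When $q$ is a quadratic residue modulo $p$ it gives $\#\bigl((a^*+C_{i'j})\cap C_{ij}^*\bigr)=\frac{\phi(p^{s-i'})}{2}(q-2)q^{t-j-1}$ and $\#\bigl((a^*+C_{i'j})\cap C_{im}^*\bigr)=\frac{\phi(p^{s-i'})}{2}\phi(q^{t-m})$ for $j<m\leqslant t$; when $q$ is a nonresidue the same size $\frac{\phi(p^{s-i'})}{2}\phi(q^{t-m})$ is realised by $C_{im}$ when $2\nmid m-j$ and by $C_{im}^*$ when $2\mid m-j$. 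That these cosets exhaust $a^*+C_{i'j}$ follows from the count
\[
\frac{\phi(p^{s-i'})}{2}(q-2)q^{t-j-1}+\frac{\phi(p^{s-i'})}{2}\sum_{j<m\leqslant t}\phi(q^{t-m})=\frac{\phi(p^{s-i'})}{2}(q-1)q^{t-j-1}=\#C_{i'j},
\]
where $\sum_{j<m\leqslant t}\phi(q^{t-m})=\sum_{k=0}^{t-j-1}\phi(q^{k})=q^{t-j-1}$; so no other coset can occur.

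Finally, for each coset $C_\gamma$ on this list I would recover its multiplicity in $C_{ij}^*+C_{i'j}$ by a double count: elements of $C_\gamma$ occur a total of $\#C_{ij}^*\cdot\#\bigl((a^*+C_{i'j})\cap C_\gamma\bigr)$ times in $\bigcup_k l^k(a^*+C_{i'j})$ (each intersection size being $l$-invariant), and they are spread evenly over $C_\gamma$, so the multiplicity of $C_\gamma$ equals $\#C_{ij}^*\cdot\#\bigl((a^*+C_{i'j})\cap C_\gamma\bigr)/\#C_\gamma$. Substituting $\#C_{ij}^*=\frac{\phi(p^{s-i})}{2}\phi(q^{t-j})$ and $\#C_{im}=\#C_{im}^*=\frac{\phi(p^{s-i})}{2}\phi(q^{t-m})$ for $j<m\leqslant t$, the common factor $\frac{\phi(p^{s-i})}{2}$ cancels, giving multiplicity $\frac{\phi(p^{s-i'})}{2}(q-2)q^{t-j-1}$ for $C_{ij}^*$ and multiplicity $\frac{\phi(p^{s-i'})}{2}\phi(q^{t-j})$ for whichever of $C_{im}$, $C_{im}^*$ occurs ($j<m\leqslant t$); reading off the $C_{im}$-versus-$C_{im}^*$ split from Lemma~\ref{3.17} in each of the two cases then produces exactly the two displayed formulas for $\chi_{ij}^*\chi_{i'j}$. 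The only genuinely delicate point is the parity bookkeeping ($2\mid m-j$ against $2\nmid m-j$) in the nonresidue case, together with the verification via the displayed count that the enumerated cosets really do exhaust $a^*+C_{i'j}$; everything else is the routine cancellation above.
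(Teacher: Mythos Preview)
Your argument is correct and is precisely the approach the paper intends: it states that Theorems~\ref{4.8}--\ref{4.16} are proved by the method of Theorem~\ref{4.1} (or Theorem~\ref{4.7}), and the correspondence Lemma~3.$n\mapsto$ Theorem~4.$(n{-}1)$ means Theorem~\ref{4.16} is to be obtained from Lemma~\ref{3.17}, exactly as you do. Your explicit exhaustion check that the listed cosets account for all of $a^*+C_{i'j}$ is a welcome addition that the paper leaves implicit.
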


\begin{corollary}
Let $0\leqslant i<i'\leqslant s-1$ and $0\leqslant j\leqslant t-1$, then\[\chi_{ij}\chi_{i'j}=\chi_{ij}\chi_{i'j}^*\]and\[\chi_{ij}^*\chi_{i'j}=\chi_{ij}^*\chi_{i'j}^*.\]
\end{corollary}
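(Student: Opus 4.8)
The plan is to read the corollary straight off the four theorems that precede it, so that no new cyclotomic-coset computation is needed. Recall that Theorem 4.13 expresses $\chi_{ij}\chi_{i'j}$ as an explicit $\mathbb{F}_l$-linear combination of the generators $\chi_{km},\chi_{km}^{*},\chi_{sm}$, and that Theorem 4.15 expresses $\chi_{ij}\chi_{i'j}^{*}$ as a combination of the very same shape. First I would place the two statements side by side and check, coefficient by coefficient --- once in the quadratic-residue case and once in each of the two quadratic-nonresidue subcases --- that the right-hand sides are literally the same polynomial; this gives $\chi_{ij}\chi_{i'j}=\chi_{ij}\chi_{i'j}^{*}$. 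Then I would do the same with Theorem 4.14 (for $\chi_{ij}^{*}\chi_{i'j}^{*}$) and Theorem \ref{4.16} (for $\chi_{ij}^{*}\chi_{i'j}$), obtaining $\chi_{ij}^{*}\chi_{i'j}=\chi_{ij}^{*}\chi_{i'j}^{*}$. That is the whole argument.

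Alternatively, for a proof not routed through Section \ref{sec4}, I would argue directly from the counting lemmas of Section \ref{sec3}, imitating the bookkeeping in the proof of Theorem \ref{4.1}. Fixing $a\in C_{ij}$, part (i) of Lemma 3.14 together with part (1) of Lemma 3.16 gives $\#(a+C_{i'j})\cap C=\#(a+C_{i'j}^{*})\cap C$ for $C=C_{ij}$, and the remaining parts of those two lemmas give the same equality for every other cyclotomic coset $C$ (the counts listed there already sum to $\#C_{i'j}$, so nothing is missed). Summing these equalities over $a\in C_{ij}$ and using that, as $l^{k}a$ runs through $C_{ij}$, each target coset $C$ is swept out $\#C_{ij}/\#C$ times, one concludes that the multisets $C_{ij}+C_{i'j}$ and $C_{ij}+C_{i'j}^{*}$ have identical cyclotomic content, i.e. $\chi_{ij}\chi_{i'j}=\chi_{ij}\chi_{i'j}^{*}$; the companion identity comes from Lemmas 3.15 and 3.17 in the same way.

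I do not expect a genuine obstacle here --- the corollary is, by design, a bookkeeping consequence of the theorems just proved. The single point deserving a line of justification is why, in the quadratic-nonresidue case, the parity-sensitive split of $\sum_{j<m\leqslant t}$ into its $2\nmid m-j$ and $2\mid m-j$ pieces comes out the same for $\chi_{ij}\chi_{i'j}$ and for $\chi_{ij}\chi_{i'j}^{*}$ (and likewise for the starred pair). The reason is structural: since $i<i'$, the $p$-adic contribution of an element of $C_{i'j}$ (or of $C_{i'j}^{*}$) to a sum $a+c$ with $a\in C_{ij}$ is divisible by $p$, hence it cannot move the $p$-part of $a+c$ out of its quadratic class modulo $p$; so only the cardinalities $\#R_{s-i'}=\#N_{s-i'}=\phi(p^{s-i'})/2$ and the $q$-component enter the count, and these agree for $C_{i'j}$ and $C_{i'j}^{*}$. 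Thus replacing $C_{i'j}$ by $C_{i'j}^{*}$ leaves every coefficient --- parity-split terms included --- unchanged, and both displayed identities follow.
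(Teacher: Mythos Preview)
Your proposal is correct and matches the paper's intended approach: the corollary is stated without proof precisely because it follows at once from comparing the right-hand sides of Theorems 4.13 with 4.15 (for $\chi_{ij}\chi_{i'j}=\chi_{ij}\chi_{i'j}^{*}$) and Theorems 4.14 with 4.16 (for $\chi_{ij}^{*}\chi_{i'j}=\chi_{ij}^{*}\chi_{i'j}^{*}$), exactly as in your first paragraph. Your alternative route through Lemmas 3.14--3.17 and your structural explanation of why the parity split is insensitive to starring $C_{i'j}$ are also fine, though more than is needed.
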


\section{Primitive idempotents in the ring $\frac{\mathbb{F}_l[x]}{\langle x^{p^sq^t}-1\rangle}$}
\label{sec5}

Let $\alpha$ be a fixed primitive $n$th root of unit in some extension field of $\mathbb{F}_l$, then the polynomial\[M_{\gamma}(x)=\prod_{i\in C_\gamma}(x-\alpha^i)\]is the minimal polynomial of $\alpha^\gamma$ over $\mathbb{F}_l$. Let $\mathscr{M}_\gamma$ be the minimal ideal in $\frac{\mathbb{F}_l[x]}{\langle x^n-1\rangle}$ generated by $\frac{x^n-1}{M_\gamma(x)}$ and $\theta_\gamma(x)$ be the primitive idempotent of $\mathscr{M}_\gamma$, that is,\[
\theta_\gamma(\alpha^u)=\bigg\{\begin{array}{ll}1,&u\in C_\gamma;\\0,&u\notin C_\gamma.\end{array}
\]

\begin{lemma}[\cite{ref4}, Theorem 1, p.433]
\[\theta_\gamma(x)=\sum_{0\leqslant u<n}\epsilon_u x^u\]where\[\epsilon_u=1/n\sum_{j\in C_\gamma}\alpha^{-uj},\quad i\geqslant 0.\]\label{5.1}
\end{lemma}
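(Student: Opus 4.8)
The plan is to derive the formula for the primitive idempotent $\theta_\gamma(x)$ by a direct computation starting from the defining property that $\theta_\gamma(\alpha^u)=1$ for $u\in C_\gamma$ and $\theta_\gamma(\alpha^u)=0$ otherwise. First I would write a generic element of $\frac{\mathbb{F}_l[x]}{\langle x^n-1\rangle}$ as $\theta_\gamma(x)=\sum_{0\leqslant u<n}\epsilon_u x^u$ with unknown coefficients $\epsilon_u\in\mathbb{F}_l$ (or in the appropriate extension, to be checked at the end), and impose the $n$ interpolation conditions $\theta_\gamma(\alpha^k)=\mathbf{1}[k\in C_\gamma]$ for $k=0,1,\dots,n-1$. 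Since $\alpha$ is a primitive $n$th root of unity and $\gcd(n,l)=1$, the matrix $(\alpha^{ku})_{0\leqslant k,u<n}$ is an invertible Vandermonde (discrete Fourier) matrix, so the system has a unique solution.

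Next I would invert the system explicitly using the orthogonality relation $\sum_{k=0}^{n-1}\alpha^{k(u-v)}=n\,\mathbf{1}[u\equiv v \bmod n]$, which holds because $\alpha^{u-v}$ is a nontrivial $n$th root of unity unless $u\equiv v$. Multiplying the condition $\sum_u\epsilon_u\alpha^{ku}=\mathbf{1}[k\in C_\gamma]$ by $\alpha^{-kw}$ and summing over $k$ from $0$ to $n-1$ gives $n\,\epsilon_w=\sum_{k\in C_\gamma}\alpha^{-kw}$, hence $\epsilon_w=\tfrac1n\sum_{j\in C_\gamma}\alpha^{-jw}$, which is exactly the claimed expression (noting that division by $n$ makes sense in $\mathbb{F}_l$ as $\gcd(n,l)=1$). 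I would then remark that this $\theta_\gamma$ indeed lies in $\mathscr{M}_\gamma$ and is idempotent: idempotency follows since $\theta_\gamma(\alpha^u)^2=\theta_\gamma(\alpha^u)$ for every $u$, and two elements of the semisimple ring agreeing at all $n$th roots of unity are equal; membership in $\mathscr{M}_\gamma=\langle (x^n-1)/M_\gamma(x)\rangle$ follows because $\theta_\gamma(\alpha^u)=0$ exactly on the roots of $M_\gamma$'s complement, so $\theta_\gamma$ is a multiple of $\prod_{i\notin C_\gamma}(x-\alpha^i)=(x^n-1)/M_\gamma(x)$.

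Finally I would verify that the coefficients $\epsilon_w$ actually lie in $\mathbb{F}_l$, not merely in the extension field: applying the Frobenius $x\mapsto x^l$ permutes the roots $\alpha^j$, $j\in C_\gamma$, among themselves (by definition of a cyclotomic coset), so $\epsilon_w^l=\tfrac1n\sum_{j\in C_\gamma}\alpha^{-jwl}=\tfrac1n\sum_{j'\in C_\gamma}\alpha^{-j'w}=\epsilon_w$, forcing $\epsilon_w\in\mathbb{F}_l$. The main obstacle here is essentially bookkeeping rather than conceptual: one must be careful that all the Fourier/orthogonality manipulations are valid over $\mathbb{F}_l$ (using $\gcd(n,l)=1$ so that $n$ is invertible and the $n$th roots of unity are distinct), and that the indexing of $C_\gamma$ versus $-C_\gamma$ in the exponent is tracked correctly. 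Since this lemma is quoted verbatim from \cite{ref4}, I expect the paper simply cites it; if a proof is to be included, the Fourier-inversion argument above is the cleanest route.
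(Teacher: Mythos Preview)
Your expectation is correct: the paper does not prove this lemma at all but simply cites it from \cite{ref4}. Your Fourier-inversion argument is the standard and correct proof, and your additional checks (idempotency, membership in $\mathscr{M}_\gamma$, and that the coefficients lie in $\mathbb{F}_l$ via the Frobenius action on $C_\gamma$) are all valid and go beyond what the paper records.
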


In this section, let $\alpha$ be a fixed primitive $p^sq^t$th root of unit in some extension field of $\mathbb{F}_l$. Also, for simplicity, we denote\[\theta_{ij}=\theta_{p^iq^j}\]and \[\theta_{ij}^*=\theta_{p^iq^jg}\]In particular, $\theta_{ij}=\theta_{ij}^*$ when $i=s$ and $\theta_{st}=\theta_0$.

\subsubsection*{Case 1: $q$ is a quadratic residue modulo $p$}

We first calculate $\chi_{ij}(\alpha^u),\, 0\leqslant i\leqslant s-1, 0\leqslant j\leqslant t$. By Theorem \ref{2.12}, we get
\[\begin{aligned}
\chi_{ij}(\alpha^u)&=\sum_{x\in R_{s-i}}\sum_{y\in \mathbb{Z}_{q^{t-j}}^*}\alpha^{up^iq^j(q^{t-j}x+p^{s-i}y)}=\sum_{x\in R{s-i}}\alpha^{up^iq^tx}\sum_{y\in\mathbb{Z}_{q^{t-j}}^*}\alpha^{up^sq^jy}\\
&=\sum_{x\in R_1}\alpha^{up^iq^tx}\sum_{0\leqslant\lambda<p^{s-i-1}}\alpha^{up^{i+1}q^t\lambda}\sum_{y\in\mathbb{Z}_{q^{t-j}}^*}\alpha^{up^sq^jy}.
\end{aligned}\]
Similarly, we have
\[\chi^*_{ij}(\alpha^u)=\sum_{x\in N_1}\alpha^{up^iq^tx}\sum_{0\leqslant\lambda<p^{s-i-1}}\alpha^{up^{i+1}q^t\lambda}\sum_{y\in\mathbb{Z}_{q^{t-j}}^*}\alpha^{up^sq^jy}.\]
The results below are obvious:

\begin{lemma}
If $p^{s-i}q^{t-j}\mid u$, then $\chi_{ij}(\alpha^u)=\chi_{ij}^*(\alpha^u)=\frac{\phi(p^{s-i})}{2}\phi(q^{t-j})$.
\end{lemma}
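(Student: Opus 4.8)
The plan is to substitute $u$ with $p^{s-i}q^{t-j}$ dividing it into the explicit formula for $\chi_{ij}(\alpha^u)$ derived just above the statement, and show that every exponential appearing collapses to $1$. Write $u = p^{s-i}q^{t-j}w$ for some integer $w$. Then in the factor $\sum_{x\in R_1}\alpha^{up^iq^tx}$ the exponent is $up^iq^tx = p^{s}q^{t-j}q^{t}wx$, which is a multiple of $p^sq^t = n$, so $\alpha^{up^iq^tx} = 1$ for every $x$; similarly $up^{i+1}q^t\lambda$ and $up^sq^jy$ are both multiples of $n$ (using $p^{s-i}\mid u$ for the first and $q^{t-j}\mid u$ for the last), so $\alpha^{up^{i+1}q^t\lambda} = \alpha^{up^sq^jy} = 1$. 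Hence the triple sum degenerates to $\#R_1 \cdot p^{s-i-1} \cdot \#\mathbb{Z}_{q^{t-j}}^{*} = \frac{p-1}{2}\cdot p^{s-i-1}\cdot \phi(q^{t-j}) = \frac{\phi(p^{s-i})}{2}\phi(q^{t-j})$, which is the claimed value.

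For the starred case $\chi_{ij}^{*}(\alpha^u)$ the identical argument applies with $R_1$ replaced by $N_1$; since $\#N_1 = \#R_1 = \frac{p-1}{2}$, we again obtain $\frac{\phi(p^{s-i})}{2}\phi(q^{t-j})$. This also covers the degenerate conventions: when $i=s$ the formula for $\chi_{ij}$ has to be read directly from the definition $\chi_{sj} = \sum_{y\in\mathbb{Z}_{q^{t-j}}^{*}}x^{p^sq^jy}$, and $p^sq^{t-j}\mid u$ forces each $\alpha^{up^sq^jy}=1$, giving $\phi(q^{t-j})$, consistent with $\frac{\phi(p^{0})}{2}\phi(q^{t-j})=\frac{1}{2}\cdot\phi(q^{t-j})$ only if one interprets $\phi(p^0)/2$ loosely, so I would simply note that the range $0\le i\le s-1$ in the statement excludes this and the $i=s$ case is the trivial one already recorded.

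There is essentially no obstacle here: the only thing to be careful about is the bookkeeping of which factor of $p^{s-i}q^{t-j}$ is consumed by which sum, i.e. that $q^{t-j}\mid u$ is what kills $\alpha^{up^sq^jy}$ (the $y$-sum ranges mod $q^{t-j}$) while $p^{s-i}\mid u$ kills the $x$- and $\lambda$-sums over residues and multiples mod $p^{s-i}$. Once that is spelled out, the identity $\frac{p-1}{2}p^{s-i-1} = \frac{\phi(p^{s-i})}{2}$ finishes it. I would present this in two or three sentences, since the ``obvious'' label in the excerpt is warranted.
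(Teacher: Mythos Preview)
Your argument is correct and is exactly the approach the paper has in mind: the paper merely labels this result ``obvious'' immediately after displaying the factored form of $\chi_{ij}(\alpha^u)$, and your substitution $u=p^{s-i}q^{t-j}w$ is the intended one-line verification that every exponential in that triple product becomes $\alpha$ raised to a multiple of $n=p^sq^t$. The aside about $i=s$ is unnecessary (the lemma sits in the range $0\le i\le s-1$, $0\le j\le t$ where the factored formula was derived), but you already note this yourself.
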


\begin{lemma}
If $p^{s-i-1}\nmid u$ or $q^{t-j-1}\nmid u$, then $\chi_{ij}(\alpha^u)=\chi_{ij}^*(\alpha^u)=0$.
\end{lemma}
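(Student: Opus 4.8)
The plan is to exploit the product formulas derived just above the statement, namely
\[
\chi_{ij}(\alpha^u)=\sum_{x\in R_1}\alpha^{up^iq^tx}\Bigl(\sum_{0\leqslant\lambda<p^{s-i-1}}\alpha^{up^{i+1}q^t\lambda}\Bigr)\Bigl(\sum_{y\in\mathbb{Z}_{q^{t-j}}^*}\alpha^{up^sq^jy}\Bigr),
\]
and the analogous one for $\chi_{ij}^*$. Since $\alpha$ is a primitive $p^sq^t$-th root of unity, $\alpha^{p^sq^t}=1$ and $\alpha^k=1$ if and only if $p^sq^t\mid k$. The key observation is that each of these two expressions contains one of two geometric-type sums: the sum over $\lambda$, which is a full sum of powers of $\alpha^{up^{i+1}q^t}$ over a complete residue system modulo $p^{s-i-1}$, and the sum over $y$, which is a sum over units modulo $q^{t-j}$.

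First I would handle the case $p^{s-i-1}\nmid u$. Then $\alpha^{up^{i+1}q^t}$ is a primitive $d$-th root of unity for some $d>1$ dividing $p^{s-i-1}$ (indeed $d=p^{s-i-1}/\gcd(p^{s-i-1},u)>1$), so the inner sum $\sum_{0\leqslant\lambda<p^{s-i-1}}\alpha^{up^{i+1}q^t\lambda}$ is a sum over $p^{s-i-1}$ consecutive powers, which breaks into $p^{s-i-1}/d$ copies of the complete sum $\sum_{\lambda=0}^{d-1}\zeta^\lambda=0$ for $\zeta$ a primitive $d$-th root of unity. Hence the whole product vanishes, giving $\chi_{ij}(\alpha^u)=\chi_{ij}^*(\alpha^u)=0$. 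The same argument applies verbatim to $\chi_{ij}^*$ because the $\lambda$-sum factors out identically.

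Next I would handle the case $q^{t-j-1}\nmid u$ (regardless of divisibility by powers of $p$). Here I work with the factor $\sum_{y\in\mathbb{Z}_{q^{t-j}}^*}\alpha^{up^sq^jy}$. Write $\beta=\alpha^{up^sq^j}$; since $\alpha$ has order $p^sq^t$ and $\gcd(p^s,q)=1$, the order of $\beta$ divides $q^{t-j}$ and equals $q^{t-j}/\gcd(q^{t-j},u)$, which is $>1$ precisely when $q^{t-j-1}\nmid u$ — more carefully, if $q^{t-j-1}\nmid u$ then $q^j u\not\equiv 0$ and $q^j u$ is not divisible by $q^{t-1}$, so $\beta$ is a primitive $q^m$-th root of unity for some $m$ with $2\leqslant m\leqslant t-j$. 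For such $\beta$, the sum over the units $\mathbb{Z}_{q^{t-j}}^*$ can be evaluated using the standard Ramanujan-sum identity: $\sum_{y\in\mathbb{Z}_{q^{t-j}}^*}\beta^y=\mu(q)q^{m-1}$ if $m=t-j$ and $0$ if $m<t-j$... actually the cleanest route is to write $\sum_{y\in\mathbb{Z}_{q^{t-j}}^*}\beta^y=\sum_{y=0}^{q^{t-j}-1}\beta^y-\sum_{y'\in\mathbb{Z}_{q^{t-j-1}}}\beta^{qy'}$, i.e.\ inclusion–exclusion on the multiples of $q$; the first sum is $0$ since $\beta$ has order $>1$ dividing $q^{t-j}$, and the second sum is $0$ as long as $\beta^q$ still has order $>1$, which holds exactly when the order of $\beta$ exceeds $q$, i.e.\ when $q^{t-j-1}\nmid u$. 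So this factor vanishes, and again the identical factorization for $\chi_{ij}^*$ gives $\chi_{ij}^*(\alpha^u)=0$ too.

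The combined statement is then just the disjunction: if $p^{s-i-1}\nmid u$ the first argument applies; if instead $p^{s-i-1}\mid u$ but $q^{t-j-1}\nmid u$ the second applies; in either case both $\chi_{ij}(\alpha^u)$ and $\chi_{ij}^*(\alpha^u)$ are $0$. I expect the main obstacle to be purely bookkeeping: carefully translating "$p^{s-i-1}\nmid u$" (resp.\ "$q^{t-j-1}\nmid u$") into the precise statement that the relevant root of unity $\alpha^{up^{i+1}q^t}$ (resp.\ $\alpha^{up^sq^j}$, and its $q$-th power) is nontrivial, keeping track of the coprimality of $p$ and $q$ when reducing the order of $\alpha$ raised to these mixed powers. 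Once that order computation is pinned down, both vanishing conclusions are immediate from the geometric-sum identity $\sum_{k=0}^{N-1}\zeta^k=0$ for $\zeta$ a nontrivial $N$-th root of unity, applied blockwise.
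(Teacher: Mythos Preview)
Your argument is correct and is exactly the computation the paper has in mind: the paper derives the same product factorization for $\chi_{ij}(\alpha^u)$ and $\chi_{ij}^*(\alpha^u)$ and then simply declares the lemma ``obvious'' without further detail. Your treatment of the $\lambda$-sum via the geometric-series identity and of the $y$-sum via inclusion--exclusion (i.e.\ the Ramanujan-sum evaluation) is the natural way to fill in that gap, and the order computations you sketch are accurate once one notes that $\alpha^{p^{i+1}q^t}$ is a primitive $p^{s-i-1}$-th root of unity and $\alpha^{up^sq^j}$ has order $q^{t-j}/\gcd(q^{t-j},u)$.
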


\begin{lemma}
If $q^{t-j-1}\| u$, then\[\sum_{y\in\mathbb{Z}_{q^{t-j}}^*}\alpha^{up^sq^jy}=-q^{t-j-1}.\]
\end{lemma}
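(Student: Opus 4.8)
The plan is to evaluate the character sum $\sum_{y\in\mathbb{Z}_{q^{t-j}}^*}\alpha^{up^sq^jy}$ under the hypothesis $q^{t-j-1}\|u$, i.e. $q^{t-j-1}\mid u$ but $q^{t-j}\nmid u$. Write $u=q^{t-j-1}u'$ with $\gcd(u',q)=1$. Since $\alpha$ is a primitive $p^sq^t$th root of unity, $\beta:=\alpha^{p^sq^{t-1}}$ is a primitive $q$th root of unity, and $\alpha^{up^sq^j}=\beta^{u'}$ because $up^sq^j=u'p^sq^{t-1}$. Thus the exponent $\alpha^{up^sq^jy}$ depends only on $y\bmod q$.

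The next step is to collapse the sum over $\mathbb{Z}_{q^{t-j}}^*$ to a sum over $\mathbb{Z}_q^*$. As $y$ runs through the $\phi(q^{t-j})=q^{t-j-1}(q-1)$ units modulo $q^{t-j}$, its residue modulo $q$ takes each value in $\mathbb{Z}_q^*$ exactly $q^{t-j-1}$ times (the standard fibration of $\mathbb{Z}_{q^{t-j}}^*$ over $\mathbb{Z}_q^*$). Hence
\[
\sum_{y\in\mathbb{Z}_{q^{t-j}}^*}\alpha^{up^sq^jy}=q^{t-j-1}\sum_{z\in\mathbb{Z}_q^*}\beta^{u'z}.
\]
Now $\sum_{z\in\mathbb{Z}_q^*}\beta^{u'z}=\sum_{z=0}^{q-1}\beta^{u'z}-1=-1$, since $\beta^{u'}$ is a primitive $q$th root of unity (as $q\nmid u'$) and the full sum of all $q$th roots of unity vanishes. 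Combining, the sum equals $-q^{t-j-1}$, as claimed.

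One caveat worth handling cleanly: when $t-j=0$ the statement is about $q^{-1}\|u$, which is vacuous/irrelevant, and when $t-j-1=0$ (i.e. $j=t-1$) the hypothesis reads $q^0\|u$, meaning $q\nmid u$, and the fibration argument degenerates to the identity $\mathbb{Z}_q^*\to\mathbb{Z}_q^*$ with multiplicity $1$; the displayed computation still gives $-1=-q^{0}$. So no real obstacle arises — the only thing to be careful about is the reduction modulo $q$ and the counting of fibers, which is routine. The main (minor) point is simply observing that the hypothesis $q^{t-j-1}\|u$ is exactly what makes $\alpha^{up^sq^j}$ a \emph{primitive} $q$th root of unity, which is what forces the sum to be $-q^{t-j-1}$ rather than $q^{t-j-1}(q-1)$ (the latter being the value when $q^{t-j}\mid u$, already covered by an earlier lemma).
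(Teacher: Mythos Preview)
Your argument is correct and is exactly the standard computation one would carry out here: reduce to a primitive $q$th root of unity via the hypothesis $q^{t-j-1}\|u$, collapse the sum over $\mathbb{Z}_{q^{t-j}}^*$ to $q^{t-j-1}$ copies of the sum over $\mathbb{Z}_q^*$, and use that the nontrivial $q$th roots of unity sum to $-1$. The paper itself offers no proof---it simply declares the lemma ``obvious''---so your write-up is precisely the routine verification the authors omit, and there is nothing to compare.
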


Let $\beta=\alpha^{p^{s-1}q^t}$, then $\beta$ is a $p$th root of unit. We denote \[\mathscr{R}=\sum_{x\in R_1}\beta^x\]and \[\mathscr{N}=\sum_{x\in N_1}\beta^x,\]then\[\mathscr{R}+\mathscr{N}=-1\quad \text{and} \quad \mathscr{R}\mathscr{N}=\frac{p+1}{4}.\]
So $\mathscr{R},\mathscr{N}$ are two roots of the equation\[x^2+x+\frac{p+1}{4}=0.\]
If $l=2$, then $p\equiv-1 \mathrm{mod}  8$ for $2\in R_1$. Without loss of generality, we may suppose\[\mathscr{R}=1,\quad \mathscr{N}=0.\]
If $l$ is an odd prime, then\[\mathscr{R}=\frac{-1+\delta}{2},\quad \mathscr{N}=\frac{-1-\delta}{2}\]where $\delta\in\mathbb{F}_l$ such that $\delta^2=-p$. Note that, by quadratic reciprocity law, \[\left(\frac{-p}{l}\right)=\left(\frac{l}{p}\right)=1,\]so $\delta$ is well defined.

\begin{lemma}Let $q$ be a quadratic residue modulo $p$.
\begin{enumerate}[(i)]
\item If $u\in C_{s-i-1,t-j-1}$, then \[
\chi_{ij}(\alpha^u)=-\mathscr{R} p^{s-i-1}q^{t-j-1}
\]and\[\chi_{ij}^*(\alpha^u)=-\mathscr{N} p^{s-i-1}q^{t-j-1}.\]
\item If $u\in C_{s-i-1,t-j-1}^*$, then \[
\chi_{ij}(\alpha^u)=-\mathscr{N} p^{s-i-1}q^{t-j-1}
\]and\[\chi_{ij}^*(\alpha^u)=-\mathscr{R} p^{s-i-1}q^{t-j-1}.\]
\end{enumerate}
\end{lemma}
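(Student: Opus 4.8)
The plan is to evaluate the Gauss-sum-like expression for $\chi_{ij}(\alpha^u)$ already set up before the lemma, under the hypothesis $u\in C_{s-i-1,t-j-1}$. Recall from the displayed formula
\[
\chi_{ij}(\alpha^u)=\sum_{x\in R_1}\alpha^{up^iq^tx}\sum_{0\leqslant\lambda<p^{s-i-1}}\alpha^{up^{i+1}q^t\lambda}\sum_{y\in\mathbb{Z}_{q^{t-j}}^*}\alpha^{up^sq^jy}.
\]
First I would analyze each of the three factors separately, using that $u\in C_{s-i-1,t-j-1}$ means $p^{s-i-1}q^{t-j-1}\mathrel\|u$ in the exact divisibility sense (more precisely, $u=p^{s-i-1}q^{t-j-1}w$ with $w$ a unit-type representative in the cyclotomic coset of $1$ modulo $p^{i+1}q^{j+1}$, so that $p^{s-i-1}\|u$ and $q^{t-j-1}\|u$). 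The middle factor $\sum_{\lambda}\alpha^{up^{i+1}q^t\lambda}$: since $p^{s-i-1}\|u$, the exponent $up^{i+1}q^t\lambda$ is a multiple of $p^s q^t$ for every $\lambda$, hence each term is $1$ and the sum equals $p^{s-i-1}$. The third factor is exactly the sum treated in the preceding Lemma (the one asserting $\sum_{y\in\mathbb{Z}_{q^{t-j}}^*}\alpha^{up^sq^jy}=-q^{t-j-1}$ when $q^{t-j-1}\|u$), so it equals $-q^{t-j-1}$.

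The remaining factor is $\sum_{x\in R_1}\alpha^{up^iq^tx}$. Here I would observe that $up^iq^t = p^{s-1}q^t q^{t-j-1} w'$ for the appropriate unit $w'$ modulo $p$, so $\alpha^{up^iq^t} = \beta^{w'}$ where $\beta=\alpha^{p^{s-1}q^t}$ is the fixed primitive $p$th root of unity from the setup, and $w'$ is a quadratic residue modulo $p$ (this is the crucial point: $u\in C_{s-i-1,t-j-1}$ is the coset of $1$-type, and by Theorem~\ref{2.12} together with the fact that $q$ is a QR mod $p$, the relevant residue class is in $R_1$). Consequently $x\mapsto w'x$ permutes $R_1$, and $\sum_{x\in R_1}\alpha^{up^iq^tx}=\sum_{x\in R_1}\beta^{w'x}=\sum_{x\in R_1}\beta^{x}=\mathscr{R}$. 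Multiplying the three factors gives $\chi_{ij}(\alpha^u)=\mathscr{R}\cdot p^{s-i-1}\cdot(-q^{t-j-1})=-\mathscr{R}\,p^{s-i-1}q^{t-j-1}$, as claimed. The computation for $\chi_{ij}^*(\alpha^u)$ is identical except that $R_1$ is replaced by $N_1$ in the first factor, yielding $\mathscr{N}$ in place of $\mathscr{R}$; and part (ii), the case $u\in C_{s-i-1,t-j-1}^*$, follows by the same argument with $w'$ now a quadratic nonresidue mod $p$, which swaps the roles of $\mathscr{R}$ and $\mathscr{N}$.

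The main obstacle — really the only nontrivial point — is pinning down exactly which quadratic class modulo $p$ the "reduced" exponent $w'$ lands in, i.e. correctly tracking how the coset membership $u\in C_{s-i-1,t-j-1}$ (versus $C^*_{s-i-1,t-j-1}$) translates through the substitution into membership in $R_1$ (versus $N_1$). This requires invoking the explicit additive description of the cyclotomic cosets from Theorem~\ref{2.12} in the case $q\in R_s$, together with Lemma~\ref{3.1}-style bookkeeping and the observation that multiplication by $l$ (hence by any power of $l$, and $l$ is a QR mod $p$ by Lemma~\ref{lem2.7}) fixes $R_1$ setwise. Once the sign/class correspondence is established, the rest is the routine factor-by-factor evaluation sketched above, and the dependence on whether $l=2$ or $l$ is odd enters only through the already-recorded values of $\mathscr{R},\mathscr{N}$, so no further case analysis is needed here.
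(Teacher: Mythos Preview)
Your approach is correct and is exactly the (implicit) argument the paper has in mind: the paper sets up the three-factor decomposition of $\chi_{ij}(\alpha^u)$ just before this lemma and then declares the result ``obvious,'' so the intended proof is precisely your factor-by-factor evaluation---the $\lambda$-sum collapses to $p^{s-i-1}$, the $y$-sum gives $-q^{t-j-1}$ by the preceding lemma, and the $x$-sum reduces to $\mathscr{R}$ (resp.\ $\mathscr{N}$) once one checks via Theorem~\ref{2.12} that the effective exponent of $\beta$ is a quadratic residue (resp.\ nonresidue) modulo $p$. One small notational slip: you write $up^iq^t=p^{s-1}q^t\,q^{t-j-1}w'$ and then $\alpha^{up^iq^t}=\beta^{w'}$, which is inconsistent---either drop the extra $q^{t-j-1}$ in the first equation or write $\beta^{q^{t-j-1}w'}$ in the second; the argument is unaffected since $q^{t-j-1}$ is a quadratic residue mod $p$ anyway.
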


\begin{lemma}Let q be a quadratic residue modulo $p$ and $t-j\leqslant m\leqslant t$.
\begin{enumerate}[(i)]
\item If $u\in C_{s-i-1,m}$, then \[
\chi_{ij}(\alpha^u)=\mathscr{R} p^{s-i-1}\phi(q^{t-j})
\]and\[\chi_{ij}^*(\alpha^u)=\mathscr{N} p^{s-i-1}\phi(q^{t-j}).\]
\item If $u\in C_{s-i-1,m}^*$, then \[
\chi_{ij}(\alpha^u)=\mathscr{N} p^{s-i-1}\phi(q^{t-j})
\]and\[\chi_{ij}^*(\alpha^u)=\mathscr{R} p^{s-i-1}\phi(q^{t-j}).\]
\end{enumerate}
\end{lemma}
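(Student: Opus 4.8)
The plan is to compute $\chi_{ij}(\alpha^u)$ and $\chi_{ij}^*(\alpha^u)$ directly from the two product formulas displayed immediately before this lemma, namely $\chi_{ij}(\alpha^u)=\sum_{x\in R_1}\alpha^{up^iq^tx}\sum_{0\leqslant\lambda<p^{s-i-1}}\alpha^{up^{i+1}q^t\lambda}\sum_{y\in\mathbb{Z}_{q^{t-j}}^*}\alpha^{up^sq^jy}$ and its analogue with $R_1$ replaced by $N_1$ for $\chi_{ij}^*$. First I would fix a representative of the coset in multiplicative form: by Theorem \ref{thm2.1}, $u\in C_{s-i-1,m}$ means $u=p^{s-i-1}q^ml^k$ for some $k$, and $u\in C_{s-i-1,m}^*$ means $u=p^{s-i-1}q^mgl^k$. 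Since $\gcd(lg,pq)=1$, in both cases $p^{s-i-1}\|u$ and $q^m\|u$ exactly; and because $\chi_{ij},\chi_{ij}^*$ are constant on each $l$-cyclotomic coset, the particular choice of $k$ is irrelevant.

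Next I would dispose of the two inner sums. For the $\lambda$-sum, $p^{s-i-1}\|u$ gives $p^s\|(up^{i+1}q^t)$-part and $q^t\mid q^{t+m}\mid up^{i+1}q^t$, so $p^sq^t\mid up^{i+1}q^t$, hence $\alpha^{up^{i+1}q^t}=1$ and the sum equals $p^{s-i-1}$. For the $y$-sum, the hypothesis $t-j\leqslant m$ gives $q^t\mid q^{m+j}\mid up^sq^j$ and $p^s\mid up^sq^j$, so $\alpha^{up^sq^j}=1$ and the sum equals $\phi(q^{t-j})$. (This is precisely where the present lemma diverges from the previous one: there $q^{t-j-1}\|u$ and the $y$-sum is $-q^{t-j-1}$, whereas here it is the full $\phi(q^{t-j})$.) Thus $\chi_{ij}(\alpha^u)=p^{s-i-1}\phi(q^{t-j})\sum_{x\in R_1}\alpha^{up^iq^tx}$ and likewise $\chi_{ij}^*(\alpha^u)=p^{s-i-1}\phi(q^{t-j})\sum_{x\in N_1}\alpha^{up^iq^tx}$.

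It remains to evaluate $\sum_{x\in R_1}\alpha^{up^iq^tx}$ and $\sum_{x\in N_1}\alpha^{up^iq^tx}$. Put $\beta=\alpha^{p^{s-1}q^t}$, a primitive $p$th root of unity. From $up^iq^t=p^{s-1}q^t\cdot c$ with $c=q^ml^k$ in the case $u\in C_{s-i-1,m}$ and $c=q^mgl^k$ in the case $u\in C_{s-i-1,m}^*$, we get $\alpha^{up^iq^t}=\beta^{c}$, so $\sum_{x\in R_1}\alpha^{up^iq^tx}=\sum_{x\in cR_1}\beta^{x}$, and similarly with $N_1$. Now $q$ is a quadratic residue modulo $p$ by hypothesis, $l$ is a quadratic residue modulo $p$ (by Lemma \ref{lem2.7}, as already used in the proof of Lemma \ref{lem2.8}), and $g$, being a common primitive root of $p^s$ and $q^t$, is a primitive root—hence a nonresidue—modulo $p$. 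Therefore $c$ is a quadratic residue modulo $p$ in the first case and a nonresidue in the second, so multiplication by $c$ fixes the pair $\{R_1,N_1\}$ in the first case and swaps $R_1\leftrightarrow N_1$ in the second. Substituting $\sum_{x\in R_1}\beta^x=\mathscr{R}$ and $\sum_{x\in N_1}\beta^x=\mathscr{N}$ then gives, in the first case, $\chi_{ij}(\alpha^u)=\mathscr{R}p^{s-i-1}\phi(q^{t-j})$ and $\chi_{ij}^*(\alpha^u)=\mathscr{N}p^{s-i-1}\phi(q^{t-j})$, and in the second case the same with $\mathscr{R}$ and $\mathscr{N}$ interchanged, which is exactly the asserted statement.

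The whole argument is essentially bookkeeping; the only substantive ingredient is the residue-coset identity $cR_1=R_1$ or $cR_1=N_1$, which relies jointly on $q$ and $l$ being quadratic residues and $g$ being a primitive root modulo $p$—precisely the standing hypotheses of the paper. I expect the only point requiring care to be the consistent tracking of the $p$- and $q$-adic valuations of $u$ (so that the two inner sums collapse correctly) and keeping the $\mathscr{R}\leftrightarrow\mathscr{N}$ parity straight between parts (i) and (ii); there is no analytic or structural obstacle beyond that.
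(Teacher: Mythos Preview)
Your proposal is correct and is exactly the argument the paper has in mind: the paper sets up the product formula for $\chi_{ij}(\alpha^u)$ and $\chi_{ij}^*(\alpha^u)$ at the start of Case~1 and then states this lemma without further proof, leaving the evaluation of the three inner sums and the residue--coset identification implicit. Your write-up simply makes those steps explicit, and all the valuations and the $\mathscr{R}/\mathscr{N}$ bookkeeping are handled correctly.
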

If $i=s$, then
\[\chi_{sj}(\alpha^u)=\sum_{y\in \mathbb{Z}_{q^{t-j}}^*}\alpha^{up^sq^jy},\]
these values are easy to calculate.

Now we consider the explicit expressions for primitive idempotents. Note that if $u$ and $u'$ are in same cyclotomic coset, the coefficient of $x^u$ and $x^{u'}$ are also same. so we can evaluate, for example $\theta_{ij}$,  by classifying $u$ according to different cyclotomic cosets, that is,
\begin{equation}\theta_{ij}=\sum_{\gamma\in \Gamma}a_\gamma\chi_\gamma\label{eq}\end{equation}
where $\Gamma$ is the set of  the representatives of all cyclotomic cosets. By Lemma \ref{5.1} we know that\[a_\gamma=\frac{1}{n}\chi_{ij}(\alpha^{-u}),\,-u\in C_\gamma\]then by the argument above, we can get the results below.

\begin{theorem}
Let q be a quadratic residue modulo $p$.
\begin{enumerate}[(i)]
\item For $i=0$ and $j=t$,
\[\theta_{0t}(x)=\frac{1}{pq^t}\left(\frac{p-1}{2}\sum_{0\leqslant m\leqslant t}\chi_{sm}(x)+\mathscr{N}\sum_{0\leqslant m\leqslant t}\chi_{s-1,m}(x)+\mathscr{R}\sum_{0\leqslant m\leqslant t}\chi_{s-1,m}^*(x)\right),\]
and
\[\theta_{0t}^*(x)=\frac{1}{pq^t}\left(\frac{p-1}{2}\sum_{0\leqslant m\leqslant t}\chi_{sm}(x)+\mathscr{R}\sum_{0\leqslant m\leqslant t}\chi_{s-1,m}(x)+\mathscr{N}\sum_{0\leqslant m\leqslant t}\chi_{s-1,m}^*(x)\right),\]

\item For $i=0$ and $0\leqslant j\leqslant t-1$,
\[\begin{aligned}
\theta_{0j}(x)=&\frac{1}{pq^{j+1}}\Bigg(\frac{(p-1)}{2}(q-1)\sum_{t-j\leqslant m\leqslant t}\chi_{sm}(x)\\
&+(q-1)\bigg(\mathscr{N}\sum_{t-j\leqslant m\leqslant t}\chi_{s-1,m}(x)+\mathscr{R}\sum_{t-j\leqslant m\leqslant m}\chi_{s-1,m}^*(x)\bigg)\\
&-\bigg(\mathscr{N}\chi_{s-1,t-j-1}(x)+\mathscr{R}\chi_{s-1,t-j-1}^*(x)\bigg)\\
&-\frac{p-1}{2}\chi_{s,t-j-1}(x)\Bigg),
\end{aligned}\]
and
\[\begin{aligned}
\theta_{0j}^*(x)=&\frac{1}{pq^{j+1}}\Bigg(\frac{(p-1)}{2}(q-1)\sum_{t-j\leqslant m\leqslant t}\chi_{sm}(x)\\
&+(q-1)\bigg(\mathscr{R}\sum_{t-j\leqslant m\leqslant t}\chi_{s-1,m}(x)+\mathscr{N}\sum_{t-j\leqslant m\leqslant m}\chi_{s-1,m}^*(x)\bigg)\\
&-\bigg(\mathscr{R}\chi_{s-1,t-j-1}(x)+\mathscr{N}\chi_{s-1,t-j-1}^*(x)\bigg)\\
&-\frac{p-1}{2}\chi_{s,t-j-1}(x)\Bigg).
\end{aligned}\]

\item For $1\leqslant i\leqslant s-1 $ and $0\leqslant j\leqslant t-1$
\[\begin{aligned}
\theta_{ij}(x)=&\frac{1}{p^{i+1}q^{j+1}}\Biggl(\frac{(p-1)}{2}(q-1)\bigg(\sum_{\substack{s-i\leqslant k\leqslant s-1\\t-j\leqslant m\leqslant t}}(\chi_{km}(x)+\chi_{km}^*(x))+\sum_{t-j\leqslant m\leqslant t}\chi_{sm}(x)\bigg)\\
&+(q-1)\bigg(\mathscr{N}\sum_{t-j\leqslant m\leqslant t}\chi_{s-i-1,m}(x)+\mathscr{R}\sum_{t-j\leqslant m\leqslant m}\chi_{s-i-1,m}^*(x)\bigg)\\
&-\bigg(\mathscr{N}\chi_{s-i-1,t-j-1}(x)+\mathscr{R}\chi_{s-i-1,t-j-1}^*(x)\bigg)\\
&-\frac{p-1}{2}\bigg(\chi_{s-i,t-j-1}(x)+\chi_{s-i,t-j-1}^*(x)\bigg)\Biggr).
\end{aligned}\]
and
\[\begin{aligned}
\theta_{ij}^*(x)=&\frac{1}{p^{i+1}q^{j+1}}\Bigg(\frac{(p-1)}{2}(q-1)\bigg(\sum_{\substack{s-i\leqslant k\leqslant s-1\\t-j\leqslant m\leqslant t}}(\chi_{km}(x)+\chi_{km}^*(x))+\sum_{t-j\leqslant m\leqslant t}\chi_{sm}(x)\bigg)\\
&+(q-1)\bigg(\mathscr{R}\sum_{t-j\leqslant m\leqslant t}\chi_{s-i-1,m}(x)+\mathscr{N}\sum_{t-j\leqslant m\leqslant m}\chi_{s-i-1,m}^*(x)\bigg)\\
&-\bigg(\mathscr{R}\chi_{s-i-1,t-j-1}(x)+\mathscr{N}\chi_{s-i-1,t-j-1}^*(x)\bigg)\\
&-\frac{p-1}{2}\bigg(\chi_{s-i,t-j-1}(x)+\chi_{s-i,t-j-1}^*(x)\bigg)\Bigg).
\end{aligned}\]

\item For $1\leqslant i\leqslant s-1$ and $j=t$, 
\[\begin{aligned}
\theta_{it}(x)=&
\frac{1}{p^{i+1}q^t}\Bigg(\frac{p-1}{2}\sum_{\substack{s-i\leqslant k\leqslant s-1\\ 0\leqslant m\leqslant t}}(\chi_{km}(x)+\chi_{km}^*(x))+\frac{p-1}{2}\sum_{0\leqslant m\leqslant t}\chi_{sm}(x)\\&+\mathscr{N}\sum_{0\leqslant m\leqslant t}\chi_{s-i-1,m}(x)+\mathscr{R}\sum_{0\leqslant m\leqslant t}\chi_{s-i-1,m}^*(x)\Bigg),
\end{aligned}\]
and
\[\begin{aligned}
\theta_{it}^*(x)=&
\frac{1}{p^{i+1}q^t}\Bigg(\frac{p-1}{2}\sum_{\substack{s-i\leqslant k\leqslant s-1\\ 0\leqslant m\leqslant t}}(\chi_{km}(x)+\chi_{km}^*(x))+\frac{p-1}{2}\sum_{0\leqslant m\leqslant t}\chi_{sm}(x)\\&+\mathscr{R}\sum_{0\leqslant m\leqslant t}\chi_{s-i-1,m}(x)+\mathscr{N}\sum_{0\leqslant m\leqslant t}\chi_{s-i-1,m}^*(x)\Bigg),
\end{aligned}\]

\item For $i=s$ and $0\leqslant j\leqslant t-1$,
\[\begin{aligned}
\theta_{sj}(x)=&\frac{1}{p^sq^{j+1}}\Bigg((q-1)\bigg(\sum_{\substack{0\leqslant k\leqslant s-1\\t-j\leqslant m\leqslant t}}(\chi_{km}(x)+\chi_{km}^*(x))+\sum_{t-j\leqslant m\leqslant t}\chi_{sm}(x)\bigg)\\
&-\bigg(\sum_{0\leqslant k\leqslant s-1}(\chi_{k,t-j-1}(x)+\chi_{k,t-j-1}^*(x))+\chi_{s,t-j-1}(x)\bigg)\Bigg),
\end{aligned}\]

\item For $i=s$ and $j=t$,
\[\begin{aligned}
\theta_{st}(x)&=\frac{1}{p^sq^t}\Bigg(\sum_{\substack{0\leqslant k\leqslant s-1\\0\leqslant m\leqslant t}}(\chi_{ij}(x)+\chi_{ij}^*(x))+\sum_{0\leqslant m\leqslant t}\chi_{sj}(x)\Bigg)\\
&=\frac{1}{p^sq^t}\sum_{0\leqslant u< p^sq^t}x^u.
\end{aligned}\]
\end{enumerate}\label{5.7}
\end{theorem}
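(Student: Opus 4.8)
The plan is to run the Fourier-inversion formula of Lemma~\ref{5.1} through the coset decomposition \eqref{eq}. Since multiplication by $l$ permutes every cyclotomic coset, $\chi_{ij}(\alpha^{-u})$ depends only on the coset of $u$, so $\theta_{ij}=\sum_{\gamma\in\Gamma}a_{\gamma}\chi_{\gamma}$ with $a_{\gamma}=\frac{1}{n}\chi_{ij}(\alpha^{-u})$ for any $u\in C_{\gamma}$, and likewise for $\theta_{ij}^{*}$ with $\chi_{ij}^{*}$ in place of $\chi_{ij}$. The first reduction is to identify $-C_{\gamma}$: since $p\equiv 3\bmod 4$, the element $-1$ is a quadratic nonresidue modulo $p$, hence modulo every $p^{s-k}$ with $k\leqslant s-1$, so the description of the cosets in Theorem~\ref{2.12} gives $-C_{km}=C_{km}^{*}$ and $-C_{km}^{*}=C_{km}$ for $0\leqslant k\leqslant s-1$, while $-C_{sm}=C_{sm}$. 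Therefore the coefficient of $\chi_{km}$ (resp.\ $\chi_{km}^{*}$, resp.\ $\chi_{sm}$) in $\theta_{ij}$ equals $\frac{1}{n}\chi_{ij}(\alpha^{v})$ with $v$ running over $C_{km}^{*}$ (resp.\ $C_{km}$, resp.\ $C_{sm}$), and it only remains to read off these values from the evaluation lemmas stated above for the case in which $q$ is a quadratic residue modulo $p$.

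Next I would tabulate $\chi_{ij}(\alpha^{v})$ on all $(2s+1)(t+1)$ cosets by splitting on the $p$-adic and $q$-adic valuations of $v$, the only data entering the product formula $\chi_{ij}(\alpha^{v})=\bigl(\sum_{x\in R_1}\alpha^{vp^{i}q^{t}x}\bigr)\bigl(\sum_{0\leqslant\lambda<p^{s-i-1}}\alpha^{vp^{i+1}q^{t}\lambda}\bigr)\bigl(\sum_{y\in\mathbb{Z}_{q^{t-j}}^{*}}\alpha^{vp^{s}q^{j}y}\bigr)$ obtained from Theorem~\ref{2.12}. Writing $p^{a}\,\|\,v$ for ``$p^{a}\mid v$ but $p^{a+1}\nmid v$'', there are exactly four regimes with nonzero value: (A) $p^{s-i}\mid v$ and $q^{t-j}\mid v$, value $\tfrac{\phi(p^{s-i})}{2}\phi(q^{t-j})$; (B) $p^{s-i-1}\,\|\,v$ and $q^{t-j}\mid v$, value ($\mathscr{R}$ or $\mathscr{N}$, according as $v/p^{s-i-1}$ is a residue or nonresidue modulo $p$) times $p^{s-i-1}\phi(q^{t-j})$; (C) $p^{s-i}\mid v$ and $q^{t-j-1}\,\|\,v$, value $\tfrac{p-1}{2}\cdot p^{s-i-1}\cdot(-q^{t-j-1})$ (the last factor by the lemma computing $\sum_{y}\alpha^{vp^{s}q^{j}y}$); and (D) $p^{s-i-1}\,\|\,v$ and $q^{t-j-1}\,\|\,v$, value ($\mathscr{R}$ or $\mathscr{N}$) times $p^{s-i-1}\cdot(-q^{t-j-1})$. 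Every other $v$ gives $0$ by the lemma on $p^{s-i-1}\nmid v$ or $q^{t-j-1}\nmid v$. Dividing by $n=p^{s}q^{t}$ turns these into $\tfrac{(p-1)(q-1)}{2p^{i+1}q^{j+1}}$, $\tfrac{\mathscr{R}(q-1)}{p^{i+1}q^{j+1}}$ (or with $\mathscr{N}$), $-\tfrac{p-1}{2p^{i+1}q^{j+1}}$ and $-\tfrac{\mathscr{R}}{p^{i+1}q^{j+1}}$ (or with $\mathscr{N}$), precisely the coefficients appearing in the statement.

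The remaining work is to sort the cosets into these regimes, again using Theorem~\ref{2.12}: $C_{km},C_{km}^{*}$ lie in (A) exactly when $s-i\leqslant k\leqslant s-1$ and $t-j\leqslant m\leqslant t$, and $C_{sm}$ in (A) when $t-j\leqslant m\leqslant t$; $C_{s-i-1,m},C_{s-i-1,m}^{*}$ lie in (B) when $t-j\leqslant m\leqslant t$ and in (D) when $m=t-j-1$; the cosets $C_{k,t-j-1},C_{k,t-j-1}^{*}$ (for $s-i\leqslant k\leqslant s-1$) together with $C_{s,t-j-1}$ lie in (C); everything else contributes $0$. Because $-1$ is a nonresidue, evaluating $\chi_{ij}$ on $C_{s-i-1,m}^{*}=-C_{s-i-1,m}$ gives the $\mathscr{N}$-value and on $C_{s-i-1,m}$ the $\mathscr{R}$-value (here one uses that $q$ is a residue modulo $p$), so in $\theta_{ij}$ the $\mathscr{N}$-terms attach to the unstarred generators and the $\mathscr{R}$-terms to the starred ones, with these roles swapped in $\theta_{ij}^{*}$. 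Two degeneracies ease the assembly: for $i=s$ the product formula collapses to $\chi_{sj}(\alpha^{v})=\sum_{y\in\mathbb{Z}_{q^{t-j}}^{*}}\alpha^{vp^{s}q^{j}y}$, so $\mathscr{R},\mathscr{N}$ do not occur in $\theta_{sj}$; and for $(i,j)=(s,t)$ one has $\chi_{st}(\alpha^{v})=1$ for all $v$, giving $\theta_{st}=\tfrac{1}{n}\sum_{0\leqslant u<p^{s}q^{t}}x^{u}$. The cases $i=0$ are the specialization $s-i=s$, so the index set $\{s-i\leqslant k\leqslant s-1\}$ is empty and $\chi_{s-1,m}$ plays the role of $\chi_{s-i-1,m}$. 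Collecting over the six ranges of $(i,j)$ then yields the stated expressions.

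The only genuine obstacle is this combinatorial bookkeeping: for each of the six $(i,j)$-ranges one must correctly assign the cosets to the four valuation regimes and keep the index shifts ($s-i-1$ versus $s-i$, $t-j-1$ versus $t-j$) and the $\mathscr{R}$-versus-$\mathscr{N}$ placement consistent; conceptually nothing new is needed once the lemmas evaluating $\chi_{ij}(\alpha^{u})$ and $\chi_{ij}^{*}(\alpha^{u})$ are in hand. As an independent check one can verify directly that the claimed right-hand sides satisfy $\theta_{\gamma}(\alpha^{u})=1$ for $u\in C_{\gamma}$ and $0$ otherwise, or that the $\theta_{\gamma}$ sum to $1$ and have vanishing pairwise products off the diagonal, using the multiplication identities for the $\chi$'s established in Section~\ref{sec4}.
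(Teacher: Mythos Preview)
Your proposal is correct and follows essentially the same route as the paper: write $\theta_{ij}=\sum_{\gamma}a_{\gamma}\chi_{\gamma}$ via Lemma~\ref{5.1}, then read off each $a_{\gamma}=\frac{1}{n}\chi_{ij}(\alpha^{-u})$ from the evaluation lemmas for $\chi_{ij}(\alpha^{u})$ stated immediately before the theorem (the paper's own argument is simply ``by the argument above, we can get the results below''). Your explicit identification $-C_{km}=C_{km}^{*}$ via $-1\in N_1$ and your organization of the nonzero values into the four valuation regimes (A)--(D) make transparent the bookkeeping that the paper leaves to the reader, but no new idea is involved.
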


\subsubsection*{Case 2: $q$ is a quadratic nonresidue modulo $p$}

Let $q$ be a quadratic nonresidue modulo $p$ and $u\in C_{km}$ or $C_{km}^*$. If $k<i-1$ or $k\geqslant i$, then we can evaluate $\chi_{ij}(\alpha^{-u})$ on the similar lines in Case1 1, for whether $u$ is a quadratic residue modulo $p$ does not influence the coefficient of $\chi_{km}$ or $\chi_{km}^*$. However, if $k=i-1$, by theorem \ref{2.12}, whether the coefficient of $\chi_{km}$ (or $\chi_{km}^*$) is  a multiple of $\mathscr{R}$ or $\mathscr{N}$ is depend on $j-m \mathrm{mod}  2$. Then by \eqref{eq}, we can get the results below.

\begin{theorem}
Let $q$ be a quadratic nonresidue modulo $p$.
\begin{enumerate}[(i)]
\item For $i=0$ and $j=t$,
\[\begin{aligned}\theta_{0t}(x)=&\frac{1}{pq^t}\Bigg(\frac{p-1}{2}\sum_{0\leqslant m\leqslant t}\chi_{sm}(x)\\&+\mathscr{N}\bigg(\sum_{\substack{0\leqslant m\leqslant t\\2\mid t-m}}\chi_{s-1,m}(x)+\sum_{\substack{0\leqslant m\leqslant t\\2\nmid t-m}}\chi_{s-1,m}^*(x)\bigg)\\&+\mathscr{R}\bigg(\sum_{\substack{0\leqslant m\leqslant t\\2\mid t-m}}\chi_{s-1,m}^*(x)+\sum_{\substack{0\leqslant m\leqslant t\\2\nmid t-m}}\chi_{s-1,m}(x)\bigg)\Bigg),\end{aligned}\]
and
\[\begin{aligned}\theta_{0t}^*(x)=&\frac{1}{pq^t}\Bigg(\frac{p-1}{2}\sum_{0\leqslant m\leqslant t}\chi_{sm}(x)\\&+\mathscr{R}\bigg(\sum_{\substack{0\leqslant m\leqslant t\\2\mid t-m}}\chi_{s-1,m}(x)+\sum_{\substack{0\leqslant m\leqslant t\\2\nmid t-m}}\chi_{s-1,m}^*(x)\bigg)\\&+\mathscr{N}\bigg(\sum_{\substack{0\leqslant m\leqslant t\\2\mid t-m}}\chi_{s-1,m}^*(x)+\sum_{\substack{0\leqslant m\leqslant t\\2\nmid t-m}}\chi_{s-1,m}(x)\bigg)\Bigg),\end{aligned}\]

\item For $i=0$ and $0\leqslant j\leqslant t-1$,
\[\begin{aligned}
\theta_{0j}(x)=&\frac{1}{pq^{j+1}}\Bigg(\frac{(p-1)}{2}(q-1)\sum_{t-j\leqslant m\leqslant t}\chi_{sm}(x)\\
&+(q-1)\mathscr{N}\bigg(\sum_{\substack{t-j\leqslant m\leqslant t\\2\mid m-j}}\chi_{s-1,m}(x)+\sum_{\substack{t-j\leqslant m\leqslant t\\2\nmid m-j}}\chi_{s-1,m}^*(x)\bigg)\\
&+(q-1)\mathscr{R}\bigg(\sum_{\substack{t-j\leqslant m\leqslant m\\2\mid m-j}}\chi_{s-1,m}^*(x)+\sum_{\substack{t-j\leqslant m\leqslant m\\2\nmid m-j}}\chi_{s-1,m}(x)\bigg)\\
&-\bigg(\frac{-1+(-1)^{t}\delta}{2}\chi_{s-1,t-j-1}(x)+\frac{-1-(-1)^{t}\delta}{2}\chi_{s-1,t-j-1}^*(x)\bigg)\\
&-\frac{p-1}{2}\chi_{s,t-j-1}(x)\Bigg),
\end{aligned}\]
and
\[\begin{aligned}
\theta_{0j}(x)^*=&\frac{1}{pq^{j+1}}\Bigg(\frac{(p-1)}{2}(q-1)\sum_{t-j\leqslant m\leqslant t}\chi_{sm}(x)\\
&+(q-1)\mathscr{R}\bigg(\sum_{\substack{t-j\leqslant m\leqslant t\\2\mid m-j}}\chi_{s-1,m}(x)+\sum_{\substack{t-j\leqslant m\leqslant t\\2\nmid m-j}}\chi_{s-1,m}^*(x)\bigg)\\
&+(q-1)\mathscr{N}\bigg(\sum_{\substack{t-j\leqslant m\leqslant m\\2\mid m-j}}\chi_{s-1,m}^*(x)+\sum_{\substack{t-j\leqslant m\leqslant m\\2\nmid m-j}}\chi_{s-1,m}(x)\bigg)\\
&-\bigg(\frac{-1-(-1)^{t}\delta}{2}\chi_{s-1,t-j-1}(x)+\frac{-1+(-1)^{t}\delta}{2}\chi_{s-1,t-j-1}^*(x)\bigg)\\
&-\frac{p-1}{2}\chi_{s,t-j-1}(x)\Bigg).
\end{aligned}\]

\item For $1\leqslant i\leqslant s-1 $ and $0\leqslant j\leqslant t-1$
\[\begin{aligned}
\theta_{ij}(x)=&\frac{1}{p^{i+1}q^{j+1}}\Biggl(\frac{(p-1)}{2}(q-1)\bigg(\sum_{\substack{s-i\leqslant k\leqslant s-1\\t-j\leqslant m\leqslant t}}(\chi_{km}(x)+\chi_{km}^*(x))+\sum_{t-j\leqslant m\leqslant t}\chi_{sm}(x)\bigg)\\
&+(q-1)\mathscr{N}\bigg(\sum_{\substack{t-j\leqslant m\leqslant t\\2\mid m-j}}\chi_{s-i-1,m}(x)+\sum_{\substack{t-j\leqslant m\leqslant t\\2\nmid m-j}}\chi_{s-1,m}^*(x)\bigg)\\
&+(q-1)\mathscr{R}\bigg(\sum_{\substack{t-j\leqslant m\leqslant m\\2\mid m-j}}\chi_{s-i-1,m}^*(x)+\sum_{\substack{t-j\leqslant m\leqslant m\\2\nmid m-j}}\chi_{s-1,m}(x)\bigg)\\
&-\bigg(\frac{-1+(-1)^{t}\delta}{2}\chi_{s-i-1,t-j-1}(x)+\frac{-1-(-1)^{t}\delta}{2}\chi_{s-i-1,t-j-1}^*(x)\bigg)\\
&-\frac{p-1}{2}\bigg(\chi_{s-i,t-j-1}(x)+\chi_{s-i,t-j-1}^*(x)\bigg)\Biggr).
\end{aligned}\]
and
\[\begin{aligned}
\theta_{ij}^*(x)=&\frac{1}{p^{i+1}q^{j+1}}\Biggl(\frac{(p-1)}{2}(q-1)\bigg(\sum_{\substack{s-i\leqslant k\leqslant s-1\\t-j\leqslant m\leqslant t}}(\chi_{km}(x)+\chi_{km}^*(x))+\sum_{t-j\leqslant m\leqslant t}\chi_{sm}(x)\bigg)\\
&+(q-1)\mathscr{R}\bigg(\sum_{\substack{t-j\leqslant m\leqslant t\\2\mid m-j}}\chi_{s-i-1,m}(x)+\sum_{\substack{t-j\leqslant m\leqslant t\\2\nmid m-j}}\chi_{s-1,m}^*(x)\bigg)\\
&+(q-1)\mathscr{N}\bigg(\sum_{\substack{t-j\leqslant m\leqslant m\\2\mid m-j}}\chi_{s-i-1,m}^*(x)+\sum_{\substack{t-j\leqslant m\leqslant m\\2\nmid m-j}}\chi_{s-1,m}(x)\bigg)\\
&-\bigg(\frac{-1-(-1)^{t}\delta}{2}\chi_{s-i-1,t-j-1}(x)+\frac{-1+(-1)^{t}\delta}{2}\chi_{s-i-1,t-j-1}^*(x)\bigg)\\
&-\frac{p-1}{2}\bigg(\chi_{s-i,t-j-1}(x)+\chi_{s-i,t-j-1}^*(x)\bigg)\Biggr).
\end{aligned}\]

\item For $1\leqslant i\leqslant s-1$ and $j=t$, 
\[\begin{aligned}
\theta_{it}(x)=&
\frac{1}{p^{i+1}q^t}\Bigg(\frac{p-1}{2}\sum_{\substack{s-i\leqslant k\leqslant s-1\\ 0\leqslant m\leqslant t}}(\chi_{km}(x)+\chi_{km}^*(x))+\frac{p-1}{2}\sum_{0\leqslant m\leqslant t}\chi_{sm}(x)\\&+\mathscr{N}\bigg(\sum_{\substack{0\leqslant m\leqslant t\\2\mid t-m}}\chi_{s-i-1,m}(x)+\sum_{\substack{0\leqslant m\leqslant t\\2\nmid t-m}}\chi_{s-i-1,m}^*(x)\bigg)\\&+\mathscr{R}\bigg(\sum_{\substack{0\leqslant m\leqslant t\\2\mid t-m}}\chi_{s-i-1,m}^*(x)+\sum_{\substack{0\leqslant m\leqslant t\\2\nmid t-m}}\chi_{s-i-1,m}(x)\bigg)\Bigg),
\end{aligned}\]
and
\[\begin{aligned}
\theta_{it}(x)=&
\frac{1}{p^{i+1}q^t}\Bigg(\frac{p-1}{2}\sum_{\substack{s-i\leqslant k\leqslant s-1\\ 0\leqslant m\leqslant t}}(\chi_{km}(x)+\chi_{km}^*(x))+\frac{p-1}{2}\sum_{0\leqslant m\leqslant t}\chi_{sm}(x)\\&+\mathscr{R}\bigg(\sum_{\substack{0\leqslant m\leqslant t\\2\mid t-m}}\chi_{s-i-1,m}(x)+\sum_{\substack{0\leqslant m\leqslant t\\2\nmid t-m}}\chi_{s-i-1,m}^*(x)\bigg)\\&+\mathscr{N}\bigg(\sum_{\substack{0\leqslant m\leqslant t\\2\mid t-m}}\chi_{s-i-1,m}^*(x)+\sum_{\substack{0\leqslant m\leqslant t\\2\nmid t-m}}\chi_{s-i-1,m}(x)\bigg)\Bigg),
\end{aligned}\]

\item For $i=s$ and $0\leqslant j\leqslant t-1$,
\[\begin{aligned}
\theta_{sj}(x)=&\frac{1}{p^sq^{j+1}}\Bigg((q-1)\bigg(\sum_{\substack{0\leqslant k\leqslant s-1\\t-j\leqslant m\leqslant t}}(\chi_{km}(x)+\chi_{km}^*(x))+\sum_{t-j\leqslant m\leqslant t}\chi_{sm}(x)\bigg)\\
&-\bigg(\sum_{0\leqslant k\leqslant s-1}(\chi_{k,t-j-1}(x)+\chi_{k,t-j-1}^*(x))+\chi_{s,t-j-1}(x)\bigg)\Bigg),
\end{aligned}\]

\item For $i=s$ and $j=t$,
\[\begin{aligned}
\theta_{st}(x)&=\frac{1}{p^sq^t}\Bigg(\sum_{\substack{0\leqslant k\leqslant s-1\\0\leqslant m\leqslant t}}(\chi_{ij}(x)+\chi_{ij}^*(x))+\sum_{0\leqslant m\leqslant t}\chi_{sj}(x)\Bigg)\\
&=\frac{1}{p^sq^t}\sum_{0\leqslant u< p^sq^t}x^u.
\end{aligned}\]
\end{enumerate}\label{5.8}
\end{theorem}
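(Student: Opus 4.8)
The plan is to run exactly the argument that proves Theorem~\ref{5.7}, the only genuinely new ingredient being the parity condition in the Case~2 coset description, i.e.\ Theorem~\ref{2.12}(ii) in place of~(i). First I would apply Lemma~\ref{5.1}: writing $\theta_{ij}(x)=\sum_{0\le u<p^sq^t}\epsilon_u x^u$ with $\epsilon_u=\frac1{p^sq^t}\chi_{ij}(\alpha^{-u})$, and observing that $\epsilon_{lu}=\epsilon_u$ because $C_{ij}$ is closed under multiplication by $l$, the coefficient $\epsilon_u$ is constant on each $l$-cyclotomic coset. Hence $\theta_{ij}=\sum_\gamma\bigl(\frac1{p^sq^t}\chi_{ij}(\alpha^{-u_\gamma})\bigr)\chi_\gamma$, the sum running over coset representatives $\gamma$, and similarly for $\theta_{ij}^{*}$. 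Thus everything reduces to evaluating the character sum $\chi_{ij}(\alpha^{-u})$ as $u$ runs over each coset $C_{km}$, $C_{km}^{*}$, and then reading off which linear combination of the $\chi_\gamma$ results.

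Next I would carry out those evaluations. As in the displays preceding Lemma~5.2, but invoking Theorem~\ref{2.12}(ii), $\chi_{ij}(\alpha^{-u})$ factors as a product of three pieces: a quadratic sum of the shape $\sum_{x\in X_1}\beta^{cx}$, where $\beta=\alpha^{p^{s-1}q^t}$, $c$ is an appropriate unit, and $X_1=R_1$ if $2\mid t-j$ while $X_1=N_1$ if $2\nmid t-j$; a geometric sum over $\lambda$; and a Ramanujan-type sum over $y\in\mathbb Z_{q^{t-j}}^{*}$. Lemma~5.2 settles the case where all three are ``full'' (value $\tfrac{\phi(p^{s-i})}2\phi(q^{t-j})$), Lemma~5.3 the cases where one vanishes, and Lemma~5.4 gives the boundary $q$-sum the value $-q^{t-j-1}$; the quadratic piece is evaluated by the identity that $\mathscr{R},\mathscr{N}$ --- the roots $\tfrac{-1\pm\delta}2$ of $x^2+x+\tfrac{p+1}4$ with $\delta^2=-p$ --- are $\sum_{x\in R_1}\beta^x$ and $\sum_{x\in N_1}\beta^x$. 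For a coset $C_{km}$ or $C_{km}^{*}$ with $s-i\le k\le s$ the quadratic piece is simply $\#X_1=\tfrac{p-1}2$, so such cosets contribute only $\mathscr{R},\mathscr{N}$-free multiples, namely $\tfrac{(p-1)(q-1)}{2\,p^{i+1}q^{j+1}}$ when $m\ge t-j$ and $-\tfrac{p-1}{2\,p^{i+1}q^{j+1}}$ when $m=t-j-1$; these give the ``$\tfrac{p-1}2(q-1)$'' blocks of the statement, and one must remember to include them for \emph{all} such $k$, not only $k=s-i$.

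The delicate step --- the genuine difference from Case~1 --- is the coset $C_{s-i-1,m}$ (and $C_{s-i-1,m}^{*}$), where the quadratic piece is a true Gauss sum and equals $\mathscr{R}$ or $\mathscr{N}$ according to the square class mod $p$ of the relevant representative. Writing that representative as $p^{s-i-1}q^m w'$, its square class mod $p$ is $(-1)^m\bigl(\tfrac{w'}{p}\bigr)$ since $q$ is a nonsquare mod $p$, while $\bigl(\tfrac{w'}{p}\bigr)$ is itself determined by the parity of $t-m$ through Theorem~\ref{2.12}(ii) applied to $D_0^{p^{i+1}q^{t-m}}$. Folding in the parity of $t-j$ that is already built into $X_1$, a short case check shows that $\chi_{s-i-1,m}$ and $\chi_{s-i-1,m}^{*}$ acquire coefficients that are multiples of $\mathscr{R}$ or of $\mathscr{N}$ precisely according to $m-j\bmod 2$, which is the source of the $\sum_{2\mid m-j}$ versus $\sum_{2\nmid m-j}$ splitting in the items with $i\le s-1$. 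At the boundary coset $m=t-j-1$ the factor $-q^{t-j-1}$ of Lemma~5.4 multiplies the Gauss-sum value and the coefficient of $\chi_{s-i-1,t-j-1}$ (resp.\ of $\chi_{s-i-1,t-j-1}^{*}$) comes out as $\tfrac{-1\pm(-1)^{t}\delta}2$, i.e.\ $\mathscr{R}$ when $t$ is even and $\mathscr{N}$ when $t$ is odd, with the opposite choice in the starred coset and for $\theta_{ij}^{*}$; keeping these four sign patterns consistent is the main hazard of the whole proof. Finally I would assemble: the coefficient of each $\chi_\gamma$ in $\theta_{ij}$ is the value of $\tfrac1{p^sq^t}\chi_{ij}(\alpha^{-u})$ just computed, and collecting them --- simplifying by $\phi(p^{s-i})=(p-1)p^{s-i-1}$ and $\phi(q^{t-j})=(q-1)q^{t-j-1}$ --- reproduces each of the displayed formulas; the cases $i=0$, $1\le i\le s-1$, $i=s$ and $j=t$ versus $j<t$ differ only in which inner sums are empty, while the last case $\theta_{st}=\tfrac1{p^sq^t}\sum_u x^u$ is immediate from $\sum_\gamma\chi_\gamma=\sum_{0\le u<p^sq^t}x^u$. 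A convenient global check is $\sum_\gamma\theta_\gamma=1$ together with $\theta_\gamma\theta_{\gamma'}=0$ for $\gamma\ne\gamma'$, which follow from the product identities of Section~\ref{sec4}.
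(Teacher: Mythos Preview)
Your proposal is correct and follows essentially the same route as the paper. The paper's own argument for Theorem~\ref{5.8} is a one-paragraph reduction to Case~1: it notes that for $u\in C_{km}$ with $k\ne s-i-1$ the value $\chi_{ij}(\alpha^{-u})$ is computed exactly as in Case~1, while for $k=s-i-1$ the choice between $\mathscr{R}$ and $\mathscr{N}$ is governed by $j-m\bmod 2$ via Theorem~\ref{2.12}(ii), and then simply invokes \eqref{eq}. Your write-up expands this same skeleton---Lemma~\ref{5.1} for the coefficient formula, the three-factor splitting of $\chi_{ij}(\alpha^{-u})$, Lemmas~5.2--5.4 for the non-Gauss pieces, and the parity bookkeeping for the Gauss piece---with more detail than the paper provides, and adds the orthogonality check as a sanity test, but the method is identical.
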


\begin{remark}
In Theorem \ref{5.8}, if $l=2$, we set\[\frac{-1+\delta}{2}=\mathscr{R}=1\quad\text{and}\quad\frac{-1-\delta}{2}=\mathscr{N}=0,\]which is respect to define of $\mathscr{R}$ and $\mathscr{N}$ above.
\end{remark}

\begin{example}
Set $p=11$, $q=5$, $s=t=1$, $g=2$ and $l=3$. Then $n=55$. 3 is a primitive of 5 and $\mathrm{ord}_{11}(3)=5$. $R_1=\{1,3,4,5,9,\}$ and $N_1=\{2,6,7,8,10\}$. Note that $5$ is a quadratic residue modulo 11, by Theorem \ref{2.12}, the 3-ary cyclotomic coset modulo are given below:
\[\begin{aligned}
&C_0=\{0\},\\
&C_1=\{5x+11y:x\in R_1,y\in\mathbb{Z}_5^*\}\\&\quad\,=\{1,3,4,9,12,14,16,23,26,27,31,34,36,37,38,42,47,48,49,53\},\\
&C_2=\{5x+11y:x\in N_1,y\in\mathbb{Z}_5^*\}\\&\quad\,=\{2,6,7,8,13,17,18,19,21,24,28,29,32,39,41,43,46,51,52,54\},\\
&C_5=\{5x:x\in R_1\}=\{5,15,20,25,45\}\\
&C_{10}=\{5x:x\in N_1\}=\{10,30,35,40,50\},\\
&C_{11}=\{11y:y\in\mathbb{Z}_5^*\}=\{11,22,33,44\}.
\end{aligned}\]
Set $\delta=2\in\mathbb{F}_3$. Let $\alpha$ be a 55th root of unit such that $\mathscr{R}=2$ and $\mathscr{N}=0$. Note that $11^{-1}=5^{-1}=2$ in $\mathbb{F}_3$, by theorem \ref{5.7}, the six primitives idempotents are given below.
\[\begin{aligned}
&\theta_0(x)=1+x+\cdots+x^{54},\\
&\theta_1(x)=2+2\chi_{10}(x)+\chi_2(x)+\chi_{11}(x),\\
&\theta_2(x)=2+2\chi_{5}(x)+\chi_1(x)+\chi_{11}(x),\\
&\theta_5(x)=2+2\chi_2(x)+2\chi_{10}(x)+2\chi_{11}(x),\\
&\theta_{10}(x)=2+2\chi_1(x)+2\chi_{5}(x)+2\chi_{11}(x),\\
&\theta_11(x)=1+2\chi_1(x)+2\chi_2(x)+\chi_5(x)+\chi_10(x)+2\chi_{11}(x).
\end{aligned}\]
\end{example}

\begin{example}
Set $p=7$, $q=5$, $s=t=1$, $g=3$ and $l=2$. Then $n=35$. 2 is a primitive root of 5 and $\mathrm{ord}_7(2)=3$. $R_1=\{1,2,4\}$ and $N_1=\{3,5,6\}$. Note that  5 is a quadratic nonresidue modulo 7, by Theorem \ref{2.12}, the 2-ary cyclotomic cosets modulo 35 are given below:
\[\begin{aligned}
&C_0=\{0\},\\
&C_1=\{5x+7y:x\in N_1,y\in\mathbb{Z}_5^*\}=\{1,2,4,8,9,11,16,18,22,23,29,32\},\\
&C_3=\{5x+7y:x\in R_1,y\in\mathbb{Z}_5^*\}=\{3,6,12,13,17,19,24,26,27,31,33\},\\
&C_5=\{5x:x\in R_1\}=\{5,10,20\}\\
&C_{15}=\{5x:x\in N_1\}=\{15,25,30\},\\
&C_7=\{7y:y\in\mathbb{Z}_5^*\}=\{7,14,21,28\}.
\end{aligned}\]
Let $\alpha$ be a 35th root of unit such that $\mathscr{R}=1$ and $\mathscr{N}=0$. Note that $7^{-1}=5^{-1}=1$ in $\mathbb{F}_2$, by Theorem  \ref{5.8}, the six primitives idempotents are given below.
\[\begin{aligned}
&\theta_0(x)=1+x+\cdots+x^{34},\\
&\theta_1(x)=\chi_3(x)+\chi_7(x),\\
&\theta_3(x)=\chi_1(x)+\chi_7(x),\\
&\theta_5(x)=1+\chi_3(x)+\chi_5(x)+\chi_7(x),\\
&\theta_{15}(x)=1+\chi_1(x)+\chi_7(x)+\chi_{15}(x),\\
&\theta_7(x)=\chi_1(x)+\chi_3(x)+\chi_7(x).
\end{aligned}\]
\end{example}

\section{Parameters of some cyclic codes of length $p^sq^t$}

The dimension of the minimal code $\mathscr{M}_\gamma$ is the number of non-zeros of the generating idempotent, which is the cardinality of the class $C_\gamma$.

\begin{lemma}[\cite{ref4}, Lemma 10, p. 446]
If $\mathscr{C}$ is a cyclic code of length $n$ generated by $g(x)$ and is of minimum distance $d$, then the code $\hat{\mathscr{C}}$ of length $nk$ generated by $g(x)(1+x^n+\cdots+x^{(k-1)m})$ is a repetition code of $\mathscr{C}$ repeated $k$ times and its minimum distance is $dk$.\label{6.1}
\end{lemma}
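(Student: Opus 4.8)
The plan is to identify $\hat{\mathscr{C}}$ as a concrete set of polynomials and then read off its weights. First I would set $\epsilon(x) = 1 + x^n + x^{2n} + \cdots + x^{(k-1)n} = (x^{nk}-1)/(x^n-1)$, so the stated generator of $\hat{\mathscr{C}}$ is $g(x)\epsilon(x)$ (here the exponent in the hypothesis should read $x^{(k-1)n}$). Since $\mathscr{C}$ is cyclic of length $n$ we have $g(x)\mid x^n-1$; writing $x^n-1 = g(x)h(x)$ gives $g(x)\epsilon(x) = (x^{nk}-1)/h(x)$, and $h(x)\mid x^n-1\mid x^{nk}-1$, so $g(x)\epsilon(x)$ genuinely divides $x^{nk}-1$ and generates a cyclic code of length $nk$. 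A degree count then shows $\dim\hat{\mathscr{C}} = nk - (\deg g + (k-1)n) = n - \deg g = \dim\mathscr{C}$, which I will use at the end.

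Next I would introduce the repetition map $\rho\colon \mathbb{F}_l[x]/\langle x^n-1\rangle \to \mathbb{F}_l[x]/\langle x^{nk}-1\rangle$, $f(x)\mapsto f(x)\epsilon(x)$, and check it is well defined and injective: the identity $\epsilon(x)(x^n-1) = x^{nk}-1$ makes any multiple of $x^n-1$ die under $\rho$, and on the degree-$<n$ representative $c_0 + \cdots + c_{n-1}x^{n-1}$ the image is $\sum_{r=0}^{k-1}\sum_{i=0}^{n-1}c_i x^{rn+i}$, i.e. the word $(c_0,\dots,c_{n-1})$ written out $k$ times, whose first $n$ coordinates recover $c$. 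Thus $\rho(\mathscr{C})$ is exactly the $k$-fold repetition of $\mathscr{C}$. To see $\hat{\mathscr{C}} = \rho(\mathscr{C})$: for $\supseteq$, if $c \equiv a(x) g(x) \pmod{x^n-1}$ then $\rho(c) = c(x)\epsilon(x) \equiv a(x)g(x)\epsilon(x) \pmod{x^{nk}-1}$ lies in $\hat{\mathscr{C}}$; and since $\rho$ is a linear injection, $\dim\rho(\mathscr{C}) = \dim\mathscr{C} = \dim\hat{\mathscr{C}}$, forcing equality. One can also obtain $\subseteq$ directly by reducing the multiplier $a(x)$ modulo $x^n-1$, the error term again being killed by $\epsilon(x)(x^n-1)=x^{nk}-1$.

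Finally, every nonzero word of $\hat{\mathscr{C}}$ equals $\rho(c)$ for a unique nonzero $c\in\mathscr{C}$, and since the nonzero coordinates of $c$ are simply copied $k$ times into disjoint blocks with no overlap, $\mathrm{wt}(\rho(c)) = k\,\mathrm{wt}(c)$; minimising over nonzero $c$ gives the minimum distance $k\min_{c\neq 0}\mathrm{wt}(c) = kd$. I do not anticipate a real obstacle: the only steps that need any care are the two divisibility bookkeeping arguments (well-definedness of $\rho$ and the reduction of $a(x)$), both of which reduce to the single identity $\epsilon(x)(x^n-1)=x^{nk}-1$, together with the harmless correction of the exponent in the hypothesis to $x^{(k-1)n}$.
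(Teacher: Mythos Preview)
Your argument is correct: the key identity $\epsilon(x)(x^n-1)=x^{nk}-1$ handles both the well-definedness of the repetition map and the divisibility check, and the dimension count together with the containment $\rho(\mathscr{C})\subseteq\hat{\mathscr{C}}$ forces equality, after which the weight formula is immediate. You are also right that the exponent $x^{(k-1)m}$ in the statement is a typo for $x^{(k-1)n}$.

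As for comparison with the paper: there is nothing to compare, since the paper does not prove this lemma at all---it is simply quoted from \cite{ref4} (Bakshi and Raka, Lemma~10) and used as a black box in the subsequent propositions. Your write-up therefore supplies a self-contained proof where the paper offers only a citation.
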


\begin{prop}
The generating polynomials of the code $\mathscr{M}_0$ is clearly\[\frac{x^{p^sq^t}-1}{x-1}=1+x+\cdots+x^{p^sq^t-1}\]and its minimum distance is $p^sq^t$.
\end{prop}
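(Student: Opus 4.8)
The plan is to identify the minimal code $\mathscr{M}_0$ explicitly and then read off both its generator and its minimum distance. Recall that $\mathscr{M}_0$ is, by definition, the minimal ideal in $\frac{\mathbb{F}_l[x]}{\langle x^{p^sq^t}-1\rangle}$ generated by $\frac{x^{p^sq^t}-1}{M_0(x)}$, where $M_0(x)$ is the minimal polynomial of $\alpha^0=1$ over $\mathbb{F}_l$. Since $C_0=\{0\}$, we have $M_0(x)=x-\alpha^0=x-1$, so the generating polynomial is $\frac{x^{p^sq^t}-1}{x-1}=1+x+\cdots+x^{p^sq^t-1}$, as asserted.

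Next I would compute the minimum distance. The ideal generated by $1+x+\cdots+x^{p^sq^t-1}$ in $\frac{\mathbb{F}_l[x]}{\langle x^{p^sq^t}-1\rangle}$ is one-dimensional over $\mathbb{F}_l$: every element is a scalar multiple of $1+x+\cdots+x^{p^sq^t-1}$, because multiplying this polynomial by $x$ returns it unchanged modulo $x^{p^sq^t}-1$, hence the $\mathbb{F}_l$-span is already the whole ideal. The unique (up to scalar) nonzero codeword is the all-ones vector of length $p^sq^t$, which has weight $p^sq^t$. Therefore the minimum distance of $\mathscr{M}_0$ is exactly $p^sq^t$.

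There is essentially no obstacle here: the statement is a direct unwinding of the definitions of $M_\gamma$ and $\mathscr{M}_\gamma$ given at the start of Section~\ref{sec5}, combined with the elementary observation that the repetition code of length $p^sq^t$ has minimum distance $p^sq^t$. (One may alternatively view this as the case $g(x)=1$, $k=p^sq^t$, $m=1$ of Lemma~\ref{6.1}, which immediately yields minimum distance $1\cdot p^sq^t=p^sq^t$.) The only thing worth stating carefully is that $\gcd(p^sq^t,l)=1$, which is part of the standing assumptions, so $x^{p^sq^t}-1$ is separable and the minimal-ideal decomposition applies.
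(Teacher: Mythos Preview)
Your proposal is correct and matches the paper's treatment: the paper gives no separate proof for this proposition, signalling with the word ``clearly'' that it follows at once from the definitions of $M_\gamma$ and $\mathscr{M}_\gamma$, which is exactly the unwinding you carry out. Your additional remark that the ideal is one-dimensional (hence the only nonzero codewords are scalar multiples of the all-ones vector) is the standard justification and is precisely what the paper is taking for granted.
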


\begin{prop}
For $0\leqslant j\leqslant t-1$,
\[x^{p^sq^t}-1=(1+x^{q^{t-j-1}}+\cdots +x^{(q-1)q^{t-j-1}})(x^{q^{t-j-1}}-1)(1+x^{q^{t-j}}+\cdots +x^{(p^sq^j-1)q^{t-j}}).\]
The minimal polynomial of $\alpha^{p^sq^j}$ is\[M_{p^sq^j}(x)=1+x^{q^{t-j-1}}+\cdots +x^{(q-1)q^{t-j-1}},\]
therefore the generating polynomial of $\mathscr{M}_{p^sq^j}$ is
\[(x^{q^{t-j-1}}-1)(1+x^{q^{t-j}}+\cdots +x^{(p^sq^j-1)q^{t-j}}).\]
Let $\mathscr{C}_j$ is the code of length $q^{t-j}$ generated by $x^{q^{t-j-1}}-1$, then by Lemma \ref{6.1}, the code $\mathscr{M}_{p^sq^j}$ is the repetition code of $\mathscr{C}_j$ and its minimum distance is $2p^sq^j$.
\end{prop}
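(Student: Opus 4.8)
The plan is to establish the claimed factorization of $x^{p^sq^t}-1$ directly, then read off the minimal polynomial of $\alpha^{p^sq^j}$ from Theorem \ref{thm2.1}, and finally invoke Lemma \ref{6.1} to get the minimum distance. First I would verify the polynomial identity
\[x^{p^sq^t}-1=(1+x^{q^{t-j-1}}+\cdots +x^{(q-1)q^{t-j-1}})(x^{q^{t-j-1}}-1)(1+x^{q^{t-j}}+\cdots +x^{(p^sq^j-1)q^{t-j}})\]
by multiplying the first two factors to get $x^{q^{t-j}}-1$ (a telescoping sum times a binomial), and then multiplying by the third factor, which is $\frac{x^{p^sq^t}-1}{x^{q^{t-j}}-1}$; this is just the standard identity $\frac{y^m-1}{y-1}=1+y+\cdots+y^{m-1}$ applied twice with $y=x^{q^{t-j-1}}$ and $y=x^{q^{t-j}}$.

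Next I would identify $M_{p^sq^j}(x)$. By Theorem \ref{thm2.1}, $C_{p^sq^j}=\{p^sq^jl^m:0\leqslant m\leqslant \phi(q^{t-j})-1\}$, so $\deg M_{p^sq^j}=\phi(q^{t-j})=(q-1)q^{t-j-1}$, and $\alpha^{p^sq^j}$ is a primitive $q^{t-j}$th root of unity (since $\gcd(p^sq^j,p^sq^t)=p^sq^j$ and $p^sq^t/p^sq^j=q^{t-j}$). The $q^{t-j}$th cyclotomic-type factor $\frac{x^{q^{t-j}}-1}{x^{q^{t-j-1}}-1}=1+x^{q^{t-j-1}}+\cdots+x^{(q-1)q^{t-j-1}}$ has exactly the right degree and has $\alpha^{p^sq^j}$ as a root; since $M_{p^sq^j}$ is the minimal polynomial and divides every polynomial vanishing at $\alpha^{p^sq^j}$, a degree count forces equality. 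Dividing the factorization above by $M_{p^sq^j}(x)$ then gives the generating polynomial of $\mathscr{M}_{p^sq^j}=\langle \frac{x^n-1}{M_{p^sq^j}(x)}\rangle$ as $(x^{q^{t-j-1}}-1)(1+x^{q^{t-j}}+\cdots+x^{(p^sq^j-1)q^{t-j}})$.

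Finally, set $\mathscr{C}_j$ to be the length-$q^{t-j}$ cyclic code generated by $x^{q^{t-j-1}}-1$; this is a repetition-type code whose minimum distance is $2$ (the generator itself is a weight-$2$ codeword, and any nonzero codeword, being a multiple of $x^{q^{t-j-1}}-1$, has even weight $\geqslant 2$). The factor $1+x^{q^{t-j}}+\cdots+x^{(p^sq^j-1)q^{t-j}}=1+x^{q^{t-j}}+\cdots+x^{(p^sq^j-1)q^{t-j}}$ is exactly $1+x^{q^{t-j}}+\cdots+x^{(p^sq^j-1)q^{t-j}}$, matching the shape $1+x^{m}+\cdots+x^{(k-1)m}$ of Lemma \ref{6.1} with $m=q^{t-j}$ and $k=p^sq^j$, so $\mathscr{M}_{p^sq^j}$ is the $p^sq^j$-fold repetition of $\mathscr{C}_j$ and has minimum distance $2p^sq^j$.

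The only mild obstacle is the bookkeeping with the nested geometric-series identities and confirming that the exponent ranges in the three factors line up correctly (the middle factor accounting for the primitive $q^{t-j}$th roots, the outer two for the rest); once the factorization is checked, the degree comparison pinning down $M_{p^sq^j}(x)$ and the application of Lemma \ref{6.1} are immediate.
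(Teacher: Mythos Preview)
Your proposal is correct and follows exactly the route the paper takes: verify the telescoping factorization, identify $M_{p^sq^j}$ by a degree count against $|C_{p^sq^j}|=\phi(q^{t-j})$, read off the generator of $\mathscr{M}_{p^sq^j}$, and invoke Lemma~\ref{6.1} with $m=q^{t-j}$, $k=p^sq^j$. One small slip: over $\mathbb{F}_l$ with $l$ odd, multiples of $x^{q^{t-j-1}}-1$ need not have \emph{even} weight; the correct reason that $\mathscr{C}_j$ has minimum distance $2$ is simply that every nonzero codeword $c(x)$ satisfies $c(1)=0$, forcing at least two nonzero coefficients, while the generator itself has weight $2$.
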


\begin{prop}
For $0\leqslant i\leqslant s-1$ and $0\leqslant j\leqslant t$, we denote $d_{ij}^{(0)}=M_{p^iq^j}(x)$ and $d_{ij}^{(1)}(x)=M_{p^iq^jg}(x)$. Let $A=(a_{ij})\in \mathbb{F}_2^{s\times(t+1)}$. Let $\mathscr{C}_A$ be the code of length $p^sq^t$ generated by
\[g_A(x)=\prod_{\substack{0\leqslant i\leqslant s-1\\0\leqslant j\leqslant t}}d_{ij}^{(a_{ij})(x)},\]then $\mathscr{C}_A$ is a $[p^sq^t,\frac{(p^s+1)q^t}{2}]$ code with minimum odd-like weight $d\geqslant p^{s/2}$. 
In fact, let $A'=(1-a_{ij})_{s\times (t+1)}$ and $\mathscr{C}_{A'}$ be the code generated by\[g_{A'}(x)=\prod_{\substack{0\leqslant i\leqslant s-1\\0\leqslant j\leqslant t}}d_{ij}^{(1-a_{ij})(x)}.\]
Suppose $a(x)$ be a codeword of minimum odd-like weight $d$ of $\mathscr{C}_A$, then the polynomial \[b(x)=a(x)a(x^g) \mathrm{mod} (x^{p^sq^t}-1)\]is a codeword with odd-like weight of both $\mathscr{C}_A$ and $\mathscr{C}_{A'}$. It follows that $b(x)$ is a multiple of
\[g_A(x)g_{A'}(x)=\frac{x^{p^sq^t}-1}{\prod\limits_{0\leqslant j\leqslant t}M_{p^sq^j}(x)}=\frac{x^{p^sq^t-1}}{x^{q^t}-1}=1+x^{q^t}+\cdots+x^{(p^s-1)q^t}.\]Since there are at most $d^2$ terms in $b(x)$, we have $d^2\geqslant p^s$.
\end{prop}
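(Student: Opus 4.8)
The plan is to obtain the dimension from a degree count and the bound on the minimum odd-like weight from the ``squaring'' device $a(x)\mapsto a(x)a(x^{g})$ foreshadowed just after the statement (recall that a word $f(x)$ is \emph{odd-like} when $f(1)\neq 0$, so the minimum odd-like weight is the least Hamming weight of a codeword with nonzero coordinate sum, and note $\gcd(g,p^{s}q^{t})=1$ so $x\mapsto x^{g}$ is a ring automorphism of $\mathbb{F}_{l}[x]/\langle x^{p^{s}q^{t}}-1\rangle$). First I would compute $\deg g_{A}(x)$: since $g_{A}(x)$ is the product, over $0\leqslant i\leqslant s-1$ and $0\leqslant j\leqslant t$, of one polynomial from each pair $\{M_{p^{i}q^{j}}(x),M_{p^{i}q^{j}g}(x)\}$, and $\deg M_{\gamma}(x)=\#C_{\gamma}$, Theorem \ref{thm2.1} gives $\deg M_{p^{i}q^{j}g^{a_{ij}}}(x)=\phi(p^{s-i}q^{t-j})/2$, independently of $a_{ij}$. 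Hence
\[
\deg g_{A}(x)=\frac{1}{2}\Big(\sum_{i=0}^{s-1}\phi(p^{s-i})\Big)\Big(\sum_{j=0}^{t}\phi(q^{t-j})\Big)=\frac{(p^{s}-1)q^{t}}{2},
\]
using $\sum_{k=1}^{s}\phi(p^{k})=p^{s}-1$ and $\sum_{k=0}^{t}\phi(q^{k})=q^{t}$, so $\dim\mathscr{C}_{A}=p^{s}q^{t}-\deg g_{A}(x)=(p^{s}+1)q^{t}/2$. (One also checks $d$ is finite: $g_{A}(1)=\prod_{\gamma}\prod_{u\in C_{\gamma}}(1-\alpha^{u})\neq 0$ since $0$ lies in no coset occurring in $g_{A}$, so $g_{A}(x)$ is itself an odd-like codeword.)

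For the weight bound, \textbf{Step 1} is that $x\mapsto x^{g}$ carries $\mathscr{C}_{A}$ onto $\mathscr{C}_{A'}$. Let $\mu_{g}$ be the induced permutation $C_{\gamma}\mapsto C_{g\gamma}$ of the $l$-cyclotomic cosets. The explicit description in Theorem \ref{2.12} shows directly that $\mu_{g}$ interchanges $C_{p^{i}q^{j}}$ with $C_{p^{i}q^{j}g}$ for all $0\leqslant i\leqslant s-1$, $0\leqslant j\leqslant t$ -- multiplication by $g$ acts as $g_{1}$ modulo $p^{s}$, a non-residue, flipping $R_{s-i}\leftrightarrow N_{s-i}$, and as $g_{2}$ modulo $q^{t}$, permuting $\mathbb{Z}_{q^{t-j}}^{*}$ -- and fixes $C_{0}$ and every $C_{p^{s}q^{j}}$; in particular $\mu_{g}$ is an involution. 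Hence for $c(x)\in\mathscr{C}_{A}$ the word $c(x^{g})\bmod(x^{p^{s}q^{t}}-1)$ has zero set $\mu_{g}\big(\bigcup_{i,j}C_{p^{i}q^{j}g^{a_{ij}}}\big)=\bigcup_{i,j}C_{p^{i}q^{j}g^{1-a_{ij}}}$, i.e.\ it lies in $\mathscr{C}_{A'}$.

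\textbf{Step 2}: put $b(x)=a(x)\,a(x^{g})\bmod(x^{p^{s}q^{t}}-1)$ for an odd-like codeword $a(x)\in\mathscr{C}_{A}$ of weight $d$. Both codes being ideals, $b\in\mathscr{C}_{A}$ (a multiple of $a(x)$) and $b\in\mathscr{C}_{A'}$ (being $a(x)$ times $a(x^{g})\bmod(x^{p^{s}q^{t}}-1)\in\mathscr{C}_{A'}$), while $b(1)=a(1)^{2}\neq 0$, so $b\neq 0$. \textbf{Step 3}: the zero sets of $\mathscr{C}_{A}$ and $\mathscr{C}_{A'}$ are disjoint, so $\mathscr{C}_{A}\cap\mathscr{C}_{A'}=\langle g_{A}(x)g_{A'}(x)\rangle$ with $g_{A}(x)g_{A'}(x)=\prod_{0\leqslant i\leqslant s-1,\,0\leqslant j\leqslant t}M_{p^{i}q^{j}}(x)M_{p^{i}q^{j}g}(x)$; since $\bigcup_{j=0}^{t}C_{p^{s}q^{j}}=\{p^{s}m:0\leqslant m<q^{t}\}$ is exactly the exponent set of the $q^{t}$-th roots of unity, the remaining factors of $x^{p^{s}q^{t}}-1$ multiply to $x^{q^{t}}-1$, whence $g_{A}(x)g_{A'}(x)=(x^{p^{s}q^{t}}-1)/(x^{q^{t}}-1)=1+x^{q^{t}}+\cdots+x^{(p^{s}-1)q^{t}}$. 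By Lemma \ref{6.1} (the $p^{s}$-fold repetition of the whole space of length $q^{t}$), the code $\langle g_{A}(x)g_{A'}(x)\rangle$ has minimum distance $p^{s}$, so $\mathrm{wt}(b)\geqslant p^{s}$; on the other hand $a(x)a(x^{g})$ has at most $d^{2}$ terms and reduction modulo $x^{p^{s}q^{t}}-1$ cannot increase the term count, so $\mathrm{wt}(b)\leqslant d^{2}$. Combining, $d^{2}\geqslant p^{s}$, that is $d\geqslant p^{s/2}$.

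I expect the genuinely delicate point to be Step 1: establishing cleanly that $x\mapsto x^{g}$ realizes the involution $A\mapsto A'$ on the generating polynomials, equivalently that $\mu_{g}$ swaps $C_{p^{i}q^{j}}$ and $C_{p^{i}q^{j}g}$ for every $(i,j)$, including $j=t$ (where the cosets are given through $R_{s-i},N_{s-i}$ rather than $D_{0},D_{1}$) and the case where $q$ is a non-residue modulo $p$ (the parity-dependent branch of Theorem \ref{2.12}). The rest -- the degree count and the chain $p^{s}\leqslant\mathrm{wt}(b)\leqslant d^{2}$ -- is bookkeeping built on Theorem \ref{thm2.1}, Theorem \ref{2.12} and Lemma \ref{6.1}.
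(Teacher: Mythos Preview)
Your proof is correct and follows essentially the same route as the paper's own argument (which is embedded in the statement itself after ``In fact,\ldots''): form $b(x)=a(x)a(x^{g})$, observe it lies in $\mathscr{C}_{A}\cap\mathscr{C}_{A'}$ and hence is a nonzero multiple of $(x^{p^{s}q^{t}}-1)/(x^{q^{t}}-1)$, then compare $p^{s}\leqslant\mathrm{wt}(b)\leqslant d^{2}$. You supply several details the paper leaves implicit---the degree/dimension count, the justification that $x\mapsto x^{g}$ sends $\mathscr{C}_{A}$ to $\mathscr{C}_{A'}$ (which, incidentally, follows most quickly from $g^{2}\in D_{0}^{p^{s}q^{t}}=C_{1}$ by Lemma~\ref{lem2.8}, so $gC_{p^{i}q^{j}g}=C_{p^{i}q^{j}g^{2}}=C_{p^{i}q^{j}}$), and the invocation of Lemma~\ref{6.1} to get $\mathrm{wt}(b)\geqslant p^{s}$---but the skeleton is identical.
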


\begin{prop}
Suppose $0\leqslant i\leqslant s-1$ and $0\leqslant j\leqslant t$, let $A=(a_{km})\in \mathbb{F}_2^{(s-1)\times(t+1-j)}$. We define
\[g_A(x)=\prod_{\substack{i\leqslant k\leqslant s-1\\j\leqslant m\leqslant t}}d_{km}^{(a_{km})(x)},\]and let $\mathscr{C}_A$ be the code generated by\[g_A(x)(1+x^{p^{s-i}q^{t-j}}+\cdots+x^{(p^iq^j-1)p^{s-i}q^{t-j}}),\]then $\mathscr{C}_A$ is a $[p^sq^t,\frac{(p^{s-i}+1)q^{t-j}}{2}]$ code with minimum odd-like weight $d\geqslant p^{(s+i)/2}q^j$. 
In fact, let $\bar{\mathscr{C}}_A$ be the code of length $p^{s-i}q^{t-j}$ with the generator polynomial $g_A(x)$. We also define $A'=(1-a_{ij})_{(s-i)\times (t+1-j)}$ and $\bar{\mathscr{C}}_{A'}$ be the code of length $p^{s-i}q^{t-j}$ with the generator polynomial\[g_{A'}(x)=\prod_{\substack{i\leqslant k\leqslant s-1\\j\leqslant m\leqslant t}}d_{km}^{(1-a_{km})(x)}.\]
Suppose $a(x)$ be a codeword of minimum odd-like weight $d_0$ of $\bar{\mathscr{C}}_A$, then the polynomial \[b(x)=a(x)a(x^g) \mathrm{mod} (x^{p^{s-i}q^{t-j}}-1)\]is a codeword with odd-like weight of both $\bar{\mathscr{C}}_A$ and $\bar{\mathscr{C}}_{A'}$. It follows that $b(x)$ is a multiple of
\[g_A(x)g_{A'}(x)=\frac{x^{p^{s-i}q^{t-j}}-1}{\prod\limits_{j\leqslant m\leqslant t}M_{p^sq^m}(x)}=\frac{x^{p^{s-i}q^{t-j}}-1}{x^{q^{t-j}}-1}=1+x^{q^{t-j}}+\cdots+x^{(p^{s-i}-1)q^{t-j}}.\]Since there are at most $d_0^2$ terms in $b(x)$, we have $d_0^2\geqslant p^{s-i}$. Then by Lemma \ref{6.1}, we have\[d=p^iq^jd_0\geqslant p^{(s+i)/2}q^j.\]
\end{prop}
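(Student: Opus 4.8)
The plan is to get the dimension from a degree count and the bound on the minimum odd‑like weight from the ``square‑root'' device, after first pushing everything down to the auxiliary length‑$p^{s-i}q^{t-j}$ code.

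\textbf{Reduction to a shorter code, and the dimension.} Put $N=p^{s-i}q^{t-j}$ and $k=p^iq^j$, so that $1+x^{N}+\cdots+x^{(k-1)N}=\dfrac{x^{p^sq^t}-1}{x^{N}-1}$. Every factor of $g_A(x)$ is a minimal polynomial $M_\gamma$ with $p^iq^j\mid\gamma$ (as $i\leqslant k$ and $j\leqslant m$ in each index), hence a factor of $x^N-1$; therefore the generator $g_A(x)\cdot\frac{x^{p^sq^t}-1}{x^N-1}$ divides $x^{p^sq^t}-1$, and by Lemma \ref{6.1} the code $\mathscr{C}_A$ is the $k$‑fold repetition of the length‑$N$ code $\bar{\mathscr{C}}_A$ generated by $g_A(x)$. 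Using $\deg M_{p^kq^m}=\#C_{p^kq^m}=\phi(p^{s-k})\phi(q^{t-m})/2$ together with the telescoping identities $\sum_{k'=i}^{s-1}\phi(p^{s-k'})=p^{s-i}-1$ and $\sum_{m'=j}^{t}\phi(q^{t-m'})=q^{t-j}$ gives $\deg g_A=\frac{(p^{s-i}-1)q^{t-j}}{2}$, whence
\[\dim\mathscr{C}_A=p^sq^t-\deg g_A-(k-1)N=p^{s-i}q^{t-j}-\tfrac{(p^{s-i}-1)q^{t-j}}{2}=\tfrac{(p^{s-i}+1)q^{t-j}}{2}.\]
Every codeword of $\mathscr{C}_A$ is $\bar c(x)\bigl(1+x^N+\cdots+x^{(k-1)N}\bigr)$, i.e.\ $k$ disjointly supported copies of $\bar c$; since $k=p^iq^j$ is odd, weights are multiplied by $k$ and odd‑likeness (nonvanishing at $1$) is preserved. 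So it suffices to show the minimum odd‑like weight $d_0$ of $\bar{\mathscr{C}}_A$ satisfies $d_0^{\,2}\geqslant p^{s-i}$; then $d=kd_0\geqslant p^iq^j\,p^{(s-i)/2}=p^{(s+i)/2}q^j$.

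\textbf{The square‑root bound in $\bar{\mathscr{C}}_A$.} Let $a(x)$ be an odd‑like codeword of $\bar{\mathscr{C}}_A$ of weight $d_0$ and set $b(x)=a(x)a(x^g)\bmod(x^{N}-1)$. The substitution $y\mapsto gy$ interchanges $C_{p^kq^m}$ and $C_{p^kq^mg}$ modulo $N$, because $C_{p^kq^m}=p^kq^mD_0^{(\cdot)}$, $C_{p^kq^mg}=p^kq^mD_1^{(\cdot)}$ and $gD_0^{(\cdot)}=D_1^{(\cdot)}$ (Lemma \ref{lem2.8} and its analogue for divisors of $n$, $g$ being a common primitive root of the relevant prime powers). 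Hence $a(x^g)\in\bar{\mathscr{C}}_{A'}$ with $A'=(1-a_{km})$, and evaluating at the $N$th roots of unity $\bar\alpha^u$ shows that $b(\bar\alpha^u)=a(\bar\alpha^u)a(\bar\alpha^{gu})$ vanishes at every $u$ selected by $A$ and at every $u$ selected by $A'$, so $b\in\bar{\mathscr{C}}_A\cap\bar{\mathscr{C}}_{A'}$. Now $g_A$ and $g_{A'}$ are coprime and their product omits exactly $M_0$ together with the ``pure‑$q$'' minimal polynomials $M_{p^{s-i}q^{m'}}$ $(0\leqslant m'\leqslant t-j-1)$, whose product is $x^{q^{t-j}}-1$; therefore $b$ is a multiple of
\[g_A(x)g_{A'}(x)=\frac{x^{N}-1}{x^{q^{t-j}}-1}=1+x^{q^{t-j}}+\cdots+x^{(p^{s-i}-1)q^{t-j}}.\]
Finally $b\neq0$, since $b(1)=a(1)^2\neq0$ by odd‑likeness of $a$.

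\textbf{Finishing.} The cyclic code generated by $1+x^{q^{t-j}}+\cdots+x^{(p^{s-i}-1)q^{t-j}}$ has minimum distance $p^{s-i}$: after reducing, any of its codewords equals $f(x)\bigl(1+x^{q^{t-j}}+\cdots+x^{(p^{s-i}-1)q^{t-j}}\bigr)$ with $\deg f<q^{t-j}$, i.e.\ $p^{s-i}$ disjointly supported copies of $f$, so its weight is $p^{s-i}\,\mathrm{wt}(f)\geqslant p^{s-i}$. Hence $\mathrm{wt}(b)\geqslant p^{s-i}$. On the other hand $a(x)$ and $a(x^g)$ each have $d_0$ nonzero terms, so their product, hence its reduction $b$ modulo $x^N-1$, has at most $d_0^{\,2}$ nonzero terms; therefore $d_0^{\,2}\geqslant p^{s-i}$, as required.

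\textbf{Where the work is.} There is no conceptual obstacle: the whole argument is the CRT/square‑root trick from the $n=pq$ case of \cite{ref1}, and the only things needing care are the bookkeeping on Section \ref{sec2}, namely (a) that $y\mapsto gy$ really swaps $C_{p^kq^m}\leftrightarrow C_{p^kq^mg}$ modulo \emph{every} divisor $p^{s-i}q^{t-j}$ of $n$ (which is what puts $a(x^g)$ in $\bar{\mathscr{C}}_{A'}$), and (b) the identification $g_A g_{A'}=\frac{x^{N}-1}{x^{q^{t-j}}-1}$, i.e.\ recognizing precisely which minimal polynomials $g_A g_{A'}$ fails to contain. One should also note in passing that $\mathscr{C}_A$ genuinely contains odd‑like words, since its generator $g_A(x)\frac{x^{p^sq^t}-1}{x^N-1}$ does not vanish at $x=1$ (neither factor does, and $k$ is odd).
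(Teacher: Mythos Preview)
Your proof is correct and follows exactly the same approach as the paper's embedded argument: pass to the length-$p^{s-i}q^{t-j}$ code $\bar{\mathscr{C}}_A$, use the square-root trick via $b(x)=a(x)a(x^g)$ to land in the repetition code generated by $g_Ag_{A'}=(x^N-1)/(x^{q^{t-j}}-1)$, and then scale back up by Lemma~\ref{6.1}. You have simply supplied more detail than the paper (the explicit degree count for the dimension, the verification that $y\mapsto gy$ swaps the relevant cosets so that $a(x^g)\in\bar{\mathscr{C}}_{A'}$, the check that $b\neq0$, and the minimum-distance computation for the repetition code), all of which is correct.
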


\end{document}